\newtheorem{thm}{Theorem}[section]
\newtheorem{lem}[thm]{Lemma}
\newtheorem{prop}[thm]{Proposition}
\newtheorem{cor}[thm]{Corollary}
\theoremstyle{definition}
\newtheorem{dfn}[thm]{Definition}
\newtheorem{ex}[thm]{Example}
\theoremstyle{remark}
\newtheorem{remark}[thm]{Remark}
\newtheorem{notations}[thm]{Notations}
\newcommand{\CA}{{\mathcal{A}}}
\newcommand{\CE}{{\mathcal{E}}}
\newcommand{\CF}{{\mathcal{F}}}
\newcommand{\CG}{{\mathcal{G}}}
\newcommand{\CU}{{\mathcal{U}}}
\newcommand{\CV}{{\mathcal{V}}}
\newcommand{\CI}{{\mathcal{I}}}
\newcommand{\CJ}{{\mathcal{J}}}
\newcommand{\CL}{{\mathcal{L}}}
\newcommand{\CZ}{{\mathcal{Z}}}
\newcommand{\CB}{{\mathcal{B}}}
\newcommand{\CR}{{\mathcal{R}}}
\newcommand{\CO}{{\mathcal{O}}}
\newcommand{\CW}{{\mathcal{W}}}
\newcommand{\af}{\alpha}
\newcommand{\bt}{\beta}
\newcommand{\gm}{\gamma}
\newcommand{\dt}{\delta}
\newcommand{\ld}{\lambda}
\newcommand{\sm}{\sigma}
\newcommand{\Ld}{\Lambda}
\newcommand{\Z}{{\mathbb{Z}}}
\newcommand{\N}{{\mathbb{N}}}
\newcommand{\WCB}{{\widehat{\mathcal{B}}}}
\begin{document}

\title[Boundary path groupoids of GBDS and their $C^*$-algebras]
{Boundary path groupoids of generalized Boolean dynamical systems and their $C^*$-algebras }

\author[Gilles G. de Castro]{Gilles G. de Castro}
\address[Gilles G. de Castro]{Departamento de Matem\'atica, Universidade Federal de Santa Catarina, 88040-970 Florian\'opolis SC, Brazil.} \email{gilles.castro@ufsc.br}

\author[E. J. Kang]{Eun Ji Kang}
\address[Eun Ji Kang]{Department of Mathematics, Research Institute for Natural Sciences,  Hanyang University, Seoul 04763, Korea} \email{kkang3333\-@\-gmail.\-com}

\thanks{The first named author was partially supported by Capes-PrInt Brazil grant number 88881.310538/2018-01. The second named author was  supported by Basic Science Research Program through the National Research Foundation of Korea(NRF) funded by the Ministry of Science and ICT (NRF-2020R1A4A3079066) and by  the Ministry of Education (NRF-2020R1I1A1A01072970).}

\subjclass[2020]{Primary: 46L55, Secondary: 37B99, 46L05}

\keywords{Generalized Boolean dynamical systems, Inverse semigroups, Groupoids, Tight groupoids, Boundary path groupoids, Topological correspondences}

\begin{abstract} 
In this paper, we provide two types of boundary path groupoids from a generalized Boolean dynamical system $(\CB,\CL, \theta, \CI_\af)$.  For the first groupoid, we associate an inverse semigroup to a generalized Boolean dynamical system and use the tight spectrum $\mathsf{T}$ as the unit space of a groupoid $\Gamma(\CB,\CL, \theta, \CI_\af)$ that is isomorphic to the tight groupoid $\CG_{tight}$. The other one is defined as the Renault-Deaconu groupoid $\Gamma(\partial E, \sm_E)$ arising from a topological correspondence $E$ associated with a generalized Boolean dynamical system.  We then  prove  that the tight spectrum $\mathsf{T} $ is homeomorphic to the boundary path space $\partial E$ obtained from the topological correspondence.
 Using this result, we prove that  the groupoid $\Gamma(\CB,\CL, \theta, \CI_\af)$ equipped with the topology  induced from  the topology on  $\CG_{tight}$
is isomorphic to  $\Gamma(\partial E, \sm_E)$
 as a topological groupoid.
Finally, we show that their $C^*$-algebras are isomorphic to the $C^*$-algebra of the generalized Boolean dynamical system.

\end{abstract}
 
\maketitle

\setcounter{equation}{0}

\section{Introduction}

Ever since a class of $C^*$-algebras associated to directed graphs was introduced in \cite{KPRR, KPR}, there have been various generalizations of graph algebras. 
The $C^*$-algebras associated with topological graphs  \cite{Ka2004},  higher rank graphs \cite{KP1}, labeled spaces \cite{BP1}, Boolean dynamical systems \cite{COP} and generalized Boolean dynamical systems \cite{CaK2} are those generalizations among many others. 

 When studying $C^*$-algebras, it is  useful  to describe them as groupoid $C^*$-algebras. In \cite{KPRR}, 
graph  algebras associated to row-finite graphs with no sources were introduced  as Renault-Deaconu groupoid $C^*$-algebras.  
Based on this work, a groupoid model of most of  the classes mentioned above was constructed.   
 For example, 
  Katsura proved in \cite{Ka2009} that when vertex and edge spaces of a topological graph  are both compact and the range map is surjective,  the topological graph algebra 
 is isomorphic to a Renault-Deaconu groupoid $C^*$-algebra. Then, Yeend proved in \cite{Yeend2006} that every topological higher-rank graph $C^*$-algebra is an $\acute{e}$tale groupoid $C^*$-algebra. Similar to Yeend's results,  Kumjian and Li showed  in \cite{KL2017} that  
every twisted topological graph algebra
can be realized as  a Renault-Deaconu groupoid $C^*$-algebra.
Also, the first  author together with Boava and Mortari provided a kind of  Renault-Deaconu groupoid model for weakly left-resolving normal labeled spaces in \cite{BCM3}.

One of  the purposes of  this paper is to construct  several groupoid models  for generalized Boolean dynamical systems. 
 The class of $C^*$-algebras of generalized Boolean dynamical systems was introduced in \cite{CaK2} in order to provide a common approach to the $C^*$-algebras of weakly left-resolving normal labeled spaces and  Boolean dynamical systems for which  each action has compact range and closed domain. 
 We first construct a kind of  Renault-Deaconu groupoid  by extending the  results known in the context of labeled spaces. 
Precisely, motivated by the work bringing inverse semigroup theory to the study of labeled space given in \cite{BCM1}, we  define an inverse semigroup $S_{(\CB,\CL, \theta, \CI_\af)}$ associated with generalized Boolean dynamical system $(\CB,\CL, \theta, \CI_\af)$ and characterize the tight spectrum $\mathsf{T}$ of
the idempotent semilattice of this inverse semigroup.
We then define a groupoid $\Gamma(\CB,\CL, \theta, \CI_\af)$ having $\mathsf{T}$ as its unit space and show that $\Gamma(\CB,\CL, \theta, \CI_\af)$  is isomorphic to the tight groupoid $\CG_{tight}$ arising from the inverse semigroup $S_{(\CB,\CL, \theta, \CI_\af)}$. 
By using some cutting and gluing map, we build a  local homeomorphism $\sm$ on a subset of $\mathsf{T}$ and prove that the  
groupoid $\Gamma(\CB,\CL, \theta, \CI_\af)$ equipped with the topology  induced from  the topology on  $\CG_{tight}$ can be seen as a Renault-Deaconu groupoid.

Carlsen, Ortega and Pardo characterize Boolean dynamical system for which  each action has compact range and closed domain as 0-dimensional topological graph in \cite{COP}. Motivated their work, we construct a topological correspondence $E:=E_{(\CB,\CL,\theta,\CI_\alpha)}$ from a generalized Boolean dynamical system. We then consider the  boundary path space $\partial E$ analogous to that of \cite[Definition 3.1]{KL2017} from the topological correspondence.  Thereby, we have a Renault-Deaconu groupoid $\Gamma(\partial E, \sm_E)$  associated to a shift map $\sm_E$  on the boundary path space. 

The second goal and main task of this paper is to examine the relations between the introduced groupoid models. We   prove  that the tight spectrum $\mathsf{T} $ is homeomorphic to the boundary path space $\partial E$ and that  the groupoid $\Gamma(\CB,\CL, \theta, \CI_\af)$ 
is isomorphic to  $\Gamma(\partial E, \sm_E)$
 as  topological groupoids. This simplifies the description of the tight spectrum found in \cite{BCM1} for labeled spaces and also unifies the groupoids models found in \cite{BCM3} for labeled spaces and \cite{COP} for Boolean dynamical systems.
 
The final goal of this paper is to prove that the C*-algebra of the above groupoids are isomorphic to the C*-algebra of the corresponding generalized Boolean dynamical system, which again unifies some of the results of \cite{BCM3} and \cite{COP}.

The structure of the paper is as follows: in Section \ref{Preliminaries}, we review some of the concepts needed for this paper. In Section \ref{section:inverse.semigroup}, we define an inverse semigroup from a generalized Boolean dynamical system and we describe the sets of filters, ultrafilters and tight filters of the corresponding semilattice of idempotents. In Section \ref{filter surgery}, we define maps that we call cutting and gluing maps that will be used to define one of the groupoid models as well as the topological correspondence of a generalized Boolean dynamical system. In Section \ref{section:tight.groupoid}, we study the tight groupoid of the aforementioned inverse semigroup and give it a Renault-Deaconu groupoid. In Section \ref{section:C*-isomorphism}, we prove that C*-algebra of the tight groupoid is isomorphic to the C*-algebra of the generalized Boolean dynamical system. Finally, in Section \ref{section:topological.correspondence}, we define a topological correspondence from a generalized Boolean dynamical, we prove that its C*-correspondence is isomorphic to the C*-correspondence found in \cite{CaK2} and that its groupoid is isomorphic to the tight groupoid. As a corollary, we have the C*-algebra of the topological correspondence is also isomorphic to the C*-algebra of the generalized Boolean dynamical system.

\section{Preliminaries}\label{Preliminaries}

\subsection{Filters}

\begin{dfn}\label{filter}
 A {\em filter} in a partially ordered set $P$ with least element 0 is a non-empty subset  $\xi$ of $P$ such that 
\begin{enumerate}
\item[(i)] $0 \notin \xi$,
\item[(ii)] if $ x \in \xi$ and  $x \leq y$, then $y \in \xi$,
\item[(iii)] if $x,y \in \xi$, there exists $z \in \xi$ such that $z \leq x$ and $z \leq y$.
\end{enumerate}
An {\it ultrafilter} is a filter which is not properly contained in any filter. 
\end{dfn}
If $P$ is a (meet) semilattice, condition (iii) may be replaced by $x \wedge y$ if $x,y \in \xi$.

For $x \in P$, we denote the principal filter generated by x by $\uparrow x$, that is, 
$$\uparrow x=\{y \in P: x \leq y\}.$$
When $X, Y \subseteq P$, we write
$\uparrow X:=\{b \in P: a \leq b ~\text{for some}~a \in A\}$
and $\uparrow_Y X:= Y \cap \uparrow X$. 

\subsection{Inverse semigroups}\label{Inverse semigroups} We here briefly summarize some basic facts of inverse semigroups. 
The reader is referred to  \cite[Sect. 4]{Ex1} or \cite{EP} for more details. 
 
 \vskip 1pc 
\begin{enumerate}
\item Let $S$ be an inverse semigroup. Then $E:=E(S)=\{e \in S: e^2=e\}$ is the {\it idempotent semi-lattice} of $S$ under the order relation 
$$e \leq f \iff ef=e$$
 for all $e,f \in E$ 
and the greatest lower bound $e \wedge f := ef$. The order relation on $E$ is extended to an order relation on $S$, defined by
$$s \leq t \iff ts^*s=s=ss^*t.$$

\item Given $e,f \in E$, we say that $e$ is {\it orthogonal } to f, in symbols $e \perp f$, when $e  f=0$. Otherwise, we say that $e$ {\it intersects} $f$.
Given any $F \subseteq E$, we say that $Z \subseteq F$ is a cover for $F$ if for every $0 \neq x \in F$ there exists $z \in Z$ such that $z x \neq 0 $. $Z$ is cover for $y$ if it is a cover for $F=\{x \in E: x \leq y\}$.
\item A {\it character} on $E$ is a nonzero function  $\phi$ from $ E$ to the Boolean algebra $ \{0,1\}$ such that $$ \phi(0)=0  ~\text{and}~  \phi(ef)=\phi(e)\phi(f)$$ for all $e,f \in E$. We denote by $\hat E_0$ the set of all characters on $E$. We view $\hat E_0$ as a topological space equipped with the product topology inherited from $\{0,1\}^{E}$. 
   So, given finite subsets $X, Y \subseteq E$,  the set 
$$U(X,Y):=\{\phi \in \hat E_0: \phi=1 ~\text{on}~ X  ~\text{and} ~\phi=0 ~ ~\text{on}~ Y\}$$ is an open set in $\hat E_0$ and the collection of all such is a 
  basis of the topology of $\hat E_0$.
  It is easy to see that $\hat E_0$ is a locally compact totally disconnected Hausdorff space. 
\item Given a filter $\eta$ in $E$, the map $$\phi_{\eta}: E \rightarrow \{0,1\} ~\text{ given by}~ \phi_{\eta}(e)=[e \in \eta]$$ is a character. Conversely, for a character $\phi$ on $E$, the set $$\eta_{\phi}=\{e \in E: \phi(e)=1\}$$ is a filter. These correspondences are mutually inverses.

A character $\phi$ of $E$ is called an  {\it ultra-character} if the corresponding filter $\xi_{\phi}$ is an ultrafilter. 
We denote by $\hat E_{\infty}$ the set of all ultra-characters.

\item (\cite[Definition 2.21]{BCM1}) A character $\phi$ of $E$ is {\it tight} if for every $x \in E$ and every finite cover $Z$ for $x$, we have 
$$ \bigvee_{z \in Z} \phi(z) = \phi(x).$$
The set of all tight characters is denoted by $\hat E_{tight}$, and 
 called the {\it tight spectrum} of $E$. It is a closed subspace of $\hat E_0$ containing $\hat E_{\infty}$ as a dense subspace. 
 
  A filter $\xi$ is a {\it tight filter} if its associated character $\phi_{\xi}$ is a tight character.
 \end{enumerate}
 
 The following is frequently used in section 4.2.

\begin{prop}\label{tight filter:how to check}(\cite[Proposition 2.23]{BCM1}) A filter $\xi$ in E is tight if and only if for every $x \in \xi$ and every finite cover $Z$ for $x$, one has $Z \cap \xi \neq \emptyset$. 
\end{prop}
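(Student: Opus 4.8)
The plan is to translate everything into the language of the character $\phi_\xi$ associated to the filter $\xi$ via item (4), and then read off the equivalence almost directly from the definition of a tight character in item (5). Recall that $\phi_\xi(e)=1$ precisely when $e\in\xi$, and that the join $\bigvee_{z\in Z}\phi_\xi(z)$, taken in the Boolean algebra $\{0,1\}$, equals $1$ if and only if $\phi_\xi(z)=1$ for some $z\in Z$, that is, if and only if $Z\cap\xi\neq\emptyset$. Hence for a fixed $x$ and a finite cover $Z$, the tightness identity $\bigvee_{z\in Z}\phi_\xi(z)=\phi_\xi(x)$ is equivalent to the Boolean statement $[Z\cap\xi\neq\emptyset]=[x\in\xi]$. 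The whole argument is then a matter of comparing the two sides of this identity in the relevant cases.

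For the forward implication, I would assume $\phi_\xi$ is tight. Given $x\in\xi$ and a finite cover $Z$ for $x$, we have $\phi_\xi(x)=1$, so tightness forces $\bigvee_{z\in Z}\phi_\xi(z)=1$; by the observation above this says exactly $Z\cap\xi\neq\emptyset$, which is the asserted condition.

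For the reverse implication, I would assume the condition holds and verify the tightness identity for every $x\in E$ and every finite cover $Z$ for $x$, splitting into two cases. If $x\in\xi$, the hypothesis gives $Z\cap\xi\neq\emptyset$, so both sides of $[Z\cap\xi\neq\emptyset]=[x\in\xi]$ equal $1$. The point that makes the other case work is the definition of a cover for $x$: by construction $Z\subseteq\{w\in E:w\leq x\}$, so every $z\in Z$ satisfies $z\leq x$. If $x\notin\xi$, then upward-closedness of the filter $\xi$ (condition (ii) of Definition \ref{filter}) forbids any such $z$ from lying in $\xi$, for otherwise $z\leq x$ would force $x\in\xi$; hence $Z\cap\xi=\emptyset$ and both sides of the identity equal $0$. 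This establishes tightness of $\phi_\xi$, i.e. that $\xi$ is a tight filter.

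The main, and essentially only, obstacle is recognizing that the stated condition quantifies only over $x\in\xi$, whereas tightness of the character quantifies over all $x\in E$; the gap is exactly the case $x\notin\xi$, and it is closed for free by the structural facts that the elements of a cover lie below $x$ and that $\xi$ is upward closed. Everything else is an unwinding of the correspondence between filters and characters, so I would not expect any genuine difficulty beyond keeping the Boolean bookkeeping straight.
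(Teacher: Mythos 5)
Your proof is correct: the paper itself gives no argument for this proposition (it simply cites \cite[Proposition 2.23]{BCM1}), and your argument is precisely the standard unwinding of the filter--character correspondence that the cited reference uses. In particular, your treatment of the case $x\notin\xi$ --- using that, by the paper's definition, a cover $Z$ for $x$ satisfies $Z\subseteq\{w\in E: w\leq x\}$, so upward closedness of $\xi$ forces $Z\cap\xi=\emptyset$ --- is exactly the observation needed to pass from quantifying over $x\in\xi$ to quantifying over all $x\in E$.
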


\subsection{The tight groupoid of \texorpdfstring{$S$}{S}}\label{tight groupoid} In this subsection, we recall the construction of the tight groupoid of an inverse semigroup as a groupoid of germs as done in \cite{Ex1}. 
 Let $S$ be a inverse semigroup with 0 and $E:=E(S)$ be its idempotent semilattice.   For each idempotent $e \in E$, we let $$D_e:=\{ \phi \in \hat E_{tight}: \phi(e)=1\}.$$
 The standard action $\rho$ of $S$ on $\hat{E}_{tight}$ is defined as follows:  
 given $s \in S$,   $\rho_s:D_{s^*s} \to D_{ss^*}$ is given by 
 $\rho_s(\phi)(e)=\phi(s^*es).$ 

\begin{dfn}(\cite[Chapter 4]{Ex1}) \label{def:tight groupoid}
Put $$\Omega:=\{ (s,\phi) \in S \times \hat{E}_{tight} : \phi \in D_{s^*s} \}$$
 and define an equivalence relation on $\Omega$ by 
$$(s,\phi) \sim (t,\psi) ~\text{if}~ \phi=\psi ~\text{and  there exists}~ e \in E ~\text{such that}~ \phi \in D_e ~\text{and}~ se=te.$$ The equivalence class of $(s,\phi)$ is called {\it germ} of $s$ at $\phi$ and denoted by $[s,\phi]$.
Let  $\CG_{tight}:=\CG_{tight}(S)= \Omega/\sim$ be the set of all germs.
Define 
$$\CG^{(2)}_{tight}:=\{([s,\phi],[t,\psi]) \in \CG_{tight} \times \CG_{tight}: \phi =\rho_t(\psi)\} \subseteq \CG_{tight} \times \CG_{tight}.$$
For $([s,\phi],[t,\psi]) \in \CG^{(2)}_{tight}$ define
$$[s,\rho_t(\psi)] \cdot [t,\psi]=[st,\psi]$$
and
$$[s,\phi]^{-1}=[s^*, \rho_s(\phi)].$$
Then $\CG_{tight}$ is a groupoid, which we call $\CG_{tight}$ {\it the tight groupoid (of germs)}. 
\end{dfn}

\begin{remark} Let $\CG_{tight}$ be the groupoid with the operation defined above. 
\begin{enumerate}
\item 
The unit space is the set
$$\CG_{tight}^{(0)}=\{[e,\phi]: \phi \in D_e \},$$
which may be identified with $\hat E_{tight}$ via the bijection $[e, \phi] \mapsto \phi$ (\cite[Equation (3.9)]{EP}).
Under the above identification, we write  the domain and source of $[s,\phi]$  by  
$$d([s,\phi])=\phi ~\text{and}~r([s,\phi])= \rho_s(\phi).$$
\item (\cite[Proposition 4.13]{Ex1}) Given $s \in S$ and any open set $\CU \subseteq D_{s^*s}$, let $$\Theta(s, \CU)=\{[s,\phi]: \phi \in \CU\}.$$ Then the collection of all  $\Theta(s, \CU)$ forms a basis for a topology on $\CG_{tight}$.
  \item  (\cite[Proposition 4.15 and 4.18]{Ex1}) Given $s \in S$, let  $\CU \subseteq D_{s^*s}$ be an open set.  Then  
  the map $$\pi: \CU \rightarrow \Theta(s, \CU) ~\text{given by}~ \pi(\phi)=[s,\phi]$$ is a homeomorphism. This implies that the set $\Theta(s,\CU)$ is a compact-open bisection of $\CG_{tight}$. Thus, $\CG_{tight}$ is a locally compact étale groupoid.
  \item (\cite[Proposition 4.11 and Corollary 4.16]{Ex1}) 
The  identification given (1) is a homeomorphism, and hence 
 $\CG_{tight}^{(0)} $ is a locally compact totally disconnected Hausdorff space. 
  \end{enumerate}
  \end{remark}

\subsection{Boolean algebras}

A {\em Boolean algebra} is a set $\CB$ with a distinguished element $\emptyset$ and maps $\cap: \CB \times \CB \rightarrow \CB$, $\cup: \CB \times \CB \rightarrow \CB$ and $\setminus: \CB \times \CB \rightarrow \CB$ such that $(\CB,\cap,\cup)$ is a distributive lattice, $A\cap\emptyset=\emptyset$ for all $A\in\CB$, and $(A\cap B)\cup (A\setminus B)=A$ and $(A\cap B)\cap (A\setminus B)=\emptyset$ for all $A,B\in\CB$. The Boolean algebra $\CB$ is called {\em unital} if there exists $1 \in \CB$ such that $1 \cup A = 1$ and $1 \cap A=A$ for all $A \in \CB$ (often, Boolean algebras are assumed to be unital and what we here call a Boolean algebra is often called a \emph{generalized Boolean algebra}).

We call $A\cup B$ the \emph{union} of $A$ and $B$, $A\cap B$ the \emph{intersection} of $A$ and $B$, and $A\setminus B$ the \emph{relative complement} of $B$ with respect to $A$. A subset $\CB' \subseteq \CB$ is called a {\em Boolean subalgebra} if $\emptyset\in\CB'$ and $\CB'$ is closed under taking unions, intersections and relative complements. A Boolean subalgebra of a Boolean algebra is itself a Boolean algebra.

We define a partial order on $\CB$ as follows: for $A,B \in \CB$,
\[
A \subseteq B ~~~\text{if and only if}~~~A \cap B =A.
\]
Then $(\CB, \subseteq)$ is a partially ordered set, and $A\cup B$ and $A\cap B$ are the least upper-bound and the greastest lower-bound of $A$ and $B$ with respect to the partial order $\subseteq$. If a family $\{A_{\ld}\}_{\ld \in \Lambda}$ of elements from $\CB$ has a least upper-bound, then we denote it by $\cup_{\ld \in \Lambda} A_\ld$. If $A\subseteq B$, then we say that $A$ is a \emph{subset of} $B$.

A non-empty subset $\CI$ of $\CB$ is called  an {\em ideal} \cite[Definition 2.4]{COP} if 
\begin{enumerate}
\item[(i)] if $A, B \in \CI$, then $A \cup B \in \CI$,
\item[(ii)] if $A \in \CI$ and $ B \in \CB$, then   $A \cap B \in \CI$. 
\end{enumerate}

An ideal $\CI$ of a Boolean algebra $\CB$ is a Boolean subalgebra. For $A \in \CB$, the ideal generated by $A$ is defined by $\CI_A:=\{ B \in \CB : B \subseteq A\}.$

A filter in $\CB$ is {\em prime} if for every $B,B' \in \CB$ with $B \cup B'\in\xi$, we have that either $B \in \xi$ or $B' \in \xi$. We note that $\xi$ is an ultrafilter if and only if it is a prime filter, and we use this equivalence throughout the paper without further mention.

 Given a Boolean algebra $\CB$, we write $\widehat{\CB}$  for the set of all ultrafilters of $\CB$. Notice that if $A\in\CB\setminus\{\emptyset\}$, then $\{B\in\CB:A\subseteq B\}$ is a filter, and it then follows from Zorn's Lemma that there is an ultrafilter $\eta\in\widehat{\CB}$ that contains $A$. For $A\in\CB$, we let $$Z(A):=\{\xi\in\widehat{\CB}:A\in\xi\}$$ and we equip $\widehat{\CB}$ with the topology generated by $\{Z(A): A\in\CB\}$. Then $\widehat{\CB}$ is a totally disconnected locally compact Hausdorff space, $\{Z(A): A\in\CB\}$ is a basis for the topology, and each $Z(A)$ is compact and open.

\subsection{Boolean dynamical systems}

A map $\phi: \CB \rightarrow \CB'$ between two Boolean algebras is called a {\em Boolean homomorphism} (\cite[Definition 2.1]{COP}) if $\phi(A \cap B)=\phi(A) \cap \phi(B)$, $\phi(A \cup B)=\phi(A) \cup \phi(B)$, and $\phi(A \setminus B)=\phi(A) \setminus \phi(B)$ for all $A,B \in \CB$.

A map $\theta: \CB \rightarrow \CB $ is called an {\it action} (\cite[Definition 3.1]{COP})  on a Boolean algebra $\CB$ if it is a Boolean homomorphism with $\theta(\emptyset)=\emptyset$.

Given a set $\CL$ and any $n \in \N$, we define $\CL^n:=\{(\af_1, \dots, \af_n): \af_i \in \CL\}$,  $\CL^{\geq 1}=\cup_{n \geq 1} \CL^n$  and $\CL^*:=\cup_{n \geq 0} \CL^n$, where $\CL^0:=\{\emptyset \}$. For $\alpha\in\CL^n$, we write  $|\af|:=n$. For $\af=(\af_1, \dots, \af_n), \beta=(\beta_1,\dots,\beta_m) \in \CL^*$, we will usually write $\af_1 \dots \af_n$ instead of $(\af_1, \dots, \af_n)$ and use $\alpha\beta$ to denote the word $\af_1 \cdots \af_n\beta_1\dots\beta_m$ (if $\alpha=\emptyset$, then $\alpha\beta:=\beta$; and if $\beta=\emptyset$, then $\alpha\beta:=\alpha$). 
  For $1\leq i\leq j\leq |\af|$, we also denote by $\af_{i,j}$ the sub-word $\af_i\cdots \af_j$ of  $\af=\af_1\af_2\cdots\af_{|\af|}$, where $\af_{i,i}=\af_i$. If $j < i$, set $\af_{i,j} =\emptyset$.
  
 We also  let $\CL^\infty$ denote the set of infinite sequences with entries in $\CL$. 
If $x=(x_1,x_2,\dots)\in\CL^\infty$ and $n\in\N$, then we let $x_{1,n}$ denote the word $x_1x_2\cdots x_n\in\CL^n$. 

 Given $\af,\bt \in \CL^*$, we say $\af$ is a {\it beginning} of $\bt$ if $\bt=\af\bt'$ for some $\bt' \in \CL^*$. When $\af$ is a  beginning of $\bt$, then we also call $\af$ the {\it initial path or initial segment} of $\bt$.
 We say that $\af$ and $\bt$ are {\it comparable} if $\af$ is a beginning of $\bt$ or $\bt$ is a beginning of $\af$ (\cite[Definition 2.7]{BCM1}).

A {\em Boolean dynamical system} is a triple $(\CB,\CL,\theta)$ where $\CB$ is a Boolean algebra, $\CL$ is a set, and $\{\theta_\af\}_{\af \in \CL}$ is a set of actions on $\CB$. For $\af=\af_1 \cdots \af_n \in \CL^{\geq 1}$, the action $\theta_\af: \CB \rightarrow \CB$ is defined as $\theta_\af:=\theta_{\af_n} \circ \cdots \circ \theta_{\af_1}$.  We  also define $\theta_\emptyset:=\text{Id}$.

For $B \in \CB$, we define
\[
\Delta_B^{(\CB,\CL,\theta)}:=\{\af \in \CL:\theta_\af(B) \neq
\emptyset \} ~\text{and}~  \ld_B^{(\CB,\CL,\theta)}:=|\Delta_B^{(\CB,\CL,\theta)}|.
\]
We will often just write $\Delta_B$ and $\ld_B$ instead of
$\Delta_B^{(\CB,\CL,\theta)}$ and $\ld_B^{(\CB,\CL,\theta)}$.

We say that $A \in \CB$ is {\em regular} (\cite[Definition 3.5]{COP})
if for any $\emptyset \neq B \in \CI_A$, we have $0 < \ld_B < \infty$.
If $A \in \CB$ is not regular, then it is called a {\em singular} set.
We write $\CB^{(\CB,\CL,\theta)}_{reg}$ or just $\CB_{reg}$ for the
set of all regular sets. Notice that $\emptyset\in\CB_{reg}$.

\subsection{Generalized Boolean dynamical systems and their \texorpdfstring{$C^*$}{C*}-algebras}\label{GBDS}

Let $(\CB,\CL,\theta)$ be a Boolean dynamical system and let 
\[
\mathcal{R}_\alpha^{(\CB,\CL,\theta)}:=\{A\in\mathcal{B}:A\subseteq\theta_\alpha(B)\text{ for some }B\in\mathcal{B}\}
\]
for each $\alpha \in \mathcal{L}$. Note that each $\CR_\af^{(\CB,\CL,\theta)}$ is an ideal of $\CB$. 
We will often, when it is clear which Boolean dynamical system we are working with, just write $\CR_\af$ instead of $\CR_\af^{(\CB,\CL,\theta)}$.
\

\begin{dfn}(\cite[Definition 3.2]{CaK2})\label{def:GBDS} 
A {\em generalized Boolean dynamical system} is a quadruple  $(\CB,\CL,\theta,\CI_\alpha)$ where  $(\CB,\CL,\theta)$ is  a Boolean dynamical system  and  $\{\CI_\alpha:\alpha\in\CL\}$ is a family of ideals in $\CB$ such that $\CR_\alpha\subseteq\CI_\alpha$ for each $\alpha\in\CL$.
\end{dfn}

\begin{dfn}\label{def:representation of RGBDS} 
Let $(\CB,\CL,\theta, \CI_\af)$ be a  generalized Boolean dynamical system. A {\it  $(\CB, \CL, \theta, \CI_\af)$-representation (or  Cuntz--Krieger representation of $(\CB, \CL,
\theta,\CI_\af)$)} is a family of projections $\{P_A:A\in\mathcal{B}\}$ and a family of partial isometries $\{S_{\alpha,B}:\alpha\in\mathcal{L},\ B\in\mathcal{I}_\alpha\}$ such that for $A,A'\in\mathcal{B}$, $\alpha,\alpha'\in\mathcal{L}$, $B\in\mathcal{I}_\alpha$ and $B'\in\mathcal{I}_{\alpha'}$,
\begin{enumerate}
\item[(i)] $P_\emptyset=0$, $P_{A\cap A'}=P_AP_{A'}$, and $P_{A\cup A'}=P_A+P_{A'}-P_{A\cap A'}$;
\item[(ii)] $P_AS_{\alpha,B}=S_{\alpha,  B}P_{\theta_\af(A)}$;
\item[(iii)] $S_{\alpha,B}^*S_{\alpha',B'}=\delta_{\alpha,\alpha'}P_{B\cap B'}$;
\item[(iv)] $P_A=\sum_{\af \in\Delta_A}S_{\af,\theta_\af(A)}S_{\af,\theta_\af(A)}^*$ for all  $A\in \mathcal{B}_{reg}$. 
\end{enumerate}
\end{dfn}

Given a $(\CB, \CL, \theta, \CI_\af)$-representation $\{P_A, S_{\af,B}\}$ in a $C^*$-algebra $\CA$, we denote by $C^*(P_A, S_{\af,B})$ the $C^*$-subalgebra of $\CA$ generated by $\{ P_A,  S_{\af,B}\}$.
It is shown in \cite{CaK2} that there exists  a universal $(\CB, \CL, \theta, \CI_\af)$-representation $\{p_A, s_{\af,B}: A\in \CB, \af \in \CL ~\text{and}~ B \in \CI_\af\}$   in a  $C^*$-algebra. 
 We write $C^*(\mathcal{B},\mathcal{L},\theta, \CI_\af)$ for $C^*(p_A,s_{\af,B})$ and    call it the {\it  $C^*$-algebra of $(\CB,\CL,\theta,\CI_\alpha)$}. 
 When $(\CB,\CL,\theta)$ is a Boolean dynamical system, then we write
$C^*(\CB,\CL,\theta)$ for $C^*(\CB,\CL,\theta, \CR_\af)$ and call it
the \emph{$C^*$-algebra of $(\CB,\CL,\theta)$}.

\vskip1pc

\begin{dfn}\label{def of I}(\cite{CaK2})
For $\af=\af_1\af_2 \cdots \af_n \in \CL^{\geq 1}$, we define
\begin{align*}
\CI_\af:=\{A \in \CB : A \subseteq \theta_{\af_2 \cdots \af_n}(B)~\text{for some }~ B \in \CI_{\af_1}\}.
\end{align*}
 For $\af =\emptyset$, we  define $\CI_\emptyset := \CB$ 
\end{dfn}

It is easy to check that $\CI_\af$ is an ideal of $\CB$ for  $\af \in \CL^{\geq 1}.$
For $\af\in \CL^{\geq 1}$ and $\CI \subseteq \CB$, we put $\theta_{\af}(\CI):=\{ \theta_\af(B) : B \in \CI\} $.

\begin{lem}\label{properties of I} Let $\af,\bt \in \CL^*$.  Then we have 
\begin{enumerate}
\item[(i)] $\theta_{\af}(\CB) \subseteq \CI_\af$.
\item[(ii)]  $\CI_{\af\bt} \subseteq \CI_\bt$.
\item[(iii)] If $A\in\CI_\af$, then $\theta_\bt(A)\in\CI_{\af\bt}$.
\end{enumerate}
\end{lem}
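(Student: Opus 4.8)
The plan is to prove Lemma \ref{properties of I}, which asserts three basic containment/membership properties of the ideals $\CI_\af$ defined for multi-letter words. I would prove each part separately, and in each case the strategy is to unwind Definition \ref{def of I} and reduce everything to properties of the single-letter ideals $\CI_{\af_1}$ together with the basic facts about actions (Boolean homomorphisms fixing $\emptyset$) and the composition rule $\theta_{\af\bt}=\theta_\bt\circ\theta_\af$. Throughout I would treat the edge cases $\af=\emptyset$ and $\bt=\emptyset$ first, since for those the conventions $\CI_\emptyset=\CB$ and $\theta_\emptyset=\mathrm{Id}$ usually make the statements immediate (for instance, in (i) with $\af=\emptyset$ we get $\theta_\emptyset(\CB)=\CB=\CI_\emptyset$), and then assume $\af,\bt\in\CL^{\geq 1}$ for the main argument.

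For part (i), given $A=\theta_\af(B)$ for some $B\in\CB$ with $\af=\af_1\cdots\af_n$, I would rewrite $\theta_\af=\theta_{\af_2\cdots\af_n}\circ\theta_{\af_1}$ and observe that $\theta_{\af_1}(B)\in\CR_{\af_1}\subseteq\CI_{\af_1}$ by the defining hypothesis of a generalized Boolean dynamical system (Definition \ref{def:GBDS}). Then $A=\theta_{\af_2\cdots\af_n}(\theta_{\af_1}(B))$ exhibits $A$ as an element of $\CI_\af$ via Definition \ref{def of I}, taking the witness $B':=\theta_{\af_1}(B)\in\CI_{\af_1}$. For part (ii), writing $\af=\af_1\cdots\af_n$ and $\bt=\bt_1\cdots\bt_m$ so that $\af\bt$ begins with $\af_1$, I would take $A\in\CI_{\af\bt}$, so $A\subseteq\theta_{(\af\bt)_{2,n+m}}(B)$ for some $B\in\CI_{\af_1}$; the tail word $(\af\bt)_{2,n+m}=\af_2\cdots\af_n\bt_1\cdots\bt_m$ factors as $\theta_{\bt}\circ\theta_{\af_2\cdots\af_n}$, so setting $C:=\theta_{\af_2\cdots\af_n}(B)$ gives $A\subseteq\theta_\bt(C)$. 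The remaining point is that $C\in\CI_{\bt_1}$, which again follows because $C\in\theta_{\af_2\cdots\af_n}(\CB)\subseteq\CI_{\af_2\cdots\af_n}$ and one must track how this sits inside $\CI_{\bt_1}$ — here care is needed about whether $m\geq 1$; if $m=0$ then $\bt=\emptyset$ and $\CI_\bt=\CB$ makes the claim trivial.

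For part (iii), assuming $A\in\CI_\af$ I want to show $\theta_\bt(A)\in\CI_{\af\bt}$. Writing $A\subseteq\theta_{\af_2\cdots\af_n}(B)$ for some $B\in\CI_{\af_1}$ and applying $\theta_\bt$ (which preserves $\subseteq$ since it is a Boolean homomorphism), I get $\theta_\bt(A)\subseteq\theta_\bt(\theta_{\af_2\cdots\af_n}(B))=\theta_{\af_2\cdots\af_n\bt}(B)$; since $\af_2\cdots\af_n\bt=(\af\bt)_{2,\,|\af\bt|}$ and $B\in\CI_{\af_1}=\CI_{(\af\bt)_1}$, this is exactly the condition for $\theta_\bt(A)\in\CI_{\af\bt}$. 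The main obstacle, such as it is, is bookkeeping: correctly identifying the tail subwords, handling the empty-word conventions consistently so that $\CI_\emptyset=\CB$ and $\theta_\emptyset=\mathrm{Id}$ do not create degenerate mismatches, and using the functoriality $\theta_{\gamma\dt}=\theta_\dt\circ\theta_\gamma$ in the right order (recall the reversed composition convention $\theta_\af=\theta_{\af_n}\circ\cdots\circ\theta_{\af_1}$). None of the steps is deep; the content is purely that the definition of $\CI_\af$ is compatible with factoring words and with applying further actions, and each verification is a direct substitution into Definition \ref{def of I}.
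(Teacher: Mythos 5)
Parts (i) and (iii) of your proposal are correct and essentially identical to the paper's own arguments: in (i) you exhibit the witness $\theta_{\af_1}(B)\in\CR_{\af_1}\subseteq\CI_{\af_1}$, and in (iii) you apply $\theta_\bt$ to the containment defining $A\in\CI_\af$ and use $\theta_\bt\circ\theta_{\af_2\cdots\af_n}=\theta_{\af_2\cdots\af_n\bt}$.

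Part (ii), however, has a genuine gap at the final step. After correctly reducing to $A\subseteq\theta_\bt(C)$ with $C:=\theta_{\af_2\cdots\af_n}(B)$, you declare that ``the remaining point is that $C\in\CI_{\bt_1}$.'' That is the wrong obligation, and it is generally false: $C$ lies in the image of $\theta_{\af_2\cdots\af_n}$, and the words $\af_2\cdots\af_n$ and $\bt_1$ are unrelated, so knowing $C\in\CI_{\af_2\cdots\af_n}$ gives no information about membership in $\CI_{\bt_1}$; the phrase ``one must track how this sits inside $\CI_{\bt_1}$'' is not an argument, and any appeal to a containment between ideals of the form $\CI_{\gm\dt}\subseteq\CI_\dt$ would be circular, since that containment is exactly what part (ii) asserts. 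What Definition~\ref{def of I} actually requires in order to conclude $A\in\CI_\bt$ is a set $B''\in\CI_{\bt_1}$ with $A\subseteq\theta_{\bt_{2,|\bt|}}(B'')$. The fix is one more application of the factorization you already used in parts (i) and (iii): write $\theta_\bt(C)=\theta_{\bt_{2,|\bt|}}(\theta_{\bt_1}(C))$ and take $B''=\theta_{\bt_1}(C)$, which lies in $\CR_{\bt_1}\subseteq\CI_{\bt_1}$ by Definition~\ref{def:GBDS}. This is precisely the paper's proof of (ii), whose witness is $\theta_{\bt_1}(\theta_{\af_{2,|\af|}}(D'))$. (Alternatively, you could finish by citing your own part (i) applied to the word $\bt$, so that $\theta_\bt(C)\in\CI_\bt$, together with the fact, noted in the paper just before the lemma, that $\CI_\bt$ is an ideal and hence closed downwards under $\subseteq$.)
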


\begin{proof}(i) Since $\theta_\af(B)=\theta_{\af_{2,|\af|}}(\theta_{\af_1}(B))$ and $\theta_{\af_1}(B) \in \CR_{\af_1} \subseteq \CI_{\af_1} $ for any $B \in \CB$, we have 
$\theta_{\af}(B) \in  \CI_\af$ for any $B \in \CB$.

(ii) If $D \in \CI_{\af\bt}$, then $D \subseteq \theta_{\af_{2,|\af|}\bt}(D')$ for some $ D' \in \CI_{\af_1}$.  We then have that  $D \in \CI_{\bt}$ since $\theta_{\af_{2,|\af|}\bt}(D')=\theta_{\bt_{2,|\bt|}}(\theta_{\af_{2,|\af|}\bt_1}(D'))$ and $\theta_{\af_{2,|\af|}\bt_1}(D')=\theta_{\bt_1}(\theta_{\af_{2,|\af|}}(D')) \in \CI_{\bt_1}$. 

(iii) If $A\in \CI_{\af}$, then $A\subseteq \theta_{\af_{2,|\af|}}(A')$ for some $A'\in\CI_{\af_1}$. Hence, $\theta_\bt(A)\subseteq \theta_\bt(\theta_{\af_{2,|\af|}}(A'))=\theta_{\af_{2,|\af|}\bt}(A')$, so that $\theta_{\bt}(A)\in\CI_{\af\bt}$.
\end{proof}

\begin{remark}
Let $\{P_A,\ S_{\alpha,B}: A\in\CB, \alpha\in\CL ~\text{and}~ B\in\CI_\alpha\}$
be a $(\CB,\CL,\theta,\CI_\alpha)$-representation. 
\begin{enumerate}
\item  For $\af, \bt \in \CL^*$, $A \in \CI_\af$ and $B \in \CI_\bt$, we have the equality 
\[
S_{\af,A}^*S_{\bt,B}= \left\{ 
\begin{array}{ll}
    P_{A \cap B} & \hbox{if\ }\af =\bt \\
    S_{\af', A \cap \theta_{\af'}(B)}^* & \hbox{if\ }\af =\bt\af' \\
    S_{\bt',B \cap \theta_{\bt'}(A)}   & \hbox{if\ } \bt=\af\bt' \\
    0 & \hbox{otherwise.} \\
\end{array}
\right.
\]
 We then have
that
\begin{align}
C^*(P_A,S_{\alpha,B})&=\overline{\operatorname{span}}\{
S_{\af,A}S_{\bt,B}^*: ~\af,\bt \in \CL^* ~\text{and}~ A \in \CI_\af, B
\in \CI_\bt\}\label{eq:2}\\
&=\overline{{\rm \operatorname{span}}}\{S_{\af,A}S_{\bt,A}^*: \af,\bt
\in \CL^* ~\text{and}~ A \in \CI_\af\cap \CI_\bt \}.\label{eq:3}
\end{align}
\item It follows from the universal property of $C^*(\CB,\CL,\theta,
\CI_\af)=C^*(p_A, s_{\af,B})$ that there is a strongly continuous
action $\gm:\mathbb T\to {\rm Aut}(C^*(\CB,\CL,\theta, \CI_\af))$,
which we call the {\it gauge action}, such that

\[
\gm_z(p_A)=p_A   \ \text{ and } \ \gm_z(s_{\af,B})=zs_{\af,B}
\]
for $A\in \CB$, $\af \in \CL$ and $B \in \CI_\af$.
 
\end{enumerate}

\end{remark}

\begin{ex}
Let $(\CE,\CL,\CB)$ be a  labeled space where $\CL:\CE^1 \to \CA$ is onto and put $C^*(\CE, \CL, \CB)=C^*(p_A, s_\af)$. Then $\CB$ is a Boolean algebra  and for each $\af \in \CA$, the map $\theta_{\af}:\CB \to \CB$ defined by 
$\theta_\af(A):=r(A,\af)$
is an action on $\CB$ (\cite[Example 11.1]{COP}).  Put $\CR_\af=\{A \in \CB: A \subseteq r(B,\af) ~\text{for some}~ B \in \CB\}$ and let $$\CI_{r(\af)}=\{A \in \CB : A \subseteq r(\af)\}.$$
It is clear that $\CR_\af \subseteq \CI_{r(\af)}$  for each $\af \in \CL$. Then $(\CB, \CA,\theta, \CI_{r(\af)})$ is a generalized Boolean dynamical system and we have 
$$   C^*(\CE,\CL,\CB) \cong C^*(\CB, \CA,\theta, \CI_{r(\af)}) $$
by \cite[Example 4.2]{CaK2}.  In this case, we  call $(\CB, \CA,\theta, \CI_{r(\af)})$ {\it the generalized Boolean dynamical system associated to $(\CE, \CL,\CB)$}. 
 \end{ex}
 
\section{Inverse semigroups associated with generalized Boolean dynamical systems and its tight spectrum}\label{section:inverse.semigroup}

Motivated by the inverse semigroup treatment given to labeled spaces in \cite{BCM1}, in this section, we associate each generalized Boolean dynamical system with an inverse semigroup and characterize the tight spectrum of the inverse semigroup. 
We first give a description of filters and ultrafilters in the idempotent semilattice of this inverse semigroup in section 3.2. and section 3.3, respectively. Using these results, we  give a complete characterization of the tight filters  in section 3.4.

\subsection{An inverse semigroup}\label{An inverse semigroup}
 Let $(\CB, \CL,\theta, \CI_\af)$ be a generalized Boolean dynamical system and let
$$S_{(\CB, \CL,\theta, \CI_\af)}:=\{(\af, A, \bt): \af,\bt \in \CL^* ~\text{and}~  A \in \CI_\af \cap \CI_\bt ~\text{with}~ A \neq \emptyset \} \cup \{0\}. $$
To simplify the notation, we write $S=S_{(\CB, \CL,\theta, \CI_\af)}$ when it is clear which generalized Boolean dynamical system we are working with.

Define a binary operation on $S$ is given as follows: $s \cdot 0 = 0 \cdot s=0$ for all $s \in S$ and 
for $(\af, A, \bt)$ and $(\gm, B, \dt)$ in $S$, 

$$(\af, A, \bt) \cdot (\gm, B, \dt)=\left\{
 \begin{array}{ll}    (\af, A \cap B, \dt) & \hbox{if\ }\bt =\gm  ~\text{and}~ A\cap B \neq \emptyset, \\
 (\af\gm', \theta_{\gm'}(A)\cap B, \dt) & \hbox{if\ } \gm=\bt\gm' ~\text{and}~ \theta_{\gm'}(A)\cap B \neq \emptyset,\\
 (\af, A\cap \theta_{\bt'}(B) ,\dt\bt') & \hbox{if\ } \bt=\gm\bt'  ~\text{and}~ A\cap \theta_{\bt'}(B) \neq \emptyset,\\
          0 & \hbox{otherwise.}
                      \end{array}
                    \right.
$$
That the operation is well-defined follows from Lemma~\ref{properties of I}. If for a given $s=(\af, A, \bt) \in S$ we define $s^*=(\bt, A, \af)$, then the set $S$, endowed with the operation above, 
 is an  inverse semigroup with zero element 0 (\cite[Sect. 2.3]{BCM1}), whose semilattice of  idempotents is 
$$E(S):=\{(\af, A, \af): \af \in \CL^* ~\text{and}~ \emptyset \neq A \in \CI_\af\}\cup \{0\}.$$ 

The natural order in the semilattice $E(S)$ is given below.

\begin{lem}\label{lem:order}(\cite[Proposition 4.1]{BCM1}) Let $\af,\bt \in \CL^*$, $A \in \CI_\af$ and $B \in \CI_\bt$. Then 
 $(\af, A, \af)\leq (\bt, B, \bt)$ if and only if $\af=\bt\af'$ and $A \subseteq\theta_{\af'}(B)$.
\end{lem}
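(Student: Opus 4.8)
The plan is to invoke the description of the natural order on the idempotent semilattice recalled in Section~\ref{Inverse semigroups}: for idempotents $e,f$ of an inverse semigroup one has $e \leq f$ if and only if $ef = e$. Taking $s = (\af, A, \af)$ and $t = (\bt, B, \bt)$, both of which lie in $E(S)$, I would therefore compute the product $s \cdot t$ directly from the multiplication formula defining $S$ and determine exactly when it equals $s$. The whole argument is a case analysis driven by the comparison between the third coordinate $\af$ of $s$ and the first coordinate $\bt$ of $t$.

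Unwinding the three non-zero branches of the product rule gives the following. First, if $\af = \bt$ (the case $\af' = \emptyset$, where $\theta_{\af'} = \mathrm{Id}$), the product is $(\af, A \cap B, \af)$, which equals $s$ precisely when $A \cap B = A$, i.e. $A \subseteq B = \theta_{\af'}(B)$. Second, if $\bt = \af\gm'$ with $\gm' \neq \emptyset$, then the product, when non-zero, has first coordinate $\af\gm'$, whose length strictly exceeds $|\af|$; hence it can never equal $s$, and this branch contributes nothing. Third, if $\af = \bt\af'$ with $\af' \neq \emptyset$, the product is $(\af, A \cap \theta_{\af'}(B), \af)$, which equals $s$ exactly when $A \cap \theta_{\af'}(B) = A$, i.e. $A \subseteq \theta_{\af'}(B)$. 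In every remaining situation (incomparable words, or an empty intersection) the product is $0 \neq s$, so no order relation holds.

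Combining the first and third branches and writing $\af'$ for the possibly-empty word with $\af = \bt\af'$ yields the stated equivalence $s \leq t \iff \af = \bt\af'$ and $A \subseteq \theta_{\af'}(B)$. For the converse implication I would note that $A \subseteq \theta_{\af'}(B)$ together with $A \neq \emptyset$ forces $A \cap \theta_{\af'}(B) = A \neq \emptyset$, so the product lands in the appropriate non-zero branch rather than the ``otherwise'' branch, and Lemma~\ref{properties of I}(iii) ensures $\theta_{\af'}(B) \in \CI_\af$, so the containment is a meaningful statement in $\CB$. There is no genuine conceptual obstacle here; the only thing requiring care is keeping the role of each coordinate straight across the three-way split and treating the empty-word case $\af' = \emptyset$ uniformly with $\af' \neq \emptyset$, which is exactly what the $\theta_\emptyset = \mathrm{Id}$ convention arranges.
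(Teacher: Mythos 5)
Your proof is correct: computing $st$ from the three non-zero branches of the multiplication and imposing $st=s$ (with the $\theta_\emptyset=\mathrm{Id}$ convention handling $\af'=\emptyset$, and $A\neq\emptyset$ ruling out the zero branch in the converse) is exactly the verification the paper delegates to \cite[Proposition 4.1]{BCM1}, where the argument is the same case analysis. No gaps; the side remark that $\theta_{\af'}(B)\in\CI_\af$ via Lemma~\ref{properties of I}(iii) is a nice touch but not essential.
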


\subsection{Filters in \texorpdfstring{$E(S)$}{E(S)}}  In this section, we characterize filters in $E(S)$. In  \cite{BCM1}, it is shown  for a labeled space $(\CE, \CL, \CB)$ (i.e., the generalized Boolean dynamical system $(\CB, \CA,\theta,\CI_{r(\af)} )$) that there is a bijective correspondence between filters in $E(S)$ and pairs $(\af, \{\CF_n\}_{0 \leq n \leq |\af|})$, where $\af \in \CL^{\leq \infty}$ (in \cite{BCM1},  $\CL^\infty=\{\af \in \CA^\infty: \af_{1,n} \in \CL(\CE^n), \forall n\}$ and $\CL^{\leq \infty} = \{\omega\} \cup (\cup_{n \geq 1 }\CL(\CE^n)) \cup \CL^{
\infty}  )$ and $ \{\CF_n\}_{0 \leq n \leq |\af|}$ is a complete family for $\
\af$. 
We generalize this results to generalized Boolean dynamical systems $(\CB,\CL,\theta,\CI_\af)$. 
The main difference between the generalized Boolean dynamical system $(\CB, \CA,\theta,\CI_{r(\af)} )$ and a generalized Boolean dynamical system $(\CB, \CL,\theta,\CI_{\af} ) $ is that, for all $\af \in \CL^{\geq 1}$, $\CI_{r(\af)}$ is a unital Boolean algebra with unit $r(\af)$, but $\CI_\af$ might not be unital, in general.
We mention that the results  in \cite[Section 4]{BCM1}  proved without using the unit 
  are easily generalized  just by replacing $r(-,\af)$ to $\theta_\af(-)$, and  that  a proof of  Proposition \ref{filter gives complete family}  should be given  because 
 it is a  generalization of \cite[Proposition 4.8]{BCM1}, which is proved  using the unit  of $\CI_{r(\af)}$. We  provide all proofs  for completeness. 
  
 We also  remark that given a  generalized Boolean dynamical system $(\CB,\CL,\theta,\CI_\af)$, $\CI_\af$ can consist of only the empty set for some $\af \in \CL^*$, in which case we would have no filters in $\CI_\af$. 
So, to exclude this case, we define  $\CW^*=\{\alpha\in\CL^*:\CI_\alpha\neq \{\emptyset\}\}$, $\CW^{\infty}=\{\alpha\in\CL^{\infty}:\alpha_{1,n}\in\CW^* ~\text{for all}~ n \geq 1\}$ and $\CW^{\leq\infty}=\CW^*\cup \CW^{\infty}$ ($\CW^{\leq\infty}$ plays the same role as $\CL^{\leq\infty}$ in \cite{BCM1}).

\begin{dfn}(\cite[Definition 4.5]{BCM1}) Let $\af \in \CW^{\leq \infty}$ and $\{\CF_n\}_{0 \leq n \leq |\af|}$  (understanding that $0 \leq n \leq |\af|$ means $0 \leq n < \infty$ when $\af \in \CW^\infty$) be a family such that $\CF_n$ is a filter in $\CI_{\af_{1,n}}$ for every $n >0 $ and $\CF_0$ is a filter in $\CB$ or $\CF_0=\emptyset$ (the latter only allowed if $|\af|>0$).
The family $\{\CF_n\}_{0 \leq n \leq |\af|}$ is said to be {\it admissible for $\af$} if 
$$\CF_n \subseteq \{A \in \CI_{\af_{1,n}}: \theta_{\af_{n+1}}(A) \in \CF_{n+1}\}$$ 
for all $0\leq n<|\af|$, and  is said to be {\it complete for $\af$} if 
$$\CF_n=\{A \in \CI_{\af_{1,n}}: \theta_{\af_{n+1}}(A) \in \CF_{n+1}\}$$ 
for all $0\leq n<|\af|$. 
\end{dfn}
\begin{remark} Let  $\{\CF_n\}_{0 \leq n \leq |\af|}$ be a complete family for $\af$ with $|\af|>0$. If $\CF_1 \cap \CR_{\af_1} =\emptyset$, then $\CF_0 =\emptyset$. If   $\CF_1 \cap \CR_{\af_1} \neq \emptyset$, then $\CF_0$ is a filter in $\CB$.
\end{remark}

\begin{remark}\label{rmk:complete.family} Let $\af \in \CW^{\leq \infty}$ and $\{\CF_n\}_{0 \leq n \leq |\af|}$   be a family such that $\CF_n$ is a filter in $\CI_{\af_{1,n}}$ for every $n >0 $ and $\CF_0$ is a filter in $\CB$ or $\CF_0=\emptyset$.
\begin{enumerate}
\item If  $\af \in \CW^{\infty}$, then  $\{\CF_n\}_{n \geq 0}$  is complete for $\af$ if and only if 
$$\CF_n=\{A \in \CI_{\af_{1,n}}: \theta_{\af_{n+1, m}}(A) \in \CF_m \}$$
for all $n \geq 0$ and all $m >n$. 
\item If $\{\CF_n\}_{0 \leq n \leq |\af|}$ is a  complete family for $\af \in \CW^*$, then $$\CF_n=\{A \in \CI_{\af_{1,n}}: \theta_{\af_{n+1,|\af|}}(A) \in \CF_{|\af|} \}$$ for all $0 \leq n \leq  |\af|$.
\end{enumerate}
\end{remark}

\begin{lem}\label{lem:assoc.word}
Let $\xi$ be a filter in $E(S)$. Then, there exists a unique $\alpha\in\CW^{\leq\infty}$ such that for all $(\beta,A,\beta)\in \xi$, we have that $\beta$ is an initial segment of $\alpha$.
\end{lem}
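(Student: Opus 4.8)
The plan is to read the word $\af$ off the set of words that actually occur in $\xi$ and to take $\af$ to be the supremum of this set in the prefix order. Put
\[
W(\xi):=\{\bt\in\CL^*:(\bt,A,\bt)\in\xi\ \text{for some}\ A\},
\]
which is nonempty since $\xi\neq\emptyset$. First I would show that $W(\xi)$ is a chain for the ``beginning of'' order. Given $(\bt_1,A_1,\bt_1),(\bt_2,A_2,\bt_2)\in\xi$, condition (iii) of Definition \ref{filter} provides a common lower bound $(\gm,C,\gm)\in\xi$; applying Lemma \ref{lem:order} to $(\gm,C,\gm)\le(\bt_i,A_i,\bt_i)$ gives $\gm=\bt_i\gm_i'$, so both $\bt_1$ and $\bt_2$ are beginnings of $\gm$ and hence comparable.

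The key step, and the one I expect to be the main obstacle, is to show that every \emph{nonempty} beginning of an element of $W(\xi)$ again lies in $W(\xi)$, and in particular lies in $\CW^*$; this is exactly where the possible non-unitality of the ideals $\CI_\af$ must be handled (in \cite{BCM1} the unit of $\CI_{r(\af)}$ is available). Suppose $(\bt,A,\bt)\in\xi$ and $\bt=\gm\bt'$ with $\gm\neq\emptyset$. Unwinding Definition \ref{def of I} for $\CI_{\gm\bt'}$, from $A\in\CI_{\gm\bt'}$ one obtains $D\in\CI_{\gm_1}$ with $A\subseteq\theta_{\gm_{2,|\gm|}\bt'}(D)=\theta_{\bt'}(\theta_{\gm_{2,|\gm|}}(D))$. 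Setting $B:=\theta_{\gm_{2,|\gm|}}(D)$, Lemma \ref{properties of I}(iii) gives $B\in\CI_{\gm_1\gm_{2,|\gm|}}=\CI_\gm$, while $A\subseteq\theta_{\bt'}(B)$ and $A\neq\emptyset$ force $B\neq\emptyset$. By Lemma \ref{lem:order} this means $(\bt,A,\bt)\le(\gm,B,\gm)$, so upward closure of $\xi$ yields $(\gm,B,\gm)\in\xi$; hence $\gm\in W(\xi)$ and $\CI_\gm\neq\{\emptyset\}$, i.e. $\gm\in\CW^*$.

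With these two facts the word is determined. Let $N:=\sup\{|\bt|:\bt\in W(\xi)\}\in\{0,1,2,\dots\}\cup\{\infty\}$. Since $W(\xi)$ is a chain, two of its words of equal length coincide, so if $N<\infty$ there is a unique longest word, which I take to be $\af$; if $N=\infty$ I take $\af\in\CL^\infty$ to be the word whose length-$n$ beginning is the unique length-$n$ member of $W(\xi)$ (consistent by the chain property). In both cases every $\bt\in W(\xi)$ is a beginning of $\af$. Moreover $\af_{1,0}=\emptyset\in\CW^*$ (as $\CI_\emptyset=\CB\neq\{\emptyset\}$ whenever $\xi$ exists), and for $1\le n\le|\af|$ the beginning $\af_{1,n}$ is a nonempty beginning of a member of $W(\xi)$, hence lies in $\CW^*$ by the previous paragraph; therefore $\af\in\CW^{\leq\infty}$.

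For uniqueness I would use that the $\af$ just constructed is the least upper bound of the chain $W(\xi)$. Indeed, by the second paragraph $\af_{1,n}\in W(\xi)$ for $1\le n\le|\af|$, so if $\af'\in\CW^{\leq\infty}$ also has every $\bt\in W(\xi)$ as a beginning, then each $\af_{1,n}$ is a beginning of $\af'$, forcing $\af'_{1,n}=\af_{1,n}$ and hence $\af$ to be a beginning of $\af'$; taking $\af$ as the least (equivalently, the supremum) word with the stated property pins it down uniquely. The only genuine computation is the one in the second paragraph; the remaining steps are bookkeeping about chains in the prefix order together with the filter axioms.
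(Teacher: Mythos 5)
Your proof is correct and takes essentially the same route as the paper's: the words occurring in $\xi$ form a chain in the prefix order (filter axiom (iii) plus Lemma \ref{lem:order}), and $\af$ is then the maximal word or the limit of arbitrarily long ones. The only differences are matters of added care rather than of method: you explicitly verify that all prefixes lie in $\CW^*$ (a point the paper asserts without proof in the infinite case, and which it only justifies later via the unnumbered lemma on proper Boolean homomorphisms in Section \ref{section:tight.filters}), and you patch the uniqueness claim -- which, as literally stated, fails for finite-type filters since any extension of the largest word also has the stated property -- by singling out $\af$ as the minimal such word, whereas the paper dismisses uniqueness as ``immediate.''
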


\begin{proof}
Let $M=\{|\alpha|:(\alpha,A,\alpha)\in\xi\}$. If $M$ has a maximum, we let $\alpha$ be such that $(\alpha,A,\alpha)\in E(S)$ for some $A\in\CI_\af$ and $|\alpha|=\max M$. If $(\beta,B,\beta)\in\xi$, then $|\beta|\leq |\alpha|$, and by Lemma~\ref{lem:order}, we have that $\beta$ is an initial segment of $\alpha$.

Suppose now that $M$ does not have a maximum. For each $n >0$, choose $p=(\bt, B, \bt) \in \xi$ such that $|\bt|\geq n$ and define $\af_n=\bt_n$. Since any two elements in $\xi$ have comparable words, then $\af_n$ is well defined. Hence, we have an infinite word $\af=\af_1\af_2 \cdots \in \CW^\infty$. By construction if $(\beta,B,\beta)\in\xi$, we have that $\beta$ is an initial segment of $\alpha$.

The uniqueness of such $\alpha$ is immediate.
\end{proof}

\begin{dfn}
Let $\xi$ be a filter in $E(S)$ and let $\alpha\in\CW^{\leq\infty}$ as in Lemma~\ref{lem:assoc.word}. We say that $\alpha$ is the \emph{word associated with} $\xi$. If $|\alpha|<\infty$, we also say that $\alpha$ is the \emph{largest word} for $\xi$.
\end{dfn}

\begin{lem}(\cite[Proposition 4.3]{BCM1})\label{word to finite} Let $\af \in \CW^{*}$ and $\CF$ be a filter in $\CI_\af$. Then 
\begin{align*}\xi&= \bigcup_{A \in \CF} \uparrow(\af, A, \af) \\
&=\{(\af_{1,i}, A, \af_{1,i}) \in E(S): 0 \leq i \leq |\af| ~\text{and}~\theta_{\af_{i+1, |\af|}}(A) \in \CF\}
\end{align*}
is a filter  in $E(S)$ with largest word $\af$. 
\end{lem}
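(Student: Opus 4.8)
The plan is to treat the two displayed descriptions of $\xi$ separately: first show they define the same subset of $E(S)$, then verify the three filter axioms and finally compute the associated word. I would begin by unwinding the principal filters using Lemma~\ref{lem:order}: for $A\in\CF$ one has $(\af,A,\af)\leq(\bt,B,\bt)$ precisely when $\bt=\af_{1,i}$ for some $0\leq i\leq|\af|$ and $A\subseteq\theta_{\af_{i+1,|\af|}}(B)$. Taking the union over $A\in\CF$ yields
$$\xi=\{(\af_{1,i},B,\af_{1,i})\in E(S): 0\leq i\leq|\af| \text{ and } A\subseteq\theta_{\af_{i+1,|\af|}}(B) \text{ for some } A\in\CF\}.$$
To identify this with the second description, I would argue that the condition ``$A\subseteq\theta_{\af_{i+1,|\af|}}(B)$ for some $A\in\CF$'' is equivalent to ``$\theta_{\af_{i+1,|\af|}}(B)\in\CF$''. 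The key input here, and the place where care is needed since $\CI_\af$ need not be unital, is Lemma~\ref{properties of I}(iii): because $B\in\CI_{\af_{1,i}}$, we obtain $\theta_{\af_{i+1,|\af|}}(B)\in\CI_\af$, so membership of this element in the filter $\CF\subseteq\CI_\af$ is meaningful. The forward implication is then upward closure of $\CF$, and the backward one follows by taking $A=\theta_{\af_{i+1,|\af|}}(B)$.

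Next I would check the filter axioms for the set just identified. Non-emptiness is immediate, since $\CF\neq\emptyset$ gives $(\af,A,\af)\in\xi$, and $0\notin\xi$ because every listed element has nonempty middle entry. Upward closure is automatic, as $\xi$ is a union of principal filters. The only genuine verification is downward-directedness: given $(\af_{1,i},A,\af_{1,i})$ and $(\af_{1,j},B,\af_{1,j})$ in $\xi$, the identity just proved gives $\theta_{\af_{i+1,|\af|}}(A),\theta_{\af_{j+1,|\af|}}(B)\in\CF$, so their meet $D=\theta_{\af_{i+1,|\af|}}(A)\cap\theta_{\af_{j+1,|\af|}}(B)$ lies in $\CF$ (filters being closed under meets) and is nonempty. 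Then $(\af,D,\af)\in\xi$ and, by Lemma~\ref{lem:order}, it sits below both given elements, providing the required common lower bound.

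Finally, for the statement about the largest word, I would appeal to Lemma~\ref{lem:assoc.word}: every element of $\xi$ has first coordinate $\af_{1,i}$, an initial segment of $\af$, while $\af$ itself is attained because $(\af,A,\af)\in\xi$ for any $A\in\CF$; hence the word associated with $\xi$ is $\af$, which is finite, so it is indeed the largest word. I do not expect any single hard step; the main obstacle is the bookkeeping in the first paragraph, namely ensuring via Lemma~\ref{properties of I}(iii) that the images $\theta_{\af_{i+1,|\af|}}(B)$ land back in $\CI_\af$. This is exactly the point where the generalization from the labeled-space setting of \cite{BCM1}, in which the relevant ideals $\CI_{r(\af)}$ are unital, must be carried out without recourse to a unit.
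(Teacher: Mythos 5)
Your proposal is correct and follows essentially the same route as the paper's proof: both establish the equality of the two descriptions of $\xi$ via Lemma~\ref{lem:order} together with upward closure of $\CF$ (the backward direction by taking $A=\theta_{\af_{i+1,|\af|}}(B)$ itself), and both verify directedness by descending to elements of the form $(\af,\cdot,\af)$ and intersecting inside $\CF$. The only differences are cosmetic and in your favor: you make explicit the appeal to Lemma~\ref{properties of I}(iii) guaranteeing $\theta_{\af_{i+1,|\af|}}(B)\in\CI_\af$, and you spell out the largest-word claim via Lemma~\ref{lem:assoc.word}, both of which the paper leaves implicit.
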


\begin{proof} First we show that the sets in the statement are equal: let $A \in \CF$ and $p=(\bt, B, \bt) \in \uparrow(\af, A, \af)$. Then $\af=\bt\af'$ and $A \subseteq \theta_{\af'}(B)$. So,  $\bt=\af_{1,i}$ for some $i$ and $\theta_{\af_{i+1,|\af| }}(B) =\theta_{\af'}(B) \in \CF $ since $A \in \CF$
 and $\CF$ is a filter.  

Conversely, let $p=(\af_{1,n}, A, \af_{1,n})$ be such that $\theta_{\af_{i+1, |\af|}}(A) \in \CF$. We then have $p \in \bigcup_{A \in \CF} \uparrow(\af, A, \af) $ since  $(\af, \theta_{\af_{i+1, |\af|}}(A), \af ) \leq (\af_{1,i}, A, \af_{1,i})$.

Now, we show that $\xi$ is a filter: it clearly satisfies (i),(ii) in Definition \ref{filter}. For (iii), let $p, q \in \xi$ and choose $A$ and $B$ in $\CF$
 such that $p \geq (\af, A, \af)$ and $q \geq (\af, B, \af)$. Since $A \cap B \in \CF$,  we see that $r:=(\af, A \cap B, \af) \in \xi$. Then $r \leq p$ and $r \leq q$. 
\end{proof}
\begin{remark} Let $\af \in \CL$. If $\CF$ is a filter in $\CI_\af$ such that $\CF \cap \CR_\af \neq \emptyset $, then the  filter  $\xi$  in $E(S)$ with largest word $\af$ given in Lemma \ref{word to finite}  is 
\begin{align*}\xi =\{(\emptyset, A, \emptyset): \theta_{\af}(A) \in \CF\} \cup \{(\af, A, \af): A \in \CF\}.
\end{align*} 
If $\CF$ is a filter in $\CI_\af$ such that $\CF \cap \CR_\af = \emptyset $, then 
\begin{align*}\xi = \{(\af, A, \af): A \in \CF\}.
\end{align*}
	\end{remark}

\begin{prop}\label{complete family gives filter}(\cite[Proposition 4.7]{BCM1}) Let $\af \in \CW^{\leq \infty}$ and $\{\CF_n\}_{0 \leq n \leq |\af|}$  be an   admissible family  for $\af$. Define
$$\xi= \bigcup_{n=0}^{|\af|} \bigcup_{A \in \CF_n} \uparrow (\af_{1,n}, A, \af_{1,n}).$$
Then $\xi$ is a filter in $E(S)$. 
\end{prop}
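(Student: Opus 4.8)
The plan is to verify the three defining conditions of a filter from Definition~\ref{filter} directly for the set $\xi$, with the only substantial work concentrated in the directedness condition (iii); conditions (i) and (ii) are essentially formal. Since each $\CF_n$ with $n>0$ is a filter it is nonempty, and at least one $\CF_n$ contributes a generator $(\af_{1,n},A,\af_{1,n})$ with $A\neq\emptyset$. Such a generator is a nonzero idempotent, so $(\af_{1,n},A,\af_{1,n})\not\leq 0$ forces $0\notin\uparrow(\af_{1,n},A,\af_{1,n})$ and hence $0\notin\xi$, while $\xi$ is visibly nonempty, giving (i). For condition (ii) I would simply note that $\xi$ is by construction a union of principal filters $\uparrow(\af_{1,n},A,\af_{1,n})$, and an arbitrary union of up-sets is again an up-set: if $x\in\xi$ with $x\geq (\af_{1,n},A,\af_{1,n})$ and $x\leq y$, then $y\geq (\af_{1,n},A,\af_{1,n})$, so $y\in\xi$.

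The heart of the argument is condition (iii). Given $p,q\in\xi$, I would choose generators so that $p\geq (\af_{1,n},A,\af_{1,n})$ with $A\in\CF_n$ and $q\geq (\af_{1,m},B,\af_{1,m})$ with $B\in\CF_m$, assuming without loss of generality that $n\leq m$. The idea is to descend the level-$n$ datum to level $m$ using \emph{admissibility}: iterating the inclusion $\CF_k\subseteq\{C\in\CI_{\af_{1,k}}:\theta_{\af_{k+1}}(C)\in\CF_{k+1}\}$ from $k=n$ up to $k=m-1$ yields $\theta_{\af_{n+1,m}}(A)\in\CF_m$, its membership in $\CI_{\af_{1,m}}$ being guaranteed by Lemma~\ref{properties of I}(iii). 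Since $\CF_m$ is a filter and both $\theta_{\af_{n+1,m}}(A)$ and $B$ lie in it, the meet $\theta_{\af_{n+1,m}}(A)\cap B$ is again in $\CF_m$, and in particular nonempty. (When $n=m$ the iteration is vacuous and $\theta_{\af_{n+1,m}}(A)=A$, so this case is subsumed.)

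I would then set $z=(\af_{1,m},\theta_{\af_{n+1,m}}(A)\cap B,\af_{1,m})$, which lies in $\uparrow(\af_{1,m},\theta_{\af_{n+1,m}}(A)\cap B,\af_{1,m})\subseteq\xi$. It remains to check $z\leq p$ and $z\leq q$, and here Lemma~\ref{lem:order} does all the work: writing $\af_{1,m}=\af_{1,n}\af_{n+1,m}$ and using $\theta_{\af_{n+1,m}}(A)\cap B\subseteq \theta_{\af_{n+1,m}}(A)$ gives $z\leq (\af_{1,n},A,\af_{1,n})\leq p$, while $\theta_{\af_{n+1,m}}(A)\cap B\subseteq B=\theta_\emptyset(B)$ gives $z\leq (\af_{1,m},B,\af_{1,m})\leq q$. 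This produces the required common lower bound and completes (iii).

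I expect the main obstacle to be exactly the bookkeeping in (iii): correctly passing from level $n$ to level $m$ through the admissibility \emph{inequalities} (rather than the completeness equalities, which are not assumed here) and confirming at each stage that the relevant elements stay in the appropriate ideals $\CI_{\af_{1,k}}$ and remain nonempty. Once the single element $\theta_{\af_{n+1,m}}(A)\cap B\in\CF_m$ is produced, the two order comparisons reduce to direct applications of Lemma~\ref{lem:order}. The infinite case $\af\in\CW^{\infty}$ needs no separate treatment, since any two chosen generators sit at finite levels $n,m$ and the same descent applies.
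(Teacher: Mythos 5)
Your proof is correct, but it is organized differently from the paper's. The paper factors the argument through Lemma~\ref{word to finite}: each single-level union $\xi^n=\bigcup_{A\in\CF_n}\uparrow(\af_{1,n},A,\af_{1,n})$ is already known to be a filter, and admissibility is then used one step at a time to show $\xi^n\subseteq\xi^{n+1}$, so that $\xi$ is the union of a nested chain of filters and hence a filter. You instead verify the filter axioms directly, concentrating everything in directedness: iterating admissibility from level $n$ to level $m$ to get $\theta_{\af_{n+1,m}}(A)\in\CF_m$, taking the meet with $B$ inside the filter $\CF_m$, and exhibiting the explicit common lower bound $(\af_{1,m},\theta_{\af_{n+1,m}}(A)\cap B,\af_{1,m})$, with both comparisons checked via Lemma~\ref{lem:order}. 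The two arguments rest on the same mechanism---admissibility pushes members of $\CF_n$ forward along the word into later filters---but your version is self-contained (it never invokes Lemma~\ref{word to finite}, and in fact subsumes it as the one-level case), while the paper's is shorter because it reuses that lemma together with the general fact that a union of a nested chain of filters is a filter. One cosmetic remark: once $\theta_{\af_{n+1,m}}(A)\in\CF_m$ is known, its membership in $\CI_{\af_{1,m}}$ is automatic because $\CF_m\subseteq\CI_{\af_{1,m}}$, so the appeal to Lemma~\ref{properties of I}(iii) is harmless but unnecessary. Your handling of conditions (i) and (ii), of a possibly empty $\CF_0$, and of the infinite-word case is all fine.
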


\begin{proof} By Lemma \ref{word to finite}, for each $n \geq 0$, 
$$\xi^n:=\bigcup_{A \in \CF_n} \uparrow(\af_{1,n}, A, \af_{1,n}) $$ is a filter in E(S)
 (except if $\CF_0=\emptyset$). We claim that $\xi^n \subseteq \xi^{n+1}$ for each $0 \leq n < |\af|$. Let $p \in \xi^n$. Then $(\af_{1,n}, A, \af_{1,n}) \leq p$ for some $A \in \CF_n$.
Since the family is admissible, we have $\theta_{\af_{n+1}}(A) \in \CF_{n+1}$, and hence, $(\af_{1,n+1}, \theta_{\af_{n+1}}(A), \af_{1,n+1}) \in \xi^{n+1}$.
Since $\xi^{n+1}$ is filter and $ (\af_{1,n+1}, \theta_{\af_{n+1}}(A), \af_{1,n+1})  \leq (\af_{1,n}, A, \af_{1,n}) \leq p$, we see that $p \in \xi_{n+1}$. This claim says that the filters $\xi^n$ are nested, from where we conclude that their union is a filter.  
\end{proof}

\begin{cor}\label{complete to finite} Let $\af \in \CW^{*}$ and $\{\CF_n\}_{0 \leq n \leq |\af|}$  be a complete family  for $\af$. If  $\xi$ is  the filter given by Proposition \ref{complete family gives filter}, then 
\begin{align*} \xi = \bigcup_{A \in \CF_{|\af|}} \uparrow(\af, A, \af).
\end{align*}

\end{cor}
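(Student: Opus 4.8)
The plan is to read the corollary directly off the internal structure of the filter $\xi$ exhibited in the proof of Proposition~\ref{complete family gives filter}. Writing $\xi^n := \bigcup_{A \in \CF_n} \uparrow(\af_{1,n}, A, \af_{1,n})$ for $0 \leq n \leq |\af|$, we have by definition $\xi = \bigcup_{n=0}^{|\af|} \xi^n$, while the right-hand side of the claimed identity is exactly $\xi^{|\af|}$. So everything reduces to showing $\xi = \xi^{|\af|}$.

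The key point, already isolated in the proof of Proposition~\ref{complete family gives filter}, is that the family $\{\xi^n\}$ is nested: admissibility of $\{\CF_n\}$ yields $\xi^n \subseteq \xi^{n+1}$ for every $0 \leq n < |\af|$. Since a complete family is in particular admissible and $\af$ is finite with top index $|\af|$, this produces a finite increasing chain $\xi^0 \subseteq \xi^1 \subseteq \cdots \subseteq \xi^{|\af|}$, whose union is its largest term $\xi^{|\af|}$. Hence $\xi = \xi^{|\af|} = \bigcup_{A \in \CF_{|\af|}} \uparrow(\af, A, \af)$, as required.

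Alternatively, I would argue the nontrivial inclusion $\xi \subseteq \xi^{|\af|}$ by hand, which makes the role of completeness transparent: given $p \in \xi^n$, pick $A \in \CF_n$ with $(\af_{1,n}, A, \af_{1,n}) \leq p$. By the descent characterization of a complete family (Remark~\ref{rmk:complete.family}(2)) we have $\theta_{\af_{n+1,|\af|}}(A) \in \CF_{|\af|}$, where $\theta_{\af_{n+1,|\af|}}(A) \in \CI_\af$ by Lemma~\ref{properties of I}(iii); then Lemma~\ref{lem:order} gives $(\af, \theta_{\af_{n+1,|\af|}}(A), \af) \leq (\af_{1,n}, A, \af_{1,n}) \leq p$. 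Since $(\af, \theta_{\af_{n+1,|\af|}}(A), \af) \in \xi^{|\af|}$ and $\xi^{|\af|}$ is upward closed, we conclude $p \in \xi^{|\af|}$. The reverse inclusion $\xi^{|\af|} \subseteq \xi$ is immediate from the definition of $\xi$.

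There is essentially no obstacle here; the single point requiring a word of care is the degenerate case $\CF_0 = \emptyset$ (permitted when $|\af| > 0$), in which $\xi^0$ is empty and hence trivially contained in $\xi^{|\af|}$, so it does not disturb the collapse of the union. The content of the corollary is thus pure bookkeeping layered on top of Proposition~\ref{complete family gives filter}.
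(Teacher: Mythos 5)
Your proposal is correct and is essentially the paper's own argument: your ``by hand'' alternative (pushing $A \in \CF_n$ down to $\theta_{\af_{n+1,|\af|}}(A) \in \CF_{|\af|}$ via completeness, then applying Lemma~\ref{lem:order} and upward closure of the right-hand side) is exactly the proof given in the paper. Your first formulation, collapsing the finite nested chain $\xi^0 \subseteq \cdots \subseteq \xi^{|\af|}$ already established in the proof of Proposition~\ref{complete family gives filter}, is merely a repackaging of the same idea.
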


\begin{proof} Put $\eta:=\bigcup_{A \in \CF_{|\af|}} \uparrow(\af, A, \af) $. It then is   clear that $\eta \subseteq \xi$. If $p \in \xi$, then $(\af_{1,n}, A, \af_{1,n}) \leq p$ for some $0 \leq n \leq |\af|$ and $A \in \CF_n$. Since $(\af, \theta_{\af_{n+1, |\af|}}(A), \af) \leq (\af_{1,n}, A, \af_{1,n}) \leq p$, we have  $p \in \eta$. Thus, $\xi=\eta$.
\end{proof}
 In Proposition \ref{complete family gives filter}, we construct a filter in $E(S)$ from a complete family. We now go in the opposite direction.

\begin{prop}\label{filter gives complete family}
Let $\xi$ be a filter in $E(S)$ and $\alpha$ its associated word. For each $0 \leq n \leq |\af|$ (understanding that $0 \leq n \leq |\af|$ means $0 \leq n < \infty$ when $\af \in \CW^\infty$) define
$$\CF_n=\{A \in \CI_{\af_{1,n}}: (\af_{1,n}, A, \af_{1,n}) \in \xi\},$$
then $\{\CF_n\}_{0\leq n\leq |\alpha|}$ is a complete family for $\af$.
\end{prop}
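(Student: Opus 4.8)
The plan is to verify the two requirements in the definition of a complete family separately: first, that each $\CF_n$ with $n>0$ is a filter in $\CI_{\af_{1,n}}$ and that $\CF_0$ is either empty or a filter in $\CB$; second, that the equality $\CF_n=\{A\in\CI_{\af_{1,n}}:\theta_{\af_{n+1}}(A)\in\CF_{n+1}\}$ holds for all $0\leq n<|\af|$. Throughout I would translate the membership $A\in\CF_n$ into the single statement $(\af_{1,n},A,\af_{1,n})\in\xi$ and reason entirely inside the idempotent semilattice $E(S)$, using Lemma~\ref{lem:order} to compare idempotents and the product formula to compute meets.

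For the filter axioms, the conditions $\emptyset\notin\CF_n$ and upward closure are routine: the former holds because elements of $\xi$ are nonzero (so their middle entries are nonempty), and the latter follows because $A\subseteq B$ in $\CI_{\af_{1,n}}$ gives $(\af_{1,n},A,\af_{1,n})\leq(\af_{1,n},B,\af_{1,n})$ by Lemma~\ref{lem:order}, after which upward closure of $\xi$ applies. Downward directedness follows from the semilattice form of filter axiom (iii) (the remark after Definition~\ref{filter}): for $A,B\in\CF_n$ the product $(\af_{1,n},A,\af_{1,n})(\af_{1,n},B,\af_{1,n})=(\af_{1,n},A\cap B,\af_{1,n})$ is their meet, it lies in $\xi$ and is therefore nonzero, so $A\cap B\in\CF_n$. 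The one genuinely non-trivial point is \emph{non-emptiness} of $\CF_n$ for $0<n\leq|\af|$, and this is exactly where the argument must depart from \cite{BCM1}, which invoked the unit $r(\af)$ of $\CI_{r(\af)}$. The plan is to start from an element $(\bt,B,\bt)\in\xi$ with $|\bt|\geq n$, whose existence is furnished by the construction of the associated word in Lemma~\ref{lem:assoc.word}; since $\bt$ is an initial segment of $\af$, I write $\bt=\af_{1,n}\bt'$. Unfolding $B\in\CI_\bt$ through Definition~\ref{def of I} yields $B\subseteq\theta_{\af_{2,n}\bt'}(D)$ for some $D\in\CI_{\af_1}$; setting $A:=\theta_{\af_{2,n}}(D)$, Lemma~\ref{properties of I}(iii) gives $A\in\CI_{\af_{1,n}}$, while $B\subseteq\theta_{\bt'}(A)$ forces $A\neq\emptyset$. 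Then $(\bt,B,\bt)\leq(\af_{1,n},A,\af_{1,n})$ by Lemma~\ref{lem:order}, so upward closure of $\xi$ puts $A$ in $\CF_n$. For $\CF_0$ I would note that it may legitimately be empty when $|\af|>0$, and that when $|\af|=0$ it is nonempty simply because $\xi$ is nonempty; in either case the filter axioms are checked as above with $\af_{1,0}=\emptyset$ and $\CI_\emptyset=\CB$.

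For completeness, fix $0\leq n<|\af|$. The inclusion $\supseteq$ is the easy direction: if $\theta_{\af_{n+1}}(A)\in\CF_{n+1}$ then $(\af_{1,n+1},\theta_{\af_{n+1}}(A),\af_{1,n+1})\in\xi$, and since this idempotent is $\leq(\af_{1,n},A,\af_{1,n})$ by Lemma~\ref{lem:order} (the condition $\theta_{\af_{n+1}}(A)\subseteq\theta_{\af_{n+1}}(A)$ being trivial), upward closure gives $(\af_{1,n},A,\af_{1,n})\in\xi$, i.e.\ $A\in\CF_n$. The reverse inclusion $\subseteq$ is the main obstacle, because it must pass from the shorter word $\af_{1,n}$ to the longer word $\af_{1,n+1}$, a move that cannot come from upward closure alone. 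Here the plan is to exploit non-emptiness of $\CF_{n+1}$ (available since $n+1\leq|\af|$): pick $B\in\CF_{n+1}$, form the meet in $\xi$ of $(\af_{1,n},A,\af_{1,n})$ and $(\af_{1,n+1},B,\af_{1,n+1})$, and compute via the product formula that it equals $(\af_{1,n+1},\theta_{\af_{n+1}}(A)\cap B,\af_{1,n+1})$. As this lies in $\xi$ it is nonzero, so $\theta_{\af_{n+1}}(A)\cap B\in\CF_{n+1}$; since $\theta_{\af_{n+1}}(A)\in\CI_{\af_{1,n+1}}$ by Lemma~\ref{properties of I}(iii) and $\CF_{n+1}$ is upward closed, I conclude $\theta_{\af_{n+1}}(A)\in\CF_{n+1}$, which is precisely the assertion that $A$ lies in the right-hand set.

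The crux of the whole argument is therefore twofold, and both points trace back to the absence of a unit in $\CI_\af$: establishing non-emptiness of the $\CF_n$ via the explicit truncation $A=\theta_{\af_{2,n}}(D)$, and then using that non-emptiness to drive the hard inclusion $\subseteq$ of completeness through a single meet computation in $E(S)$. Once these are in place, the logical order is to prove the filter properties first (so that $\CF_{n+1}$ is known to be a filter) and only afterwards the completeness equality, since the latter appeals to upward closure within $\CF_{n+1}$.
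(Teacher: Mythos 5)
Your proposal is correct and follows essentially the same route as the paper's proof: non-emptiness of $\CF_n$ via the truncation $\theta_{\af_{2,n}}(D)$ of an element lying deeper in $\xi$ (the paper's $(\af_{1,n},\theta_{\af_{2,n}}(B),\af_{1,n})$), and the hard inclusion of completeness via the meet $(\af_{1,n},A,\af_{1,n})(\af_{1,n+1},C,\af_{1,n+1})=(\af_{1,n+1},\theta_{\af_{n+1}}(A)\cap C,\af_{1,n+1})\in\xi$ followed by upward closure in $\CF_{n+1}$. The only cosmetic difference is that you treat $n=0$ inside the unified argument (which works, since $\theta_{\af_1}(A)\in\CR_{\af_1}\subseteq\CI_{\af_1}$), whereas the paper handles it separately via the condition $\CF_1\cap\CR_{\af_1}\neq\emptyset$.
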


\begin{proof}
If $|\af|=0$, then it follows that $\CF_0$ is a filter by Lemma~\ref{lem:order}, and hence $\{\CF_n\}_{0\leq n\leq |\alpha|}$ is a complete family for $\af$.

Suppose now that $|\af|>0$ and let $0 < n \leq |\af|$. 
  It is easy to see that $\CF_n$ is a filter in $\CI_{\af_{1,n}}$  if $\CF_n$ is nonempty. We claim that $\CF_n \neq \emptyset$. We can choose $m \geq n$ such that there is $p=(\af_{1,m}, A, \af_{1,m}) \in \xi$ for some $A \in \CI_{\af_{1,m}}$ (such $m$ always exits by the construction of the associated word in the proof of Lemma~\ref{lem:assoc.word}). Since $A \in \CI_{\af_{1,m}}$, $A \subseteq \theta_{\af_{2,m}}(B)$ for some $B \in \CI_{\af_1}$. We then see that $p=(\af_{1,m}, A, \af_{1,m}) \leq (\af_{1,n},   \theta_{\af_{2,n}}(B) , \af_{1,n}) $ since $A \subseteq \theta_{\af_{n+1,m}}(\theta_{\af_{2,n}}(B))=\theta_{\af_{2,m}}(B)$. Thus, we have   $(\af_{1,n},   \theta_{\af_{2,n}}(B) , \af_{1,n}) \in \xi$ since $\xi$ is a filter. This shows that $\theta_{\af_{2,n}}(B) \in \CF_n$.

 It remains to show that the family $\{\CF_n\}_{0\leq n\leq |\alpha|}$ is complete for $\af$. 
First, fix $0<n<|\af|$ and put $$\CG=\{A \in \CI_{\af_{1,n}}: \theta_{\af_{n+1}}(A) \in \CF_{n+1}\}.$$ Let $A \in \CF_n$. Note first that $A \subseteq \theta_{\af_{2,n}}(B)$ for some $B \in \CI_{\af_1}$. 
 Choose $C \in \CF_{n+1} (\neq \emptyset)$. Since $p=(\af_{1,n}, A, \af_{1,n})$ and $q=(\af_{1,n+1}, C, \af_{1,n+1})$ belong to the filter $\xi$, $pq=(\af_{1,n+1}, \theta_{\af_{n+1}}(A)\cap C, \af_{1,n+1}) \in \xi$. 
This says that $ \emptyset \neq \theta_{\af_{n+1}}(A)\cap C \in \CF_{n+1}$. 
We also have  $\theta_{\af_{n+1}}(A) \in \CI_{\af_{1,n+1}}$ since 
 $\theta_{\af_{n+1}}(A) \subseteq \theta_{\af_{2,n+1}}(B)$ for $B \in \CI_{\af_1}$. 
It thus follows that  $\theta_{\af_{n+1}}(A)  \in \CF_{n+1}$
since $\CF_{n+1}$ is a filter in $\CI_{\af_{1,n+1}}$. Therefore, $A \in \CG$. 
On the other hand, let $A \in \CG$ and observe that $p=(\af_{1,n+1}, \theta_{\af_{n+1}}(A), \af_{1,n+1}) \in \xi$. Clearly, $p \leq q=(\af_{1,n}, A, \af_{1,n})$
Thus, $q \in \xi$ since $\xi$ is a filter. So, $A \in \CF_n$. 

For $n=0$, we observe that $\CF_0 \neq \emptyset$ if and only if  $\CF_1 \cap \CR_{\af_1} \neq \emptyset$. In fact, if $\CF_1 \cap \CR_{\af_1} \neq \emptyset$, then $(\af_1, \theta_{\af_1}(A), \af_1) \in \xi$ for some $A \in \CB$. Since $(\af_1, \theta_{\af_1}(A), \af_1) \leq (\emptyset, A, \emptyset)$, we have $ (\emptyset, A, \emptyset) \in \xi$, and hence, $A \in \CF_0$. On the other hand, if $A\in\CF_0$, $\theta_{\af_1}(A)\in\CF_1\cap \CR_{\af_1}$.

Using the same arguments, one also can show that if $\CF_1 \cap \CR_{\af_1}\neq \emptyset$, then 
$\CF_0=\{A \in \CB: \theta_{\af_1}(A) \in \CF_1\}.$ And if $\CF_1 \cap \CR_{\af_1} = \emptyset$, then $\CF_0=\emptyset=\{A \in \CB: \theta_{\af_1}(A) \in \CF_1\}$.

Hence, $\{\CF_n\}_{0\leq n\leq|\af|}$ is a complete family for $\af$. 
\end{proof}

Combining the above results, we have the main result of this section. 

\begin{thm}\label{filter-bijective-pair}(\cite[Theorem 4.13]{BCM1}) Let $(\CB, \CL,\theta, \CI_\af)$ be a generalized Boolean dynamical system and $S$ be its associated inverse semigroup. Then there is a bijective correspondence between filters in $E(S)$ and pairs $(\af, \{ \CF_n\}_{0 \leq n \leq |\af|})$, where $\af \in \CW^{\leq \infty}$ and $\{\CF_n\}_{0 \leq n \leq |\af|}$ is a complete family for $\af$.
\end{thm}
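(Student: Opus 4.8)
The plan is to assemble the bijection directly from the machinery already in place and then verify that the two resulting assignments are mutually inverse. In one direction, to a filter $\xi$ in $E(S)$ I would associate the pair $(\af, \{\CF_n\})$, where $\af \in \CW^{\leq\infty}$ is the word furnished by Lemma~\ref{lem:assoc.word} and $\CF_n = \{A \in \CI_{\af_{1,n}} : (\af_{1,n}, A, \af_{1,n}) \in \xi\}$, which is a complete family for $\af$ by Proposition~\ref{filter gives complete family}. In the other direction, to a pair $(\af, \{\CF_n\})$ with $\{\CF_n\}$ complete I would associate the filter $\xi = \bigcup_{n} \bigcup_{A \in \CF_n} \uparrow(\af_{1,n}, A, \af_{1,n})$; this is legitimate because a complete family is in particular admissible (equality gives inclusion in the defining relation), so Proposition~\ref{complete family gives filter} guarantees $\xi$ is a filter. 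It then remains only to check that the two round trips are identities.

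For the round trip beginning with a filter $\xi$, I would show the reconstructed filter $\xi'$ equals $\xi$ by a double inclusion. The inclusion $\xi' \subseteq \xi$ is immediate from upward closure: any $p \in \xi'$ dominates some $(\af_{1,n}, A, \af_{1,n})$ with $A \in \CF_n$, and such a generator lies in $\xi$ by the very definition of $\CF_n$. For $\xi \subseteq \xi'$, I would take $(\bt, B, \bt) \in \xi$, use Lemma~\ref{lem:assoc.word} to write $\bt = \af_{1,n}$, observe $B \in \CF_n$ by definition, and conclude $(\bt,B,\bt) \in \xi'$.

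The more delicate round trip begins with a pair $(\af, \{\CF_n\})$. First I would verify that the word associated with the constructed filter $\xi$ is again $\af$: since each $\CF_n$ with $n \geq 1$ is a nonempty filter (so contains a nonempty $A$, giving a genuine generator $(\af_{1,n},A,\af_{1,n})\in\xi$), the filter $\xi$ contains elements of every length up to $|\af|$, while Lemma~\ref{lem:order} shows every element of $\xi$ has word an initial segment of $\af$; together these pin down the associated word as $\af$. Next I must recover the family, i.e.\ show $\CF'_n := \{A : (\af_{1,n}, A, \af_{1,n}) \in \xi\}$ coincides with $\CF_n$. The inclusion $\CF_n \subseteq \CF'_n$ is clear. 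For the reverse, in the finite case I would invoke Corollary~\ref{complete to finite} to write $\xi = \bigcup_{C \in \CF_{|\af|}} \uparrow(\af, C, \af)$, so that $A \in \CF'_n$ forces $C \subseteq \theta_{\af_{n+1,|\af|}}(A)$ for some $C \in \CF_{|\af|}$; using Lemma~\ref{properties of I}(iii) to see $\theta_{\af_{n+1,|\af|}}(A)\in\CI_\af$, upward closure gives $\theta_{\af_{n+1,|\af|}}(A) \in \CF_{|\af|}$, and the completeness relation of Remark~\ref{rmk:complete.family}(2) then yields $A \in \CF_n$. The infinite case is analogous, choosing a level $m > n$ at which a generator of $(\af_{1,n},A,\af_{1,n})$ lives (comparability forces $m \geq n$) and appealing to Remark~\ref{rmk:complete.family}(1).

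I expect the recovery of the complete family to be the crux, and it is precisely the point where \emph{completeness}, rather than mere admissibility, is indispensable: completeness is what makes the assignment injective on families and what makes the multi-step relations of Remark~\ref{rmk:complete.family} available to close the reverse inclusion $\CF'_n \subseteq \CF_n$. The finite-versus-infinite dichotomy and the boundary case $\CF_0 = \emptyset$ (where one must also confirm $\CF'_0 = \emptyset$, arguing that a putative $(\emptyset,A,\emptyset)\in\xi$ would be generated at some positive level and force $A\in\CF_0$ by completeness) are the fiddly points to treat with care, but they follow the same pattern once the multi-step completeness relations are in hand.
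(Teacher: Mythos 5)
Your proposal is correct and follows essentially the same route as the paper: both directions of the correspondence are given by Propositions~\ref{filter gives complete family} and~\ref{complete family gives filter}, and the crux in both arguments is recovering the complete family via the multi-step completeness relation together with upward closure of the $\CF_k$ (using Lemma~\ref{properties of I}(iii) and Lemma~\ref{lem:order}). The only cosmetic difference is that the paper treats finite and infinite words uniformly (finding a generator $(\af_{1,k},C,\af_{1,k})\leq(\af_{1,m},B,\af_{1,m})$ with $k\geq m$ directly from the definition of the constructed filter), whereas you route the finite case through Corollary~\ref{complete to finite}; your extra attention to the word identification and the $\CF_0=\emptyset$ edge case is consistent with, and slightly more explicit than, the paper's argument.
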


\begin{proof} Let $\xi$ be a filter in $E(S)$ with the associated word $\af \in \CW^{\leq \infty}$ and consider $(\af, \{\CF_n\}_{0 \leq n \leq |\af|})$ as in Proposition \ref{filter gives complete family}. Let $\eta=\bigcup_{n=0}^{|\af|} \bigcup_{A \in \CF_n} \uparrow (\af_{1,n}, A, \af_{1,n}) $ be the filter given by Proposition \ref{complete family gives filter} from $(\af, \{\CF_n\}_{0 \leq n \leq |\af|})$.  We claim that $\xi=\eta$.  If $p \in \xi$, then $p=(\af_{1,n}, A, \af_{1,n})$ for some $n \geq 0$ and some $A \in \CI_{\af_{1,n}}$ by Lemma \ref{lem:assoc.word}. Then clearly, $p \in \eta$ by the definition of $\eta$. So,  $\xi \subseteq \eta$. For the reverse inclusion, let $p \in \eta$ and choose $n \geq 0$ and $A \in \CF_n$ such that $(\af_{1,n}, A, \af_{1,n}) \leq p$. Since $(\af_{1,n}, A, \af_{1,n}) \in \xi$, we have $p \in \xi$. Thus, $\eta \subseteq \xi$.

Conversely, let $\af \in \CW^{\leq \infty}$ and $\{\CF_n\}_{0 \leq n \leq |\af|}$ be a complete family for $\af$, consider $\xi$ constructed from them as in Proposition \ref{complete family gives filter} and let $(\bt, \{\CG_n\}_{0 \leq n \leq |\af|})$ be given from $\xi$ by Proposition \ref{filter gives complete family}. We show that $(\af, \{\CF_n\}_{0 \leq n \leq |\af|})=(\bt, \{\CG_n\}_{0 \leq n \leq |\af|})$. By construction we have $\af=\bt$.  Fix $m \geq 0$ and let $A \in \CF_m$. Then $(\af_{1,m}, A, \af_{1,m}) \in \xi$, hence $A \in \CG_m$.

On the other hand, if $B \in \CG_m$, then $(\af_{1,m}, B, \af_{1,m}) \in \xi$. So, there exists $k \geq m$ and $C \in \CF_k$ such that $(\af_{1,k}, C, \af_{1,k}) \leq (\af_{1,m}, B, \af_{1,m})$. This says that $C \subseteq \theta_{\af_{m+1,k}}(B)$, and hence, $ \theta_{\af_{m+1,k}}(B) \in \CF_k$.
Since $\{\CF_n\}_{0 \leq n \leq |\af|}$ is complete for $\af$, we see that $B \in \CF_m$. 
\end{proof}

Filters in $E(S)$ are classified in two types: filters are of {\it finite type} if they are associated with pairs $(\af, \{\CF_n\}_{0 \leq n \leq |\af|})$ for which $|\af|< \infty$, and of {\it infinite type} otherwise.

A filter $\xi$ in $E(S)$ with associated $\af \in \CW^{\leq \infty}$ is sometimes denoted by $\xi^\af$ to stress $\af$; in addition, the filters in the complete family associated with $\xi^\af$ will be  denoted by  $\xi^\af_n$ (or simply $\xi_n$). Specifically, 
$$\xi^\af_n=\{A \in \CI_{\af_{1,n}}: (\af_{1,n}, A, \af_{1,n}) \in\xi^\af\}$$
and the family $\{\xi_n^\af\}_{0 \leq n \leq |\af|}$ satisfies 
$$\xi^\af_n =\{A \in \CI_{\af_{1,n}}: \theta_{\af_{n+1,m}}(A) \in \xi_m^\af\}$$
for all $0 \leq n < m \leq |\af|$.

\begin{notations} For future reference, we denote by $\mathsf{F}$ the set of all filters in $E(S)$ and by  $\mathsf{F}_{\af}$ the  set of all filters  in $E(S)$ associated with  $\af \in \CW^{\leq \infty}$.
\end{notations}

Lastly, we remark in below that a filter $\xi$  of finite type is completely determined by a finite word $\af \in \CW^*$ and a filter $\CF$ in $\CI_{\af}$. 

\begin{remark} There is a bijective correspondence between filters in $E(S)$ of finite type and pairs $(\af, \CF)$, where $\af$ is a finite word and $\CF$ is a filter in $\CI_\af$. 
\end{remark}

\begin{proof}
If $\xi=\xi^\af$ is a filter of finite type, we then have a filter $\CF_{|\af|}=\{A \in \CI_\af: (\af, A, \af) \in \xi\}$ in $\CI_\af$ from Proposition \ref{filter gives complete family}. Let $\eta=\bigcup_{A \in \CF_{|\af|}} \uparrow(\af, A, \af)$ be the filter given by Corollary \ref{complete to finite}. Then clearly, $\eta \subseteq \xi$. On the other hand, let $p=(\bt,B,\bt) \in \xi$ and choose $A \in \CI_\af$ such that $q=(\af, A, \af) \in \xi$. By Lemma \ref{lem:assoc.word},  $\bt$ must be an initial segment of $\af$, say $\af=\bt\af'$. Since $pq=(\af, A \cap \theta_{\af'}(B), \af) \in \xi$, we have $A \cap \theta_{\af'}(B) \in \CF_{|\af|}$. Thus, $p \in \uparrow(\af, A \cap \theta_{\af'}(B), \af) \subseteq \eta$.

Give a pair $(\af, \CF)$  such that $\af$ is a finite word and $\CF$ is a filter in $\CI_\af$, let $\xi=\bigcup_{A \in \CF} \uparrow(\af, A, \af)$ be the filter given by Lemma \ref{word to finite}. Consider  $(\af, \{\CF_n\}_{0 \leq n \leq |\af|})$ as in Proposition \ref{filter gives complete family}. Then, clearly, $\CF \subseteq \CF_{|\af|}$.
If $A \in \CF_{|\af|}$, then $(\af, A, \af) \in \xi$. So, $(\af, B, \af) \leq (\af, A, \af)$ for some $B \in \CF$. Hence, $A \in \CF$. Thus, $\CF=\CF_{|\af|}$.
\end{proof}

\subsection{Ultrafilters in \texorpdfstring{$E(S)$}{E(S)}} We next describe the ultrafilters in $E(S)$. We start with an easy, but useful proposition. 
\begin{prop}\label{prop 1}(\cite[Proposition 5.1]{BCM1}) Let $\xi^\af, \eta^\bt$ be filters in $E(S)$. Then
\begin{enumerate} 
                  
\item If  $\af$ is a finite word, then  $\xi^\af \subseteq \eta^\bt$  if and only if $\af$ is a beginning of (or equal to) $\bt$ and $\xi_{|\af|} \subseteq \eta_{|\af|}$. 
\end{enumerate}
\end{prop}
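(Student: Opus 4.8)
The plan is to prove both implications of the biconditional, working with the characterization of filters via complete families established in Theorem~\ref{filter-bijective-pair} together with the order description of Lemma~\ref{lem:order}. Throughout, I will use the notation $\xi_n=\xi^\af_n$ and $\eta_n=\eta^\bt_n$ for the filters in the complete families, so that $(\gm,A,\gm)\in\xi^\af$ precisely when $\gm$ is an initial segment of $\af$ and $A\in\xi_{|\gm|}$.

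For the forward direction, suppose $\xi^\af\subseteq\eta^\bt$ with $\af$ finite. First I would show $\af$ is a beginning of $\bt$: pick any $A\in\xi_{|\af|}$ (nonempty since $\xi_{|\af|}$ is a filter in $\CI_\af$), so that $(\af,A,\af)\in\xi^\af\subseteq\eta^\bt$; by Lemma~\ref{lem:assoc.word} applied to $\eta^\bt$, the word $\af$ must be an initial segment of $\bt$, which is exactly the claim. Next, for the inclusion $\xi_{|\af|}\subseteq\eta_{|\af|}$, take $A\in\xi_{|\af|}$, whence $(\af,A,\af)\in\xi^\af\subseteq\eta^\bt$; since $\af$ is a beginning of $\bt$, this membership means precisely $A\in\eta_{|\af|}$ by the defining formula for the complete family of $\eta^\bt$ (Proposition~\ref{filter gives complete family}).

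For the reverse direction, assume $\af$ is a beginning of $\bt$, say $\bt=\af\bt'$, and $\xi_{|\af|}\subseteq\eta_{|\af|}$. I would take an arbitrary $p\in\xi^\af$ and show $p\in\eta^\bt$. By Lemma~\ref{lem:assoc.word} and the structure of $\xi^\af$, we may write $p=(\af_{1,n},A,\af_{1,n})$ with $0\le n\le|\af|$ and $A\in\xi_n$. Since $\{\xi_m^\af\}$ is a complete family, from $A\in\xi_n$ I get $\theta_{\af_{n+1,|\af|}}(A)\in\xi_{|\af|}\subseteq\eta_{|\af|}$, so $(\af,\theta_{\af_{n+1,|\af|}}(A),\af)\in\eta^\bt$ (using that $\af=\bt_{1,|\af|}$ is an initial segment of $\bt$). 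Finally, Lemma~\ref{lem:order} gives $(\af,\theta_{\af_{n+1,|\af|}}(A),\af)\leq(\af_{1,n},A,\af_{1,n})=p$, and since $\eta^\bt$ is upward closed, $p\in\eta^\bt$.

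I do not expect a serious obstacle here; the content is bookkeeping with the complete-family formulas and the order relation. The one point requiring minor care is the reverse direction when $n<|\af|<|\bt|$: I must make sure the element witnessing membership, namely $\theta_{\af_{n+1,|\af|}}(A)$, indeed lands in $\eta_{|\af|}$ and that $\af$ (as opposed to the longer $\bt$) is a legitimate initial segment of $\bt$ so that $\eta_{|\af|}$ is well-defined in the complete family of $\eta^\bt$. This is guaranteed by the hypothesis that $\af$ is a beginning of $\bt$, so the index $|\af|$ satisfies $0\le|\af|\le|\bt|$ and the term $\eta_{|\af|}$ makes sense.
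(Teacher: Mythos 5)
Your proof is correct and follows essentially the same route as the paper's (much terser) argument: the forward direction is exactly the paper's appeal to Lemma~\ref{lem:assoc.word} and the defining formula $\xi_n=\{A:(\af_{1,n},A,\af_{1,n})\in\xi^\af\}$, while your reverse direction re-derives inline, via completeness and Lemma~\ref{lem:order}, the identity $\xi^\af=\bigcup_{A\in\xi_{|\af|}}\uparrow(\af,A,\af)$ that the paper instead cites from Corollary~\ref{complete to finite}. No gaps.
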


\begin{proof}(1) Since $\xi_n=\{A \in \CI_{\af_{1,n}}: (\af_{1,n}, A, \af_{1,n}) \in \xi^\af\}$ and $\eta_n=\{A \in \CI_{\bt_{1,n}}: (\bt_{1,n}, A, \bt_{1,n}) \in \eta^\bt\}$, the results is clear.

(2) It is also clear since  $\xi^\af= \bigcup_{A \in \xi_{|\af|}} \uparrow (\af, A, \af)$.
\end{proof}

\begin{prop}(\cite[Proposition 5.4]{BCM1}) Let $\xi^\af $ be a filter of finite type in $E(S)$. Then $\xi^\af$ is an ultrafilter if and only if  $\xi_{|\af|} $ is an ultrafilter in $\CI_\af$  and for each $\bt \in \CL$, there exists $A \in  \xi_{|\af|}$ such that $\theta_\bt(A)=\emptyset$. 
\end{prop}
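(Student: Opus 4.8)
The plan is to reduce everything to the comparison criterion for finite-type filters in Proposition~\ref{prop 1}(1), which says that for a finite word $\af$ one has $\xi^\af \subseteq \eta^\bt$ precisely when $\af$ is a beginning of $\bt$ and $\xi_{|\af|} \subseteq \eta_{|\af|}$. Since an ultrafilter is exactly a maximal filter, I read off from this criterion the only two ways a filter could properly enlarge $\xi^\af$: either keep the word $\af$ and strictly enlarge the top filter $\xi_{|\af|}$ inside $\CI_\af$, or pass to a strictly longer word. Conditions (a) and (b) are precisely the obstructions to these two moves, so I would prove the equivalence by showing each condition rules out the corresponding enlargement, and conversely that the failure of either condition produces a proper enlargement.

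For the ``only if'' direction I argue contrapositively. If $\xi_{|\af|}$ is not an ultrafilter of $\CI_\af$, choose a filter $\CG$ in $\CI_\af$ with $\xi_{|\af|}\subsetneq \CG$; feeding $(\af,\CG)$ into Lemma~\ref{word to finite} yields a finite-type filter $\eta^\af$ whose top filter is $\CG$, and Proposition~\ref{prop 1}(1) gives $\xi^\af\subsetneq\eta^\af$, so $\xi^\af$ is not maximal. If instead (b) fails, there is $\gm\in\CL$ with $\theta_\gm(A)\neq\emptyset$ for every $A\in\xi_{|\af|}$. Here I push $\xi_{|\af|}$ forward: let $\CG$ be the filter in $\CI_{\af\gm}$ generated by $\{\theta_\gm(A):A\in\xi_{|\af|}\}$. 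This set lies in $\CI_{\af\gm}$ by Lemma~\ref{properties of I}(iii), and since $\theta_\gm$ is a Boolean homomorphism we have $\theta_\gm(A)\cap\theta_\gm(A')=\theta_\gm(A\cap A')$, which is nonempty by the failure of (b) together with the meet-closedness of $\xi_{|\af|}$; this finite-intersection property makes $\CG$ a genuine (proper) filter. Lemma~\ref{word to finite} then produces $\eta^{\af\gm}$, and completeness of its associated family gives $\eta_{|\af|}=\{A\in\CI_\af:\theta_\gm(A)\in\CG\}\supseteq\xi_{|\af|}$, so Proposition~\ref{prop 1}(1) yields $\xi^\af\subsetneq\eta^{\af\gm}$ (proper since the words differ), again contradicting maximality.

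For the ``if'' direction, assume (a) and (b) and let $\eta$ be any filter with $\xi^\af\subseteq\eta$; by Lemma~\ref{lem:assoc.word} it has an associated word $\bt$. Proposition~\ref{prop 1}(1) gives that $\af$ is a beginning of $\bt$ and $\xi_{|\af|}\subseteq\eta_{|\af|}$, where $\eta_{|\af|}$ is a filter in $\CI_{\bt_{1,|\af|}}=\CI_\af$; since $\xi_{|\af|}$ is an ultrafilter of $\CI_\af$ by (a), we get $\xi_{|\af|}=\eta_{|\af|}$. It remains to exclude $|\bt|>|\af|$. If $\bt$ were strictly longer, set $\gm=\bt_{|\af|+1}$; then $\eta_{|\af|+1}$ is a nonempty filter in $\CI_{\af\gm}$ and, by completeness of the family associated with $\eta$, $\eta_{|\af|}=\{A\in\CI_\af:\theta_\gm(A)\in\eta_{|\af|+1}\}$. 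Now I invoke (b) to pick $A\in\xi_{|\af|}=\eta_{|\af|}$ with $\theta_\gm(A)=\emptyset$; then $\emptyset=\theta_\gm(A)\in\eta_{|\af|+1}$, which is impossible for a filter. Hence $\bt=\af$, and by the bijective correspondence of Theorem~\ref{filter-bijective-pair} between finite-type filters and pairs, $\eta=\xi^\af$, so $\xi^\af$ is maximal.

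The step I expect to be the main obstacle is the word-extension argument, namely constructing the push-forward filter $\CG$ in $\CI_{\af\gm}$ and verifying that it is a proper filter. Everything hinges on $\theta_\gm$ preserving finite meets (it is an action, hence a Boolean homomorphism) together with the negation of (b) to guarantee that all the relevant intersections are nonempty. I would also be careful that, unlike the labeled-space ideals $\CI_{r(\af)}$ treated in \cite{BCM1}, the ideal $\CI_\af$ need not be unital, so the argument must not appeal to a largest element of $\CI_\af$; the push-forward construction sidesteps this, using only the meet-closedness of $\xi_{|\af|}$ and completeness of the associated family.
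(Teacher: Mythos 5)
Your proof is correct and takes essentially the same route as the paper's: both hinge on the comparison criterion of Proposition~\ref{prop 1}, the push-forward filter generated by $\{\theta_\gm(A):A\in\xi_{|\af|}\}$ inside $\CI_{\af\gm}$ (the paper's $\uparrow_{\CI_{\af\bt}}Y$) to handle failure of the sink condition, and the completeness relation forcing $\emptyset\in\eta_{|\af|+1}$ to rule out word extensions. The only difference is logical packaging (you state each direction in the contrapositive form of the paper's argument), which does not change the substance.
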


\begin{proof}($\Rightarrow$) Suppose  $\xi^\af$ is an ultrafilter and let $\CF $ be a filter in $\CI_\af$ such that $\xi_{|\af|} \subseteq \CF$. Then the filter $\eta$ associated with $(\af,\CF )$ is given by 
$\eta=\bigcup_{A \in \CF} \uparrow (\af, A, \af)$, so $\xi^\af \subseteq \eta$ by Proposition \ref{prop 1}. Thus, 
 $\xi^\af = \eta$. So, $\CF=\xi_{|\af|}$, whence $\xi_{|\af|}$ is an ultrafilter. 

Now, assume to the contrary that there exists $\bt \in \CL$ such that $\theta_\bt(A) \neq \emptyset$ for all $A \in \xi_{|\af|}$.
Consider $Y=\{\theta_\bt(A): A \in \xi_{|\af|}\} \subset \CI_{\af \bt}$.  Since $\emptyset \notin Y$ and $Y$ is closed under intersections, 
 $\uparrow_{\CI_{\af \bt}}Y$ is a filter in $\CI_{\af \bt}$ by \cite[Proposition 2.16]{BCM1}. Consider then the filter $\eta$ associated with the pair $(\af \bt, \uparrow_{\CI_{\af \bt}}Y)$. Since $\eta_{|\af|}= \{A \in \CI_\af : \theta_\bt(A) \in \uparrow_{\CI_{\af \bt}}Y\} \supseteq \xi_{|\af|} $, we would then have $\xi^\af \subsetneq \eta$. It contradicts the fact that $\xi^\af$ is an ultrafilter. 
 
 ($\Leftarrow$) Suppose $\xi^\af$ is not an ultrafilter, so that there is a filter $\eta^\bt$ such that $\xi^\af \subsetneq \eta^\bt$. Then by Proposition \ref{prop 1}, $\af$ is a beginning of $\bt$. If $\bt=\af$, then $\xi_{|\af|} \subsetneqq \eta_{|\af|}$ by Proposition \ref{prop 1}(2), hence $\xi_{|\af|}$ is not an ultrafilter in $\CI_\af$. If $\bt \neq \af$, then $\bt=\af\gm$ with $\gm \neq \emptyset$.
 Let $b$ be the first letter of $\gm$ and observe that $\xi_{|\af|} \subseteq \eta_{|\af|}=\{A \in \CI_\af: \theta_b(A) \in \eta_{|\af|+1}\}$.
  Since $\{\theta_b(A): A \in \xi_{|\af|}\} \subseteq \eta_{|\af|+1}$ and $\eta_{|\af|+1}$ is a filter, we have $\theta_b(A)\neq \emptyset$ for all $A \in \xi_{|\af|}$, a contradiction.    
\end{proof}

If a filter in a complete family is an ultrafilter, then all filters in the family coming before it are also ultrafilters as we see in below.

\begin{lem}\label{aa}Let $\xi^\af$ be a filter in $E(S)$. If $\xi_n$ is an ultrafilter for $n$ with $0 < n \leq |\af|$, then $\xi_m$ is an ultrafilter for every $0 < m < n$.  If furthermore $\xi_0$ is nonempty, then $\xi_0$ is also ultrafilter.
\end{lem}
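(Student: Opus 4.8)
The plan is to reduce the statement to a single downward step and then prove that step using the characterization of ultrafilters as prime filters. Since $\CI_{\af_{1,k}}$ is itself a Boolean algebra for every $k$, the equivalence ``ultrafilter $\iff$ prime filter'' recorded in the preliminaries applies inside each $\CI_{\af_{1,k}}$, and I intend to exploit this freely.

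First I would isolate the one-step claim: if $0 < k \leq n$ and $\xi_k$ is an ultrafilter in $\CI_{\af_{1,k}}$, and if $\xi_{k-1}$ is a nonempty filter (in $\CI_{\af_{1,k-1}}$, or in $\CB$ when $k-1=0$), then $\xi_{k-1}$ is an ultrafilter. Granting this, the lemma follows by applying the claim repeatedly, stepping down from index $n$ toward index $1$; for each intermediate index $m$ with $0 < m < n$ the filter $\xi_m$ is automatically nonempty by (the proof of) Proposition~\ref{filter gives complete family}, so no extra hypothesis is needed. The final descent from $\xi_1$ to $\xi_0$ requires $\xi_0 \neq \emptyset$, which is exactly the hypothesis of the ``furthermore'' clause.

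For the one-step claim, the key observation is the completeness relation, which gives
$$\xi_{k-1}=\{A\in\CI_{\af_{1,k-1}}:\theta_{\af_k}(A)\in\xi_k\},$$
exhibiting $\xi_{k-1}$ as the preimage of $\xi_k$ under the Boolean homomorphism $\theta_{\af_k}$ (note $\theta_{\af_k}(A)\in\CI_{\af_{1,k}}$ by Lemma~\ref{properties of I}(iii), so the membership question is legitimate). I would then verify that $\xi_{k-1}$ is prime: given $B,B'\in\CI_{\af_{1,k-1}}$ with $B\cup B'\in\xi_{k-1}$, the homomorphism property yields $\theta_{\af_k}(B)\cup\theta_{\af_k}(B')=\theta_{\af_k}(B\cup B')\in\xi_k$, and primeness of the ultrafilter $\xi_k$ forces $\theta_{\af_k}(B)\in\xi_k$ or $\theta_{\af_k}(B')\in\xi_k$, that is, $B\in\xi_{k-1}$ or $B'\in\xi_{k-1}$. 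Hence $\xi_{k-1}$ is a prime filter, therefore an ultrafilter.

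The argument is short, so the only point demanding care is bookkeeping rather than genuine difficulty: I must ensure the relevant sets lie in the correct ideals before asking about membership in $\xi_k$ or $\xi_{k-1}$ (handled by Lemma~\ref{properties of I}(iii)), and that each $\xi_m$ invoked is genuinely a nonempty filter so that primeness is meaningful. The one real input is the equivalence between ultrafilters and prime filters inside the ideals $\CI_{\af_{1,k}}$; everything else is a direct consequence of $\theta_{\af_k}$ preserving unions.
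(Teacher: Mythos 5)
Your proof is correct and follows essentially the same route as the paper: both exploit the completeness relation to exhibit the earlier filter as the preimage of the ultrafilter under a union-preserving Boolean homomorphism, and then pull back primeness. The only cosmetic difference is that the paper applies this in one shot with the composite action $\theta_{\af_{m+1,n}}$ (via Remark~\ref{rmk:complete.family}), whereas you iterate single-letter descents $\theta_{\af_k}$; the bookkeeping you flag (membership in the right ideals, nonemptiness of the intermediate $\xi_m$) is handled the same way in both.
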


\begin{proof}
By Remark \ref{rmk:complete.family}, we have that $\xi_m=\{A \in \CI_{\af_{1,m}}: \theta_{\af_{m+1,n}}(A) \in \xi_n\}$ for $0 < m < n$. Suppose that $A,B\in \CI_{\af_{1,m}}$ are such that $A\cup B\in \xi_m$. Then, $\theta_{\af_{m+1,n}}(A)\cup \theta_{\af_{m+1,n}}(B)=\theta_{\af_{m+1,n}}(A\cup B)\in\xi_n$. Since $\xi_n$ is an ultrafilter, we have that $\theta_{\af_{m+1,n}}(A)\in\xi_n$ or $\theta_{\af_{m+1,n}}(B)$, and hence $A\in\xi_m$ or $B\in\xi_m$. It follows that $\xi_m$ is an ultarfilter. The case $n=0$  is analogous.
\end{proof}

\begin{prop}\label{ultrafilter of infinite type}Let $\xi^\af$ be a filter of infinite type in $E(S)$. Then $\xi^\af$ is an ultrafilter if and only if $\xi_n$ is an ultrafilter in $\CI_{\af_{1,n}}$ for every $n >0$ and $\xi_0$ is either an ultrafilter in $\CB$ or the empty set.
\end{prop}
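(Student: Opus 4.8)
The plan is to compare $\xi^\af$ with filters containing it level by level, reducing everything to the Boolean algebras $\CI_{\af_{1,n}}$ and their ultrafilter spaces. First I would record that, since $\xi^\af$ is of infinite type, any filter $\eta$ with $\xi^\af\subseteq\eta$ must have associated word $\af$: indeed $\xi_n\neq\emptyset$ for every $n$ (as $\af\in\CW^\infty$), so $\eta$ contains elements $(\af_{1,n},A,\af_{1,n})$ for all $n$, and Lemma~\ref{lem:assoc.word} forces the word of $\eta$ to have every $\af_{1,n}$ as an initial segment, hence to equal $\af$. Consequently $\xi^\af\subseteq\eta^\af$ is equivalent to $\xi_n\subseteq\eta_n$ for all $n$, and $\xi^\af$ being an ultrafilter is exactly the statement that $\{\xi_n\}$ is maximal among complete families for $\af$.

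For the ($\Leftarrow$) direction I would argue directly: given $\eta^\af\supseteq\xi^\af$, we get $\eta_n\supseteq\xi_n$ for all $n$; if $\xi_n$ is an ultrafilter in $\CI_{\af_{1,n}}$ then $\eta_n=\xi_n$ for every $n>0$, and then completeness forces $\eta_0=\{A\in\CB:\theta_{\af_1}(A)\in\eta_1\}=\{A\in\CB:\theta_{\af_1}(A)\in\xi_1\}=\xi_0$. Hence $\eta^\af=\xi^\af$ and $\xi^\af$ is maximal (this does not even use the hypothesis on $\xi_0$).

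The substantive direction is ($\Rightarrow$), and the key step is to manufacture, out of the levels $\xi_n$, a companion family consisting entirely of ultrafilters. The naive idea---enlarge $\xi_n$ to an ultrafilter of $\CI_{\af_{1,n}}$ and push it forward along $\theta$---fails, and this is the \emph{main obstacle}: $\theta_{\af_{n+1}}$ may send nonzero elements to $\emptyset$, so there is in general no forward-compatible filter realizing a given ultrafilter at level $n$. I would bypass this by pulling back coherently from all levels at once. Using the material on Boolean algebras, for each $m$ let $Y_m=\bigcap_{A\in\xi_m}Z(A)$ be the subset of the Stone space $\widehat{\CI_{\af_{1,m}}}$ consisting of ultrafilters containing $\xi_m$; it is nonempty, compact and Hausdorff. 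Let the bonding map $Y_{m+1}\to Y_m$ be $\CU\mapsto\{A:\theta_{\af_{m+1}}(A)\in\CU\}$. This is well defined---by Lemma~\ref{properties of I}(iii) we have $\theta_{\af_{m+1}}(\CI_{\af_{1,m}})\subseteq\CI_{\af_{1,m+1}}$, and since $\theta_{\af_{m+1}}$ preserves unions the pullback of a prime (= ultra) filter is again prime, while completeness gives $\xi_m\subseteq\{A:\theta_{\af_{m+1}}(A)\in\CU\}$---and continuous, since the preimage of $Z(A)\cap Y_m$ is $Z(\theta_{\af_{m+1}}(A))\cap Y_{m+1}$. Because the inverse limit of a sequence of nonempty compact Hausdorff spaces with continuous bonding maps is nonempty, I obtain a thread $\{\CU_m\}_{m\geq 1}$ of ultrafilters with $\CU_m\supseteq\xi_m$ and $\{A:\theta_{\af_{m+1}}(A)\in\CU_{m+1}\}=\CU_m$; setting $\CU_0=\{A\in\CB:\theta_{\af_1}(A)\in\CU_1\}$ produces a complete family for $\af$.

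Finally I would invoke maximality. By Proposition~\ref{complete family gives filter} the complete family $\{\CU_m\}$ determines a filter $\eta^\af$, and $\eta^\af\supseteq\xi^\af$ because $\CU_m\supseteq\xi_m$ for every $m$. As $\xi^\af$ is an ultrafilter it is not properly contained in any filter, so $\eta^\af=\xi^\af$; comparing levels gives $\xi_m=\CU_m$ for all $m$. In particular $\xi_n=\CU_n$ is an ultrafilter in $\CI_{\af_{1,n}}$ for every $n>0$, and $\xi_0=\CU_0$, being the pullback of the ultrafilter $\CU_1$, is either an ultrafilter in $\CB$ or empty, as required. I expect the only delicate points to be the verification that the bonding maps are well defined and continuous and the appeal to nonemptiness of the inverse limit---both routine once one sees that the survival obstacle is precisely what the compactness argument dissolves.
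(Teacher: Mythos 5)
Your proof is correct, but your forward direction takes a genuinely different route from the paper's. The paper argues elementwise inside the semilattice: given $A=B\cup B'\in\xi_n$ with $B\notin\xi_n$, it applies the standard ultrafilter criterion (\cite[Lemma 12.3]{Ex1}) to $p=(\af_{1,n},B,\af_{1,n})\notin\xi^\af$ to produce $q=(\af_{1,m},C,\af_{1,m})\in\xi^\af$ with $pq=0$, and a three-case analysis on $m=n$, $m<n$, $m>n$ --- transporting $B$, $B'$, $C$ along $\theta$ to a common level and using completeness --- shows $B'\in\xi_n$; thus each $\xi_n$ is prime, and the case $n=0$ is Lemma \ref{aa}. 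You instead manufacture an entire complete family of ultrafilters dominating $\{\xi_n\}$ by a Stone-duality compactness argument (a thread in the inverse limit of the nonempty compact sets $Y_m$ of ultrafilters containing $\xi_m$, with pullback bonding maps) and then invoke maximality of $\xi^\af$. Your argument is sound: well-definedness of the bonding maps is exactly the paper's (unnumbered) lemma that $\theta_{\af_{m+1}}:\CI_{\af_{1,m}}\to\CI_{\af_{1,m+1}}$ is a proper Boolean homomorphism, so pullbacks of ultrafilters are prime filters, with nonemptiness supplied by the fact that completeness puts $\xi_m$ inside the pullback; and Theorem \ref{filter-bijective-pair} guarantees that the filter $\eta$ associated with the thread $\{\CU_m\}$ has levels exactly $\CU_m$ and contains $\xi^\af$. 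The trade-off: the paper's proof is elementary and self-contained (no Stone spaces, no inverse-limit nonemptiness, no ultrafilter-extension via Zorn), while yours is conceptually cleaner and proves more along the way, namely that every filter of infinite type is levelwise dominated by --- hence contained in --- an ultrafilter of infinite type, and that ultrafilterhood of $\xi^\af$ is precisely maximality of $\{\xi_n\}$ among complete families for $\af$. Your observation that the backward direction needs no hypothesis on $\xi_0$ (completeness forces $\eta_0=\xi_0$ once $\eta_1=\xi_1$) is also a small sharpening of the paper's proof, which treats the case $\xi_0=\emptyset$ only implicitly.
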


\begin{proof} ($\Rightarrow$) Assume  $\xi^\af$ is an ultrafilter and fix $n >0$. Let $A \in \xi_n$. Suppose that $A=B \cup B'$ for $B, B' \in \CI_{\af_{1,n}}$. If $B \notin \xi_n$, then $p=(\af_{1,n}, B, \af_{1,n}) \notin \xi^\af$. Since $\xi^\af$ is an ultrafilter, there exists $q=(\af_{1,m}, C, \af_{1,m}) \in \xi^\af$ such that $pq=0$ by \cite[Lemma 12.3]{Ex1}. There are three cases to consider: if $m=n$, then $B \cap C=\emptyset$ and $C \in \xi_n$. 
Since $A, C \in \xi_n$, we see that  $\emptyset \neq A \cap C=B' \cap C \in \xi_n$. It thus follows that $B' \in \xi_n$.  
If $m < n$, then $pq=(\af_{1,n}, \theta_{\af_{m+1,n}}(C)\cap B, \af_{1,n})=0$, which implies that $\theta_{\af_{m+1,n}}(C) \cap B =\emptyset$. 
Then again,   since $A,\theta_{\af_{m+1,n}}(C) \in \xi_n $, we see that
 $\emptyset \neq A \cap \theta_{\af_{m+1,n}}(C) =B' \cap  \theta_{\af_{m+1,n}}(C) \in \xi_n$. Hence, $B' \in \xi_n$.
For the case $m>n$, we first note that $\theta_{\af_{n+1,m}}(B) \notin \xi_m$ and $\theta_{\af_{n+1,m}}(A) \in \xi_m$ since the family $\{\xi_n\}_{n \geq 0}$ is complete. 
Now consider $\tilde{p}=(\af_{1,m}, \theta_{\af_{n+1,m}}(B) ,\af_{1,m}) (\notin \xi^\af)$ instead of $p$. One then again can see that there exists $C \in \xi_m$ such that $\theta_{\af_{n+1,m}}(B) \cap C=\emptyset$.
 Then, $ \emptyset \neq  \theta_{\af_{n+1,m}}(A) \cap C = \theta_{\af_{n+1,m}}(B') \cap C \in \xi_m $. Thus, we have that $\theta_{\af_{n+1,m}}(B')  \in \xi_m$, and hence, $B' \in \xi_n$. 

One also can  show using the same arguments that if  $B' \notin \xi_n$, then $B \in \xi_n$. Thus,  $\xi_n$ is an ultrafilter.

The case $n=0$ follows from Proposition \ref{aa}.

($\Leftarrow$) Let $\eta^\bt \in \mathsf{F}$ be such that $\xi^\af \subseteq \eta^\bt$.
Then $\af=\bt$ and $\xi_n \subseteq \eta_n$ for all $n \geq 0$ by Proposition \ref{prop 1}. Since $\xi_n$ is an ultrafilter for all $n \geq 0 $ (except $\xi_0=\emptyset$), we have  $\xi_n = \eta_n$ for all $n \geq 0$ and therefore  $\xi^\af=\eta^\bt$.
\end{proof}

The results of this section give the following characterization of the ultrafilters in $E(S)$.

\begin{thm}\label{char:ultrafilters}(\cite[Theorem 5.10]{BCM1})  Let $(\CB, \CL,\theta, \CI_\af)$ be a generalized Boolean dynamical system, and $S$ be its associated inverse semigroup. Then the ultrafilters in $E(S)$ are:
\begin{enumerate}
\item[(i)] The filters of finite type $\xi^\af$ such that $\xi_{|\af|}$ is an ultrafilters in $\CI_\af$
and for each $\bt \in \CL$ there exists $A \in \xi_{|\af|}$ such that $\theta_\bt(A)=\emptyset$.
\item[(ii)] The filters of infinite type $\xi^\af$ such that $\xi_{n}$ is an ultrafilter for every $n > 0$ and $\xi_0$ is either an ultrafilter or the empty set. 
\end{enumerate}
\end{thm}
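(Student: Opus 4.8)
The plan is to assemble Theorem~\ref{char:ultrafilters} directly from the two preceding propositions, using the dichotomy between filters of finite and infinite type as the organizing principle. Since every filter in $E(S)$ is associated with a unique word $\af\in\CW^{\leq\infty}$ by Lemma~\ref{lem:assoc.word}, and the bijective correspondence of Theorem~\ref{filter-bijective-pair} lets us speak unambiguously of the complete family $\{\xi_n\}_{0\leq n\leq|\af|}$, a filter is of exactly one of the two types according to whether $|\af|<\infty$ or $|\af|=\infty$. Ultrafilters can then be characterized type by type.

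\emph{First} I would treat the finite-type case, which is precisely the content of the proposition preceding Lemma~\ref{aa}. That proposition states that a finite-type filter $\xi^\af$ is an ultrafilter if and only if $\xi_{|\af|}$ is an ultrafilter in $\CI_\af$ and for each $\bt\in\CL$ there exists $A\in\xi_{|\af|}$ with $\theta_\bt(A)=\emptyset$; this is exactly clause~(i), so nothing further is needed here. \emph{Second}, for the infinite-type case, Proposition~\ref{ultrafilter of infinite type} already asserts that $\xi^\af$ is an ultrafilter if and only if $\xi_n$ is an ultrafilter in $\CI_{\af_{1,n}}$ for every $n>0$ and $\xi_0$ is either an ultrafilter in $\CB$ or empty, which is exactly clause~(ii). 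Thus the proof reduces to invoking the two results and observing that the finite/infinite dichotomy exhausts all filters.

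The only genuine content beyond citation is verifying that the two cases are mutually exclusive and jointly exhaustive, which follows immediately from Lemma~\ref{lem:assoc.word}: the associated word $\af$ is unique and either finite or infinite, placing each filter in exactly one case. I would also note, for completeness, that Lemma~\ref{aa} is what guarantees the consistency of clause~(ii)'s phrasing — if $\xi_n$ is an ultrafilter for some $n>0$, then all earlier $\xi_m$ (and $\xi_0$ when nonempty) are automatically ultrafilters — so the condition ``$\xi_n$ is an ultrafilter for every $n>0$'' is the natural way to state the infinite-type criterion without redundancy.

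The step I expect to be least routine is not in this theorem at all but in the two propositions it cites, particularly the infinite-type direction of Proposition~\ref{ultrafilter of infinite type}, where showing each $\xi_n$ is prime requires the case analysis on the relative position $m$ versus $n$ of a witnessing idempotent $q=(\af_{1,m},C,\af_{1,m})$ produced by \cite[Lemma~12.3]{Ex1}. Since those arguments are already established in the excerpt, the proof of Theorem~\ref{char:ultrafilters} itself is a short bookkeeping assembly: state the dichotomy, apply each proposition to its case, and conclude. I would keep the write-up to a couple of sentences, explicitly pointing the reader to the relevant proposition for each of clauses~(i) and~(ii).
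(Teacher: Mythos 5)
Your proposal is correct and matches the paper exactly: the paper gives no separate proof of Theorem~\ref{char:ultrafilters}, presenting it as the immediate combination of the finite-type proposition (the one preceding Lemma~\ref{aa}) and Proposition~\ref{ultrafilter of infinite type}, with the finite/infinite dichotomy of associated words making the two cases exhaustive and disjoint. Your additional remarks on Lemma~\ref{lem:assoc.word} and Lemma~\ref{aa} are consistent with the paper and require no changes.
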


\subsection{Tight filters in \texorpdfstring{$E(S)$}{E(S)}}\label{section:tight.filters}

 We denote by $\mathsf{T}$  the set of tight filters on $E:=E(S)$ and we equip  $\mathsf{T}$  with the topology induced from the topology of pointwise convergence of character, via the bijection between tight characters and tight filters given in subsection \hyperref[Inverse semigroups]{2.2}(4). 
Note then that $\mathsf{T}$ is (homeomorphic to) the tight spectrum $\hat{E}_{tight}$ of $E$. We in this section give a complete characterization of the tight filters in $E$.

\begin{prop}\label{tight gives ultrafilter} Let $(\CB, \CL,\theta, \CI_\af)$ be a generalized Boolean dynamical system,  $S$ be its associated inverse semigroup and let  $\xi^\af$ be a tight filter in $E(S)$. Then, for every $0 \leq n \leq |\af|$, $\xi_n$ is an ultrafilter in $\CI_{\af_{1,n}}$ ($\xi_0$ may also be the empty set).
\end{prop}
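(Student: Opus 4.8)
The plan is to prove that for each index $n$ with $\xi_n\neq\emptyset$ the filter $\xi_n$ is \emph{prime} in the Boolean algebra $\CI_{\af_{1,n}}$; since in a Boolean algebra a filter is prime if and only if it is an ultrafilter, this gives the statement. For $n>0$ the filter $\xi_n$ is automatically nonempty by Proposition~\ref{filter gives complete family}, and for $n=0$ the empty case $\xi_0=\emptyset$ is precisely the exception allowed; when $\xi_0\neq\emptyset$ the argument below applies unchanged with $\af_{1,0}=\emptyset$ and $\CI_\emptyset=\CB$.

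So fix $n$ with $\xi_n\neq\emptyset$ and take $B,B'\in\CI_{\af_{1,n}}$ with $A:=B\cup B'\in\xi_n$; I want $B\in\xi_n$ or $B'\in\xi_n$. After discarding the trivial case where $B$ or $B'$ is empty (then one of them equals $A$ and membership is immediate), I set $x:=(\af_{1,n},A,\af_{1,n})$, which lies in $\xi^\af$ by definition of $\xi_n$, and I propose the candidate cover
$$Z=\{(\af_{1,n},B,\af_{1,n}),\,(\af_{1,n},B',\af_{1,n})\}.$$
Lemma~\ref{lem:order} shows both elements of $Z$ are $\leq x$. The heart of the proof is to check that $Z$ is a finite cover for $x$: for an arbitrary nonzero $w=(\mu,C,\mu)\leq x$, Lemma~\ref{lem:order} yields $\mu=\af_{1,n}\nu$ and $\emptyset\neq C\subseteq\theta_\nu(A)$, and applying the multiplication rule of $S$ gives $w\cdot(\af_{1,n},B,\af_{1,n})=(\mu,C\cap\theta_\nu(B),\mu)$ and $w\cdot(\af_{1,n},B',\af_{1,n})=(\mu,C\cap\theta_\nu(B'),\mu)$. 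As $\theta_\nu$ is a Boolean homomorphism, $\theta_\nu(A)=\theta_\nu(B)\cup\theta_\nu(B')$, so $C=(C\cap\theta_\nu(B))\cup(C\cap\theta_\nu(B'))$; since $C\neq\emptyset$ at least one intersection is nonempty, i.e.\ $w$ has nonzero product with some member of $Z$. This is exactly the cover condition.

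Having exhibited $Z$ as a finite cover for $x\in\xi^\af$, I invoke the tightness criterion of Proposition~\ref{tight filter:how to check}: it forces $Z\cap\xi^\af\neq\emptyset$, hence $(\af_{1,n},B,\af_{1,n})\in\xi^\af$ or $(\af_{1,n},B',\af_{1,n})\in\xi^\af$, which by the defining description of $\xi_n$ means $B\in\xi_n$ or $B'\in\xi_n$. Thus $\xi_n$ is prime, and therefore an ultrafilter in $\CI_{\af_{1,n}}$.

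The one step requiring care is the verification that $Z$ covers $x$. Here one must track which of the three cases of the product formula occurs (the word $\mu$ of a general $w\leq x$ extends $\af_{1,n}$, so the relevant case is the third one, with the degenerate subcase $\nu=\emptyset$ giving the first case), and then use that $\theta_\nu$ preserves unions. The remaining points --- membership of the listed triples in $E(S)$, the degenerate empty cases, and the equivalence ``prime $\Leftrightarrow$ ultrafilter'' --- are routine. I note that this single argument handles every level $n$ at once, including the infinite-type filters, so no separate treatment of finite and infinite type and no appeal to Lemma~\ref{aa} is needed.
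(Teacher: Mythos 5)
Your proposal is correct and follows essentially the same route as the paper's own proof: the same candidate cover $Z=\{(\af_{1,n},B,\af_{1,n}),(\af_{1,n},B',\af_{1,n})\}$, the same verification via Lemma~\ref{lem:order}, the product formula, and the fact that $\theta_\nu$ preserves unions, and the same appeal to Proposition~\ref{tight filter:how to check}. The only cosmetic difference is that you argue directly that $\xi_n$ is prime while the paper argues by contradiction from a failed union decomposition; these are the same argument read in opposite directions.
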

 
 \begin{proof} Fix $0 \leq n \leq |\af|$ and suppose $\xi_n$ is not an ultrafilter.  There  then exists
 $A \in \xi_n$ such that $A =B \cup B' $ for $B \notin \xi_n$ and $ B' \notin \xi_n$. 
 
  Let $p=(\af_{1,n}, A, \af_{1,n})$ and $Z=\{(\af_{1,n}, B, \af_{1,n}),(\af_{1,n}, B', \af_{1,n})\}$. Then $p \in \xi^\af$ and $Z$ is a cover for $p$. Indeed, for every $q \leq p$, we must have $q=(\af_{1,n}\bt, C, \af_{1,n}\bt)$ for some $\bt \in \CL^*$ and some $C \in \CI_{\af_{1,n}\bt}$. Since $C \subseteq \theta_\bt(A)=\theta_\bt(B) \cup \theta_\bt(B')$, then $C \cap \theta_\bt(B) \neq \emptyset$ or $C \cap \theta_\bt(B') \neq \emptyset$, and hence, it follows that $q(\af_{1,n}, B, \af_{1,n}) \neq 0$ or $q(\af_{1,n}, B', \af_{1,n}) \neq 0$. It says that $Z$ is a cover for $p$. Since $Z \cap \xi^\af = \emptyset$, we see that $\xi^\af$ is not tight by Corollary \ref{tight filter:how to check}. This is a contradiction.  
 \end{proof}
 We say that a Boolean  homomorphism $\phi:\CB\to \CB'$ is {\em proper} if for every $A\in \CB'$, there exists $B\in\CB$ such that $A\subseteq \phi(B)$.
 
\begin{lem}
 	Given $\af\in\CL^{\geq 1}$ and $\bt\in\CL^*$, if $\CI_{\af\bt}\neq\{\emptyset\}$, then $\CI_{\af}\neq\{\emptyset\}$ and the map $\theta_\bt : \CI_\af \to \CI_{\af\bt}$ is a proper Boolean  homomorphism.
 \end{lem}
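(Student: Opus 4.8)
The plan is to reduce all three assertions to the defining formula for $\CI_{\af\bt}$ from Definition \ref{def of I} together with Lemma \ref{properties of I}. Write $\af=\af_1\cdots\af_n$ with $n=|\af|\geq 1$. Since $(\af\bt)_1=\af_1$ and $(\af\bt)_{2,|\af\bt|}=\af_{2,n}\bt$, unwinding the definition gives
$$\CI_{\af\bt}=\{A\in\CB : A\subseteq\theta_{\af_{2,n}\bt}(B)~\text{for some}~ B\in\CI_{\af_1}\}.$$
The one bookkeeping fact I would record first, since everything rests on it, is the concatenation identity $\theta_{\af_{2,n}\bt}=\theta_\bt\circ\theta_{\af_{2,n}}$, immediate from the convention $\theta_\gm=\theta_{\gm_{|\gm|}}\circ\cdots\circ\theta_{\gm_1}$.

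For nonemptiness, I would take a nonempty $A\in\CI_{\af\bt}$, so $A\subseteq\theta_\bt(\theta_{\af_{2,n}}(B))$ for some $B\in\CI_{\af_1}$; since actions send $\emptyset$ to $\emptyset$, the nonemptiness of $A$ forces $\theta_{\af_{2,n}}(B)\neq\emptyset$, and $\theta_{\af_{2,n}}(B)\in\CI_\af$ follows directly from the defining formula for $\CI_\af$ by taking the witness $B$ itself. For the homomorphism claim, each $\theta_{\bt_i}$ is an action and hence a Boolean homomorphism, so $\theta_\bt$ is a Boolean homomorphism on all of $\CB$; its restriction to the Boolean subalgebra $\CI_\af$ preserves $\cap$, $\cup$ and $\setminus$, and lands in $\CI_{\af\bt}$ by Lemma \ref{properties of I}(iii). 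The degenerate case $\bt=\emptyset$ (where $\theta_\bt=\mathrm{Id}$ and $\af\bt=\af$) is immediate and can be dispatched in a line.

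The substantive point is properness. Given any $A\in\CI_{\af\bt}$, I would again write $A\subseteq\theta_{\af_{2,n}\bt}(B)=\theta_\bt(\theta_{\af_{2,n}}(B))$ for some $B\in\CI_{\af_1}$ and set $C:=\theta_{\af_{2,n}}(B)$; as in the nonemptiness step $C\in\CI_\af$, and $A\subseteq\theta_\bt(C)$ is exactly the required witness. The only thing to watch is the edge case $n=1$, where $\af_{2,n}=\emptyset$, $\theta_{\af_{2,n}}=\mathrm{Id}$ and $C=B$, which causes no trouble. I do not expect a genuine obstacle: once the identity $\theta_{\af_{2,n}\bt}=\theta_\bt\circ\theta_{\af_{2,n}}$ is isolated, the lemma follows by unwinding the definition of $\CI_{\af\bt}$ and choosing the preimage $C=\theta_{\af_{2,n}}(B)$, rather than from any deeper structural input.
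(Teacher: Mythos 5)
Your proof is correct and follows essentially the same route as the paper's: unwind Definition \ref{def of I}, use the identity $\theta_{\af_{2,n}\bt}=\theta_\bt\circ\theta_{\af_{2,n}}$, and take $C=\theta_{\af_{2,n}}(B)$ as the nonempty element of $\CI_\af$ and simultaneously as the witness for properness. The only difference is that you also spell out the (routine) Boolean-homomorphism verification via Lemma \ref{properties of I}(iii), which the paper leaves implicit.
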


\begin{proof}
	Suppose that $\CI_{\af\bt}\neq\{\emptyset\}$ and that $A\in \CI_{\af\bt}\setminus\{\emptyset\}$. By Definition~\ref{def of I}, there exists $B\in\CI_{\af_1}$ such that $A\subseteq \theta_{\af_{2,n}\beta}(B)$. Since $A\neq\emptyset$ and $\theta_{\af_{2,n}\beta}(\emptyset)=\emptyset$ since $\theta_{\af_{2,n}\beta}$ is an action, we have that $B\neq\emptyset$. And since $\theta_{\af_{2,n}\beta}=\theta_{\beta}\circ\theta_{\af_{2,n}}$, we also have $\theta_{\af_{2,n}}(B)\neq\emptyset$. Hence $\theta_{\af_{2,n}}(B)\in\CI_\af\setminus\{\emptyset\}$.
	
	Notice that the above argument also shows that $\theta_\bt$ is proper since $A\subseteq \theta_\bt(\theta_{\af_{2,n}}(B))$ and $\theta_{\af_{2,n}}(B)\in\CI_\af$.
\end{proof}

From now on, for each  $\af \in  \CW^*$, 
 we write  $X_\af$ instead of $\widehat{\CI_\af}$ for the set of all ultrafilters in $\CI_\af$ to match our notations with \cite{BCM2}. Note that $X_\emptyset$ denotes the set of ultrafilters in $\CI_\emptyset=\CB$.  
  For $A \in \CI_\af$, we let $$Z(\af, A):=\{\CF \in X_\af: A \in \CF\}$$ and equip $X_\af$
 with the topology generated by $\{Z(\af, A): A\in\CI_\af\}$.
     We further define 
 $$X_\af^{sink}=\{\CF \in X_\af : \forall \bt \in \CL, \exists A \in \CF ~\text{such that}~ \theta_\bt(A) =\emptyset\}.$$

In what follows, we also need to consider the set $X_\emptyset\cup\{\emptyset\}$ with a suitable topology. If $\CB$ is unital, the topology is such that $\{\emptyset\}$ is an isolated point. If $\CB$ is not unital, then $\emptyset$ plays the role of the point at infinity in the one-point compactification of $X_\emptyset$. The following lemma describe convergence to $\emptyset$ in a unifying way.

\begin{lem}\label{lem:conv.X.empty}
A net $\{\CF_\lambda\}_{\lambda\in \Lambda}$ converges to $\emptyset$ in $X_\emptyset\cup\{\emptyset\}$ if and only if for all $A\in\CB$, there exists $\lambda_0\in\Lambda$ such that $A\notin\CF_\lambda$ for all $\lambda\geq\lambda_0$.
\end{lem}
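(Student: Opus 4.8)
The plan is to prove both directions of the equivalence directly from the definition of the topology on $X_\emptyset\cup\{\emptyset\}$, handling the unital and non-unital cases in a unified manner. The basic open sets of $X_\emptyset$ are the sets $Z(\emptyset,A)=\{\CF\in X_\emptyset:A\in\CF\}$ for $A\in\CB$, and since each $Z(\emptyset,A)$ is compact-open, the neighbourhoods of the point $\emptyset$ in the one-point compactification are precisely the sets of the form $(X_\emptyset\cup\{\emptyset\})\setminus K$ where $K$ is compact in $X_\emptyset$. The key observation that makes the two cases uniform is that every compact subset $K$ of $X_\emptyset$ is contained in some $Z(\emptyset,A)$: in the unital case we may take $A=1$, so that $K\subseteq Z(\emptyset,1)=X_\emptyset$, while in the non-unital case a compact set is covered by finitely many basic opens $Z(\emptyset,A_1),\dots,Z(\emptyset,A_k)$, whose union is $Z(\emptyset,A_1\cup\cdots\cup A_k)$. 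Thus in both cases the complements $(X_\emptyset\cup\{\emptyset\})\setminus Z(\emptyset,A)$, as $A$ ranges over $\CB$, form a neighbourhood basis at $\emptyset$.

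First I would prove the forward direction. Assume $\{\CF_\lambda\}\to\emptyset$ and fix $A\in\CB$. The complement $(X_\emptyset\cup\{\emptyset\})\setminus Z(\emptyset,A)$ is an open neighbourhood of $\emptyset$ (it contains $\emptyset$ since $\emptyset$ is not in $X_\emptyset$ at all, and $Z(\emptyset,A)$ is a compact subset of $X_\emptyset$ avoiding $\emptyset$). By convergence there is $\lambda_0$ with $\CF_\lambda\in(X_\emptyset\cup\{\emptyset\})\setminus Z(\emptyset,A)$ for all $\lambda\geq\lambda_0$; since each $\CF_\lambda$ lies in $X_\emptyset$, this says exactly that $A\notin\CF_\lambda$ for all $\lambda\geq\lambda_0$, which is the desired conclusion.

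For the converse I would assume the stated condition and verify convergence by checking an arbitrary basic neighbourhood $U$ of $\emptyset$. By the observation above, $U$ contains a neighbourhood of the form $(X_\emptyset\cup\{\emptyset\})\setminus Z(\emptyset,A)$ for some $A\in\CB$. Applying the hypothesis to this $A$ yields $\lambda_0$ with $A\notin\CF_\lambda$ for all $\lambda\geq\lambda_0$, i.e.\ $\CF_\lambda\notin Z(\emptyset,A)$, so $\CF_\lambda\in(X_\emptyset\cup\{\emptyset\})\setminus Z(\emptyset,A)\subseteq U$ for all $\lambda\geq\lambda_0$. Hence $\{\CF_\lambda\}\to\emptyset$. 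The only genuinely delicate point—and the one I would state carefully as the crux of the argument—is the reduction of an arbitrary neighbourhood of $\emptyset$ to one of the special form $(X_\emptyset\cup\{\emptyset\})\setminus Z(\emptyset,A)$; once the compact sets of $X_\emptyset$ are identified with (subsets of) the $Z(\emptyset,A)$ using that these are compact-open and cover $X_\emptyset$, everything else is formal. In the unital case this reduction is transparent via $A=1$, and it is worth remarking explicitly that then the condition forces the net to be eventually absent from every $Z(\emptyset,A)$, consistent with $\{\emptyset\}$ being isolated only insofar as $X_\emptyset$ itself equals $Z(\emptyset,1)$ is clopen.
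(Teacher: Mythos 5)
Your proof is correct, and its essential content matches the paper's; the difference is in how the two cases are organized. The substantive step---reducing an arbitrary neighbourhood of $\emptyset$ to one of the form $(X_\emptyset\cup\{\emptyset\})\setminus Z(A)$ by covering a compact set with finitely many basic opens and using primeness of ultrafilters to write $Z(A_1)\cup\cdots\cup Z(A_k)=Z(A_1\cup\cdots\cup A_k)$---is exactly the argument the paper gives in its non-unital case. Where you differ is the unital case: the paper treats it separately in both directions, noting that every ultrafilter contains the unit (so the hypothesis applied to the unit forces the net to be eventually equal to $\emptyset$) and that convergence to an isolated point forces eventual constancy. You instead fold the unital case into the one-point-compactification picture by observing that $X_\emptyset=Z(1)$ is compact and clopen there, so the isolated-point topology and the complements-of-compacts description of neighbourhoods of $\emptyset$ coincide; this yields a single neighbourhood-basis statement from which both directions follow formally, with no case split in the body of the proof. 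Both routes rest on the same two facts (compactness of the sets $Z(A)$ and primeness of ultrafilters); yours avoids repetition at the cost of having to justify that the paper's two ad hoc definitions of the topology admit a uniform description, which you do address, if somewhat tersely, in your closing remark. One small caveat: the lemma is applied to nets whose terms may themselves equal $\emptyset$ (this is how it is used in proving continuity of $f_{\emptyset[\beta]}$), so you should not assume $\CF_\lambda\in X_\emptyset$ in the forward direction; this is harmless, since $\CF_\lambda\notin Z(A)$ is equivalent to $A\notin\CF_\lambda$ also when $\CF_\lambda=\emptyset$.
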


\begin{proof}
Suppose first that $\{\CF_\lambda\}_{\lambda\in \Lambda}$ converges to $\emptyset$. In the unital case, since $\emptyset$ is isolated, the net is eventually constant and the result follows. In the non-unital case, given $A\in\CB$, we have that $Z(A)$ is compact in $X_\emptyset$ so that $(X_\emptyset\setminus Z(A))\cup \{\emptyset\}$ is an open neighborhood of $\emptyset$. Then, there exists $\lambda_0$ such that $\CF_\lambda\in (X_\emptyset\setminus Z(A))\cup \{\emptyset\}$ for all $\lambda\geq \lambda_0$, the latter implying that $A\notin\CF_\lambda$ for all $\lambda\geq\lambda_0$.

Let now $\{\CF_\lambda\}_{\lambda\in \Lambda}$ be a net such that for all $A\in\CB$, there exists $\lambda_0\in\Lambda$ such that $A\notin\CF_\lambda$ for all $\lambda\geq\lambda_0$. In the unital case, if $I$ is the unit of $\CB$, then all ultrafilters contain $I$. The property above then implies that the net $\{\CF_\lambda\}_{\lambda\in \Lambda}$ is eventually constant and equals to the empty set, and the result follows. Now suppose that $\CB$ is not unital and let $(X_\emptyset\setminus K)\cup\{\emptyset\}$ be a open neighborhood of $\emptyset$, where $K$ is a compact subset of $X_\emptyset$. By compactness of $K$, there exists $A_1,\ldots,A_n$ such that $K\subseteq Z(A_1)\cup\cdots\cup Z(A_n)=Z(A_1\cup\cdots\cup A_n)$, where the equality follows from the fact that every ultrafilter is prime. If we let $A=A_1\cup\cdots\cup A_n$ and take $\lambda_0$ as in the hypothesis, then $\CF_\lambda\in (X_\emptyset\setminus Z(A))\cup\{\emptyset\}\subseteq(X_\emptyset\setminus K)\cup\{\emptyset\}$ for all $\lambda\geq\lambda_0$. Hence the net $\{\CF_\lambda\}_{\lambda\in \Lambda}$ converges to $\emptyset$.
\end{proof}

Given $\af, \bt \in \CW^{\geq 1}$, since the  action $$\theta_\bt : \CI_\af \to \CI_{\af\bt}$$ is a proper Boolean  homomorphism, there is its dual morphism 
$$f_{\af[\bt]}: X_{\af\bt} \to X_\af$$ given by 
 $f_{\af[\bt]}(\CF)=\{A \in \CI_\af: \theta_\bt(A) \in \CF\}$,
When $\af=\emptyset$, if $\CF \in X_\bt$, then $\{A \in \CB: \theta_\bt(A) \in \CF\}$ is either an ultrafilter in $\CI_\emptyset(=\CB)$ or  the empty set. We can therefore consider 
$f_{\emptyset[\bt]}: X_\bt \to X_\emptyset \cup \{\emptyset\}.$ Notice that $f_{\af[\bt\gm]}=f_{\af[\bt]}\circ f_{\af\bt[\gm]}$ for all $\af\in\CW^{*}$ and $\beta,\gamma\in\CW^{\geq 1}$ such that $\alpha\beta\gamma\in\CW^{\geq 1}$.
If $\af\neq\emptyset$, we have that $f_{\af[\bt]}$ is a continuous function since $f_{\af[\bt]}^{-1}(Z(\af,A))=Z(\af\bt, \theta_\bt(A))$ for each $A \in \CI_\af$. For $\af=\emptyset$, we prove the continuity in the following lemma.

\begin{lem}\label{lem:f.empty.continuous}
The function $f_{\emptyset[\bt]}: X_\bt \to X_\emptyset \cup \{\emptyset\}$ is continuous.
\end{lem}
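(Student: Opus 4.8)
The plan is to verify continuity on nets, which is legitimate since both $X_\bt$ and $X_\emptyset\cup\{\emptyset\}$ are topological spaces. So I would fix a net $\{\CF_\lambda\}_{\lambda\in\Lambda}$ in $X_\bt$ converging to some $\CF\in X_\bt$ and show that $\{f_{\emptyset[\bt]}(\CF_\lambda)\}_{\lambda\in\Lambda}$ converges to $f_{\emptyset[\bt]}(\CF)$ in $X_\emptyset\cup\{\emptyset\}$. The argument splits according to whether $f_{\emptyset[\bt]}(\CF)$ is a genuine ultrafilter in $\CB$ or the empty filter $\emptyset$, because the topology on $X_\emptyset\cup\{\emptyset\}$ treats these two kinds of points quite differently.

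The first case, $f_{\emptyset[\bt]}(\CF)=\eta\in X_\emptyset$, is the routine one and mirrors the preimage computation already recorded for $\af\neq\emptyset$. Since $X_\emptyset$ is open in $X_\emptyset\cup\{\emptyset\}$, it suffices to prove convergence inside $X_\emptyset$, whose topology has basic neighborhoods $Z(A)$. For $A\in\eta$ we have $\theta_\bt(A)\in\CF$ by definition of $f_{\emptyset[\bt]}$, and $\theta_\bt(A)\in\CI_\bt$ by Lemma~\ref{properties of I}(i); hence $Z(\bt,\theta_\bt(A))$ is a neighborhood of $\CF$, so eventually $\theta_\bt(A)\in\CF_\lambda$, that is $A\in f_{\emptyset[\bt]}(\CF_\lambda)$, that is $f_{\emptyset[\bt]}(\CF_\lambda)\in Z(A)$. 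Picking any $A_0\in\eta$ (necessarily nonempty, as filters omit the least element) shows the images are eventually nonempty, and running the argument over all $A\in\eta$ gives $f_{\emptyset[\bt]}(\CF_\lambda)\to\eta$. Concretely this is just the identity $f_{\emptyset[\bt]}^{-1}(Z(A))=Z(\bt,\theta_\bt(A))$ already noted before the lemma.

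The substantive case is $f_{\emptyset[\bt]}(\CF)=\emptyset$, and here I would invoke Lemma~\ref{lem:conv.X.empty}, which reduces the claim to the statement that for every $A\in\CB$ there is $\lambda_0\in\Lambda$ with $A\notin f_{\emptyset[\bt]}(\CF_\lambda)$ for all $\lambda\geq\lambda_0$. Fix $A\in\CB$. Since $f_{\emptyset[\bt]}(\CF)=\emptyset$, we have $\theta_\bt(A)\notin\CF$, so $\CF$ lies in $X_\bt\setminus Z(\bt,\theta_\bt(A))$; as each $Z(\bt,B)$ is clopen, this is an open neighborhood of $\CF$. Convergence $\CF_\lambda\to\CF$ then supplies $\lambda_0$ with $\CF_\lambda\in X_\bt\setminus Z(\bt,\theta_\bt(A))$ for $\lambda\geq\lambda_0$, i.e. $\theta_\bt(A)\notin\CF_\lambda$, i.e. $A\notin f_{\emptyset[\bt]}(\CF_\lambda)$, which is exactly what Lemma~\ref{lem:conv.X.empty} demands.

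I expect the only real obstacle to be this last case, precisely because convergence to the point $\emptyset$ is governed by the (possibly non-unital) one-point compactification rather than by the basic clopen sets; this is the difficulty that Lemma~\ref{lem:conv.X.empty} was designed to absorb, so once that lemma is in hand the argument is routine. As an alternative that avoids nets entirely, one could instead check that preimages of a neighborhood basis are open, using $f_{\emptyset[\bt]}^{-1}(Z(A))=Z(\bt,\theta_\bt(A))$ together with $f_{\emptyset[\bt]}^{-1}\big((X_\emptyset\setminus Z(A))\cup\{\emptyset\}\big)=X_\bt\setminus Z(\bt,\theta_\bt(A))$, both clopen, after first observing that the sets $(X_\emptyset\setminus Z(A))\cup\{\emptyset\}$ form a neighborhood basis at $\emptyset$ (since the compact-open sets $Z(A)$ cover $X_\emptyset$).
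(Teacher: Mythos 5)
Your proof is correct and takes essentially the same approach as the paper: a net argument split into the cases $f_{\emptyset[\bt]}(\CF)\neq\emptyset$ and $f_{\emptyset[\bt]}(\CF)=\emptyset$, with the first case handled via the identity $f_{\emptyset[\bt]}^{-1}(Z(A))=Z(\bt,\theta_\bt(A))$ and the second reduced to Lemma~\ref{lem:conv.X.empty}, exactly as in the paper. Your closing sketch of a net-free alternative (checking that preimages of the neighborhood bases $Z(A)$ and $(X_\emptyset\setminus Z(A))\cup\{\emptyset\}$ are clopen) is also sound, but the main argument coincides with the paper's.
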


\begin{proof}
Let $\{\CF_\lambda\}_{\lambda\in \Lambda}$ be a net converging to $\CF$ in $X_\bt$. We prove that $\{f_{\emptyset[\bt]}(\CF_\lambda)\}_{\lambda\in \Lambda}$ converges to $f_{\emptyset[\bt]}(\CF)$. If $f_{\emptyset[\bt]}(\CF)\neq\emptyset$, then for $A\in\CB$, we have that
\begin{align*}
    A\in f_{\emptyset[\bt]}(\CF) & \Leftrightarrow \theta_\bt(A)\in\CF \\
    & \Leftrightarrow \exists\lambda_0:\forall\lambda\geq\lambda_0,\ \theta_\bt(A)\in\CF_\lambda \\
    & \Leftrightarrow \exists\lambda_0:\forall\lambda\geq\lambda_0,\ A\in f_{\emptyset[\bt]}(\CF_\lambda),
\end{align*}
from where the desired convergence follows.

If $f_{\emptyset[\bt]}(\CF)=\emptyset$, then for $A\in\CB$, we have that $\theta_\bt(A)\notin\CF$. Hence, there exists $\lambda_0$ such that for all $\lambda\geq\lambda_0$, $\theta_\bt(A)\notin\CF_\lambda$, that is, $A\notin f_{\emptyset[\bt]}(\CF_\lambda)$. The desired convergence then follows from Lemma \ref{lem:conv.X.empty}.
\end{proof}

\begin{remark}
The above lemma fixes a small gap in \cite{BCM1}, where the topology on $X_\emptyset\cup\{\emptyset\}$ was not described and it was not given a proof that $f_{\emptyset[\bt]}$ as defined above is continuous in the context of labeled spaces.
\end{remark}

\begin{remark}\label{rmk:complete family using f}Let  $\{\CF_n\}_{n \geq 0}$ be a complete family of ultrafilters for $\af \in \CW^{\leq\infty}$. Then the condition defining complete family is equivalent to
$$\CF_n=f_{\af_{1,n}[\af_{n+1}]}(\CF_{n+1})$$
for all $1\leq n<|\alpha|$. Also, it is easy to see that 
$$\CF_n=f_{\af_{1,n}[\af_{n+1,m}]}(\CF_m)$$
for all $1\leq n<m\leq|\af|$.
\end{remark}

The next result classifies the tight filters of finite type. To prove it,  the fact that   $\hat E_{\infty}$ is dense in  $\hat E_{tight}$ and the continuity of the map $f_{\af[\bt]}$ are mainly used. Since the idea of proof is same with \cite[Proposition 6.4]{BCM1}, it is omitted.

\begin{prop}\label{char 1:tight filter of finite type}(\cite[Proposition 6.4]{BCM1}) Let $(\CB, \CL,\theta, \CI_\af)$ be a generalized Boolean dynamical system, $S$ be its associated inverse semigroup and
  $\xi^\af$ be a  filter of finite type in $E(S)$. Then $\xi^\af$ is a tight filter if and only if $\xi_{|\af|}$ is an ultrafilter and at least one of the following condition hold:
\begin{enumerate}
\item There is a net $\{\CF_\ld\}_{\ld \in \Lambda} \subseteq X_\af^{sink}$ converging to $\xi_{|\af|}$.
\item There is a net $\{(t_\ld, \CF_\ld)\}_{\ld \in \Lambda}$, where $t_\ld \in \CL$ and $\CF_\ld \in X_{\af t_\ld}$ for each $\ld \in \Lambda$, such that $\{f_{\af[t_\ld]}(\CF_\ld)\}_{\ld \in \Lambda}$ converges to $\xi_{|\af|}$ and for every $b  \in \CL$ there is $\ld_b \in \Lambda$ such that $t_\ld \neq b$ for all $\ld \geq \ld_b$.
\end{enumerate}
\end{prop}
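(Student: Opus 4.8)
The plan is to prove tightness by the standard route of \emph{approximation by ultrafilters}: since $\hat E_{tight}$ is the closure of $\hat E_\infty$ in $\hat E_0$ (subsection 2.2(5)), a filter $\xi^\af$ is tight precisely when it is the limit of a net of ultrafilters. Throughout I set $n:=|\af|$, and I record at the outset that $\xi_{n}$ being an ultrafilter is forced in both directions by Proposition~\ref{tight gives ultrafilter}, so it may be assumed. The two technical tools I rely on are the classification of ultrafilters in Theorem~\ref{char:ultrafilters} (every ultrafilter is either of finite type with the sink condition at its top, or of infinite type) and the fact that each $Z(\af,A)$ is compact-\emph{open} in $X_\af$, so that testing convergence of the level-$n$ filters against a fixed $A\in\CI_\af$ is a clopen condition; continuity of the maps $f_{\af[\bt]}$ (and, when $\af=\emptyset$, Lemmas~\ref{lem:f.empty.continuous} and~\ref{lem:conv.X.empty}) handles the passage between levels.

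For the ($\Leftarrow$) direction I would manufacture the approximating ultrafilters explicitly. Given a net as in (1), each $\CF_\ld\in X_\af^{sink}$ is, by Theorem~\ref{char:ultrafilters}(i), the top filter of a finite-type ultrafilter $\eta^\af_\ld$ with word exactly $\af$; given a net as in (2), I extend each $\CF_\ld$ by Zorn's Lemma to an ultrafilter $\tilde\eta_\ld$ containing the finite-type filter generated by $(\af t_\ld,\CF_\ld)$, so that $\tilde\eta_\ld$ has word beginning with $\af t_\ld$ and $(\tilde\eta_\ld)_{n}=f_{\af[t_\ld]}(\CF_\ld)$. In either case I check $\eta_\ld\to\xi^\af$ on a basic neighbourhood $U(X,Y)$: the conditions coming from generators $(\af_{1,i},C,\af_{1,i})$ with $i\le n$ reduce, via $C\in(\eta_\ld)_i\Leftrightarrow\theta_{\af_{i+1,n}}(C)\in(\eta_\ld)_n$ and Remark~\ref{rmk:complete family using f}, to the assumed convergence $\CF_\ld\to\xi_n$ (resp.\ $f_{\af[t_\ld]}(\CF_\ld)\to\xi_n$) together with clopenness of $Z(\af,\theta_{\af_{i+1,n}}(C))$; the conditions coming from generators whose word strictly extends $\af$ hold automatically in case (1) (the word of $\eta^\af_\ld$ is just $\af$) and in case (2) are killed by the escape clause, since $Y$ is finite and each offending first letter is eventually avoided by $t_\ld$. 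Density then gives $\xi^\af\in\overline{\hat E_\infty}=\hat E_{tight}$.

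For the ($\Rightarrow$) direction I would first extract from tightness the structural fact that \emph{$\xi_n$ contains no regular set}. Indeed, if $A\in\xi_n\cap\CB_{reg}$, then $\Delta_A$ is finite and $A$ has no nonempty subset with empty $\Delta$, so $Z_A:=\{(\af d,\theta_d(A),\af d):d\in\Delta_A\}$ is a finite subset of $\{x\le(\af,A,\af)\}$ (Lemma~\ref{lem:order}) which one checks is a cover of $(\af,A,\af)$; as every element of $Z_A$ has word of length $n+1$, it misses $\xi^\af$, contradicting tightness via Proposition~\ref{tight filter:how to check}. With this in hand I split on whether $\xi_n\in\overline{X_\af^{sink}}$: if so, a net witnessing this is exactly condition (1). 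If not, there is $A_0\in\xi_n$ with $Z(\af,A_0)\cap X_\af^{sink}=\emptyset$; extending any nonempty sink subset of $A_0$ to an ultrafilter would place it in $X_\af^{sink}\cap Z(\af,A_0)$, so $A_0$ has no nonempty subset with empty $\Delta$, and combining this with singularity of every member of $\xi_n$ forces $\Delta_A$ to be \emph{infinite} for every $A\in\xi_n$ with $A\subseteq A_0$. I then build the net for (2) directly over the directed set of pairs $(A,G)$ with $A\in\xi_n$, $A\subseteq A_0$, and $G\subseteq\CL$ finite, choosing $t_{(A,G)}\in\Delta_A\setminus G$ (possible since $\Delta_A$ is infinite), picking an ultrafilter $\CF_{(A,G)}\in X_{\af t_{(A,G)}}$ containing $\theta_{t_{(A,G)}}(A)\in\CI_{\af t_{(A,G)}}$, and verifying $f_{\af[t_{(A,G)}]}(\CF_{(A,G)})\to\xi_n$ (using primeness of $\xi_n$) together with the escape condition (which holds by construction, since $b\in G$ forces $t_{(A,G)}\neq b$).

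The main obstacle is precisely the production of the net in condition (2) of the forward direction. It is tempting to read $t_\ld$ and $\CF_\ld$ off an arbitrary net of ultrafilters converging to $\xi^\af$, but this fails: such a net may have its first extending letters repeat a fixed $b$ cofinally (the level-$(n+1)$ filters merely escape to ``$\emptyset$''), so the escape clause is \emph{not} automatic and the net must instead be constructed by hand. The construction is powered by the nonobvious consequence of tightness that, off the sink locus, every relevant member of $\xi_n$ emits infinitely many letters; securing this through the cover argument and the no-sink-subset analysis of $A_0$ is the real content. A secondary point requiring care is the case $\af=\emptyset$ with $\CB$ non-unital, where convergence of the level-$0$ filters must be interpreted in $X_\emptyset\cup\{\emptyset\}$ and handled through Lemmas~\ref{lem:conv.X.empty} and~\ref{lem:f.empty.continuous}.
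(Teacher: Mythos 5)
Your proposal is correct, and its backward direction is essentially the paper's own route: the paper omits the proof, citing \cite[Proposition 6.4]{BCM1}, whose idea is exactly to manufacture ultrafilters from the data in (1) or (2) and invoke the density of $\hat E_{\infty}$ in $\hat E_{tight}$, together with the (continuity of the) maps $f_{\af[\bt]}$. Your forward direction, however, is genuinely different. The paper's argument runs the density theorem in reverse: a tight $\xi^\af$ is a limit of a net of ultrafilters $\eta^{(\ld)}$; eventually their words extend $\af$, and one splits according to whether the words equal $\af$ cofinally (a subnet then witnesses (1), via Theorem \ref{char:ultrafilters}(i)) or eventually extend $\af$ strictly (one reads off $t_\ld$ as the $(|\af|+1)$-st letter and $\CF_\ld=\eta^{(\ld)}_{|\af|+1}$, so that $f_{\af[t_\ld]}(\CF_\ld)=\eta^{(\ld)}_{|\af|}\to\xi_{|\af|}$). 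You instead bypass density in this direction: you deduce from the cover criterion (Proposition \ref{tight filter:how to check}) that $\xi_{|\af|}$ contains no regular set, split on whether $\xi_{|\af|}\in\overline{X_\af^{sink}}$, and in the negative case build the witnessing net for (2) by hand over the directed set of pairs $(A,G)$. This is valid and more constructive, and as a by-product it proves Theorem \ref{char:tight}(ii) directly (your forward direction is in effect a simultaneous proof of that theorem, whereas the paper derives the no-regular-set condition from the present proposition afterwards); the price is length, since the density theorem does this extraction for free.

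One correction: the ``main obstacle'' you describe is not real, so your deviation is a choice rather than a necessity. If a net of ultrafilters $\eta^{(\ld)}$ converges to $\xi^\af$ and the set of indices whose word begins with $\af b$ were cofinal, then along that subnet every fixed $C\in\CI_{\af b}$ is eventually absent from $\eta^{(\ld)}_{|\af|+1}$, because $(\af b,C,\af b)\notin\xi^\af$. But for any $A\in\xi_{|\af|}$ one has $\theta_b(A)\in\CI_{\af b}$ by Lemma \ref{properties of I}(iii), so eventually $\theta_b(A)\notin\eta^{(\ld)}_{|\af|+1}$, i.e. $A\notin f_{\af[b]}(\eta^{(\ld)}_{|\af|+1})=\eta^{(\ld)}_{|\af|}$, contradicting convergence at level $|\af|$ (note $\xi_{|\af|}\neq\emptyset$ since it is an ultrafilter). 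Thus the escape clause in (2) is automatic for any approximating net of ultrafilters, even when $\CI_{\af b}$ has no unit: the unit in \cite{BCM1} is a convenience, not what makes the read-off argument work, and the paper's appeal to the same proof idea is legitimate in the generalized setting.
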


 Combining  Proposition \ref{char 1:tight filter of finite type},  Proposition \ref{tight gives ultrafilter} and Theorem  \ref{char:ultrafilters}, we have the following description of the tight filters in $E$.

\begin{thm}\label{char:tight}(\cite[Theorem 6.7]{BCM1})  Let $(\CB, \CL,\theta, \CI_\af)$ be a generalized Boolean dynamical system and $S$ be its associated inverse semigroup. Then the tight filters in $E(S)$ are :
\begin{enumerate}
\item[(i)] The ultrafilters of infinite type.
\item[(ii)] The filters of finite type $\xi^\af$ such that $\xi_{|\af|}$ is an ultrafilter and   $A\notin\CB_{reg}$ for all $A \in \xi_{|\af|}$.
\end{enumerate}
\end{thm}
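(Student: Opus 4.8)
The plan is to split the tight filters according to the finite/infinite type dichotomy introduced after Theorem~\ref{filter-bijective-pair}, and to treat the two cases by combining Proposition~\ref{tight gives ultrafilter} (tightness forces each $\xi_n$ to be an ultrafilter), Theorem~\ref{char:ultrafilters} (the classification of ultrafilters), and the two general facts recorded in the preliminaries: every ultra-character is tight, i.e.\ $\hat E_{\infty}\subseteq\hat E_{tight}$, and $\hat E_{tight}$ is \emph{closed} in $\hat E_0$. The infinite-type case is then immediate: if $\xi^\af$ is tight of infinite type, then Proposition~\ref{tight gives ultrafilter} gives that $\xi_n$ is an ultrafilter for every $n>0$ and that $\xi_0$ is an ultrafilter or empty, so $\xi^\af$ is an ultrafilter by Theorem~\ref{char:ultrafilters}(ii); conversely every ultrafilter is tight since $\hat E_{\infty}\subseteq\hat E_{tight}$. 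Hence the tight filters of infinite type are exactly the ultrafilters of infinite type, which is item~(i).

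For the finite-type case, tightness again forces $\xi_{|\af|}$ to be an ultrafilter by Proposition~\ref{tight gives ultrafilter}, so it remains to prove, for finite-type $\xi^\af$ with $\xi_{|\af|}$ an ultrafilter, that $\xi^\af$ is tight if and only if $A\notin\CB_{reg}$ for all $A\in\xi_{|\af|}$. I would prove the forward implication by contraposition using Proposition~\ref{tight filter:how to check}. If some $A_0\in\xi_{|\af|}$ is regular, then $\Delta_{A_0}$ is finite and every nonempty subset of $A_0$ emits a label, and I claim $Z=\{(\af\bt,\theta_\bt(A_0),\af\bt):\bt\in\Delta_{A_0}\}$ is a finite cover of $x=(\af,A_0,\af)\in\xi^\af$: given $0\neq y=(\af\bt',D,\af\bt')\leq x$, regularity yields $\bt\in\Delta_D\subseteq\Delta_{A_0}$ when $\bt'=\emptyset$, and one takes $\bt=\bt'_1\in\Delta_{A_0}$ when $\bt'\neq\emptyset$, so that $z\cdot y\neq 0$ by the multiplication rule in $S$. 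Since the word of $\xi^\af$ is $\af$ while every element of $Z$ has strictly longer word, $Z\cap\xi^\af=\emptyset$, whence $\xi^\af$ is not tight.

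The converse is the crux. Assuming $\xi_{|\af|}$ is an ultrafilter all of whose elements are singular, I would construct a net of \emph{ultrafilters} of $E(S)$ converging to $\xi^\af$ in $\hat E_0$ and conclude by closedness of $\hat E_{tight}$. Index the net by pairs $(A,F)$ with $A\in\xi_{|\af|}$ (ordered by reverse inclusion) and $F\subseteq\CL$ finite (ordered by inclusion). Singularity of $A$ furnishes $\emptyset\neq B\subseteq A$ with $\ld_B\in\{0,\infty\}$: if $\ld_B=0$, then $(\af,B,\af)$ generates a filter that extends, by Zorn, to a finite-type ultrafilter with $\xi'_{|\af|}\in X_\af^{sink}$; if $\ld_B=\infty$, I choose an escaping label $t\in\Delta_B\setminus F$ and extend $(\af t,\theta_t(B),\af t)$ to an ultrafilter. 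Each approximant lies in $\hat E_{\infty}\subseteq\hat E_{tight}$ by Theorem~\ref{char:ultrafilters}, and these are precisely the witnesses appearing in conditions~(1) and~(2) of Proposition~\ref{char 1:tight filter of finite type}. The main obstacle is verifying convergence, i.e.\ that for every $g=(\gm,C,\gm)\in E(S)$ one eventually has $g$ in the approximant exactly when $g\in\xi^\af$: capturing elements of $\xi^\af$ uses upward closure of the approximants, while excluding non-elements uses primeness of $\xi_{|\af|}$ to produce a disjoint witness $A^*\in\xi_{|\af|}$ together with the product formula in $S$ to force orthogonality, and, for words diverging immediately after $\af$, the escape condition $t\notin F$. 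Once convergence is established, $\xi^\af$ is a limit of tight filters, hence tight, which completes item~(ii).
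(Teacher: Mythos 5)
Your proof is correct, but it takes a genuinely different route from the paper's in both directions of item (ii) (item (i) is handled the same way in both: Proposition~\ref{tight gives ultrafilter}, Theorem~\ref{char:ultrafilters}, and $\hat E_{\infty}\subseteq\hat E_{tight}$). For the forward implication the paper does not build a cover at all: it invokes the net characterization of Proposition~\ref{char 1:tight filter of finite type} and reads off singularity of each $A\in\xi_{|\af|}$ from the two kinds of approximating nets (a sink net produces $\emptyset\neq A\cap C\subseteq A$ with $\Delta_{A\cap C}=\emptyset$, while an escaping-label net forces $|\Delta_A|=\infty$). Your contrapositive argument --- the explicit finite cover $Z=\{(\af\bt,\theta_\bt(A_0),\af\bt):\bt\in\Delta_{A_0}\}$ of $(\af,A_0,\af)$, which lies in $\downarrow(\af,A_0,\af)$, meets every nonzero element below it, and misses $\xi^\af$ for word-length reasons --- is valid and needs only Proposition~\ref{tight filter:how to check}. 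For the converse the roles are exactly swapped: the paper works directly with covers, showing that any finite cover of $(\af,A,\af)$ must contain elements of the form $(\af,D_i,\af)$, that $D_1\cup\cdots\cup D_n\in\xi_{|\af|}$ (via a disjoint-witness contradiction using singularity of $A\cap C$), and then primeness of the ultrafilter $\xi_{|\af|}$ puts some $D_i$ in $\xi_{|\af|}\subseteq$ elements of $\xi^\af$; you instead approximate $\xi^\af$ by a net of ultrafilters and invoke closedness of $\hat E_{tight}$ in $\hat E_0$. Your net does converge as claimed: capture of $(\gm,C,\gm)\in\xi^\af$ follows from upward closure because every approximant indexed by $(A,F)$ contains some $(\af,B,\af)$ with $B\subseteq A$; exclusion of $(\gm,C,\gm)\notin\xi^\af$ with $\gm$ a beginning of $\af$, say $\af=\gm\af''$, follows from the disjoint witness $A^*\in\xi_{|\af|}$ for $\theta_{\af''}(C)$ and the product formula; words strictly extending $\af$ are excluded by the escape condition $t\notin F$ (in the sink case the approximant's word is exactly $\af$); and incomparable words are excluded automatically since elements of a filter have comparable words. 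The trade-off: the paper's converse is purely order-theoretic, using no topology on the spectrum, whereas your argument in effect reproves the sufficiency half of Proposition~\ref{char 1:tight filter of finite type} (your approximants realize its conditions (1) and (2), possibly mixed along the net, which is why you appeal to closedness rather than to the proposition itself); consequently your proof of (ii) is entirely independent of Proposition~\ref{char 1:tight filter of finite type}, whose proof this paper omits and defers to \cite{BCM1}, making your route more self-contained at the price of the topological inputs ($\hat E_{tight}$ closed, ultra-characters tight).
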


\begin{proof}(i) If follows by Proposition \ref{tight gives ultrafilter} and Theorem  \ref{char:ultrafilters}

(ii) We prove that $\xi^\af$ is a tight filter of finite type if and only if $\xi_{|\af|}$ is an ultrafilter and  for each $A \in \xi_{|\af|}$ we have $A\notin\CB_{reg}$:

($\Rightarrow$) We only need to show that $A\notin\CB_{reg}$ for each $A \in \xi_{|\af|}$. Let  $A \in \xi_{|\af|}$.
Suppose first that  $\xi^\af$ is a tight filter such that there is a net $\{\CF_\ld\}_{\ld \in \Lambda} \subseteq X_\af^{sink}$ converging to $\xi_{|\af|}$.  If $|\Delta_A| =\infty$, there is nothing to prove. Suppose $|\Delta_A| < \infty$. Since $A \in \xi_{|\af|}$, there is $\ld$ such that $A \in \CF_\ld \in X_{\af}^{sink}$.  By the definition of $X_{\af}^{sink}$, for each $\bt \in \Delta_A$ there is $C_\bt \in \CF_\ld$ such that $\theta_\bt(C_\bt) =\emptyset$. Since  $|\Delta_A| < \infty$, if $C:=\cap C_\bt$, then $C \in \CF_\ld$. It then follows that $\theta_\bt(A \cap C)=\emptyset$ for all $\bt \in \CL$, and hence, $\emptyset \neq A \cap C \subseteq A$ and $\Delta_{A \cap C} =\emptyset $. Thus, $A \notin \CB_{reg}$.

Next, suppose that  $\xi^\af$ is a tight filter as in part (2) of Proposition \ref{char 1:tight filter of finite type}. There then exists $\ld_0$ such that $A \in f_{\af[t_\ld]}(\CF_\ld)$ for all $\ld \geq \ld_0$. It means that $\theta_{t_\ld}(A) \neq \emptyset$ for all $\ld \geq \ld_0$. Since  $\{t_\ld\}_\ld$ converges to infinity in $\CL$, we have $|\Delta_A| =\infty$. Thus,  $A\notin\CB_{reg}$.

 ($\Leftarrow$) To check $\xi^\af$ is tight, we use Proposition \ref{tight filter:how to check}. First observe that it is enough to consider $x \in \xi^\af$ of the form $x=(\af,A, \af )$. Indeed, if $x=(\af',A,\af') \in \xi^\af$, where $\af=\af'\af''$, and $Z$ is a finite cover for $x$, we define $x'=(\af, \theta_{\af''}(A),\af)$ and $Z'=\{zx': z \in Z\} \setminus \{0\}$ (which is a finite cover of $x'$). Then it is easy to see that if $Z' \cap \xi^\af \neq \emptyset$ then $Z \cap \xi^\af \neq \emptyset$.

         Consider $x=(\af,A,\af) \in \xi^\af$ and let $Z$ be a finite cover for $x$. We claim that there exists $\emptyset \neq D \in \CI_\af$ such that $(\af,D, \af) \in Z$. Since $A \in \xi_{|\af|}$, $A \notin \CB_{reg}$ by our assumption. There then exists $\emptyset \neq B \subseteq A $ such that either $|\Delta_B|=0$ or $|\Delta_B|=\infty$. If  $|\Delta_B|=\infty$, then $|\Delta_A|=\infty$. So, there are infinitely many elements of the form $y_i=(\af b_i, \theta_{b_i}(A), \af b_i)$ with distinct $b_i's$ in $\downarrow x$. Since $Z$ is a cover for $x$, we see that, for each $y_i$, there exists $z_i=(\af\bt_i, C, \af\bt_i) \in Z$ such that $y_i z_i \neq 0$.
Also, since $Z$ is finite and $\af\bt_j$ and $\af b_i$ are comparable, there must exist
 $(\af,D, \af) \in Z$ for some $\emptyset \neq D \in \CI_\af$. 
 If $|\Delta_B|=0$, then $y=(\af,B,\af) \in \downarrow x$. Thus,  an element of $Z$ which intersect $y$  must be of the form $(\af,D,\af) \in Z$ for some $\emptyset \neq D \in \CI_\af$.

Now let $D_1, \cdots, D_n$ be all the sets in $\CI_\af$ such that $(\af,D_i,\af) \in Z$. As we have seen above, $n \geq 1$. We claim that $D_1 \cup  \cdots \cup D_n \in \xi_{|\af|}$. If not, since $\xi_{|\af|}$  is an ultrafilter there exists $C \in \xi_{\af}$ such that $C \cap (D_1 \cup  \cdots \cup D_n) =\emptyset$. Now $A \cap C \in \xi_{|\af|}$ so that $A \cap C \notin \CB_{reg}$. Arguing as above it can be shown that there exists $D$ such that $(\af,D,\af) $ is in $Z$ and intersect $(\af, A\cap C, \af)$, which is a contradiction.

Finally, since $\xi_{|\af|}$  is an ultrafilter and  $D_1 \cup  \cdots \cup D_n \in \xi_{|\af|}$, we have that $D_i \in \xi_{|\af|}$ for some $i=1, \cdots, n$. It then follows that $\xi^\af \cap Z \neq \emptyset$. 
\end{proof}

\section{filter surgery in \texorpdfstring{$E(S)$}{E(S)}}\label{filter surgery}
In this section,  we extend the results in \cite[Section 4]{BCM2} to generalized Boolean dynamical systems; we generalize two operations, namely gluing paths and cutting paths on a usual directed graph $\CE$, to the context of generalized Boolean dynamical systems.
Gluing paths on $\CE$ is just concatenation of paths and cutting paths is just removing the initial segment from a path. As mentioned in \cite[Section 4]{BCM2}, in the context of generalized Boolean dynamical systems, we have an extra layer of complexity because filters in $E(S)$  are described not only by a path but also by a complete family of filters associated with it.  
 When we cut or glue paths, the Boolean algebras where each filter on the family lies change.
 
We begin with the problem of describing new filters by gluing paths. 
Recall that in the  generalized Boolean dynamical system  $(\CB, \CA, \theta, \CI_{r(\af)})$ associated to a labeled  space $(\CE,\CL,\CB)$, we  have an ultrafilter in $\CI_{r(\af\bt)}$ from  an ultrafilter $\CF \in X_{\bt}$ by cutting the elements of $\CF$ by $r(\af\bt)$ (\cite[Lemma 4.2]{BCM2}). We  revisit this case  with a slightly different perspective in the following.

\begin{remark} Let  $(\CE,\CL,\CB)$ be a  labeled space and $(\CB, \CA, \theta, \CI_{r(\af)})$ be the generalized Boolean dynamical system associated to $(\CE,\CL,\CB)$. Suppose that $\CF$ is an ultrafilter in $\CI_{r(\bt)}$ and $\af \in \CL(\CE^n)$. 
Then  it is easy to see the following.
\begin{enumerate}
\item $r(\af\bt) \in \CF$ if and only if $\CF \cap \CI_{r(\af\bt)} \neq \emptyset$ if and only if $\CF \cap \theta_{\af_{2,n}\bt}(\CI_{r(\af_1)}) \neq \emptyset$.
\item(\cite[Lemma 4.2]{BCM2}) We have that $r(\af\bt) \in \CF$ if and only if
\begin{align*} \CJ&:=\{C \cap r(\af\bt): C \in \CF\} \\
&=\CF \cap \CI_{r(\af\bt)}
 \end{align*} is an ultrafilter in $\CI_{r(\af\bt)}$.
\end{enumerate}
\end{remark}

Motivated the above remark, we have the following.

\begin{prop}\label{A}
Let $\CF \in X_\bt$ and  $\af \in \CL^{\geq 1}$  such that $\af\bt \in \CW^*$. Then we have   $\CF \cap \CI_{\af\bt}\neq \emptyset$ if and only if  $ \CF \cap \CI_{\af\bt}$
 is an ultrafilter in $\CI_{\af\bt}$.
\end{prop}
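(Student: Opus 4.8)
The plan is to prove the non-trivial implication and dispatch the converse in one line. For the converse ($\Leftarrow$), if $\CF\cap\CI_{\af\bt}$ is an ultrafilter then it is in particular a filter, and filters are non-empty by Definition~\ref{filter}, so $\CF\cap\CI_{\af\bt}\neq\emptyset$. All the content is in the forward direction, so set $\CJ:=\CF\cap\CI_{\af\bt}$ and assume $\CJ\neq\emptyset$. I would first record the two structural facts that drive everything: by Lemma~\ref{properties of I}(ii) we have $\CI_{\af\bt}\subseteq\CI_\bt$, and, being an ideal, $\CI_{\af\bt}$ is a Boolean subalgebra of $\CB$ (in particular downward closed, and closed under the lattice operations, which moreover agree with those of $\CB$). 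Since by hypothesis $\CF$ is an ultrafilter—hence a filter—in the larger Boolean algebra $\CI_\bt$, these two facts are exactly what let me restrict $\CF$ to $\CI_{\af\bt}$.

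Next I would verify directly that $\CJ$ is a filter in $\CI_{\af\bt}$. It is non-empty by assumption, and $\emptyset\notin\CJ$ because $\emptyset\notin\CF$. If $x\in\CJ$ and $x\subseteq y$ with $y\in\CI_{\af\bt}$, then $y\in\CF$ by upward closure of $\CF$ inside $\CI_\bt$ (valid since $y\in\CI_\bt$), whence $y\in\CJ$. Finally, if $x,y\in\CJ$, then $x\cap y\in\CF$ (as $\CF$ is a filter) and $x\cap y\in\CI_{\af\bt}$ (as $\CI_{\af\bt}$ is an ideal), so $x\cap y\in\CJ$; this is condition (iii) in semilattice form.

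To upgrade $\CJ$ to an ultrafilter I would use the equivalence ultrafilter $\Longleftrightarrow$ prime filter recorded in the preliminaries, which reduces the task to primeness. Given $B,B'\in\CI_{\af\bt}$ with $B\cup B'\in\CJ$, the element $B\cup B'$ is the same whether computed in $\CI_{\af\bt}$ or in $\CI_\bt$, and it lies in $\CF$; since $\CF$ is prime in $\CI_\bt$ and $B,B'\in\CI_\bt$, we get $B\in\CF$ or $B'\in\CF$, and in either case the chosen element already lies in $\CI_{\af\bt}$, hence in $\CJ$. Thus $\CJ$ is prime, i.e. an ultrafilter in $\CI_{\af\bt}$.

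The main obstacle is not conceptual but a matter of bookkeeping across the nested Boolean algebras $\CI_{\af\bt}\subseteq\CI_\bt\subseteq\CB$: one must make sure that $\cup$, $\cap$, and $\subseteq$ are computed consistently, since it is precisely this consistency that allows the primeness of $\CF$ in $\CI_\bt$ to be inherited by $\CJ$ in $\CI_{\af\bt}$. Routing the final step through the ultrafilter $=$ prime characterization rather than the maximality definition is what keeps the argument short; establishing maximality of $\CJ$ by hand would be considerably more awkward.
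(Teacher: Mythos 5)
Your proposal is correct and follows essentially the same route as the paper's own proof: verify directly that $\CF\cap\CI_{\af\bt}$ is a filter in $\CI_{\af\bt}$ (using $\CI_{\af\bt}\subseteq\CI_\bt$ and the ideal property), then establish that it is an ultrafilter via primeness inherited from $\CF$ being prime in $\CI_\bt$, with the converse dispatched by non-emptiness of filters. The only cosmetic difference is that you make the bookkeeping of lattice operations across the nested algebras explicit, which the paper leaves implicit.
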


\begin{proof} ($\Rightarrow$) Clearly, $\emptyset \notin \CF \cap  \CI_{\af\bt}$. Let $A \in \CF \cap  \CI_{\af\bt}$ and $B \in \CI_{\af\bt}$ with $A \subseteq B$.  
Then, $B \in \CF$ since $\CF \ni A \subseteq B (\in \CI_{\bt})$ and  $\CF$ is a filter in $\CI_\bt$. Thus, $B \in \CF \cap  \CI_{\af\bt}$.
Let $B_1, B_2 \in \CF \cap  \CI_{\af\bt}$. Then, clearly $B_1 \cap B_2 \in \CF$. We also have that 
$B_1 \cap B_2 \in \CI_{\af\bt}$ since $\CI_{\af\bt}$ is an ideal. So, $B_1 \cap B_2 \in \CF \cap  \CI_{\af\bt}$.

 To show that $\CF \cap  \CI_{\af\bt}$ is an ultrafilter in $\CI_{\af\bt}$, let $B_1 \cup B_2 \in \CF \cap  \CI_{\af\bt}$ for some $B_1, B_2 \in \CI_{\af\bt} (\subseteq \CI_\bt)$.
    Since $B_1 \cup B_2 \in \CF$ and $\CF$ is an ultrafilter in $\CI_\bt$, either $B_1 \in \CF$ or $B_2 \in \CF$.  
  Hence,  either $B_1 \in \CF \cap  \CI_{\af\bt}$ or $B_2 \in \CF \cap  \CI_{\af\bt}$. 
  
  ($\Leftarrow$) If $\CF \cap \CI_{\af\bt} = \emptyset$, then $\CF \cap \CI_{\af\bt}$ is not a filter.
\end{proof}

\begin{remark}\label{tt}  Let $\CF \in X_\bt$ and  $\af \in \CL^{\geq 1}$ such that $\af\bt \in \CW^*$. Then $\CF \cap \CI_{\af\bt} \neq \emptyset$  if and only if $\CF \cap \theta_{\af_{2,|\af|}\bt}(\CI_{\af_1}) \neq \emptyset$. Also, if $\CF \cap \CI_{\af\bt} \neq \emptyset$, then 
$\CF \cap \theta_{\af_{2,|\af|}\bt}(\CI_{\af_1}) \subseteq \CF \cap \CI_{\af\bt}.$
\end{remark}

For $\af \in \CL^{\geq 1}$ and $\bt \in \CL^*$ such that $\af\bt \in \CW^*$, we put
$$X_{(\af)\bt}:=\{\CF \in X_\bt:\CF \cap \CI_{\af\bt}\neq \emptyset  \}.$$
Define a map $g_{(\af)\bt}: X_{(\af)\bt} \to X_{\af\bt}$  by 
$$g_{(\af)\bt}(\CF):=     \CF \cap \CI_{\af\bt}  $$
for each  $\CF \in X_{(\af)\bt}$. Also, for $\af =\emptyset$, define $X_{(\emptyset)\bt}=X_{\bt}$ and let $g_{(\emptyset)\bt}$ denote the identity function on $X_\bt$.

 The following lemmas describe properties of these sets and maps, and how they behave with respect to the maps $f_{\af[\bt]}:X_{\af\bt} \to X_\af$.
 
\begin{lem}\label{for local homeo 2} Suppose that $A \in \CI_\bt$ and $\CF \in X_{(\af)\bt}$. Then $A \in \CF$ if and only if $A \cap D \in g_{(\af)\bt}(\CF)$ for some $D \in \CF \cap \CI_{\af\bt}$.
\end{lem}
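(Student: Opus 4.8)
The plan is to prove both implications directly from the filter and ideal axioms, invoking Proposition~\ref{A} only to guarantee that $g_{(\af)\bt}(\CF)=\CF\cap\CI_{\af\bt}$ is a genuine ultrafilter in $\CI_{\af\bt}$ and, in particular, nonempty. The hypothesis $\CF\in X_{(\af)\bt}$ says precisely that $\CF\cap\CI_{\af\bt}\neq\emptyset$, so throughout the argument there is at least one admissible choice of $D$ available.

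For the forward implication, I would assume $A\in\CF$ and fix any $D\in\CF\cap\CI_{\af\bt}$ (such $D$ exists by the remark above). Since $A,D\in\CF$ and $\CF$ is a filter in $\CI_\bt$, hence closed under finite intersections, we get $A\cap D\in\CF$; in particular $A\cap D\neq\emptyset$, because no filter contains $\emptyset$. On the other hand $A\cap D\in\CI_{\af\bt}$, since $D\in\CI_{\af\bt}$ and $\CI_{\af\bt}$ is an ideal, hence closed under intersection with arbitrary elements of $\CB$. Combining these, $A\cap D\in\CF\cap\CI_{\af\bt}=g_{(\af)\bt}(\CF)$, which is exactly the required conclusion.

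For the backward implication, I would assume $A\cap D\in g_{(\af)\bt}(\CF)$ for some $D\in\CF\cap\CI_{\af\bt}$. Then $A\cap D\in\CF$, and since $A\cap D\subseteq A$ with $A\in\CI_\bt$, upward closure of the filter $\CF$ gives $A\in\CF$. This direction is immediate once one notes that $g_{(\af)\bt}(\CF)\subseteq\CF$ by definition.

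I do not expect any genuine obstacle here; the lemma is essentially bookkeeping. The only point that needs care is to keep the two ambient objects straight—$\CF$ is a filter in $\CI_\bt$, whereas $g_{(\af)\bt}(\CF)$ lives in the smaller ideal $\CI_{\af\bt}$—and to use the ideal property of $\CI_{\af\bt}$ (closure under intersections) so that $A\cap D$ remains in $\CI_{\af\bt}$, together with the fact that $\emptyset$ lies in no filter, which makes the nonemptiness of $A\cap D$ automatic rather than something to be checked separately.
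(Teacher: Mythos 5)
Your proof is correct and follows essentially the same argument as the paper: the forward direction uses closure of $\CF$ under intersections together with the ideal property of $\CI_{\af\bt}$, and the backward direction uses upward closure of $\CF$ in $\CI_\bt$. The extra remarks (nonemptiness of $A\cap D$, invoking Proposition~\ref{A}) are harmless additions to what is otherwise the paper's own proof.
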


\begin{proof} ($\Rightarrow$) Since $\CF \cap \CI_{\af\bt} \neq \emptyset$, one can choose  $D \in \CF \cap \CI_{\af\bt}$. If $A \in \CF$, then $A \cap D \in \CF $ since $\CF$ is closed under finite intersection. Also, $A \cap D \in \CI_{\af\bt}$ since $\CI_{\af\bt}$ is an ideal. Thus,  $A \cap D \in g_{(\af)\bt}(\CF) $.

($\Leftarrow$) Suppose that $A \cap D \in g_{(\af)\bt}(\CF)$ for some $D \in \CF \cap \CI_{\af\bt}$. Since  $A \cap D  \in  \CF$ and $\CF$ is a filter in $\CI_\bt$, it follows that $A \in \CF$.
\end{proof}

\begin{lem}\label{prop:f} Let $\af \in \CL^{\geq 1}$   and $\bt,\gm \in \CL^*$ with $\af\bt\gm \in \CW^*$.  Then
\begin{enumerate}
\item[(i)] $f_{\bt[\gm]}(X_{(\af)\bt\gm}) \subseteq X_{(\af)\bt}$;
\item[(ii)] $f_{\bt[\gm]}^{-1}(X_{(\af)\bt}) \subseteq X_{(\af)\bt\gm}$.
\end{enumerate}
\end{lem}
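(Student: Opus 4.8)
The plan is to reduce both inclusions to a single principle: membership in one of the sets $X_{(\af)\bt}$, $X_{(\af)\bt\gm}$ is witnessed by exhibiting one element of the corresponding ideal $\CI_{\af\bt}$, $\CI_{\af\bt\gm}$ that lies in the ultrafilter in question. Before doing so I would record the bookkeeping forced by $\af\bt\gm\in\CW^*$: we have $\CI_{\af\bt\gm}\neq\{\emptyset\}$, the lemma on proper homomorphisms (applied to the word $\af\bt\in\CL^{\geq1}$ with suffix $\gm$) gives both $\CI_{\af\bt}\neq\{\emptyset\}$ and the properness of $\theta_\gm\colon\CI_{\af\bt}\to\CI_{\af\bt\gm}$, and Lemma~\ref{properties of I}(ii) supplies the containments $\CI_{\af\bt\gm}\subseteq\CI_{\bt\gm}$ and $\CI_{\af\bt}\subseteq\CI_\bt$ used in the ultrafilter arguments. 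The degenerate case $\gm=\emptyset$ is trivial, since there $f_{\bt[\gm]}$ is the identity and $X_{(\af)\bt\gm}=X_{(\af)\bt}$. I would treat $\bt\neq\emptyset$ and $\bt=\emptyset$ uniformly, exploiting that producing an explicit element of an image ultrafilter simultaneously certifies that the image is a genuine ultrafilter (rather than the point $\emptyset\in X_\emptyset\cup\{\emptyset\}$) and that it meets the prescribed ideal.

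For part (ii), which is essentially immediate, take $\CF\in f_{\bt[\gm]}^{-1}(X_{(\af)\bt})$, so that $\CF\in X_{\bt\gm}$ and $f_{\bt[\gm]}(\CF)\cap\CI_{\af\bt}\neq\emptyset$. Choosing $A$ in this intersection gives $A\in\CI_{\af\bt}$ with $\theta_\gm(A)\in\CF$; then Lemma~\ref{properties of I}(iii) yields $\theta_\gm(A)\in\CI_{\af\bt\gm}$, so $\theta_\gm(A)\in\CF\cap\CI_{\af\bt\gm}$ and hence $\CF\in X_{(\af)\bt\gm}$, as required.

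For part (i), take $\CF\in X_{(\af)\bt\gm}$, i.e.\ $\CF\in X_{\bt\gm}$ with $\CF\cap\CI_{\af\bt\gm}\neq\emptyset$, and fix $C\in\CF\cap\CI_{\af\bt\gm}$. Here I would invoke properness of $\theta_\gm\colon\CI_{\af\bt}\to\CI_{\af\bt\gm}$ to produce $A\in\CI_{\af\bt}$ with $C\subseteq\theta_\gm(A)$; since $\theta_\gm(A)\in\CI_{\af\bt\gm}\subseteq\CI_{\bt\gm}$ and $\CF$ is upward closed in $\CI_{\bt\gm}$ with $C\in\CF$, it follows that $\theta_\gm(A)\in\CF$. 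Thus $A\in f_{\bt[\gm]}(\CF)\cap\CI_{\af\bt}$, which gives $f_{\bt[\gm]}(\CF)\in X_{(\af)\bt}$. The one genuinely substantive step---and where I expect the real work to lie---is precisely this appeal to properness: in contrast with (ii), where the witness $\theta_\gm(A)$ is simply read off from the hypothesis, in (i) one must manufacture an element of $\CI_{\af\bt}$ whose $\theta_\gm$-image dominates a prescribed member of $\CF$, and it is exactly the properness of $\theta_\gm\colon\CI_{\af\bt}\to\CI_{\af\bt\gm}$ (valid because $\af\bt\in\CL^{\geq1}$ and $\CI_{\af\bt\gm}\neq\{\emptyset\}$) that secures such an $A$.
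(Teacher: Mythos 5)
Your proof is correct and takes essentially the same route as the paper's: both establish membership in $X_{(\af)\bt}$ (resp.\ $X_{(\af)\bt\gm}$) by exhibiting a witness in the relevant ideal and transporting it across $f_{\bt[\gm]}$ via the factorization $\theta_{\af_{2,|\af|}\bt\gm}=\theta_\gm\circ\theta_{\af_{2,|\af|}\bt}$. The only difference is packaging: where the paper inlines this computation using witnesses of the form $\theta_{\af_{2,|\af|}\bt}(A)$ with $A\in\CI_{\af_1}$, you invoke the paper's properness lemma for part (i) and Lemma~\ref{properties of I}(iii) for part (ii), which amounts to the same argument expressed through already-proved lemmas.
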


\begin{proof}  (i) Put $n :=|\af| \in \N$. Given $\CF' \in X_{(\af)\bt\gm}$, one has $\theta_{\af_{2,n}\bt\gm}(A) \in \CF'$ for some $A \in \CI_{\af_1}$. Since  $\theta_{\af_{2,n}\bt\gm}(A)= \theta_{\gm}(\theta_{\af_{2,n}\bt}(A))$, we see that $$\theta_{\af_{2,n}\bt}(A) \in f_{\bt[\gm]}(\CF')=\{B \in \CI_\bt: \theta_{\gm}(B) \in \CF'\}.$$
Thus, $f_{\bt[\gm]}(\CF') \in X_{(\af)\bt}$.

(ii) If $\CF' \in X_{\bt\gm}$ such that $f_{\bt[\gm]}(\CF') \in X_{(\af)\bt}$, then one  has that
$$\theta_{\af_{2,n}\bt}(A) \in f_{\bt[\gm]}(\CF') =\{B \in \CI_\bt: \theta_\gm(B) \in \CF'\}$$ for some $A \in \CI_{\af_1}$. Thus,  $\theta_{\af_{2,n}\bt\gm}(A) \in \CF'$.
It means that $\CF' \in X_{(\af)\bt\gm}$. 
\end{proof}

\begin{lem}\label{g comp f}  Let $\af \in \CL^{\geq 1}$   and $\bt,\gm \in \CL^*$ with $\af\bt\gm \in \CW^*$.  Then
\begin{enumerate}
\item[(i)] $X_{(\af\bt)\gm} \subseteq X_{(\bt)\gm}$;
\item[(ii)] $g_{(\bt)\gm}(X_{(\af\bt)\gm}) \subseteq X_{(\af)\bt\gm}$;
\item[(iii)] If $\CF \in X_{(\af\bt)\gm}$, then $g_{(\af\bt)\gm}(\CF)=g_{(\af)\bt\gm} \circ g_{(\bt)\gm}(\CF)$;
\item[(iv)] The following diagram is commutative:
$$\xymatrix{ X_{(\af)\bt\gm} \ar[d]_{f_{\bt[\gm]}} \ar[r]^{g_{(\af)\bt\gm}} & X_{\af\bt\gm}\ar[d]^{f_{\af\bt[\gm]}}      \\
 X_{(\af)\bt} \ar[r]_{g_{(\af)\bt}} &  X_{\af\bt}.  }$$
 \end{enumerate}

Also, for  $\af \in \CL^{\geq 1}$   and $\bt \in \CL^*$  with $\af\bt \in \CW^*$, we have 
\begin{enumerate}
 \item[(v)] $g_{(\af)\bt}(X_{(\af)\bt}\cap X_{\bt}^{sink}) \subseteq X_{\af\bt}^{sink}$;
\item[(vi)] $g_{(\af)\bt}:X_{(\af)\bt} \to X_{\af\bt}$ is continuous;
\item[(vii)] $X_{(\af)\bt}$ is an open subset of $X_\bt$.
\end{enumerate}
\end{lem}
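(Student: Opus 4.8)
The plan is to treat the seven items in three groups according to the tools they require. The inclusions and identity in (i)--(iii) are purely order-theoretic consequences of the nesting $\CI_{\af\bt\gm}\subseteq\CI_{\bt\gm}$ from Lemma~\ref{properties of I}(ii). Part (iv) needs in addition the action property $\theta_\gm(\CI_{\af\bt})\subseteq\CI_{\af\bt\gm}$ from Lemma~\ref{properties of I}(iii), and part (v) uses that each $\theta_b$ is a Boolean homomorphism. The topological statements (vi)--(vii) are read off directly from the basis $\{Z(\bt,A):A\in\CI_\bt\}$ of the topology on $X_\bt$. Throughout I would use Proposition~\ref{A} to know that each $g$-image is again an ultrafilter, so that all the maps land in the claimed spaces.

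For (i) I would take $\CF\in X_{(\af\bt)\gm}$, pick $A\in\CF\cap\CI_{\af\bt\gm}$, and observe that $A\in\CI_{\bt\gm}$ by Lemma~\ref{properties of I}(ii), so $\CF\cap\CI_{\bt\gm}\neq\emptyset$, i.e. $\CF\in X_{(\bt)\gm}$. The same element $A$ gives (ii): it lies in $g_{(\bt)\gm}(\CF)=\CF\cap\CI_{\bt\gm}$ and in $\CI_{\af\bt\gm}$, so $g_{(\bt)\gm}(\CF)\cap\CI_{\af\bt\gm}\neq\emptyset$. Part (iii) is the computation $g_{(\af)\bt\gm}(g_{(\bt)\gm}(\CF))=(\CF\cap\CI_{\bt\gm})\cap\CI_{\af\bt\gm}=\CF\cap\CI_{\af\bt\gm}=g_{(\af\bt)\gm}(\CF)$, where the middle equality again uses $\CI_{\af\bt\gm}\subseteq\CI_{\bt\gm}$.

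For the commuting square (iv) I would chase an ultrafilter $\CF\in X_{(\af)\bt\gm}$ along both routes, both of which are well defined ($f_{\bt[\gm]}$ lands in $X_{(\af)\bt}$ by Lemma~\ref{prop:f}(i) and $g_{(\af)\bt\gm}(\CF)\in X_{\af\bt\gm}$ by Proposition~\ref{A}). Going down then right produces $\{B\in\CI_{\af\bt}:\theta_\gm(B)\in\CF\}$, while going right then down produces $\{B\in\CI_{\af\bt}:\theta_\gm(B)\in\CF\cap\CI_{\af\bt\gm}\}$; these coincide because for $B\in\CI_{\af\bt}$ one automatically has $\theta_\gm(B)\in\CI_{\af\bt\gm}$ by Lemma~\ref{properties of I}(iii), so the extra membership condition is vacuous. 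Recognizing that this extra condition is free is the one place where a small observation is needed, so I expect (iv) to be the main obstacle. For (v) I would start from $\CF\in X_{(\af)\bt}\cap X_\bt^{sink}$ and a fixed $b\in\CL$; the sink condition gives $A\in\CF$ with $\theta_b(A)=\emptyset$, but this $A$ need not lie in $\CI_{\af\bt}$, so I would intersect it with some $D\in\CF\cap\CI_{\af\bt}$ (available since $\CF\in X_{(\af)\bt}$). Then $A\cap D\in\CF\cap\CI_{\af\bt}=g_{(\af)\bt}(\CF)$ ($\CF$ is closed under meets and $\CI_{\af\bt}$ under intersections) and $\theta_b(A\cap D)=\theta_b(A)\cap\theta_b(D)=\emptyset$, witnessing $g_{(\af)\bt}(\CF)\in X_{\af\bt}^{sink}$.

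Finally, for the topology: in (vi) I would compute, for $A\in\CI_{\af\bt}$, that $g_{(\af)\bt}^{-1}(Z(\af\bt,A))=\{\CF\in X_{(\af)\bt}:A\in g_{(\af)\bt}(\CF)\}=\{\CF\in X_{(\af)\bt}:A\in\CF\}=X_{(\af)\bt}\cap Z(\bt,A)$, using $A\in\CI_{\af\bt}\subseteq\CI_\bt$; this is open in the subspace topology of $X_{(\af)\bt}$, and since the sets $Z(\af\bt,A)$ form a basis of $X_{\af\bt}$, continuity follows. For (vii) I would write $X_{(\af)\bt}=\bigcup_{\emptyset\neq A\in\CI_{\af\bt}}Z(\bt,A)$: a filter meets $\CI_{\af\bt}$ nontrivially exactly when it contains some nonempty $A\in\CI_{\af\bt}$, and each $Z(\bt,A)$ is open in $X_\bt$, so the union is open.
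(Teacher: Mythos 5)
Your proposal is correct, and on items (i)--(iii), (v) and (vii) it is essentially identical to the paper's proof (same use of Lemma~\ref{properties of I}(ii), the same intersection trick with $D\in\CF\cap\CI_{\af\bt}$ for the sink condition, and the same covering $X_{(\af)\bt}=\bigcup_{\emptyset\neq A\in\CI_{\af\bt}}Z(\bt,A)$). Where you genuinely diverge is in (iv) and (vi). For (iv), the paper computes both composites, proves only the inclusion $g_{(\af)\bt}(f_{\bt[\gm]}(\CF))\subseteq f_{\af\bt[\gm]}(g_{(\af)\bt\gm}(\CF))$ via Lemma~\ref{properties of I}(iii), and then upgrades to equality by maximality of ultrafilters; you instead observe that both composites equal $\{B\in\CI_{\af\bt}:\theta_\gm(B)\in\CF\}$ outright, because the membership $\theta_\gm(B)\in\CI_{\af\bt\gm}$ is automatic for $B\in\CI_{\af\bt}$ --- a direct set equality that dispenses with the maximality step while resting on the same key lemma. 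For (vi), the paper argues with nets: it verifies that $A\in g_{(\af)\bt}(\CF)$ eventually propagates to $g_{(\af)\bt}(\CF_\ld)$, and separately handles $B\notin g_{(\af)\bt}(\CF)$ by invoking the disjointness property of ultrafilters (citing Exel's Lemma 12.3); you instead compute $g_{(\af)\bt}^{-1}(Z(\af\bt,A))=X_{(\af)\bt}\cap Z(\bt,A)$ and conclude continuity from the fact that the $Z(\af\bt,A)$ form a basis (which holds since $Z(\af\bt,A)\cap Z(\af\bt,A')=Z(\af\bt,A\cap A')$). Your preimage argument is shorter, avoids the external citation, and sidesteps the half of the paper's net computation that is not even needed for convergence in this topology; the paper's net formulation has the mild advantage of matching the convergence-style arguments it reuses later (e.g.\ in Lemma~\ref{h comp f}(v) and Theorem~\ref{thm:iso.tight.boundary}).
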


\begin{proof} (i) If $\CF \cap \CI_{\af\bt\gm} \neq \emptyset$, then  $\CF \cap \CI_{\bt\gm} \neq \emptyset$ since $\CI_{\af\bt\gm} \subseteq \CI_{\bt\gm}$ by Lemma \ref{properties of I}(ii).

(ii) Let $\CF \in X_\gm$ so that $\CF \cap \CI_{\af\bt\gm} \neq \emptyset$. Since 
$$(\CF \cap \CI_{\bt\gm} ) \cap \CI_{\af\bt\gm}=\CF \cap  ( \CI_{\bt\gm} \cap \CI_{\af\bt\gm})  = \CF \cap \CI_{\af\bt\gm} \neq \emptyset,$$ we have 
 $g_{(\bt)\gm}(\CF) \in X_{(\af)\bt\gm}$.

(iii)  For   $\CF \in X_{(\af\bt)\gm}$, we see that
$$g_{(\af)\bt\gm} ( g_{(\bt)\gm}(\CF) )=g_{(\af)\bt\gm}(\CF \cap \CI_{\bt\gm}) = (\CF \cap \CI_{\bt\gm}) \cap \CI_{\af\bt\gm} =  \CF \cap  \CI_{\af\bt\gm}=g_{(\af\bt)\gm}(\CF).$$
So, we have the result.

(iv) If   $\CF \in X_{(\af)\bt\gm}$, then  $ g_{(\af)\bt}(f_{\bt[\gm]}(\CF)) =  f_{\bt[\gm]}(\CF) \cap \CI_{\af\bt}$
and 
\begin{align*} f_{\af\bt[\gm]}(g_{(\af)\bt\gm}(\CF))
&=\{D \in \CI_{\af\bt}: \theta_{\gm}(D) \in \CF \cap \CI_{\af\bt\gm}\}.
\end{align*}
Choose $C  \in g_{(\af)\bt}(f_{\bt[\gm]}(\CF)) $. Then  $\theta_\gm(C) \in \CF $. Also, since $C \in \CI_{\af\bt}$, we have $\theta_\gm(C) \in \CI_{\af\bt\gm}$ by Lemma \ref{properties of I}(iii). Thus, 
$\theta_\gm(C) \in \CF \cap \CI_{\af\bt\gm}$. 
 It means that $ g_{(\af)\bt}(f_{\bt[\gm]}(\CF)) \subseteq f_{\af\bt[\gm]}(g_{(\af)\bt\gm}(\CF))$. Then the equality follows since both terms are ultrafilters.

 \vskip0.5pc
(v) If $\CF \in X_{(\af)\bt}\cap X_{\bt}^{sink}$, then $\CF \cap \CI_{\af\bt} \neq \emptyset$ and  for each $\gm \in \CL$, there is $D \in \CF$ such that $\theta_\gm(D)=\emptyset$. 
Choose $A \in \CF \cap \CI_{\af\bt} $. Then $A\cap D \in \CF \cap \CI_{\af\bt}$ and $\theta_\gm(A\cap D)=\theta_\gm(A) \cap \theta_\gm(D)=\emptyset$. So,   $g_{(\af)\bt}(\CF) (=\CF \cap \CI_{\af\bt}) \in X_{\af\bt}^{sink} $.

(vi) Let  $\{\CF_\ld\}_\ld \subseteq X_{(\af)\bt}$ be  a net  converging to $\CF \in X_{(\af)\bt}$. We show that $\{g_{(\af)\bt}(\CF_\ld)\}_\ld$ converges to $g_{(\af)\bt}(\CF)$. Let  $A \in g_{(\af)\bt}(\CF)$ be arbitrary.  Since $\{\CF_\ld\}_\ld$ converges to $\CF$, one can choose $\ld_0 \geq 1$ so that $\ld \geq \ld_0$ implies  $ A \in \CF_\ld$. Thus, $A \in g_{(\af)\bt}(\CF_\ld) $. 

If $B \in \CI_{\af\bt} \setminus g_{(\af)\bt}(\CF)$, then by \cite[Lemma 12.3]{Ex1} there exists $A\in g_{(\af)\bt}(\CF)$ such that $B \cap A= \emptyset$ since $g_{(\af)\bt}(\CF)$ is an ultrafilter in $\CI_{\af\bt}$. But, since  $A \in \CF$, it follows that $B \notin \CF$. There  then exists  $\ld' \geq 1$ such that $\ld \geq \ld'$ implies $B \notin \CF_\ld$. Hence, for each such $\ld$, there is $C_\ld \in \CF_\ld$ such that $B \cap C_\ld = \emptyset$. 
 Choose $D_\ld \in  \CF_\ld \cap \CI_{\af\bt} (\neq \emptyset)$. Then the element $\emptyset \neq C_\ld \cap D_\ld \in g_{(\af)\bt}(\CF_\ld) $ satisfies 
$$B \cap (C_\ld \cap D_\ld)= \emptyset,$$
from where it can be established that $B \notin g_{(\af)\bt}(\CF_\ld) $ for all $\ld  \geq \ld'$. This says that $\{g_{(\af)\bt}(\CF_\ld)\}_\ld$ converges to $g_{(\af)\bt}(\CF)$.

(vii) Suppose $\CF \in X_{(\af)\bt}$ and let $A\in \CF \cap \CI_{\af\bt} \neq \emptyset$. Then $\CF\in Z(\bt,A)\subseteq X_{(\af)\bt}$.
\end{proof}

We now consider the problem of describing new filters by cutting paths. Let $\CF \in X_{\af\bt}$  be  an ultrafitler for  $\af \in \CL^{\geq 1}$ and $\bt \in \CL^*$ such that $\af\bt \in \CW^*$. Then  $\CF (\subseteq \CI_{\af\bt} \subseteq \CI_{\bt})$ may not be a filter in $\CI_\bt$ since  $\CI_\bt$ may contain elements that are not in $\CI_{\af\bt}$ and  have a subset which is in $\CF$.
 If we add to $\CF$ these elements, the resulting set is an ultrafilter in $\CI_\bt$ as the following result shows.

\begin{prop}\label{cut path} Let $\af \in \CL^{\geq 1}$ and $\bt \in \CL^*$ be such that $\af\bt \in \CW^*$. If $\CF \in X_{\af\bt}$, then 
$\uparrow_{\CI_\bt}\CF \in X_{(\af)\bt}$.
\end{prop}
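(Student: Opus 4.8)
The plan is to unpack the claim $\uparrow_{\CI_\bt}\CF \in X_{(\af)\bt}$ into its two constituent requirements and verify each. Writing $\CG := \uparrow_{\CI_\bt}\CF = \{B \in \CI_\bt : A \subseteq B \text{ for some } A \in \CF\}$, I must show (a) that $\CG$ is an ultrafilter in $\CI_\bt$, so that $\CG \in X_\bt$, and (b) that $\CG \cap \CI_{\af\bt} \neq \emptyset$, which is precisely the defining condition for membership in $X_{(\af)\bt}$. Throughout I would use that $\CF$ is an ultrafilter in $\CI_{\af\bt}$, that $\CI_{\af\bt} \subseteq \CI_\bt$ by Lemma~\ref{properties of I}(ii), and the equivalence ``ultrafilter $=$ prime filter'' recorded in the preliminaries, applied to the Boolean algebras $\CI_{\af\bt}$ and $\CI_\bt$.

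First I would check that $\CG$ is a filter in $\CI_\bt$. Condition (i) holds since $A \subseteq \emptyset$ forces $A = \emptyset \notin \CF$, so $\emptyset \notin \CG$. Condition (ii) is immediate from transitivity of $\subseteq$: if $B \in \CG$ with witness $A \in \CF$ and $B \subseteq B'$ in $\CI_\bt$, then $A \subseteq B'$ gives $B' \in \CG$. For condition (iii), given $B_1, B_2 \in \CG$ with witnesses $A_1, A_2 \in \CF$, the meet $A_1 \cap A_2$ lies in $\CF$ and satisfies $A_1 \cap A_2 \subseteq B_1 \cap B_2$, where $B_1 \cap B_2 \in \CI_\bt$ because $\CI_\bt$ is an ideal; hence $B_1 \cap B_2 \in \CG$.

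The heart of the argument, and the step I expect to be the main obstacle, is promoting $\CG$ from a filter to an ultrafilter by showing it is prime. Suppose $B \cup B' \in \CG$ with $B, B' \in \CI_\bt$, and choose a witness $A \in \CF$ with $A \subseteq B \cup B'$. The key trick is to cut $B$ and $B'$ down into $\CI_{\af\bt}$ by intersecting with $A$: since $A \in \CF \subseteq \CI_{\af\bt}$ and $\CI_{\af\bt}$ is an ideal, both $A \cap B$ and $A \cap B'$ lie in $\CI_{\af\bt}$, and by distributivity their union is $A \cap (B \cup B') = A \in \CF$. Applying primeness of $\CF$ inside $\CI_{\af\bt}$ yields $A \cap B \in \CF$ or $A \cap B' \in \CF$; in the first case $A \cap B \subseteq B$ exhibits $B \in \CG$, and symmetrically $B' \in \CG$ in the second. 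Thus $\CG$ is prime, hence an ultrafilter in $\CI_\bt$, so $\CG \in X_\bt$.

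Finally, for requirement (b) I would simply observe that every $A \in \CF$ satisfies $A \in \CI_{\af\bt} \subseteq \CI_\bt$ and $A \subseteq A$, so $A \in \CG$; thus $\CF \subseteq \CG \cap \CI_{\af\bt}$. Since $\CF$ is nonempty, the intersection $\CG \cap \CI_{\af\bt}$ is nonempty, and by definition $\CG = \uparrow_{\CI_\bt}\CF \in X_{(\af)\bt}$, completing the argument.
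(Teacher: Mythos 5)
Your proof is correct and follows essentially the same route as the paper's: both establish primeness of $\uparrow_{\CI_\bt}\CF$ by taking a witness $A \in \CF$ for $B_1 \cup B_2$, writing $A = (A\cap B_1)\cup(A\cap B_2)$ inside the ideal $\CI_{\af\bt}$, and applying primeness of $\CF$ there, then conclude membership in $X_{(\af)\bt}$ from the inclusion $\CF \subseteq \uparrow_{\CI_\bt}\CF \cap \CI_{\af\bt}$. The only difference is that you spell out the routine filter axioms, which the paper dismisses as easy.
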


\begin{proof} 
 It is easy to see that $\uparrow_{\CI_\bt}\CF=\{B \in \CI_\bt: A \subseteq B ~\text{for some}~A \in \CF\}$ is a filter in $\CI_\bt$. To show that it is an ultrafilter, let $B_1,B_2\in\CI_\bt$ be such that $B_1\cup B_2\in \uparrow_{\CI_\bt}\CF$. Then, there exists $A\in\CF$ such that $A\subseteq B_1\cup B_2$, from where it follows that $A=(A\cap B_1)\cup (A\cap B_2)\in\CF$. Since $\CI_{\af\bt}$ is a an ideal and $\CF$ is an ultrafilter in $\CI_{\af\bt}$, we have that either $A\cap B_1\in\CF$ or $A\cap B_2\in\CF$. In the first case $B_1\in \uparrow_{\CI_\bt}\CF$ and in the second case $B_2\in \uparrow_{\CI_\bt}\CF$. Hence $\uparrow_{\CI_\bt}\CF$ is an ultrafilter.

Finally, we notice that $\CF\subseteq \CI_{\af\bt}\cap \uparrow_{\CI_\bt}\CF$, and hence $\uparrow_{\CI_\bt}\CF\in X_{(\af)\bt}$.
\end{proof}

Proposition \ref{cut path} give rises to a function $h_{[\af]\bt}:X_{\af\bt} \to X_{(\af)\bt}$ defined by 
$$h_{[\af]\bt}(\CF)=\uparrow_{\CI_\bt}\CF$$ for each ultrafilter $\CF \in X_{\af\bt}$.

We  examine the properties of these maps.

\begin{lem}\label{h comp f}  Let $\af \in \CL^{\geq 1}$   and $\bt,\gm \in \CL^*$  with $\af\bt\gm \in \CW^*$.  Then
\begin{enumerate}
\item[(i)] $h_{[\bt]\gm} \circ h_{[\af]\bt\gm}=h_{[\af\bt]\gm}$;
\item[(ii)]The following diagram is commutative:
$$\xymatrix{ X_{\af\bt\gm} \ar[d]_{f_{\af\bt[\gm]}} \ar[r]^{h_{[\af]\bt\gm}} & X_{(\af)\bt\gm}\ar[d]^{f_{\bt[\gm]}}      \\
 X_{\af\bt} \ar[r]_{h_{[\af]\bt}} &  X_{(\af)\bt}.  }$$
  \end{enumerate}

Also, for  $\af \in \CL^{\geq 1}$   and $\bt \in \CL^*$  with $\af\bt \in \CW^*$, we have 
\begin{enumerate}
 \item[(iii)]  The functions $h_{[\af]\bt}: X_{\af\bt} \to X_{(\af)\bt}$ and $g_{(\af)\bt}:X_{(\af)\bt} \to X_{\af\bt}$ are mutually inverses;
\item[(iv)] $h_{[\af]\bt}(X_{\af\bt}^{sink}) \subseteq X_{\bt}^{sink}$;
\item[(v)] $h_{[\af]\bt}:X_{\af\bt} \to X_{(\af)\bt}$ is continuous.
 \end{enumerate}
\end{lem}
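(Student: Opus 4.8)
The plan is to treat the five items according to their difficulty: (i) and (iv) are direct manipulations of upward closures inside nested ideals, (iii) establishes the bijection by checking the two composites, and (ii) and (v) carry the real content. Before anything else I would record two elementary facts. First, for any $\CF\in X_{\af\bt}$ one has $\CF\subseteq \uparrow_{\CI_\bt}\CF$, since $\CF\subseteq\CI_{\af\bt}\subseteq\CI_\bt$ by Lemma \ref{properties of I}(ii) and each $A\in\CF$ witnesses its own membership. Second, upward closure is transitive across nested ideals: whenever $\CI_{\af\bt\gm}\subseteq\CI_{\bt\gm}\subseteq\CI_\gm$ (again Lemma \ref{properties of I}(ii)) and $\CF\subseteq\CI_{\af\bt\gm}$, one has $\uparrow_{\CI_\gm}(\uparrow_{\CI_{\bt\gm}}\CF)=\uparrow_{\CI_\gm}\CF$, the inclusion $\supseteq$ using that each $A\in\CF$ already lies in $\CI_{\bt\gm}$, and $\subseteq$ merely chaining $A\subseteq A'\subseteq B$. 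Since $h_{[\bt]\gm}(h_{[\af]\bt\gm}(\CF))=\uparrow_{\CI_\gm}(\uparrow_{\CI_{\bt\gm}}\CF)$ and $h_{[\af\bt]\gm}(\CF)=\uparrow_{\CI_\gm}\CF$, this identity is exactly (i). For (iv), the first fact gives $\CF\subseteq\uparrow_{\CI_\bt}\CF$, so any witness $A\in\CF$ with $\theta_\gm(A)=\emptyset$ demanded by the definition of $X_{\af\bt}^{sink}$ also lies in $\uparrow_{\CI_\bt}\CF$, whence $\uparrow_{\CI_\bt}\CF\in X_\bt^{sink}$.

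For (iii) I would verify both composites directly. The composite $h_{[\af]\bt}(g_{(\af)\bt}(\CF))=\uparrow_{\CI_\bt}(\CF\cap\CI_{\af\bt})$ equals $\CF$ for $\CF\in X_{(\af)\bt}$: the inclusion into $\CF$ holds because $\CF$ is a filter in $\CI_\bt$, and the reverse because any $B\in\CF$ satisfies $B\cap D\in\CF\cap\CI_{\af\bt}$ for a fixed $D\in\CF\cap\CI_{\af\bt}$ with $B\cap D\subseteq B$ (this is precisely Lemma \ref{for local homeo 2}). Conversely, $g_{(\af)\bt}(h_{[\af]\bt}(\CF))=(\uparrow_{\CI_\bt}\CF)\cap\CI_{\af\bt}$ contains $\CF$, and both it and $\CF$ are ultrafilters in $\CI_{\af\bt}$ by Propositions \ref{cut path} and \ref{A}, so they coincide.

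The commutativity (ii) and continuity (v) are where the work lies. For (ii) I would first note the composites are well defined into $X_{(\af)\bt}$ via Lemma \ref{prop:f}(i), then chase $\CF\in X_{\af\bt\gm}$ through both paths; the desired equality reduces to
\begin{equation*}
\{B\in\CI_\bt:\exists A\in\CF,\ A\subseteq\theta_\gm(B)\}=\{B\in\CI_\bt:\exists D\in\CI_{\af\bt},\ \theta_\gm(D)\in\CF,\ D\subseteq B\}.
\end{equation*}
The inclusion $\supseteq$ is immediate from monotonicity of $\theta_\gm$. The inclusion $\subseteq$ is the main obstacle: given $A\in\CF$ with $A\subseteq\theta_\gm(B)$, I must produce a witness $D\in\CI_{\af\bt}$ with $D\subseteq B$ and $\theta_\gm(D)\in\CF$. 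Here I would invoke that $\theta_\gm:\CI_{\af\bt}\to\CI_{\af\bt\gm}$ is a proper Boolean homomorphism (the lemma preceding the introduction of the notation $X_\af$, applicable since $\af\bt\gm\in\CW^*$) to obtain $D_0\in\CI_{\af\bt}$ with $A\subseteq\theta_\gm(D_0)$, and then set $D=D_0\cap B\in\CI_{\af\bt}$; the homomorphism property gives $A\subseteq\theta_\gm(D_0)\cap\theta_\gm(B)=\theta_\gm(D)$, so $\theta_\gm(D)\in\CF$ because $\CF$ is a filter.

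Finally, for (v) I would compute preimages of the basic clopen sets of the subspace $X_{(\af)\bt}\subseteq X_\bt$: for each $B\in\CI_\bt$,
\begin{equation*}
h_{[\af]\bt}^{-1}\big(Z(\bt,B)\cap X_{(\af)\bt}\big)=\{\CF\in X_{\af\bt}:B\in\uparrow_{\CI_\bt}\CF\}=\bigcup\{Z(\af\bt,A):A\in\CI_{\af\bt},\ A\subseteq B\},
\end{equation*}
the last equality holding because $B\in\uparrow_{\CI_\bt}\CF$ means exactly that some $A\in\CF\subseteq\CI_{\af\bt}$ has $A\subseteq B$. This is a union of basic open sets, hence open, so $h_{[\af]\bt}$ is continuous; together with Lemma \ref{g comp f}(vi),(vii) and item (iii) this also exhibits $h_{[\af]\bt}$ and $g_{(\af)\bt}$ as mutually inverse homeomorphisms. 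The degenerate cases in which a word is empty are covered by the stated conventions (e.g. $g_{(\emptyset)\bt}=\mathrm{id}$) and are routine.
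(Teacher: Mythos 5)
Your proof is correct, and it diverges from the paper's precisely at the points you identify as carrying the content. For (i), (ii) and (iii) the paper runs a single uniform shortcut: exhibit one easy inclusion between the two sets in question, then conclude equality because both are ultrafilters in the same ideal (an ultrafilter contained in a filter equals it); in particular, for (ii) the paper proves only $h_{[\af]\bt}\circ f_{\af\bt[\gm]}(\CF)\subseteq f_{\bt[\gm]}\circ h_{[\af]\bt\gm}(\CF)$ --- your ``immediate'' direction --- and never touches the other one. You instead prove (i) and both inclusions of (ii) by direct element chases, with the hard inclusion of (ii) resting on properness of $\theta_\gm:\CI_{\af\bt}\to\CI_{\af\bt\gm}$ (the unnamed lemma of Section \ref{section:tight.filters}) to manufacture the witness $D=D_0\cap B$; this is a genuinely different mechanism, and it is sound (note your witness is automatically nonempty since $\emptyset\neq A\subseteq\theta_\gm(D)$). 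For (v), the paper argues continuity with nets, using \cite[Lemma 12.3]{Ex1} to handle sets outside the filter, whereas you compute $h_{[\af]\bt}^{-1}\bigl(Z(\bt,B)\cap X_{(\af)\bt}\bigr)=\bigcup\{Z(\af\bt,A):A\in\CI_{\af\bt},\ A\subseteq B\}$, a union of basic open sets; this is shorter, avoids nets entirely, and records exactly which cylinders are involved. Your (iii) (second composite) and (iv) coincide with the paper's arguments. The trade-offs: the paper's maximality trick is more economical and needs no properness; your route is more elementary and constructive, exposes properness as the actual reason the square in (ii) commutes, and your preimage formula in (v), combined with (iii) and Lemma \ref{g comp f}(vi)--(vii), immediately upgrades $h_{[\af]\bt}$ to a homeomorphism onto $X_{(\af)\bt}$.
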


\begin{proof}(i) For a given $\CF \in X_{\af\bt\gm}$, since $\CF \subseteq \uparrow_{\CI_{\bt\gm}}\CF$ one obtains 
$$h_{[\af\bt]\gm}(\CF)=\uparrow_{\CI_\gm}\CF \subseteq \uparrow_{\CI_\gm}\big( \uparrow_{\CI_{\bt\gm}}\CF\big)=h_{[\bt]\gm} \circ h_{[\af]\bt\gm}(\CF),$$
which implies these are equal since they are both ultrafilters in $\CI_\gm$.

(ii) If $\CF \in X_{\af\bt\gm}$, then 
\begin{align*} h_{[\af]\bt} \circ f_{\af\bt[\gm]}(\CF)=\{D \in \CI_\bt: \exists C \in \CI_{\af\bt} ~\text{with }~ C \subseteq D ~\text{and}~ \theta_\gm(C) \in \CF\}.
\end{align*}
For a given $D \in h_{[\af]\bt} \circ f_{\af\bt[\gm]}(\CF) $, if $C \in \CI_{\af\bt}$ is as above then $\theta_\gm(D) \subseteq \theta_\gm(C) \in \CI_{\bt\gm}$, so that
$$\theta_\gm(D) \in \uparrow_{\CI_{\bt\gm}}\theta_\gm(C) \subseteq \uparrow_{\CI_{\bt\gm}}\CF = h_{[\af]\bt\gm}(\CF)$$
and therefore $D \in f_{\bt[\gm]}\circ h_{[\af]\bt\gm}(\CF)$. Thus,   $h_{[\af]\bt} \circ f_{\af\bt[\gm]}(\CF) \subseteq f_{\bt[\gm]}\circ h_{[\af]\bt\gm}(\CF)$, and then 
these are equal since they are both ultrafilters in $\CI_\bt$.
\vskip 0.5pc

(iii) First, we show that $\CF=\uparrow_{\CI_\bt} (\CF\cap \CI_{\af\bt}) (=h_{[\af]\bt}\circ g_{(\af)\bt}(\CF))$  for $\CF \in X_{(\af)\bt}$. Clearly, $\uparrow_{\CI_\bt} (\CF\cap \CI_{\af\bt}) \subseteq \CF$. Thus, 
they are equal since both are ultrafilters.

Secondly, we show that $\CF = (\uparrow_{\CI_\bt}\CF) \cap \CI_{\af\bt} (=g_{(\af)\bt}\circ h_{[\af]\bt}(\CF))$ 
 for $\CF \in X_{\af\bt}$.
Clearly, we see that $\CF \subseteq (\uparrow_{\CI_\bt}\CF) \cap \CI_{\af\bt}$ for $\CF \in X_{\af\bt}$. Hence, they are equal since both are ultrafilters.

(iv)   Suppose $\CF \in X_{\af\bt}^{sink}$. Then for each $\gm \in \CL$, there exists $D \in \CF (\in X_{\af\bt})$ such that $\theta_\gm(D) =\emptyset$. Since $D \in \CF \subseteq \uparrow_{\CI_\bt}\CF=h_{[\af]\bt}(\CF)$, it says that for each $\gm \in \CL$, there exists $D \in h_{[\af]\bt}(\CF)$ such that $\theta_\gm(D) =\emptyset$. So, $h_{[\af]\bt}(\CF) \in X_{\bt}^{sink}$.

(v) Consider a net $\{\CF_\ld\}_\ld \subseteq X_{\af\bt}$ that converges to $\CF \in X_{\af\bt}$, and let $D \in \CI_\bt$ be arbitrary.
If $D \in h_{[\af]\bt}(\CF)$, there exists $C \in \CF$ such that $C \subseteq D$.
The convergence of the $\CF_\ld$ says then that there is $\ld_0 \geq 1$ such that $\ld \geq \ld_0$ implies $C \in \CF_\ld$, and hence
$$D \in \uparrow_{\CI_\bt}C \subseteq \uparrow_{\CI_\bt} \CF_\ld= h_{[\af]\bt}(\CF_\ld).$$
On the other hand, if $D \in \CI_\bt \setminus h_{[\af]\bt}(\CF)$, then there exists $C \in h_{[\af]\bt}(\CF)$ such that $C \cap D =\emptyset$ by \cite[Lemma 12.3]{Ex1}. 
What has been just shown above says that there exists $\ld_0 \geq 1$ such that $\ld \geq \ld_0 $ implies $C \in h_{[\af]\bt}(\CF_\ld)$. Thus, $D \notin h_{[\af]\bt}(\CF_\ld)$ if $\ld \geq \ld_0$.  
Thus, the net $\{h_{[\af]\bt}(\CF_\ld)\}_\ld$
converges to $h_{[\af]\bt}$, and hence, $h_{[\af]\bt}$ is continuous. 
\end{proof}
The following will be used to prove Proposition \ref{local homeo on T}.

\begin{lem}\label{for local homeo} Suppose that $A \in \CI_{\af\bt}$ and $\CF \in X_{\af\bt}$. Then $A \in \CF$ if and only if $A \in h_{[\af]\bt}(\CF)$.
\end{lem}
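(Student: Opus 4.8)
The plan is to prove the two implications separately, with the forward one being an immediate inclusion and the backward one being where the hypothesis $A \in \CI_{\af\bt}$ does the real work. Recall that $h_{[\af]\bt}(\CF) = \uparrow_{\CI_\bt}\CF = \CI_\bt \cap \uparrow\CF$.

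For the forward implication, I would simply observe that $h_{[\af]\bt}(\CF)$ always contains $\CF$ itself: if $A \in \CF$, then $A \in \CI_{\af\bt} \subseteq \CI_\bt$ (by Lemma~\ref{properties of I}(ii)), and the relation $A \subseteq A$ exhibits $A$ as an element of $\uparrow\CF$, so $A \in \uparrow_{\CI_\bt}\CF = h_{[\af]\bt}(\CF)$. This needs nothing beyond unwinding the definition of $\uparrow_{\CI_\bt}$.

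For the backward implication, suppose $A \in \CI_{\af\bt}$ and $A \in h_{[\af]\bt}(\CF) = \uparrow_{\CI_\bt}\CF$. By the definition of $\uparrow_{\CI_\bt}\CF$ there is some $A' \in \CF$ with $A' \subseteq A$. Since $\CF$ is a filter in $\CI_{\af\bt}$, since $A' \in \CF$ and $A' \subseteq A$, and crucially since $A \in \CI_{\af\bt}$, the upward-closure axiom (ii) of Definition~\ref{filter} applied \emph{inside} $\CI_{\af\bt}$ yields $A \in \CF$. The single point to watch is exactly this hypothesis $A \in \CI_{\af\bt}$: in general $\uparrow_{\CI_\bt}\CF$ contains many elements of $\CI_\bt$ lying outside $\CI_{\af\bt}$, for which one cannot invoke upward closure of $\CF$ within $\CI_{\af\bt}$; restricting to $A \in \CI_{\af\bt}$ is precisely what makes the argument go through.

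Alternatively, the whole statement drops out of Lemma~\ref{h comp f}(iii): since $g_{(\af)\bt} \circ h_{[\af]\bt} = \mathrm{id}$ on $X_{\af\bt}$, we have $h_{[\af]\bt}(\CF) \cap \CI_{\af\bt} = g_{(\af)\bt}(h_{[\af]\bt}(\CF)) = \CF$, whence for $A \in \CI_{\af\bt}$ one gets $A \in \CF \iff A \in h_{[\af]\bt}(\CF) \cap \CI_{\af\bt} \iff A \in h_{[\af]\bt}(\CF)$. I expect no genuine obstacle here: the lemma is essentially a restatement of the mutual-inverse property already recorded, and I would present the direct two-implication argument for self-containedness.
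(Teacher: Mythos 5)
Your proposal is correct, and your preferred (direct) argument is genuinely more elementary than the paper's own proof. The paper dispatches the forward implication as ``clear'' and proves the backward one exactly as in your alternative: it invokes Lemma~\ref{h comp f}(iii) to write $\CF = g_{(\af)\bt}(h_{[\af]\bt}(\CF)) = h_{[\af]\bt}(\CF) \cap \CI_{\af\bt}$, so that any $A \in h_{[\af]\bt}(\CF)$ that also lies in $\CI_{\af\bt}$ is automatically in $\CF$. Your two-implication argument instead works straight from the definition $h_{[\af]\bt}(\CF) = \uparrow_{\CI_\bt}\CF$: the forward direction is the inclusion $\CF \subseteq \uparrow_{\CI_\bt}\CF$ (using $\CI_{\af\bt} \subseteq \CI_\bt$ from Lemma~\ref{properties of I}(ii)), and the backward direction picks $A' \in \CF$ with $A' \subseteq A$ and applies the upward-closure axiom of Definition~\ref{filter} to the filter $\CF$ \emph{inside} $\CI_{\af\bt}$ --- legitimate precisely because the hypothesis places $A$ in $\CI_{\af\bt}$, as you correctly emphasize. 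What each approach buys: your direct argument uses only that $\CF$ is a filter, never that it is an ultrafilter, so your version of the lemma actually holds for arbitrary filters in $\CI_{\af\bt}$; the paper's route is shorter on the page but rests on Lemma~\ref{h comp f}(iii), whose proof does use maximality (``equal since they are both ultrafilters''). Both are valid; yours is self-contained and marginally more general.
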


\begin{proof} ($\Rightarrow$) It is clear. 

($\Leftarrow$)
Since $\CF=g_{(\af)\bt}(h_{[\af]\bt}(\CF))=h_{[\af]\bt}(\CF)\cap  \CI_{\af\bt}$ by Lemma \ref{h comp f}(iii), it follows that $A  \in \CF$.
\end{proof}

The functions $g_{(\af)\bt}$ and $h_{[\af]\bt}$ can be used to produce tools for working with filters in $E(S)$.
Let us begin with a function for gluing paths to a filter $\xi$ in $E(S)$ using the maps $g_{(\af)\bt}$.
Given  $\af \in \CL^{\geq 1}$ and $\bt \in \CW^{\leq \infty}$, 
we first explain how to construct a complete family of ultrafilters for $\af\bt$
 starting from a complete family  $\{\CF_k\}_k:=\{\CF_k\}_{0 \leq k \leq |\bt|}$ of ultrafilters for $\bt$.

Consider  the case $\bt=\emptyset$. In this case the complete family $\{\CF_k\}_k$ for $\emptyset$ consist of a single filter $\CF_0 \subseteq \CB$. Define
$$\CJ_{|\af|}:=g_{(\af)\emptyset}(\CF_0)=\CF_0 \cap \CI_\af$$
and, for $0 \leq i < |\af| (=: n)$, we recursively set 
\begin{align*} \CJ_{n-1}&:=f_{\af_{1,n-1}[\af_n]}(\CJ_{n}) \\
\CJ_{n-2}&:=f_{\af_{1,n-2}[\af_{n-1}]}(\CJ_{n-1}) \\
& \vdots \\
\CJ_0&:=f_{\emptyset[\af_1]}(\CJ_1) \\
&(=\{A \in \CB: \theta_{\af_1}(A) \in \CJ_{1}\}).
\end{align*}
Now, for the case where $\bt \neq \emptyset$, define for $1 \leq k \leq |\bt|$ (or $k < |\bt|$ if $\bt$ is infinite), 
$$\CJ_{|\af|+k}:=g_{(\af)\bt_{1,k}}(\CF_k).$$

\begin{remark} Note that, in order to this construction make sense, one must have that $\CF_n \in 
X_{(\af)\bt_{1,n}}$ for all $n$, which follows from requiring $\CF_1 \in X_{(\af)\bt_1}$. In fact, if $\CF_1 \in X_{(\af)\bt_1}$, then $
\CF_1 \cap \theta_{\af_{2,|\af|}\bt_1}(\CI_{\af_1}) \neq \emptyset$. Since $\{\CF_n\}_n$ is complete, one can see that $\CF_k \cap \theta_{\af_{2,|\af|}\bt_{1,k}}(\CI_{\af_1}) \neq \emptyset $ for  $1 \leq k \leq |\bt|$ (or $k < |\bt|$ if $\bt$ is infinite).
\end{remark}
Also, letting $n :=|\af|$, we recursively set 
\begin{align*} \CJ_{n}&:=f_{\af[\bt_1]}(\CJ_{n+1}) \\
\CJ_{n-1}&:=f_{\af_{1,n-1}[\af_{n}]}(\CJ_{n}) \\
& \vdots \\
\CJ_0&:=f_{\emptyset[\af_1]}(\CJ_1). 
\end{align*}

\begin{remark}\label{R1} If it is the case that $\CF_0 \neq \emptyset$ in the complete family $\{\CF_n\}_n$, one has $\CJ_{|\af|}=g_{(\af)\emptyset}(\CF_0)$ since 
\begin{align*}  \CJ_{n}=f_{\af[\bt_1]}(\CJ_{n+1}) &=f_{\af\emptyset[\bt_1]} \circ g_{(\af)\emptyset\bt_1}(\CF_1) \\
&=g_{(\af)\emptyset} \circ f_{\emptyset[\bt_1]}(\CF_1)=g_{(\af)\emptyset}(\CF_0).
\end{align*}
\end{remark}

\begin{remark} Since  $f_{\af[\bt\gm]}=f_{\af[\bt]}\circ f_{\af\bt[\gm]}$, we have that
\begin{align*} \CJ_{n-1}&=f_{\af_{1,n-1}[\af_n]}(f_{\af[\bt_1]}(\CJ_{|\af|+1})) \\
&=f_{\af_{1,n-1}[\af_n\bt_1]}(\CJ_{|\af|+1}).
\end{align*}
Similarly, $\CJ_i= f_{\af_{1,i}[\af_{i+1,|\af|}\bt_1]}(\CJ_{|\af|+1})$
for each $0 \leq i < |\af|-1$.
\end{remark}
 We then see that the resulting family $\{\CJ_i\}_i$ is complete for $\af\bt$.

\begin{prop}\label{after gluing} (\cite[Proposition 4.11]{BCM2}) Let $\af \in \CL^{\geq 1}$. 
\begin{enumerate}
\item[(i)] If $\CF \in X_{(\af)\emptyset}$, then $\{\CJ_i\}_{0 \leq i \leq |\af|}$ as above is a complete family of ultrafilters for $\af$.
\item[(ii)] If $\bt \in \CL^{\leq \infty} \setminus \{\emptyset\}$ and $\{\CF_k\}_k$ is a complete family of ultrafilter for $\bt$ such that $\CF_1 \in X_{(\af)\bt_1}$, then 
$\{\CJ_i\}_i$ as above is a complete family of ultrafilters for $\af\bt$.
\end{enumerate}
\end{prop}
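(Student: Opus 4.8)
The plan is to verify the two defining properties of a complete family of ultrafilters for $\af\bt$: that each $\CJ_i$ is an ultrafilter in $\CI_{(\af\bt)_{1,i}}$ (with $\CJ_0$ allowed to be empty), and that the family satisfies the completeness relation, which by Remark \ref{rmk:complete family using f} is equivalent to $\CJ_i = f_{(\af\bt)_{1,i}[(\af\bt)_{i+1}]}(\CJ_{i+1})$ for all $0\leq i<|\af\bt|$. Part (i) is then almost immediate: $\CJ_{|\af|}=\CF\cap\CI_\af$ is an ultrafilter in $\CI_\af$ by Proposition \ref{A} (since $\CF\in X_{(\af)\emptyset}$ forces $\CF\cap\CI_\af\neq\emptyset$), and each $\CJ_i$ with $i<|\af|$ is defined exactly as $f_{\af_{1,i}[\af_{i+1}]}(\CJ_{i+1})$; because these dual maps carry ultrafilters to ultrafilters (and $f_{\emptyset[\af_1]}$ yields an ultrafilter or the empty set), both required properties hold by construction.

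For part (ii), I would first record the domain condition that makes the upper half of the family well defined. From $\CF_1\in X_{(\af)\bt_1}$ and Remark \ref{tt} there is $B\in\CI_{\af_1}$ with $\theta_{\af_{2,|\af|}\bt_1}(B)\in\CF_1$; applying $\theta_{\bt_{2,k}}$ and using completeness of $\{\CF_k\}$ gives $\theta_{\af_{2,|\af|}\bt_{1,k}}(B)\in\CF_k\cap\CI_{\af\bt_{1,k}}$, so $\CF_k\in X_{(\af)\bt_{1,k}}$ for every $k$. Consequently $\CJ_{|\af|+k}=g_{(\af)\bt_{1,k}}(\CF_k)=\CF_k\cap\CI_{\af\bt_{1,k}}$ is an ultrafilter in $\CI_{\af\bt_{1,k}}=\CI_{(\af\bt)_{1,|\af|+k}}$ by Proposition \ref{A}. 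The lower indices are handled as in part (i): $\CJ_{|\af|}=f_{\af[\bt_1]}(\CJ_{|\af|+1})$ lies in $X_\af$ because $\af\neq\emptyset$, and the remaining $\CJ_i$ with $i<|\af|$ are ultrafilters (with $\CJ_0$ possibly empty) as successive $f$-images.

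It then remains to check completeness at every index. For $0\leq i\leq|\af|$ this is built into the recursion: each such $\CJ_i$ is defined precisely as $f_{(\af\bt)_{1,i}[(\af\bt)_{i+1}]}(\CJ_{i+1})$, the junction case $i=|\af|$ being $\CJ_{|\af|}=f_{\af[\bt_1]}(\CJ_{|\af|+1})$. The one substantive point is completeness across the upper indices $i=|\af|+k$, where I must show $g_{(\af)\bt_{1,k}}(\CF_k)=f_{\af\bt_{1,k}[\bt_{k+1}]}(g_{(\af)\bt_{1,k+1}}(\CF_{k+1}))$. This is exactly the commuting square of Lemma \ref{g comp f}(iv) with $\bt,\gm$ replaced by $\bt_{1,k},\bt_{k+1}$, which gives
\[
f_{\af\bt_{1,k}[\bt_{k+1}]}\big(g_{(\af)\bt_{1,k+1}}(\CF_{k+1})\big)=g_{(\af)\bt_{1,k}}\big(f_{\bt_{1,k}[\bt_{k+1}]}(\CF_{k+1})\big),
\]
and completeness of $\{\CF_k\}$ yields $f_{\bt_{1,k}[\bt_{k+1}]}(\CF_{k+1})=\CF_k$, so the right-hand side equals $g_{(\af)\bt_{1,k}}(\CF_k)=\CJ_{|\af|+k}$. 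Converting these $f$-relations back into the completeness condition via Remark \ref{rmk:complete family using f} finishes the argument.

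I expect the only real obstacle to be organizational rather than conceptual: matching the shifted indices of $\af\bt$ with the three regimes of the construction (the lower indices $i<|\af|$, the junction $i=|\af|$, and the upper indices $i>|\af|$) and applying Lemma \ref{g comp f}(iv) with the correct substitution. Once the index bookkeeping is pinned down, every step reduces to a direct appeal to Proposition \ref{A}, Lemma \ref{g comp f}, Remark \ref{tt}, and the completeness of the given family $\{\CF_k\}$.
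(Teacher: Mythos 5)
Your proof is correct and follows essentially the same route as the paper's intended argument: the paper simply defers to \cite[Proposition 4.11]{BCM2}, and your verification --- ultrafilterhood via Proposition \ref{A}, the domain condition $\CF_k\in X_{(\af)\bt_{1,k}}$ via Remark \ref{tt} plus completeness of $\{\CF_k\}_k$, and the completeness relation at the upper indices via the commuting square of Lemma \ref{g comp f}(iv) --- is precisely that argument carried out with the present paper's own lemmas. Nothing is missing; as you note, the relations at the lower indices and at the junction $i=|\af|$ hold by construction of the recursion.
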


\begin{proof} The proof is identical to \cite[Proposition 4.11]{BCM2}. 
\end{proof}

Continuing the discussion above, let $\mathsf{T}_\af$ be the set of all tight filters  in $E(S)$ associated with  $\af \in \CW^{\leq \infty}$.
For $\bt \in \CW^{\leq \infty}$,  let 
$\mathsf{T}_{(\af)\bt}$ denote the subset of $\mathsf{T}_\bt$ given by 
\begin{align*}\mathsf{T}_{(\af)\bt}&:=\{\xi \in \mathsf{T}_\bt: \xi_0 \in X_{(\af)\emptyset}\}\\
&=\{\xi \in \mathsf{T}_\bt: \xi_0  \cap \CI_\af\neq \emptyset    \}.
\end{align*}
Observe that, for $\bt \neq \emptyset$, $\xi_0 \in X_{(\af)\emptyset}$ is equivalent to  $\xi_1 \in X_{(\af)\bt_1}$. 

 The following theorem says that   a filter $\eta^{\af\bt}$ associated with the resulting pair  $(\af\bt, \{\CJ_i\}_i)$  is also tight.  
\begin{thm}\label{gluing gives tight}(\cite[Theorem 4.12]{BCM2}) Let $\af \in \CL^{\geq 1}$ and $\bt \in \CW^{\leq \infty}$. If $\xi \in \mathsf{T}_{(\af)\bt}$ and $\{\CJ_i\}_i$ is the complete family for $\af\bt$ constructed as above from $\{\xi_n\}_n$, then the filter $\eta \in \mathsf{F}_{\af\bt}$ associated with $\{\CJ_i\}_i$ is tight. 
\end{thm}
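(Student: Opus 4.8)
The plan is to verify the tightness of $\eta$ directly against the characterization of tight filters in Theorem~\ref{char:tight}, splitting into the case where $\af\bt$ is an infinite word and the case where it is finite. Before entering the cases, I would record the hypotheses needed to apply Proposition~\ref{after gluing}. Since $\xi$ is tight, Proposition~\ref{tight gives ultrafilter} shows that each $\xi_n$ is an ultrafilter in $\CI_{\bt_{1,n}}$; moreover $\xi\in\mathsf{T}_{(\af)\bt}$ forces $\xi_0\cap\CI_\af\neq\emptyset$, so in particular $\xi_0\neq\emptyset$ and $\xi_0$ is an ultrafilter in $\CB$. Thus $\{\xi_n\}_n$ is a complete family of ultrafilters with $\xi_1\in X_{(\af)\bt_1}$, which is exactly the hypothesis of Proposition~\ref{after gluing}. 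Consequently $\{\CJ_i\}_i$ is a complete family of ultrafilters for $\af\bt$, and $\eta$ is the well-defined filter in $\mathsf{F}_{\af\bt}$ associated with it.

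First I would treat the infinite case, $\bt\in\CW^\infty$, so that $\af\bt$ is infinite and $\eta$ is of infinite type. Because $\{\CJ_i\}_i$ is a complete family of ultrafilters, each $\CJ_n$ with $n>0$ is an ultrafilter, while $\CJ_0=f_{\emptyset[\af_1]}(\CJ_1)$ is either an ultrafilter in $\CB$ or the empty set. By Proposition~\ref{ultrafilter of infinite type} (equivalently Theorem~\ref{char:ultrafilters}(ii)), $\eta$ is then an ultrafilter of infinite type, and such filters are tight by Theorem~\ref{char:tight}(i). This settles the infinite case.

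Next I would treat the finite case, $\bt\in\CW^*$, so that $\eta$ has largest word $\af\bt$ and is of finite type. Its top-level filter is
\[
\eta_{|\af\bt|}=\CJ_{|\af\bt|}=g_{(\af)\bt}(\xi_{|\bt|})=\xi_{|\bt|}\cap\CI_{\af\bt}
\]
(with the convention $\CJ_{|\af|}=\xi_0\cap\CI_\af$ when $\bt=\emptyset$), which is an ultrafilter in $\CI_{\af\bt}$ by Proposition~\ref{A}. The decisive observation is that $\eta_{|\af\bt|}=\xi_{|\bt|}\cap\CI_{\af\bt}\subseteq\xi_{|\bt|}$, so every $A\in\eta_{|\af\bt|}$ already belongs to $\xi_{|\bt|}$. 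Since $\xi$ is a tight filter of finite type, Theorem~\ref{char:tight}(ii) gives $A\notin\CB_{reg}$ for all $A\in\xi_{|\bt|}$, hence for all $A\in\eta_{|\af\bt|}$. Applying the converse direction of Theorem~\ref{char:tight}(ii), $\eta$ is tight.

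The argument is short precisely because Proposition~\ref{after gluing} has already done the work of producing a complete family of ultrafilters; the only genuinely new input is the compatibility exploited in the finite case, namely that gluing the path $\af$ merely intersects the top filter with the ideal $\CI_{\af\bt}$ and therefore cannot manufacture a regular set out of a family all of whose members are singular. The main point requiring care is checking the hypotheses of Proposition~\ref{after gluing}, i.e.\ that $\{\xi_n\}_n$ is a complete family of ultrafilters with $\xi_1\in X_{(\af)\bt_1}$; this is where the membership $\xi\in\mathsf{T}_{(\af)\bt}$ and the tightness of $\xi$ (via Proposition~\ref{tight gives ultrafilter}) are used.
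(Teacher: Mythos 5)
Your proof is correct and follows essentially the same route as the proof the paper invokes (that of \cite[Theorem 4.12]{BCM2}, transported to the generalized Boolean setting): Proposition \ref{after gluing} supplies the complete family of ultrafilters, the infinite case is settled by Theorem \ref{char:ultrafilters} together with Theorem \ref{char:tight}(i), and the finite case by the inclusion $\eta_{|\af\bt|}=\xi_{|\bt|}\cap\CI_{\af\bt}\subseteq\xi_{|\bt|}$ together with both directions of Theorem \ref{char:tight}(ii). Your preliminary verification of the hypotheses of Proposition \ref{after gluing} (tightness giving ultrafilters via Proposition \ref{tight gives ultrafilter}, and $\xi\in\mathsf{T}_{(\af)\bt}$ giving $\xi_0\cap\CI_\af\neq\emptyset$, equivalently $\xi_1\in X_{(\af)\bt_1}$) is likewise the intended one.
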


\begin{proof} The proof is identical to \cite[Theorem 4.12]{BCM2}. 
\end{proof}
Through the above discussion, one  can define a gluing map 
$$G_{(\af)\bt}: \mathsf{T}_{(\af)\bt} \to \mathsf{T}_{\af\bt}$$ taking a tight filter $\xi \in \mathsf{T}_{(\af)\bt} $
to the tight filter $\eta$ given by Theorem \ref{gluing gives tight}. Also, for $\af = \emptyset$, define 
$\mathsf{T}_{(\emptyset)\bt}=\mathsf{T}_\bt$ and let $G_{(\emptyset)\bt}$ be the identity function on $\mathsf{T}_\bt$.

\vskip 1pc

 Next, we define a function for removing paths from a filter $\xi \in E(S)$ using the maps $h_{[\af]\bt}$. First, we explain  how to get a new complete family of ultrafilters for $\bt$ from a complete family of ultrafilters for $\af\bt$ in below.

\begin{lem}\label{after cutting}(\cite[Lemma 4.14]{BCM2}) Let $\af \in \CL^{\geq 1}$ and $\bt \in \CL^{\leq \infty}$ such that $\af\bt \in \CW^{\leq \infty}$. If   $\{\CF_n\}_n$ is the complete family for $\af\bt$, then 
$$\{h_{[\af]\bt_{1,n}}(\CF_{|\af|+n})\}_n$$
is a complete family of ultrafilters for $\bt$.
\end{lem}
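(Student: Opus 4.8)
The plan is to name the candidate family $\CG_n := h_{[\af]\bt_{1,n}}(\CF_{|\af|+n})$ for $0\leq n\leq|\bt|$ and verify directly that it satisfies the definition of a complete family of ultrafilters for $\bt$. First I would check that the construction is legitimate: since $\{\CF_k\}_k$ is a complete family for $\af\bt$, each $\CF_{|\af|+n}$ is an ultrafilter in $\CI_{(\af\bt)_{1,|\af|+n}}=\CI_{\af\bt_{1,n}}=X_{\af\bt_{1,n}}$, which is exactly the domain of $h_{[\af]\bt_{1,n}}$. By Proposition~\ref{cut path}, $\CG_n=\uparrow_{\CI_{\bt_{1,n}}}\CF_{|\af|+n}\in X_{(\af)\bt_{1,n}}\subseteq X_{\bt_{1,n}}$, so each $\CG_n$ is an ultrafilter in $\CI_{\bt_{1,n}}$. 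If $\bt=\emptyset$ the family is the singleton $\{\CG_0\}$ and completeness is vacuous, so I may assume $\bt\neq\emptyset$. Since the dual maps are by definition $f_{\af[\bt]}(\CF)=\{A\in\CI_\af:\theta_\bt(A)\in\CF\}$, the defining completeness condition for $\{\CG_n\}_n$ is precisely the identity $\CG_n=f_{\bt_{1,n}[\bt_{n+1}]}(\CG_{n+1})$ for every $0\leq n<|\bt|$ (using $f_{\emptyset[\bt_1]}$ at the index $n=0$); this is what I must establish.

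For $1\leq n<|\bt|$ the key tool is the commutative square of Lemma~\ref{h comp f}(ii), with its $\bt$ and $\gm$ specialized to $\bt_{1,n}$ and $\bt_{n+1}$ respectively, which reads
$$f_{\bt_{1,n}[\bt_{n+1}]}\circ h_{[\af]\bt_{1,n+1}}=h_{[\af]\bt_{1,n}}\circ f_{\af\bt_{1,n}[\bt_{n+1}]}.$$
Evaluating both sides at $\CF_{|\af|+n+1}$ gives $f_{\bt_{1,n}[\bt_{n+1}]}(\CG_{n+1})=h_{[\af]\bt_{1,n}}\big(f_{\af\bt_{1,n}[\bt_{n+1}]}(\CF_{|\af|+n+1})\big)$. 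Now I would invoke completeness of $\{\CF_k\}_k$ for the word $\af\bt$ at index $|\af|+n\geq 1$; since $(\af\bt)_{1,|\af|+n}=\af\bt_{1,n}$ and $(\af\bt)_{|\af|+n+1}=\bt_{n+1}$, this says $\CF_{|\af|+n}=f_{\af\bt_{1,n}[\bt_{n+1}]}(\CF_{|\af|+n+1})$, so the right-hand side collapses to $h_{[\af]\bt_{1,n}}(\CF_{|\af|+n})=\CG_n$. This is the identity sought at index $n$.

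For the boundary index $n=0$ I would run the same argument with $\bt$ taken to be $\emptyset$ and $\gm$ to be $\bt_1$, so that the square of Lemma~\ref{h comp f}(ii) gives $f_{\emptyset[\bt_1]}\circ h_{[\af]\bt_1}=h_{[\af]\emptyset}\circ f_{\af[\bt_1]}$; evaluating at $\CF_{|\af|+1}$ and using $\CF_{|\af|}=f_{\af[\bt_1]}(\CF_{|\af|+1})$ yields $f_{\emptyset[\bt_1]}(\CG_1)=h_{[\af]\emptyset}(\CF_{|\af|})=\CG_0$. The step requiring care here — and the one I expect to be the main obstacle — is that $f_{\emptyset[\bt_1]}$ takes values in $X_\emptyset\cup\{\emptyset\}$ rather than in $X_\emptyset$, so one must confirm both that the square is valid at $\bt=\emptyset$ and that the common value is a genuine ultrafilter, not $\emptyset$. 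This is guaranteed because $\CG_0=h_{[\af]\emptyset}(\CF_{|\af|})=\uparrow_\CB\CF_{|\af|}$ is an ultrafilter by Proposition~\ref{cut path}; concretely, choosing $B\in\CF_{|\af|+1}\cap\CI_{\af\bt_1}$ and writing $B\subseteq\theta_{\bt_1}(C)$ with $C=\theta_{\af_{2,|\af|}}(D)\in\CI_\af$ (via Definition~\ref{def of I} and Lemma~\ref{properties of I}(i)) exhibits $\theta_{\bt_1}(C)\in\CG_1$, hence $C\in f_{\emptyset[\bt_1]}(\CG_1)$, so this set is nonempty. With the identity verified at every $0\leq n<|\bt|$, the family $\{\CG_n\}_n$ is complete for $\bt$.
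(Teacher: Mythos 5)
Your proof is correct and is essentially the argument the paper relies on: the paper defers to \cite[Lemma 4.14]{BCM2}, and that proof is exactly this verification — each $h_{[\af]\bt_{1,n}}(\CF_{|\af|+n})$ is an ultrafilter by Proposition~\ref{cut path}, and the completeness identity $\CG_n=f_{\bt_{1,n}[\bt_{n+1}]}(\CG_{n+1})$ follows from the commutative square of Lemma~\ref{h comp f}(ii) combined with completeness of $\{\CF_k\}_k$ for $\af\bt$. Your additional care at the index $n=0$ (checking that the square is valid when the lower-right corner is $X_\emptyset\cup\{\emptyset\}$ and that the common value is a genuine ultrafilter) correctly handles, and makes explicit, the implicit reading that the hypothesis concerns a complete family of \emph{ultrafilters}, which is needed for the maps $h_{[\af]\bt_{1,n}}$ to be defined at all.
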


\begin{proof} The proof is identical to \cite[Lemma 4.14]{BCM2}. 
\end{proof}

\begin{thm}\label{cutting gives tight}(\cite[Theorem 4.14]{BCM2}) Let $\af \in \CL^{\geq 1}$ and $\bt \in \CL^{\leq \infty}$ such that $\af\bt \in \CW^{\leq \infty}$. If $\xi \in \mathsf{T}_{\af\bt}$, then the filter $\eta \in \mathsf{F}_\bt$ associated with the complete family $\{h_{[\af]\bt_{1,n}}(\xi_{n+|\af|})\}_n$ for $\bt$ is a tight filter. 
\end{thm}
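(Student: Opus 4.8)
The plan is to read off tightness of $\eta$ from the complete description of tight filters in Theorem~\ref{char:tight}, splitting into its two cases according to whether $\bt$ is finite or infinite (equivalently, since $\af$ is finite, according to whether $\af\bt$ is finite or infinite). Writing $\eta_n:=h_{[\af]\bt_{1,n}}(\xi_{n+|\af|})$ for the members of the complete family defining $\eta$, I would first record the one fact that does almost all the work: by Lemma~\ref{after cutting} the family $\{\eta_n\}_n$ is a complete family of \emph{ultrafilters} for $\bt$, so that each $\eta_n$ is an ultrafilter in $\CI_{\bt_{1,n}}$. This is exactly where the good behaviour of $h_{[\af]\bt}$ on ultrafilters (Proposition~\ref{cut path}) is used, via the identity $(\af\bt)_{1,n+|\af|}=\af\bt_{1,n}$ that makes $\xi_{n+|\af|}\in X_{\af\bt_{1,n}}$ so that $\eta_n\in X_{(\af)\bt_{1,n}}\subseteq X_{\bt_{1,n}}$.

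If $\bt$ is infinite, then $\eta$ is a filter of infinite type and, since each $\eta_n$ (for every $n\geq 0$) is an ultrafilter, Theorem~\ref{char:ultrafilters}(ii) shows that $\eta$ is an ultrafilter of infinite type. By Theorem~\ref{char:tight}(i) such a filter is tight, so nothing more is needed in this case. Note in passing that here $\eta_0=h_{[\af]\emptyset}(\xi_{|\af|})=\uparrow_{\CI_\emptyset}\xi_{|\af|}$ is a genuine ultrafilter rather than the empty set, which is one of the two options allowed by the characterization.

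If $\bt$ is finite, then $\af\bt$ is finite and $\xi\in\mathsf{T}_{\af\bt}$ is a tight filter of finite type; by Theorem~\ref{char:tight}(ii) this means $\xi_{|\af\bt|}$ is an ultrafilter and $A\notin\CB_{reg}$ for every $A\in\xi_{|\af\bt|}$. Since $|\af|+|\bt|=|\af\bt|$, the top member of the family for $\eta$ is $\eta_{|\bt|}=h_{[\af]\bt}(\xi_{|\af\bt|})=\uparrow_{\CI_\bt}\xi_{|\af\bt|}$, which is an ultrafilter in $\CI_\bt$ by the opening observation. It remains to verify the regularity condition of Theorem~\ref{char:tight}(ii): given $A\in\eta_{|\bt|}$, by definition of $\uparrow_{\CI_\bt}$ there is $A'\in\xi_{|\af\bt|}$ with $A'\subseteq A$. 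As $A'\notin\CB_{reg}$, there is $\emptyset\neq B\subseteq A'$ with $\ld_B\in\{0,\infty\}$; then $\emptyset\neq B\subseteq A$ as well, so $A\notin\CB_{reg}$. Hence every element of $\eta_{|\bt|}$ is singular, and Theorem~\ref{char:tight}(ii) yields that $\eta$ is tight.

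The only genuine content is the last step, namely that singularity of a set is inherited by all of its supersets; this is immediate from the definition of regular/singular sets, since a witnessing subset $B$ of $A'$ is automatically a witnessing subset of any $A\supseteq A'$. Everything else is bookkeeping with the index shift $n\mapsto n+|\af|$ together with the already-established fact (Lemma~\ref{after cutting}) that cutting preserves ultrafilters. I therefore anticipate no substantial obstacle beyond matching the cases of Theorem~\ref{char:tight} correctly and keeping the indices aligned.
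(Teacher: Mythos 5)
Your proposal is correct and takes essentially the same route as the paper's proof, which simply defers to \cite[Theorem 4.14]{BCM2}: that argument likewise reads tightness off the characterization in Theorem~\ref{char:tight}, using that cutting preserves complete families of ultrafilters (Lemma~\ref{after cutting}, via Proposition~\ref{cut path}), handling the infinite case by Theorem~\ref{char:ultrafilters}(ii) and the finite case by the ultrafilter-plus-singularity condition together with the observation that singularity passes to supersets. No gaps; your case analysis and index bookkeeping match the intended proof.
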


\begin{proof} The proof is identical to \cite[Theorem 4.14]{BCM2}.
\end{proof}

By the above theorem, one can therefore define a cutting map  
$$H_{[\af]\bt}: \mathsf{T}_{\af\bt} \to \mathsf{T}_{(\af)\bt}$$
by 
$$H_{[\af]\bt}(\xi^{\af\bt})=\eta^\bt$$ where for all $n $ with $0 \leq n \leq |\bt|$,
 \begin{align}\label{H-map}\eta^\bt_n=h_{[\af]\bt_{1,n}}(\xi_{n+|\af|}) \in X_{(\af)\bt_{1,n}}.\end{align}
 For $\af= \emptyset$, define $H_{[\emptyset]\bt}$ to be the identity function over $\mathsf{T}_\bt$.

\begin{lem}\label{G comp G}(\cite[Lemma 4.14 and Lemma 4.16]{BCM2}) Let $\af,\bt \in \CL^{\geq 1}$ and $\gm \in \CL^{\leq \infty}$ such that $\af\bt\gm\in\CW^{\leq\infty}$. Then, 
\begin{enumerate}
\item[(i)] $\mathsf{T}_{(\af\bt)\gm} \subseteq \mathsf{T}_{(\bt)\gm}$;
\item[(ii)] $G_{(\af\bt)\gm}=G_{(\af)\bt\gm}\circ G_{(\bt)\gm}$;
\item[(iii)] $H_{[\bt]\gm} \circ H_{[\af]\bt\gm}= H_{[\af\bt]\gm}.$
\end{enumerate}
\end{lem}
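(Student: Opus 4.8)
The plan is to pass everything through the bijective correspondence of Theorem~\ref{filter-bijective-pair} between tight filters and pairs consisting of a word together with a complete family of ultrafilters. Since in~(ii) all three filters have associated word $\af\bt\gm$ and in~(iii) all have associated word $\gm$, each identity reduces to showing that the associated complete families coincide. I will use repeatedly that $G_{(\af)\bt}$ is assembled from the gluing maps $g_{(\af)\bt_{1,k}}$ and $H_{[\af]\bt}$ from the cutting maps $h_{[\af]\bt_{1,n}}$, together with the index identities $|\af\bt|=|\af|+|\bt|$ and $(\bt\gm)_{1,p+|\bt|}=\bt\gm_{1,p}$. Part~(i) is then immediate: by Lemma~\ref{properties of I}(ii) we have $\CI_{\af\bt}\subseteq\CI_\bt$, so if $\xi\in\mathsf{T}_{(\af\bt)\gm}$, i.e.\ $\xi_0\cap\CI_{\af\bt}\neq\emptyset$, then $\xi_0\cap\CI_\bt\neq\emptyset$ and hence $\xi\in\mathsf{T}_{(\bt)\gm}$.

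For~(iii) I would argue by direct substitution. Let $\xi\in\mathsf{T}_{\af\bt\gm}$ have complete family $\{\xi_m\}_m$, put $\eta=H_{[\af]\bt\gm}(\xi)$ and $\zeta=H_{[\bt]\gm}(\eta)$; the composite is defined because $H_{[\af]\bt\gm}$ maps into $\mathsf{T}_{(\af)\bt\gm}\subseteq\mathsf{T}_{\bt\gm}$. For $0\leq p\leq|\gm|$, formula~\eqref{H-map} gives $\zeta_p=h_{[\bt]\gm_{1,p}}(\eta_{p+|\bt|})$ and $\eta_{p+|\bt|}=h_{[\af](\bt\gm)_{1,p+|\bt|}}(\xi_{p+|\bt|+|\af|})$. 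Substituting $(\bt\gm)_{1,p+|\bt|}=\bt\gm_{1,p}$ and $p+|\bt|+|\af|=p+|\af\bt|$ and then applying Lemma~\ref{h comp f}(i) yields
\[
\zeta_p=h_{[\bt]\gm_{1,p}}\circ h_{[\af]\bt\gm_{1,p}}(\xi_{p+|\af\bt|})=h_{[\af\bt]\gm_{1,p}}(\xi_{p+|\af\bt|})=H_{[\af\bt]\gm}(\xi)_p ,
\]
so $\zeta=H_{[\af\bt]\gm}(\xi)$, which is~(iii).

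For~(ii) my plan is to deduce it from~(iii) by inversion. First I would record that for every $\af\in\CL^{\geq1}$ the maps $G_{(\af)\bt}$ and $H_{[\af]\bt}$ are mutually inverse bijections between $\mathsf{T}_{(\af)\bt}$ and $\mathsf{T}_{\af\bt}$: applying $G_{(\af)\bt}$ to $H_{[\af]\bt}(\xi)$ and evaluating at the index $|\af|+k$ lying over $\bt$ gives $g_{(\af)\bt_{1,k}}(h_{[\af]\bt_{1,k}}(\xi_{k+|\af|}))=\xi_{k+|\af|}$ by Lemma~\ref{h comp f}(iii), and the finitely many indices $\leq|\af|$ then agree by completeness (Remark~\ref{rmk:complete family using f}); the opposite composite is analogous. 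Since $\af\bt$, $\af$ and $\bt$ all lie in $\CL^{\geq1}$, this applies to all three cut/glue pairs in~(iii). Inverting the equality $H_{[\bt]\gm}\circ H_{[\af]\bt\gm}=H_{[\af\bt]\gm}$ of bijections $\mathsf{T}_{\af\bt\gm}\to\mathsf{T}_{(\af\bt)\gm}$ then gives $G_{(\af\bt)\gm}=G_{(\af)\bt\gm}\circ G_{(\bt)\gm}$ on $\mathsf{T}_{(\af\bt)\gm}$, which is~(ii); this inversion simultaneously certifies the well-definedness of the composite, namely that $G_{(\bt)\gm}$ carries $\mathsf{T}_{(\af\bt)\gm}$ into the domain $\mathsf{T}_{(\af)\bt\gm}$ of $G_{(\af)\bt\gm}$ (cf.\ Lemma~\ref{g comp f}(i),(ii)). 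Alternatively, (ii) can be proved directly in the style of~(iii): the indices lying over $\gm$ match by the gluing-composition identity $g_{(\af)\bt\gm_{1,k}}\circ g_{(\bt)\gm_{1,k}}=g_{(\af\bt)\gm_{1,k}}$ of Lemma~\ref{g comp f}(iii), the indices $\leq|\af\bt|$ match by completeness, and the degenerate case $\gm=\emptyset$ is handled at the top index alone.

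The main obstacle is the index bookkeeping in~(ii). Unlike~(iii), where $H$ is given by an explicit closed formula on every index of the complete family, the gluing map $G$ specifies the family explicitly only on the indices over the glued-on tail and fills the remaining indices in recursively through the maps $f_{\af[\bt]}$; ensuring agreement on those recursively defined indices is exactly what forces the appeal to completeness (or, more cleanly, to the inversion argument), and it is also where the case $\gm=\emptyset$ must be treated on its own.
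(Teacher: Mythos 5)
Your proposal is correct. For parts (i) and (iii) your arguments coincide with the paper's: (i) is just the ideal inclusion $\CI_{\af\bt}\subseteq\CI_\bt$ of Lemma~\ref{properties of I}(ii) (the paper phrases this through Lemma~\ref{g comp f}(i) applied at the level of $\xi_0$), and (iii) is exactly the index-by-index computation from formula~\eqref{H-map} together with $h_{[\bt]\gm_{1,p}}\circ h_{[\af]\bt\gm_{1,p}}=h_{[\af\bt]\gm_{1,p}}$, which is what the paper means by ``immediate from Lemma~\ref{h comp f}(i).'' Where you genuinely diverge is part (ii): the paper simply defers to the direct computation of \cite[Lemma 4.13(ii)]{BCM2}, which matches the two glued families index by index via the composition identity $g_{(\af)\bt\gm_{1,k}}\circ g_{(\bt)\gm_{1,k}}=g_{(\af\bt)\gm_{1,k}}$ of Lemma~\ref{g comp f}(iii) plus the downward $f$-recursion forced by completeness (this is your ``alternative'' route), whereas your primary route deduces (ii) from (iii) by inverting the cut/glue bijections. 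That inversion needs the mutual-inverse property $G_{(\af)\bt}=H_{[\af]\bt}^{-1}$, which in the paper only appears afterwards as Theorem~\ref{GH=id}; you correctly avoid any circularity by re-deriving it from Lemma~\ref{h comp f}(iii) and completeness (Remark~\ref{rmk:complete family using f}), an argument independent of Lemma~\ref{G comp G}, and together with Theorem~\ref{filter-bijective-pair} this makes the inversion airtight. What each approach buys: the direct computation is self-contained at the level of complete families but repeats the fiddly bookkeeping on the recursively defined lower indices (including the degenerate case $\gm=\emptyset$); your inversion unifies (ii) with (iii), yields the domain statement that $G_{(\bt)\gm}$ carries $\mathsf{T}_{(\af\bt)\gm}$ into $\mathsf{T}_{(\af)\bt\gm}$ automatically rather than via Lemma~\ref{g comp f}(i),(ii) separately, and proves Theorem~\ref{GH=id} along the way.
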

 
  \begin{proof} (i) It follows by Lemma \ref{g comp f}(i).

(ii)  It is same with  \cite[Lemma 4.13 (ii)]{BCM2}.

(iii) It is immediate from Lemma \ref{h comp f}(i).
\end{proof}

\begin{thm}\label{GH=id}(\cite[Theorem 4.17]{BCM2}) Let $\af \in \CL^{\geq 1}$ and $\bt \in \CW^{\leq \infty}$ such that $\af\bt\in \CW^{\leq \infty}$. Then $H_{[\af]\bt} \circ G_{(\af)\bt}$ and $G_{(\af)\bt} \circ H_{[\af]\bt}$ are the identity maps over $\mathsf{T}_{(\af)\bt}$ and $\mathsf{T}_{\af\bt}$, respectively. 
\end{thm}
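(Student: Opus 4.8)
The plan is to exploit the fact, recorded in Theorem~\ref{filter-bijective-pair}, that a (tight) filter in $E(S)$ is completely determined by its associated word together with its complete family of ultrafilters. Since $G_{(\af)\bt}$ and $H_{[\af]\bt}$ were defined precisely by producing a new complete family and then passing through this bijection, to prove that a composition is the identity it suffices to show that the resulting complete family agrees, index by index, with the one we started from. Once this reduction is made, the entire computation rests on a single fact: by Lemma~\ref{h comp f}(iii) the maps $g_{(\af)\bt_{1,n}}$ and $h_{[\af]\bt_{1,n}}$ are mutually inverse on the relevant ultrafilter spaces.

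First I would treat $H_{[\af]\bt}\circ G_{(\af)\bt}$. Fix $\xi\in\mathsf{T}_{(\af)\bt}$ with complete family $\{\xi_n\}_{0\leq n\leq|\bt|}$. The key preliminary observation is that the glued family $\{\CJ_i\}$ of $G_{(\af)\bt}(\xi)$ satisfies the uniform formula
$$\CJ_{|\af|+n}=g_{(\af)\bt_{1,n}}(\xi_n)\qquad(0\leq n\leq|\bt|).$$
For $n\geq 1$ this is the definition; for $n=0$ it is exactly Remark~\ref{R1}, which applies because membership in $\mathsf{T}_{(\af)\bt}$ forces $\xi_0\cap\CI_\af\neq\emptyset$, hence $\xi_0\neq\emptyset$. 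Cutting then gives, for each $n$,
$$\big(H_{[\af]\bt}(G_{(\af)\bt}(\xi))\big)_n=h_{[\af]\bt_{1,n}}(\CJ_{|\af|+n})=h_{[\af]\bt_{1,n}}\big(g_{(\af)\bt_{1,n}}(\xi_n)\big)=\xi_n$$
by Lemma~\ref{h comp f}(iii), so the two filters have the same complete family and therefore coincide.

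Next I would treat $G_{(\af)\bt}\circ H_{[\af]\bt}$. Fix $\zeta\in\mathsf{T}_{\af\bt}$ with complete family $\{\zeta_m\}$, and write $\eta=H_{[\af]\bt}(\zeta)$, so $\eta_n=h_{[\af]\bt_{1,n}}(\zeta_{n+|\af|})$. For the upper indices the same inverse relation gives, for $0\leq n\leq|\bt|$,
$$\big(G_{(\af)\bt}(\eta)\big)_{|\af|+n}=g_{(\af)\bt_{1,n}}(\eta_n)=g_{(\af)\bt_{1,n}}\big(h_{[\af]\bt_{1,n}}(\zeta_{n+|\af|})\big)=\zeta_{n+|\af|},$$
again by Lemma~\ref{h comp f}(iii); in particular $\CJ_{|\af|}=\zeta_{|\af|}$. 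For the lower indices $0\leq i<|\af|$ the glued family is defined by the $f$-pullback recursion $\CJ_i=f_{\af_{1,i}[\af_{i+1}]}(\CJ_{i+1})$, and the complete family $\{\zeta_m\}$ satisfies the identical recursion $\zeta_i=f_{\af_{1,i}[\af_{i+1}]}(\zeta_{i+1})$ by Remark~\ref{rmk:complete family using f}. A downward induction starting from $\CJ_{|\af|}=\zeta_{|\af|}$ then yields $\CJ_i=\zeta_i$ for all $i$, so again the complete families agree and $G_{(\af)\bt}(H_{[\af]\bt}(\zeta))=\zeta$.

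The genuine mathematical content is entirely contained in Lemma~\ref{h comp f}(iii); everything else is bookkeeping. Accordingly, the main obstacle I anticipate is not conceptual but organizational: keeping the index shift by $|\af|$ straight between the two family constructions, and correctly handling the boundary index $n=0$ (equivalently the case $\bt=\emptyset$). This last point is where one must be careful, since it is the only place where the value $\CJ_{|\af|}$ is produced by a $g$-map rather than appearing directly, and it requires the nonemptiness of $\xi_0$ (resp.\ $\eta_0$) guaranteed by membership in $\mathsf{T}_{(\af)\bt}$ together with Remark~\ref{R1} (which in turn rests on the commuting square of Lemma~\ref{g comp f}(iv)).
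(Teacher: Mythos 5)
Your proof is correct: reducing both compositions to an index-by-index comparison of complete families via the bijection of Theorem \ref{filter-bijective-pair}, and then invoking Lemma \ref{h comp f}(iii) (with Remark \ref{R1} and the nonemptiness of $\xi_0$, resp.\ $\eta_0$, handling the boundary index $|\af|$, and the $f$-recursion handling the indices below $|\af|$) is exactly the argument of \cite[Theorem 4.17]{BCM2}, to which the paper defers. So this is essentially the same approach as the paper's proof, with the details correctly filled in.
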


\begin{proof} It is identical to \cite[Theorem 4.17]{BCM2}. 
\end{proof}

\section{A Boundary path groupoid having the tight spectrum \texorpdfstring{$\mathsf{T}$}{T} as the unit space}\label{section:tight.groupoid}

In this section, we provide a  groupoid model for a generalized Boolean dynamical system and show that it  is isomorphic to the tight groupoid $\CG_{tight}:=\CG_{tight}(S_{(\CB, \CL,\theta, \CI_\af) })$ arising from the inverse semigroup $S_{(\CB, \CL,\theta, \CI_\af) }$  given in section  \hyperref[An inverse semigroup]{3.1}.
We then define a local homeomorphism on the tight spectrum and  show that the defined groupoid is isomorphic to the Renault-Deaconu groupoid for this local homeomorphism. Most of results in this section can be obtained by the same arguments used in \cite{BCM3}. So, we only provide a proof for the results which we need to modify.

In what follows we fix a generalized Boolean dynamical system $(\CB, \CL,\theta, \CI_\af)$ and put  $S:=S_{(\CB, \CL,\theta, \CI_\af) } $ and $E:=E(S)$.

\begin{prop}\label{boundary path groupoid of tight filters}(\cite[Proposition 3.4]{BCM3}) Let $(\CB, \CL,\theta, \CI_\af)$ be a generalized Boolean dynamical system and $S$ be its associated inverse semigroup. Define 
$$\Gamma(\CB, \CL,\theta, \CI_\af)=\{(\eta^{\af\gm}, |\af|-|\bt|, \xi^{\bt\gm}) \in \mathsf{T} \times \Z \times \mathsf{T} : H_{[\af]\gm}(\eta^{\af\gm})=H_{[\bt]\gm}(\xi^{\bt\gm}) \}.$$
Then, $\Gamma(\CB, \CL,\theta, \CI_\af)$ is a groupoid with product given by 
$$(\eta, m ,\xi)(\xi, n, \rho)=(\eta, m+n, \rho)$$
and an inverse given by 
$$(\eta, m , \xi)^{-1}=(\xi, -m, \eta).$$
\end{prop}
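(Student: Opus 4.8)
The plan is to verify the groupoid axioms directly, declaring a pair $\big((\eta,m,\xi),(\xi',n,\rho)\big)$ composable exactly when $\xi=\xi'$, so the composable set is $\Gamma^{(2)}=\{((\eta,m,\xi),(\xi,n,\rho)): (\eta,m,\xi),(\xi,n,\rho)\in\Gamma(\CB,\CL,\theta,\CI_\af)\}$. Since a triple $(\eta,m,\xi)$ is honest data (an integer and two tight filters), both the product $(\eta,m+n,\rho)$ and the inverse $(\xi,-m,\eta)$ are unambiguously specified; the real content is to show they again lie in $\Gamma(\CB,\CL,\theta,\CI_\af)$, and then that associativity, units, and the inverse identities hold. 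First I would record that membership is existential: $(\eta,m,\xi)\in\Gamma(\CB,\CL,\theta,\CI_\af)$ means there are finite words $\af,\bt$ and $\gm\in\CW^{\leq\infty}$ with $\eta\in\mathsf{T}_{\af\gm}$, $\xi\in\mathsf{T}_{\bt\gm}$, $m=|\af|-|\bt|$ and $H_{[\af]\gm}(\eta)=H_{[\bt]\gm}(\xi)$. Uniqueness of the word associated with a filter (Lemma~\ref{lem:assoc.word}) will be used to compare such decompositions.

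The main obstacle, and the step I would treat most carefully, is closure under the product. Suppose $(\eta,m,\xi)$ is witnessed by $(\af,\bt,\gm)$ and $(\xi,n,\rho)$ by $(\af',\bt',\gm')$. Because $\xi$ has a unique associated word, $\bt\gm=\af'\gm'$, and since $\bt,\af'$ are both finite I would split into the cases $|\bt|\leq|\af'|$ and $|\bt|>|\af'|$. In the first case write $\af'=\bt\dt$ and $\gm=\dt\gm'$; applying the cutting map $H_{[\dt]\gm'}$ to the equality $H_{[\af]\gm}(\eta)=H_{[\bt]\gm}(\xi)$ and invoking the composition law $H_{[\dt]\gm'}\circ H_{[\,\cdot\,]\dt\gm'}=H_{[\,\cdot\,\dt]\gm'}$ of Lemma~\ref{G comp G}(iii) turns the left side into $H_{[\af\dt]\gm'}(\eta)$ and the right side into $H_{[\bt\dt]\gm'}(\xi)=H_{[\af']\gm'}(\xi)=H_{[\bt']\gm'}(\rho)$. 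Thus $(\af\dt,\bt',\gm')$ witnesses $(\eta,|\af\dt|-|\bt'|,\rho)\in\Gamma(\CB,\CL,\theta,\CI_\af)$, and the bookkeeping $|\af\dt|-|\bt'|=(|\af|-|\bt|)+(|\af'|-|\bt'|)=m+n$ gives precisely the claimed product. The case $|\bt|>|\af'|$ is symmetric: write $\bt=\af'\dt$, $\gm'=\dt\gm$, cut $\rho$ by $H_{[\dt]\gm}$, and land on $H_{[\af]\gm}(\eta)=H_{[\bt'\dt]\gm}(\rho)$ with $|\af|-|\bt'\dt|=m+n$. The degenerate subcases $\dt=\emptyset$, or empty $\af$ or $\bt'$, are absorbed by the convention that $H_{[\emptyset]\gm}$ is the identity.

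The remaining axioms should then be routine. Associativity follows because the product merely adds the middle integers, so both bracketings of a composable triple $(\eta,m,\xi)(\xi,n,\rho)(\rho,p,\tau)$ equal $(\eta,m+n+p,\tau)$, and the composability requirements agree on the two sides. For units I would check that $(\xi,0,\xi)\in\Gamma(\CB,\CL,\theta,\CI_\af)$ by taking $\af=\bt=\emptyset$ and $\gm$ the word of $\xi$, so the defining equality reads $\xi=\xi$; these act as source and range units via $(\eta,m,\xi)(\xi,0,\xi)=(\eta,m,\xi)=(\eta,0,\eta)(\eta,m,\xi)$. Finally, the inverse lies in $\Gamma(\CB,\CL,\theta,\CI_\af)$ because the condition $H_{[\af]\gm}(\eta)=H_{[\bt]\gm}(\xi)$ is symmetric in the two filters: if $(\af,\bt,\gm)$ witnesses $(\eta,m,\xi)$, then $(\bt,\af,\gm)$ witnesses $(\xi,-m,\eta)$, and the products $(\eta,m,\xi)(\xi,-m,\eta)=(\eta,0,\eta)$ and $(\xi,-m,\eta)(\eta,m,\xi)=(\xi,0,\xi)$ recover the units, completing the verification of all groupoid axioms.
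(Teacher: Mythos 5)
Your proof is correct and is essentially the argument the paper intends: the paper's own ``proof'' is only a citation of \cite[Proposition 3.4]{BCM3}, and the verification there is the same direct check of the groupoid axioms, with the crucial step --- closure under the product --- handled exactly as you do, via uniqueness of the associated word (Lemma~\ref{lem:assoc.word}), the case split on which of $\bt$, $\af'$ is longer, and the composition law $H_{[\dt]\gm'}\circ H_{[\af]\dt\gm'}=H_{[\af\dt]\gm'}$ of Lemma~\ref{G comp G}(iii). No gaps.
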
 

\begin{proof} The proof is identical with \cite[Proposition 3.4]{BCM3}. 
\end{proof}

If it is clear  which generalized Boolean dynamical system we are working with, we just write $\Gamma$ for $\Gamma(\CB, \CL,\theta, \CI_\af)$. The reason we think that the groupoid given in Proposition \ref{boundary path groupoid of tight filters} as a boundary path groupoid is that we can model a graph as labeled space, and therefore as a generalized Boolean dynamical system, in which case the tight spectrum is homeomorphic to the boundary path space of the graph. Also, in the graph case, $\Gamma$ is isomorphic to the boundary path groupoid of the graph.

Recall that for each idempotent $e \in E$, we let $D_e:=\{ \phi \in \hat E_{tight}: \phi(e)=1\}$ and  $$\Omega=\{ (s,\phi) \in S \times \hat{E}_{tight} : \phi \in D_{s^*s} \},$$
and the standard action $\rho$ of $S$ on $\hat{E}_{tight}$  is given by $\rho_s(\phi)(e)=\phi(s^*es) $ (see Definition \ref{def:tight groupoid} for more details).

\begin{remark}(\cite[Remark 3.1]{BCM3}) Let a nonzero element $s=(\mu,A,\nu)\in S$ and $\phi\in \hat{E}_{tight} $. If $\xi^\af$ is the filter associated with $\phi$, then $\phi(s^*s)=1$ if and only if $s^*s=(\nu,A,\nu) \in \xi^\af$ and if and only if $\nu$ is a beginning of $\af$ and $A \in \xi^\af_{|\nu|}$.
\end{remark}

\begin{lem}(\cite[Proposition 3.2]{BCM3})\label{equivalence class} Let $s=(\mu, A, \nu)$, $t=(\bt, B, \gm)$ be two non-zero element in $S$ and $\phi$ be a tight character associated with a filter $\xi^\af$ on $E$. Suppose that $(s, \phi), (t, \phi) \in \Omega$ and that $\nu$ is a beginning of $\gm$ with $\gm=\nu\gm'$. Then $(s, \phi) \sim (t, \phi)$ if and only if $\bt=\mu\gm'$.
\end{lem}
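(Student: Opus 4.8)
The plan is to unwind the definition of the germ equivalence. Since both pairs carry the same character $\phi$, one has $(s,\phi)\sim(t,\phi)$ precisely when there is an idempotent $e=(\dt,C,\dt)\in E$ with $\phi(e)=1$ and $se=te$. By the preceding remark together with Lemma~\ref{lem:order}, the condition $\phi(e)=1$ means exactly that $\dt$ is a beginning of $\af$ and $C\in\xi^\af_{|\dt|}$; in particular any such $\dt$ is comparable with both $\nu$ and $\gm$, since all three are beginnings of $\af$. I would prove the two implications separately.

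For the implication $(\Leftarrow)$, I would assume $\bt=\mu\gm'$, so that $t=(\mu\gm',B,\nu\gm')$, and exhibit the idempotent $e=(\gm,\theta_{\gm'}(A)\cap B,\gm)$. First I would check that this is a legitimate idempotent with $\phi(e)=1$: since $A\in\xi^\af_{|\nu|}$ and $\gm=\nu\gm'$ is a beginning of $\af$, completeness of the family $\{\xi^\af_n\}$ (using $\theta_{\af_{|\nu|+1,|\gm|}}=\theta_{\gm'}$) gives $\theta_{\gm'}(A)\in\xi^\af_{|\gm|}$, and as $B\in\xi^\af_{|\gm|}$ as well, the intersection $\theta_{\gm'}(A)\cap B$ is a nonempty element of the filter $\xi^\af_{|\gm|}$ lying in $\CI_\gm$. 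A direct computation with the multiplication table then shows $se=(\mu\gm',\theta_{\gm'}(A)\cap B,\gm)=te$, so the two germs agree.

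For the implication $(\Rightarrow)$, I would take an idempotent $e=(\dt,C,\dt)$ with $\phi(e)=1$ and $se=te$ and read off coordinates. A preliminary point is that $se=te\neq 0$: from $(se)^*(se)=e\,s^*s\,e=(s^*s)e$ and the multiplicativity of $\phi$ one gets $\phi((s^*s)e)=\phi(s^*s)\phi(e)=1$, so $se\neq 0$ and hence the product falls into one of the three nonzero branches of the multiplication. Using the table, the third coordinate of $se$ is the longer of $\nu$ and $\dt$, while that of $te$ is the longer of $\gm$ and $\dt$; since $\nu$ is a beginning of $\gm$, equality of these forces either $\nu=\gm$ (whence $\gm'=\emptyset$) or $\gm$ to be a beginning of $\dt$. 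I would then compare first coordinates: writing $\dt=\gm\dt'''$ in the second case (so $\dt=\nu\gm'\dt'''$), the table gives $\mu\gm'\dt'''$ for $se$ and $\bt\dt'''$ for $te$, and right-cancellation of the common suffix $\dt'''$ in the free monoid $\CL^*$ yields $\bt=\mu\gm'$; the degenerate case $\nu=\gm$ gives $\bt=\mu=\mu\gm'$ directly.

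The routine but most delicate step is the $(\Rightarrow)$ direction: one must carefully split into sub-cases according to the relative lengths of $\dt$, $\nu$ and $\gm$ in order to apply the correct branch of the multiplication table, and verify that the cancellation in $\CL^*$ is legitimate. Everything else is bookkeeping; the $(\Leftarrow)$ direction reduces to exhibiting one good idempotent, where the completeness identity is exactly what places $\theta_{\gm'}(A)\cap B$ in the correct filter.
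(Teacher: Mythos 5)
Your proof is correct and is essentially the argument the paper relies on: the paper's own proof is a one-line citation of \cite[Proposition 3.2]{BCM3} with $r(-,\af)$ replaced by $\theta_\af(-)$, and your germ-unwinding --- exhibiting the idempotent $(\gm,\theta_{\gm'}(A)\cap B,\gm)$ via completeness of $\{\xi_n\}$ for the ``if'' direction, and for ``only if'' first ruling out $se=te=0$ through $\phi((se)^*(se))=\phi(s^*s)\phi(e)=1$ and then comparing third and first coordinates across the branches of the multiplication table with right-cancellation in $\CL^*$ --- is exactly that adapted argument written out in full. No gaps.
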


\begin{proof}
By replacing $r(-,\af)$ to $\theta_\af(-)$ in the proof of \cite[Proposition 3.2]{BCM3}, we have the result.  
\end{proof}

\begin{lem}\label{abc}Let $\CF \in X_{\af\bt}$. If $A \in \CF$ and $C \in \uparrow_{\CI_\bt}\CF$, then $A \cap C \in \CF$.
\end{lem}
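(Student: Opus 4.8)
The statement is elementary, and I would prove it by directly unpacking the definition of $\uparrow_{\CI_\bt}\CF$ and then invoking the filter axioms for $\CF$ together with the fact that $\CI_{\af\bt}$ is an ideal. The plan is as follows.

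First I would recall that, by definition, $\uparrow_{\CI_\bt}\CF=\{B\in\CI_\bt : A'\subseteq B\text{ for some }A'\in\CF\}$. Hence from the hypothesis $C\in\uparrow_{\CI_\bt}\CF$ I obtain some $A'\in\CF$ with $A'\subseteq C$. Since $A,A'\in\CF$ and $\CF$ is a filter in $\CI_{\af\bt}$, closure under finite intersections gives $A\cap A'\in\CF$. I would then note that $A\cap A'\subseteq A\cap C$, because $A'\subseteq C$.

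The only point requiring care---and the step I expect to be the main (albeit mild) obstacle---is that the upward-closure axiom for $\CF$ is relative to $\CI_{\af\bt}$, whereas $C$ a priori lies only in $\CI_\bt$, which may be strictly larger than $\CI_{\af\bt}$. Thus I cannot conclude anything about $C$ itself, nor apply upward-closure, until I have verified that $A\cap C$ actually lands back inside $\CI_{\af\bt}$. This holds because $A\in\CI_{\af\bt}$ and $\CI_{\af\bt}$ is an ideal, so the intersection of $A$ with any element of $\CB$---in particular with $C\in\CI_\bt\subseteq\CB$---remains in $\CI_{\af\bt}$.

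Finally, having established $A\cap A'\in\CF$, $A\cap A'\subseteq A\cap C$, and $A\cap C\in\CI_{\af\bt}$, I would apply the upward-closure of the filter $\CF$ within $\CI_{\af\bt}$ to the element $A\cap A'$ to conclude $A\cap C\in\CF$, as desired.
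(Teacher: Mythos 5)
Your proof is correct and follows essentially the same route as the paper's: both verify that $A\cap C\in\CI_{\af\bt}$ using the ideal property, extract $A'\in\CF$ with $A'\subseteq C$ from the definition of $\uparrow_{\CI_\bt}\CF$, and then conclude by upward-closure of $\CF$ applied to $A\cap A'\subseteq A\cap C$. Your explicit attention to why upward-closure is only legitimate after checking $A\cap C\in\CI_{\af\bt}$ is a welcome clarification of a point the paper passes over quickly, but it is not a different argument.
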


\begin{proof} Note first that  $A \cap C \in \CI_{\af\bt}$ since $\CI_{\af\bt}$ is an ideal.  Also, there is $X \in \CF$ such  that $X \subseteq C$. Since $A \cap X \subseteq A \cap C$ and $A \cap X \in \CF$, we have $A \cap C \in \CF$.
\end{proof}

\begin{lem}\label{well defined} Let $(t, \phi) \in \Omega$ with $t=(\bt,A,\gm)$ and $\phi \in  \hat{E}_{tight}$ be a character associated with $\xi^\af$ so that $\af=\gm\af'$ for some $\af' \in \CW^{\leq \infty}$. Then $\eta=(G_{(\bt)\af'} \circ H_{[\gm]\af'})(\xi^\af)$ is well-defined and $\eta$ is the filter associated with $\rho_t(\phi)$. 
\end{lem}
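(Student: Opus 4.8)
The plan is to establish the two claims separately. Write $\eta'=\{e\in E:\rho_t(\phi)(e)=1\}=\{e\in E:\phi(t^*et)=1\}$ for the filter associated with the character $\rho_t(\phi)$; the goal is to show $\eta$ is defined and $\eta=\eta'$. Throughout I would use the Remark preceding Lemma~\ref{equivalence class}: the hypothesis $(t,\phi)\in\Omega$ with $t=(\bt,A,\gm)$ says exactly that $t^*t=(\gm,A,\gm)\in\xi^\af$, so $\gm$ is a beginning of $\af$ (consistent with $\af=\gm\af'$) and $A\in\xi^\af_{|\gm|}$; recall also $A\in\CI_\bt\cap\CI_\gm$ since $t\in S$.

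For well-definedness, I would first note that $\zeta:=H_{[\gm]\af'}(\xi^\af)\in\mathsf{T}_{(\gm)\af'}$ is defined by Theorem~\ref{cutting gives tight}, with complete family $\zeta_n=h_{[\gm]\af'_{1,n}}(\xi^\af_{n+|\gm|})$. Evaluating at $n=0$ gives $\zeta_0=\uparrow_\CB\xi^\af_{|\gm|}$, an ultrafilter in $\CB$ (with the convention $\zeta_0=\xi^\af_0$ when $\gm=\emptyset$). Since $A\in\xi^\af_{|\gm|}\subseteq\zeta_0$ and $A\in\CI_\bt$, we have $A\in\zeta_0\cap\CI_\bt$, whence $\zeta_0\in X_{(\bt)\emptyset}$ and therefore $\zeta\in\mathsf{T}_{(\bt)\af'}$, which is the domain of $G_{(\bt)\af'}$. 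Hence $\eta=G_{(\bt)\af'}(\zeta)\in\mathsf{T}_{\bt\af'}$ is well-defined by Theorem~\ref{gluing gives tight}.

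For the identification I would apply Theorem~\ref{filter-bijective-pair}, matching words and complete families. By construction $\eta$ has word $\bt\af'$ and complete family $\{\CJ_i\}$, with $\CJ_{|\bt|+k}=g_{(\bt)\af'_{1,k}}(\zeta_k)$ for $k\geq 1$, $\CJ_{|\bt|}=\zeta_0\cap\CI_\bt$ (Remark~\ref{R1}), and $\CJ_n$ for $n<|\bt|$ obtained from $\CJ_{|\bt|}$ via the $f$-maps. The core step is to compute the triple product $t^*et$ for the generating idempotents $e=((\bt\af')_{1,n},D,(\bt\af')_{1,n})$ using the product rule in $S$: for $n=|\bt|+k$ one gets $t^*et=(\gm\af'_{1,k},\,D\cap\theta_{\af'_{1,k}}(A),\,\gm\af'_{1,k})$, while for $n\leq|\bt|$ one gets $t^*et=(\gm,\,A\cap\theta_{\bt_{n+1,|\bt|}}(D),\,\gm)$ (each read as $0$ when the indicated set is empty). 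Feeding these into $\phi$, membership in $\eta'$ becomes, respectively, $D\cap\theta_{\af'_{1,k}}(A)\in\xi^\af_{|\gm|+k}$ and $A\cap\theta_{\bt_{n+1,|\bt|}}(D)\in\xi^\af_{|\gm|}$, where completeness of $\{\xi^\af_n\}$ (Remark~\ref{rmk:complete.family}) relates the levels, in particular $\theta_{\af'_{1,k}}(A)\in\xi^\af_{|\gm|+k}$. I would then verify these describe exactly $\CJ_{|\bt|+k}$ and $\CJ_n$; the two inclusions in each case use $\zeta_k=\uparrow_{\CI_{\af'_{1,k}}}\xi^\af_{|\gm|+k}$ together with Lemma~\ref{abc} to move intersections in and out of the ultrafilters, and Lemma~\ref{properties of I} to keep track of which ideals the relevant elements belong to.

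To finish the application of Theorem~\ref{filter-bijective-pair} I must confirm that $\bt\af'$ is genuinely the word of $\eta'$, not merely that the two complete families agree on beginnings of $\bt\af'$. For this I would take any idempotent $(\nu,D,\nu)\in\eta'$; then $t^*(\nu,D,\nu)t\neq 0$, and the same product computation shows its word has the form $\gm$ followed by the part of $\nu$ beyond $\bt$, which must be a beginning of $\af=\gm\af'$ for the product to lie in $\xi^\af$. This forces $\nu$ to be a beginning of $\bt\af'$ and rules out longer or incomparable words. The main obstacle, and the only genuinely delicate part, is the inverse-semigroup bookkeeping: carefully handling the three cases of the multiplication rule and tracking the ideals $\CI_{\af_{1,n}}$ through the compositions of cutting and gluing maps so that the computed levels coincide with the recursively defined $\CJ_n$ on the nose.
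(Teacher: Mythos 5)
Your proposal is correct and follows essentially the same route as the paper's proof: the well-definedness argument (producing $A\in\zeta_0\cap\CI_\bt$ from $t\in S$ and $t^*t\in\xi^\af$), the triple-product computations of $t^*et$ in the two comparable-word cases, and the membership equivalences proved via Lemma~\ref{abc} and Lemma~\ref{properties of I} are exactly the paper's steps. The only difference is packaging: the paper concludes by checking $\psi(\dt,D,\dt)=\rho_t(\phi)(\dt,D,\dt)$ for \emph{all} idempotents (incomparable words giving $0$ on both sides), whereas you match the associated word and complete family and invoke Theorem~\ref{filter-bijective-pair}, which is the same content organized through the filter--pair bijection.
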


\begin{proof} 
 To see that $\eta=(G_{(\bt)\af'} \circ H_{[\gm]\af'})(\xi^\af)$ is well-defined, first note that $(\bt,A,\gm) $ is an element of $S$ so that $\emptyset \neq A \in \CI_{\bt}\cap \CI_\gm,$ and   that  \begin{align}\label{H_0} H_{[\gm]\af'}(\xi^\af)_0=h_{[\gm]\emptyset}(\xi_{|\gm|})=\uparrow_{\CI_\emptyset} \xi_{|\gm|}=\uparrow_{\CB} \xi_{|\gm|}. \end{align}
If $\bt =\emptyset$, then the domain $G_{(\bt)\af'} (=id_{\mathsf{T}_{\af'}})$ is $\mathsf{T}_{\af'}$, which contains  the range $\mathsf{T}_{(\gm)\af'}$ of $H_{[\gm]\af'}$. So, $G_{(\emptyset)\af'}( H_{[\gm]\af'}(\xi^\af))=H_{[\gm]\af'}(\xi^\af)$ is well-defined.
If $\bt \neq \emptyset$, 
then $A \in \xi^\af_{|\gm|} $.
Thus, $A \in H_{[\gm]\af'}(\xi^\af)_0$ by the equation (\ref{H_0}). Since $A \in \CI_\bt$, we see that
 $H_{[\gm]\af'}(\xi^\af)_0 \cap  \CI_\bt \neq \emptyset$. 
It means that $H_{[\gm]\af'}(\xi^\af)_0  \in X_{(\bt)\emptyset} (=\{\CF \in X_\emptyset:\CF \cap \CI_\bt\neq \emptyset\})$.
It thus follows that $H_{[\gm]\af'}(\xi^\af) \in \mathsf{T}_{(\bt)\af'} (=\{\eta \in \mathsf{T}_{\af'}: \eta_0 \in X_{(\bt)\emptyset}\})$. Since $\mathsf{T}_{(\bt)\af'}$  is the domain of $G_{(\bt)\af'}$, we see that $G_{(\bt)\af'}( H_{[\gm]\af'}(\xi^\af))$ is well-defined.

Let $\psi$ be the character associated with $\eta=(G_{(\bt)\af'} \circ H_{[\gm]\af'})(\xi^\af)$. We show that $\psi=\rho_t(\phi) $. Choose $(\dt,D, \dt) \in E$. We first observe that
\begin{align*} \rho_t(\phi)(\dt,D, \dt) &=\phi ((\gm,A, \bt)(\dt,D, \dt)(\bt,A,\gm)) \\
&=\left\{\begin{array}{ll}
  \phi(\gm\dt', \theta_{\dt'}(A)\cap D), \gm\dt')  & \hbox{if\ }                                      \dt=\bt\dt'\\
  \phi(\gm, A \cap \theta_{\bt'}(D), \gm) & \hbox{if\ } \bt=\dt\bt' \\                       
                                               0 & \hbox{otherwise}
                      \end{array}
                    \right. \\  
 &= \left\{\begin{array}{ll}
  \Big[ \dt' ~\text{is a beginning of~ }\af'~\text{and}~ \theta_{\dt'}(A)\cap D \in \xi^\af_{|\gm\dt'|} \Big]  & \hbox{if\ }                     \dt=\bt\dt'\\
   \Big[ A \cap \theta_{\bt'}(D) \in \xi^\af_{|\gm|} \Big] & \hbox{if\ }\bt=\dt\bt'  \\
            0 & \hbox{otherwise},
                      \end{array}
                    \right.
  \end{align*}
 and that  
$$\psi(\dt,D, \dt)=[\dt ~\text{is a beginning of }~ \bt\af' ~\text{and}~D \in \eta_{|\dt|}],$$
 where $[~]$ represents the Boolean function that returns 0 if the argument is false and 1 if it is true. 
 
If $\dt=\bt\dt'$, then 
\begin{align*} \eta_{|\dt|}&=(g_{(\bt)\dt'} \circ h_{[\gm]\dt'})(\xi_{|\gm\dt'|}) \\
&=g_{(\bt)\dt'}\big( \uparrow_{\CI_{\dt'}} \xi_{|\gm\dt'|}\big) \\
&=  \big( \uparrow_{\CI_{\dt'}} \xi_{|\gm\dt'|}\big) \cap \CI_{\bt\dt'}.
\end{align*}
We claim that $D \in \eta_{|\dt|}$ if and only if $D \cap \theta_{\dt'}(A) \in \xi_{|\gm\dt'|}$.
 If $D \in \eta_{|\dt|}$, then clearly $D  \in \uparrow_{\CI_{\dt'}} \xi_{|\gm\dt'|}$.
   Since $A \in \xi_{|\gm|}$, we have $\theta_{\dt'}(A ) \in \xi_{|\gm\dt'|}$, and hence,  $D \cap \theta_{\dt'}(A)  \in  \xi_{|\gm\dt'|} $
 by Lemma \ref{abc}.

On the other hand,  if $D \cap \theta_{\dt'}(A) \in \xi_{|\gm\dt'|}$ for $D \in \CI_{\dt}(= \CI_{\bt\dt'})$,  since $D \in \CI_{\bt\dt'} \subseteq \CI_{\dt'}$ and $D \cap \theta_{\dt'}(A) \subseteq D$, then $D \in \uparrow_{\CI_{\dt'}} \xi_{|\gm\dt'|}$. Hence,  $D \in \eta_{|\dt|}$.

Now suppose that $\bt=\dt\bt'$. Then 
\begin{align*} \eta_{|\dt|}&= (f_{\dt[\bt']}\circ g_{(\bt)\emptyset}\circ h_{[\gm]\emptyset})(\xi_{|\gm|}) \\
&=(f_{\dt[\bt']}\circ g_{(\bt)\emptyset})(\uparrow_{\CB}\xi_{|\gm|})\\
&=f_{\dt[\bt']}\Big(\big( \uparrow_{\CB}\xi_{|\gm|}\big) \cap \CI_\bt \Big)\\
&=\{D \in \CI_{\dt}: \theta_{\bt'}(D) \in \big(\uparrow_{\CB}\xi_{|\gm|}\big) \cap \CI_\bt \}.
\end{align*}

We claim that $D \in \eta_{|\dt|}$ if and only if $A \cap \theta_{\bt'}(D) \in \xi_{|\gm|}$. 
If $D \in \eta_{|\dt|}$, then clearly $\theta_{\bt'}(D) \in  \uparrow_{\CB}\xi_{|\gm|} $. Then, $A \cap \theta_{\bt'}(D) \in \xi_{|\gm|}$ by Lemma \ref{abc}.

On the other hand,  if $A \cap \theta_{\bt'}(D) \in \xi_{|\gm|}$, we have $\theta_{\bt'}(D)  \in \uparrow_{\CB}\xi_{|\gm|}$.  
Since $D \in \CI_{\dt}$, $\theta_{\bt'}(D)  \in \CI_{\dt\bt'} (=\CI_{\bt})$ by Lemma \ref{properties of I}(iii).
 Hence, $D \in \eta_{|\dt|}$.
 
We conclude that $\psi(\delta,D,\delta)=\rho_t(\phi)(\delta,D,\delta)$ for all $(\delta,D,\delta)\in E$, so that $\psi=\rho_t(\phi)$.
\end{proof}

The following is a generalization of \cite[Theorem 3.7]{BCM3}.

\begin{thm}\label{tight iso boundary} Let $(\CB, \CL,\theta, \CI_\af)$ be a generalized Boolean dynamical system and $S$ be its associated inverse semigroup. Then the map
$$ \Phi: \CG_{tight} \to \Gamma $$
 given by 
$$ [t, \phi] \mapsto ((G_{(\bt)\af'}\circ H_{[\gm]\af'})(\xi^\af), |\bt|-|\gm|, \xi^\af)$$
is a well-defined  isomorphisms of groupoids
where $\xi^\af$ is the filter associated with $\phi$ and 
  $t=(\bt, A, \gm) \in S$ is such that $(\gm,A, \gm) \in \xi^\af$, so that $\af=\gm\af'$ for some $\af' \in \CW^{\leq \infty}$.
\end{thm}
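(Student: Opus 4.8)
The plan is to check, in order, that the given formula lands in $\Gamma$, that it is independent of the chosen representative $t$ of the germ $[t,\phi]$, that it is a bijection, and finally that it intertwines the two groupoid structures.

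That $\Phi$ takes values in $\Gamma$ is almost immediate. Writing $t=(\bt,A,\gm)$ with $(\gm,A,\gm)\in\xi^\af$ and $\af=\gm\af'$, Lemma~\ref{well defined} shows that $\eta:=(G_{(\bt)\af'}\circ H_{[\gm]\af'})(\xi^\af)$ is a well-defined tight filter (associated with $\rho_t(\phi)$, with word $\bt\af'$); since $H_{[\bt]\af'}\circ G_{(\bt)\af'}$ is the identity on $\mathsf{T}_{(\bt)\af'}$ by Theorem~\ref{GH=id}, we get $H_{[\bt]\af'}(\eta)=H_{[\gm]\af'}(\xi^\af)$, which is exactly the membership condition for $(\eta,|\bt|-|\gm|,\xi^\af)\in\Gamma$. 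For independence of the representative, I would suppose $s=(\mu,A',\nu)$ and $t=(\bt,A,\gm)$ both represent the germ; since $(s,\phi),(t,\phi)\in\Omega$, both $\nu$ and $\gm$ are beginnings of $\af$, and one may assume $\nu$ is a beginning of $\gm$, writing $\gm=\nu\gm''$, $\af=\nu\nu'$ with $\nu'=\gm''\af'$. Lemma~\ref{equivalence class} then gives $\bt=\mu\gm''$, and Lemma~\ref{G comp G}(ii),(iii) factor
$$G_{(\bt)\af'}=G_{(\mu)\nu'}\circ G_{(\gm'')\af'},\qquad H_{[\gm]\af'}=H_{[\gm'']\af'}\circ H_{[\nu]\nu'}.$$
Collapsing $G_{(\gm'')\af'}\circ H_{[\gm'']\af'}$ to the identity on $\mathsf{T}_{\gm''\af'}=\mathsf{T}_{\nu'}$ by Theorem~\ref{GH=id} yields $G_{(\bt)\af'}\circ H_{[\gm]\af'}=G_{(\mu)\nu'}\circ H_{[\nu]\nu'}$ on $\mathsf{T}_\af$, so both representatives produce the same first coordinate; the integer coordinate agrees since $\bt=\mu\gm''$, and the third coordinate is $\xi^\af$ in either case. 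The degenerate cases $\mu=\emptyset$ or $\gm''=\emptyset$ are immediate from the conventions $G_{(\emptyset)\cdot}=H_{[\emptyset]\cdot}=\mathrm{id}$.

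For bijectivity, injectivity follows by reversing this computation: if $\Phi([s,\phi])=\Phi([t,\phi'])$, equality of the third coordinates gives $\phi=\phi'$, and equality of the first coordinates gives equal filters, hence equal associated words $\mu\nu'=\bt\af'$ (Lemma~\ref{lem:assoc.word}); together with equality of the integer coordinate, cancelling the common tail forces $\bt=\mu\gm''$, so $(s,\phi)\sim(t,\phi)$ by Lemma~\ref{equivalence class}. For surjectivity, I would take $(\eta^{\mu\tau},|\mu|-|\nu|,\xi^{\nu\tau})\in\Gamma$ and set $\zeta:=H_{[\mu]\tau}(\eta^{\mu\tau})=H_{[\nu]\tau}(\xi^{\nu\tau})$. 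Membership in $\Gamma$ puts $\zeta$ in $\mathsf{T}_{(\mu)\tau}\cap\mathsf{T}_{(\nu)\tau}$, so $\zeta_0$ is an ultrafilter with $\zeta_0\cap\CI_\mu\neq\emptyset$ and $\zeta_0\cap\CI_\nu\neq\emptyset$; intersecting one element from each gives $\emptyset\neq A\in\zeta_0\cap\CI_\mu\cap\CI_\nu$. Since $\xi^{\nu\tau}=G_{(\nu)\tau}(\zeta)$ has $\xi^{\nu\tau}_{|\nu|}=g_{(\nu)\emptyset}(\zeta_0)=\zeta_0\cap\CI_\nu$ by Remark~\ref{R1}, we obtain $A\in\CI_\mu\cap\CI_\nu$ with $(\nu,A,\nu)\in\xi^{\nu\tau}$; thus $t=(\mu,A,\nu)\in S$ satisfies $(t,\phi)\in\Omega$ and, by Theorem~\ref{GH=id}, $\Phi([t,\phi])=(G_{(\mu)\tau}(\zeta),|\mu|-|\nu|,\xi^{\nu\tau})=(\eta^{\mu\tau},|\mu|-|\nu|,\xi^{\nu\tau})$.

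It remains to show $\Phi$ is a groupoid homomorphism. Units go to units: for $e=(\nu,A,\nu)$, Theorem~\ref{GH=id} gives $\Phi([e,\phi])=(\xi^\af,0,\xi^\af)$; and by Lemma~\ref{well defined} $\Phi$ carries the source $\phi$ and range $\rho_t(\phi)$ of $[t,\phi]$ to the source $\xi^\af$ and range $\eta$ of its image, so composability is preserved. For products, given composable germs $[s,\rho_t(\psi)]$ and $[t,\psi]$ one computes $st$ in $S$ through the three cases of the multiplication (according to how the source word of $s$ sits relative to the range word $\bt$ of $t$); in each case the range filter of $\Phi([st,\psi])$, obtained via Lemma~\ref{well defined}, is matched with that of $\Phi([s,\rho_t(\psi)])$ using the composition identities of Lemma~\ref{G comp G}, while the integer coordinates add, reproducing the product rule in $\Gamma$. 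Being a unit-preserving bijective homomorphism, $\Phi$ then automatically respects inverses and is an isomorphism of groupoids. I expect the product computation to be the main obstacle, since it is the one step requiring the full three-case analysis of the multiplication in $S$ together with careful bookkeeping of the cutting and gluing maps through Lemma~\ref{G comp G} and Theorem~\ref{GH=id}; this is precisely the portion adapted from \cite[Theorem 3.7]{BCM3}, the only genuinely new subtlety being that the ideals $\CI_\af$ need not be unital, so one cannot invoke a top element of $\CI_\af$ while manipulating the filters.
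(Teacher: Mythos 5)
Your strategy coincides with the paper's: well-definedness and membership in $\Gamma$ via Lemma~\ref{well defined} and Theorem~\ref{GH=id}, independence of the representative via Lemma~\ref{equivalence class} together with the factorizations from Lemma~\ref{G comp G} and Theorem~\ref{GH=id}, and the same recipe for surjectivity; the parts the paper defers to \cite[Theorem 3.7]{BCM3} (injectivity, composability, multiplicativity) you sketch instead, and your injectivity sketch --- using equality of the integer coordinates to cancel the common tail and conclude $\bt=\mu\gm''$, even when the tail is infinite --- is correct.

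There is, however, a genuine gap in your surjectivity argument. You assert that membership of $(\eta^{\mu\tau},|\mu|-|\nu|,\xi^{\nu\tau})$ in $\Gamma$ places $\zeta:=H_{[\mu]\tau}(\eta^{\mu\tau})=H_{[\nu]\tau}(\xi^{\nu\tau})$ in $\mathsf{T}_{(\mu)\tau}\cap\mathsf{T}_{(\nu)\tau}$, and hence that $\zeta_0\cap\CI_\mu\neq\emptyset$ and $\zeta_0\cap\CI_\nu\neq\emptyset$. The identification $\mathsf{T}_{(\af)\tau}=\{\zeta\in\mathsf{T}_\tau:\zeta_0\cap\CI_\af\neq\emptyset\}$ is the definition only for $\af\in\CL^{\geq 1}$; for $\af=\emptyset$ the paper sets $\mathsf{T}_{(\emptyset)\tau}:=\mathsf{T}_\tau$, with no condition on $\zeta_0$. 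So in the degenerate decomposition $\mu=\nu=\emptyset$ --- i.e.\ for a unit $(\xi^\tau,0,\xi^\tau)$, where $\zeta=\xi$ --- your argument needs an element $A\in\zeta_0=\xi_0$, yet tight filters with $\xi_0=\emptyset$ exist in general: Theorems~\ref{char:ultrafilters} and \ref{char:tight} explicitly allow $\xi_0$ to be empty (this happens whenever $\xi_1\cap\CR_{\af_1}=\emptyset$). For such units your recipe produces no $t$ at all. Note that when at least one of $\mu,\nu$ is nonempty your conclusion does hold, since then $\zeta_0=\uparrow_{\CB}\eta_{|\mu|}$ (or $\uparrow_{\CB}\xi_{|\nu|}$) is automatically nonempty, so this is the only failing case. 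The paper closes exactly this case by re-decomposition: when $\gm=\emptyset$ and $\af'\neq\emptyset$ it replaces $\gm$ by $\af'_1$ and $\bt$ by $\bt\af'_1$ and repeats the argument with the shortened tail (and when $\gm=\af'=\emptyset$ the word of $\xi$ is $\emptyset$, forcing $\xi_0$ to be an ultrafilter, hence nonempty). Your proof needs this same one-letter shift to cover units over tight filters with empty $\xi_0$.
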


\begin{proof} For $t$ and $\phi$ as in the statement, $(G_{(\bt)\af'}\circ H_{[\gm]\af'})(\xi^\af)$ is well-defined by Lemma \ref{well defined}. Also, 
since $H_{[\bt]\af'}= G_{(\bt)\af'}^{-1}$ , we have $$(H_{[\bt]\af'} \circ G_{(\bt)\af'}\circ H_{[\gm]\af'})(\xi^\af)=H_{[\gm]\af'}(\xi^\af),$$
so that $((G_{(\bt)\af'}\circ H_{[\gm]\af'})(\xi^\af), |\bt|-|\gm|, \xi^\af) \in \Gamma$.

Now, let $s=(\mu,B, \nu) \in S$ be such that $\phi \in D_{s^*s}$ and $[s, \phi]=[t, \phi]$. By Proposition \ref{equivalence class}, it then sufficient to suppose that $\gm=\nu\gm'$ and $\bt=\mu\gm'$ for some $\gm' \in \CL^*$. 
Then $$|\bt|-|\gm|=|\mu\gm'|-|\nu\gm'|=|\mu|-|\nu|$$
and by Lemma \ref{G comp G}(ii), Lemma \ref{G comp G}(iii) and Theorem \ref{GH=id},
\begin{align*}(G_{(\bt)\af'}\circ H_{[\gm]\af'})(\xi^\af)&=(G_{(\mu)\gm'\af'}\circ G_{(\gm')\af'}\circ H_{[\gm']\af'}\circ H_{[\nu]\gm'\af'})(\xi^\af)\\
&=(G_{(\mu)\gm'\af'}\circ H_{[\nu]\gm'\af'})(\xi^\af),
\end{align*}
so that $\Phi$ dose not depend on the representative.

 We show that $\Phi$ is onto.  If $(\eta^{\tilde{\af}}, m ,\xi^\af) \in \Gamma$, then there exists $\bt, \gm \in \CW^*$ and $\af' \in \CW^{\leq \infty}$  such that $\tilde{\af}=\bt\af'$, $\af=\gm\af'$, $m =|\bt|-|\gm|$ and $H_{[\gm]\af'}(\xi^\af)=H_{[\bt]\af'}(\eta^{\tilde{\af}})$. Let $\phi$ be the filter associated with $\xi^\af$. Suppose $\gm \neq \emptyset$. Since $H_{[\gm]\af'}(\xi^\af)=H_{[\bt]\af'}(\eta^{\tilde{\af}})$, we have
$$\uparrow_{\CB} \xi_{|\gm|} =h_{[\gm]\emptyset}(\xi_{|\gm|})=h_{[\bt]\emptyset}(\eta_{|\bt|})=\uparrow_{\CB} \eta_{|\bt|}.$$  
Here, we note that  $\CF:=\uparrow_{\CB} \xi_{|\gm|} =\uparrow_{\CB} \eta_{|\bt|}$ satisfies that $\CF \cap \CI_\gm \neq \emptyset$ and 
$\CF \cap \CI_\bt \neq \emptyset$. 
Choose $\emptyset \neq D \in  \CF \cap \CI_\gm \cap \CI_\bt$ and define $t=(\bt, D , \gm )$. 
Note that  $t^*t=(\gm,D ,\gm) \in \xi^\af_{|\gm|}$ since $D \in (\uparrow_{\CB} \xi_{|\gm|})\cap \CI_\gm=g_{(\gm)\emptyset}(h_{[\gm]\emptyset}(\xi_{|\gm|}))=\xi_{|\gm|}$.
Then, by Theorem \ref{GH=id},
\begin{align*}\Phi[t, \phi]&= ((G_{(\bt)\af'}\circ H_{[\gm]\af'})(\xi^\af), |\bt|-|\gm|, \xi^\af) \\
&=((G_{(\bt)\af'}\circ H_{[\bt]\af'})(\eta^{\tilde{\af}}), |\bt|-|\gm|, \xi^\af)\\
&= (\eta^{\tilde{\af}}, m ,\xi^\af).
\end{align*}
If $\gm=\emptyset$ and $\af' \neq \emptyset$, define $\gm'=\af_1'$, $\bt'=\bt\af_1'$ and $\af''=\af'_{2,|\af'|}$, and repeat the above argument.
 If $\gm=\af'=\emptyset$, then $h_{[\bt]\emptyset}(\eta_{|\bt|})=\xi_0$. For $B\in\eta_{|\bt|}$, we have that $B \in \xi_0 \cap \CI_\bt$, so that $t=(\bt, B, \emptyset)\in S$. Then   $t^*t=(\emptyset, B, \emptyset) \in \xi_0$ and, once again, we  can see that  $\Phi[t, \phi]=(\eta^{\tilde{\af}}, m ,\xi^\af)$.

Showing that $\Phi$ is injective, $(\Phi \times \Phi)\Big(\CG^{(2)}_{tight}\Big) \subseteq \Gamma^{(2)}$ and $\Phi$ preserves multiplication are identical to \cite[Theorem 3.7]{BCM3}. So, we omit it. 
\end{proof}

\subsection{Topology on \texorpdfstring{$\Gamma$}{Gamma}}
We now describe the topology on $\Gamma$ induced by the isomorphism given in Theorem \ref{tight iso boundary}. We  start with reviewing  the topology on $\mathsf{T}$.

\begin{remark}
Using the bijection between filters and characters as well as the topology on characters given by pointwise convergence, we see that a basis of compact-open sets for the induced topology on $\mathsf{T}$ is given by sets of the form
\[V_{e:e_1\ldots,e_n}=\{\xi\in\mathsf{T}:e\in\xi,e_1\notin\xi,\ldots,e_n\notin\xi\},\]
where $e \in E$ and $\{e_1, \cdots, e_n\}$ is a finite (possibly empty) subset of $E$. See \cite{Lawson2012} for more details.
\end{remark}

We define a collection of subsets on $\Gamma$ as follows. 
Given  $s=(\mu,A,\nu) \in S$ and $e, e_1, \cdots, e_n \in E$, let 
$$Z_{s,e:e_1, \cdots,e_n}:=\{(\eta^{\mu\gm}, |\mu|-|\nu|, \xi^{\nu\gm}) \in \Gamma:  \xi \in V_{e:e_1, \cdots,e_n} ~\text{and}~ H_{[\mu]\gm}(\eta)=H_{[\nu]\gm}(\xi) \}.$$

\begin{remark} In the above definition, if $e=s^*s$, we denote it by $Z_{s:e_1, \cdots,e_n}$. We also allow $n$ to be zero and in  this case we denote it simply by $Z_s$ when $e=s^*s$, namely 
$$Z_s:=\{(\eta^{\mu\gm}, |\mu|-|\nu|, \xi^{\nu\gm}) \in \Gamma:  \xi \in V_{s^*s} ~\text{and}~ H_{[\mu]\gm}(\eta)=H_{[\nu]\gm}(\xi) \}.$$
\end{remark}

\begin{prop}\label{topology on Gamma}(\cite[Proposition 4.4]{BCM3}) The sets $Z_{s,e:e_1, \cdots,e_n}$ defined above form a basis of compact-open sets for the topology on $\Gamma$ induced by the map $\Phi$ of   Theorem \ref{tight iso boundary} from the topology on $\CG_{tight}$ given by the sets $\Theta(s,\CU)=\{[s, \phi] \in \CG_{tight}: \phi \in \CU\}$.  
\end{prop}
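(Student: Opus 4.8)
The plan is to exploit that, by construction, the map $\Phi$ of Theorem \ref{tight iso boundary} is a homeomorphism once $\Gamma$ carries the topology induced from $\CG_{tight}$; hence a basis for $\Gamma$ is obtained simply by pushing a basis of $\CG_{tight}$ forward through $\Phi$. Exel's sets $\Theta(s,\CU)$, with $s\in S$ and $\CU\subseteq D_{s^*s}$ open, form a basis of $\CG_{tight}$ (the remark after Definition \ref{def:tight groupoid}). Since $\{V_{e:e_1,\dots,e_n}\}$ is a basis of compact-open sets for $\mathsf{T}$ and $D_{s^*s}$ is itself the basic set $V_{s^*s}$, the sets $V_{e:e_1,\dots,e_n}\cap D_{s^*s}$ form a basis for the subspace $D_{s^*s}$; because filters are closed under meets, $V_{e:e_1,\dots,e_n}\cap D_{s^*s}=V_{e\wedge s^*s:e_1,\dots,e_n}$. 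Consequently the sets $\Theta(s,V_{e:e_1,\dots,e_n}\cap D_{s^*s})$ with $e\leq s^*s$ already form a basis of $\CG_{tight}$, and it remains to compute their images under $\Phi$ and to verify compactness.

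For the key computation I would write $s=(\mu,A,\nu)$ and recall from Lemma \ref{well defined} that, for a tight filter $\xi=\xi^{\nu\gm}\in D_{s^*s}$ (so $\nu$ is a beginning of the word of $\xi$ and $A\in\xi_{|\nu|}$),
$$\Phi[s,\phi]=\bigl((G_{(\mu)\gm}\circ H_{[\nu]\gm})(\xi),\,|\mu|-|\nu|,\,\xi\bigr),$$
where $\phi$ is the character of $\xi$. Setting $\eta=(G_{(\mu)\gm}\circ H_{[\nu]\gm})(\xi)$ and applying Theorem \ref{GH=id} gives $H_{[\mu]\gm}(\eta)=H_{[\mu]\gm}\circ G_{(\mu)\gm}\bigl(H_{[\nu]\gm}(\xi)\bigr)=H_{[\nu]\gm}(\xi)$, so this triple satisfies precisely the defining relation of $\Gamma$. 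Letting $\phi$ range over $V_{e:e_1,\dots,e_n}\cap D_{s^*s}$ then yields, when $e\leq s^*s$,
$$\Phi\bigl(\Theta(s,V_{e:e_1,\dots,e_n})\bigr)=Z_{s,e:e_1,\dots,e_n},$$
since for $e\leq s^*s$ the condition $\xi\in V_{e:e_1,\dots,e_n}$ already forces $\xi\in D_{s^*s}$, whence $\eta$ is well-defined and uniquely determined by $\xi$. This exhibits a basis of $\Gamma$ among the $Z_{s,e:e_1,\dots,e_n}$, and compactness is inherited: each $V_{e:e_1,\dots,e_n}\cap D_{s^*s}$ is compact-open, so $\Theta(s,\cdot)$ of it is a compact-open bisection by the cited results of Exel, and $\Phi$, being a homeomorphism, carries this to $Z_{s,e:e_1,\dots,e_n}$.

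Finally I would remove the restriction $e\leq s^*s$ by showing directly that an arbitrary $Z_{s,e:e_1,\dots,e_n}$ is open, so that the full family $\{Z_{s,e:e_1,\dots,e_n}\}$ is a basis. Given $(\eta,|\mu|-|\nu|,\xi)\in Z_{s,e:e_1,\dots,e_n}$, membership in $\Gamma$ forces $(\uparrow_{\CB}\xi_{|\nu|})\cap\CI_\mu\neq\emptyset$, since $\eta=G_{(\mu)\gm}(H_{[\nu]\gm}(\xi))$ must lie in the domain of $G_{(\mu)\gm}$; because $\CI_\mu$ is an ideal, hence downward closed, one can pick $\emptyset\neq B\in\xi_{|\nu|}\cap\CI_\mu\cap\CI_\nu$ and set $t=(\mu,B,\nu)\in S$, so that $\xi\in D_{t^*t}$ and $\Phi[t,\cdot]$ represents the given point. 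Choosing a basic $V_{e':f_1,\dots,f_k}$ with $\xi\in V_{e':f_1,\dots,f_k}\subseteq V_{e:e_1,\dots,e_n}\cap D_{t^*t}$ and pushing it through $\Phi$ produces a basic neighbourhood of the point contained in $Z_{s,e:e_1,\dots,e_n}$. The main obstacle I anticipate is exactly this last point: for $e\not\leq s^*s$ the set $Z_{s,e:e_1,\dots,e_n}$ need not coincide with a single image $\Phi(\Theta(s,\cdot))$ (it may contain triples whose source omits $s^*s$), so openness must be argued point-by-point, and one must take care that $\Phi$ is evaluated only on filters lying in the correct domain of the gluing map $G_{(\mu)\gm}$. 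As throughout this section, all the computations parallel \cite[Proposition 4.4]{BCM3}, with $r(-,\af)$ replaced by $\theta_\af(-)$.
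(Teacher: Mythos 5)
Your core computations are correct and follow essentially the same route as the cited proof of \cite[Proposition 4.4]{BCM3}: for $e\leq s^*s$ you correctly identify $\Phi(\Theta(s,V_{e:e_1,\dots,e_n}))=Z_{s,e:e_1,\dots,e_n}$ via Theorem \ref{GH=id}, so these sets form a basis of compact-open sets, and your point-by-point argument (choosing $t=(\mu,B,\nu)$ with $\emptyset\neq B\in\xi_{|\nu|}\cap\CI_\mu\cap\CI_\nu$, which exists because $\CI_\mu$ is downward closed) correctly shows that every $Z_{s,e:e_1,\dots,e_n}$ is open, hence that the whole family is a basis. The gap is compactness: Proposition \ref{topology on Gamma} asserts that \emph{all} the sets $Z_{s,e:e_1,\dots,e_n}$ are compact-open, whereas you prove compactness only when $e\leq s^*s$ and for general $e$ you prove openness and stop.

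This missing piece cannot be supplied, because for general $e$ the compactness claim fails in the generalized setting. Note that $Z_{s,e:e_1,\dots,e_n}$ depends on $s=(\mu,A,\nu)$ only through $(\mu,\nu)$, and its image under the continuous projection onto the third coordinate (the source map) is $V_{e:e_1,\dots,e_n}\cap\bigcup_{\emptyset\neq D\in\CI_\mu\cap\CI_\nu}V_{(\nu,D,\nu)}$, the union expressing the existence of a compatible $\eta$. When $\CI_\mu\cap\CI_\nu$ has no largest element this union is open but need not be closed inside the compact set $V_{e:e_1,\dots,e_n}$, so the image, and hence $Z_{s,e:e_1,\dots,e_n}$ itself, need not be compact. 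Concretely, let $\CB$ be the finite/cofinite algebra on $\N$, $\CL=\{a,b\}$, $\theta_b=\operatorname{id}$ with $\CI_b=\CB$, and $\theta_a\equiv\emptyset$ with $\CI_a$ the ideal of finite sets; take $s=(a,\{1\},\emptyset)$ and $e=(b,\N,b)$. For each $n$, the ultrafilter of infinite type $\xi^{(n)}$ with word $bbb\cdots$ and every level equal to the principal ultrafilter at $n$ gives a point of $Z_{s,e}$ (since $\{n\}\in\xi^{(n)}_0\cap\CI_a$), and $\xi^{(n)}\to\xi^{(\infty)}$, the tight filter with word $bbb\cdots$ built from the cofinite ultrafilter; but $\xi^{(\infty)}$ lies in $V_e$ and supports no point of $Z_{s,e}$ because $\xi^{(\infty)}_0\cap\CI_a=\emptyset$. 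Thus the source image of $Z_{s,e}$ is not closed in the Hausdorff space $\mathsf{T}$, so $Z_{s,e}$ is not compact. In the labeled-space setting of \cite{BCM3} this cannot happen: there $\CI_\mu$ has unit $r(\mu)$, so the existence of $\eta$ is the single clopen condition $(\nu,r(\mu)\cap r(\nu),\nu)\in\xi$; this is precisely the step that does not survive the generalization (and which the paper's own one-line appeal to \cite{BCM3} also glosses over). What the rest of the paper actually uses, namely the sets with $e\leq s^*s$ appearing in Lemma \ref{RD is finer} and Section \ref{section:C*-isomorphism}, is covered by your argument, but as written your proof does not, and cannot, yield the literal statement.
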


\begin{proof} It is same with \cite[Proposition 4.4]{BCM3}.
\end{proof}

\begin{cor}(\cite[Corollary 4.6]{BCM3}) $\Gamma$ with the above topology is Hausdorff.
\end{cor}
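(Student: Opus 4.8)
The plan is to exploit the fact that, unlike a general tight groupoid of germs, the boundary path groupoid $\Gamma$ comes already realized as a subset of the Hausdorff space $\mathsf{T}\times\Z\times\mathsf{T}$, with its structure maps being the coordinate projections. Since $\hat E_{tight}\cong\mathsf{T}$ is a closed subspace of the Hausdorff space $\hat E_0$ (subsection \hyperref[Inverse semigroups]{2.2}), $\mathsf{T}$ is Hausdorff, and as $\Z$ is discrete the product $\mathsf{T}\times\Z\times\mathsf{T}$ is Hausdorff. I would then argue that the inclusion $\iota\colon\Gamma\hookrightarrow\mathsf{T}\times\Z\times\mathsf{T}$ is continuous, whence the result follows from the elementary fact that a space admitting a continuous injection into a Hausdorff space is itself Hausdorff: given two distinct elements of $\Gamma$, their distinct images can be separated in $\mathsf{T}\times\Z\times\mathsf{T}$, and the $\iota$-preimages of the separating open sets separate the original points.

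To prove that $\iota$ is continuous it suffices to check that each of its three coordinate maps is continuous. For the middle coordinate $c\colon(\eta,m,\xi)\mapsto m$, I would observe that on each basic set $Z_{s,e:e_1,\ldots,e_n}$ with $s=(\mu,A,\nu)$ every element has middle entry $|\mu|-|\nu|$; thus $c$ is constant on each member of the basis described in Proposition \ref{topology on Gamma}, hence locally constant and continuous into the discrete space $\Z$. For the outer coordinates $d\colon(\eta,m,\xi)\mapsto\xi$ and $r\colon(\eta,m,\xi)\mapsto\eta$, I would invoke Theorem \ref{tight iso boundary}: since the topology on $\Gamma$ is induced precisely so that $\Phi$ is a homeomorphism, and since $\CG_{tight}$ is an étale groupoid whose source and range maps are continuous, the compositions $d=d_{\CG_{tight}}\circ\Phi^{-1}$ and $r=r_{\CG_{tight}}\circ\Phi^{-1}$ are continuous. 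Here one uses the identifications $d([t,\phi])=\phi\leftrightarrow\xi^\af$ and $r([t,\phi])=\rho_t(\phi)\leftrightarrow(G_{(\bt)\af'}\circ H_{[\gm]\af'})(\xi^\af)$ from Lemma \ref{well defined}, which show that $d$ and $r$ agree with the third and first coordinates of $\iota$, respectively.

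I expect the only real subtlety to be conceptual rather than computational: one must recognise that Hausdorffness is genuine content here, because a tight groupoid of germs need not be Hausdorff in general, and that what rescues us is precisely the concrete embedding into $\mathsf{T}\times\Z\times\mathsf{T}$ afforded by the Renault--Deaconu description. The continuity of $d$ and $r$ ultimately rests on the continuity of the cutting and gluing maps $h_{[\af]\bt}$ and $g_{(\af)\bt}$ (Lemmas \ref{g comp f}(vi) and \ref{h comp f}(v)), but this is already packaged into the fact that $\CG_{tight}$ is étale together with Theorem \ref{tight iso boundary}, so no further calculation is needed. This mirrors \cite[Corollary 4.6]{BCM3}, and since every ingredient generalises verbatim to the present setting, the argument carries over without change.
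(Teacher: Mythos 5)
Your proof is correct, but it is not the argument the paper uses, and your closing claim that it ``mirrors'' \cite[Corollary 4.6]{BCM3} is off: that is precisely the proof you \emph{avoid}. The paper's proof is a two-line appeal to general theory: it observes that $S_{(\CB,\CL,\theta,\CI_\af)}$ is $E^*$-unitary (citing the verification in \cite[Proposition 4.6]{BCM3}) and then invokes \cite[Corollary 10.9]{Ex1}, which says that the groupoid of germs of an $E^*$-unitary inverse semigroup is Hausdorff; Hausdorffness of $\Gamma$ follows since its topology is transported from $\CG_{tight}$ via $\Phi$. Your route instead exploits the concrete realization of $\Gamma$ inside $\mathsf{T}\times\Z\times\mathsf{T}$: the middle coordinate is constant on each basic set $Z_{s,e:e_1,\cdots,e_n}$ of Proposition \ref{topology on Gamma}, hence locally constant; the outer coordinates agree (up to the filter--character homeomorphism) with $d_{\CG_{tight}}\circ\Phi^{-1}$ and $r_{\CG_{tight}}\circ\Phi^{-1}$, the identification of the first coordinate with the range map being exactly Lemma \ref{well defined}; so the inclusion of $\Gamma$ into the Hausdorff space $\mathsf{T}\times\Z\times\mathsf{T}$ is a continuous injection, which forces $\Gamma$ to be Hausdorff. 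All ingredients you use (Proposition \ref{topology on Gamma}, Theorem \ref{tight iso boundary}, Lemma \ref{well defined}, \'etaleness of $\CG_{tight}$) are established before this corollary, so the argument is legitimate where it stands. What the paper's approach buys is brevity plus a structural fact of independent interest (the $E^*$-unitary property of $S$, which controls Hausdorffness of germ groupoids in general); what yours buys is self-containedness --- it bypasses both the $E^*$-unitary verification and Exel's Corollary 10.9 --- and it makes transparent why Hausdorffness is automatic here: germs are completely determined by the triple (range filter, integer, source filter), i.e., the topology on $\Gamma$ refines the subspace topology inherited from $\mathsf{T}\times\Z\times\mathsf{T}$.
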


\begin{proof} It is easy to check that $S_{(\CB,\CL, \theta, \CI_\af)}$ is $E^*$-unitary (see \cite[Proposition 4.6]{BCM3}). Then the result follows immediately from \cite[Corollary 10.9]{Ex1}.
\end{proof}

\subsection{\texorpdfstring{$\Gamma$}{Gamma} as a Renault-Deaconu groupoid}\label{section:Renault-Deaconu}

Here, we  show that $\Gamma$ can be seen as a Renault-Deaconu groupoid. 
 We define, for each $n \geq 1$, $\mathsf{T}^{(n)}=\{\xi^\af \in \mathsf{T}: \af \in \CW^{\leq \infty}, |\af| \geq n \}$ and $\sigma:\mathsf{T}^{(1)} \to \mathsf{T}$ by $\sigma(\xi^\af)=H_{[\af_1]\gm}(\xi)$ if $\af=\af_1\gm$.

\begin{prop}\label{local homeo on T} Let $\mathsf{T}^{( 1)}$ and $\sigma$ as above. Then
\begin{enumerate} 
\item[(i)]  $\mathsf{T}^{( 1)}$ is open;
\item[(ii)] $\sigma$ is a local homeomorphism.
\end{enumerate}
\end{prop}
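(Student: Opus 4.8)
The plan is to prove (i) and (ii) separately, dispatching (i) quickly and putting the real work into (ii), following the pattern of the labeled-space argument in \cite{BCM3}.

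For (i), I would observe that a tight filter $\xi^\af$ lies in $\mathsf{T}^{(1)}$ precisely when $|\af|\geq 1$, and that by Proposition \ref{filter gives complete family} the component $\xi_1$ is then nonempty, so there exist $a\in\CL$ and $\emptyset\neq A\in\CI_a$ with $(a,A,a)\in\xi$. Conversely, any such element forces $|\af|\geq 1$ by Lemma \ref{lem:assoc.word}. Hence $\mathsf{T}^{(1)}=\bigcup_{a\in\CL}\bigcup_{\emptyset\neq A\in\CI_a} V_{(a,A,a)}$, a union of basic open sets, so it is open.

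For (ii), the strategy is to cover $\mathsf{T}^{(1)}$ by the open sets $V_{(a,A,a)}$ and show $\sigma$ restricts to a homeomorphism onto a basic open set on each. Fix $a\in\CL$ and $\emptyset\neq A\in\CI_a$. Every $\xi\in V_{(a,A,a)}$ has word beginning with $a$, say $\af=a\gm$, so $\sigma(\xi)=H_{[a]\gm}(\xi)$; thus on $V_{(a,A,a)}$ the map $\sigma$ is exactly the family of cutting maps $H_{[a]\gm}$. I would first identify the image: by Lemma \ref{for local homeo} (with $\bt=\emptyset$), $A\in\xi_1\iff A\in\uparrow_{\CB}\xi_1=\sigma(\xi)_0$, so $\sigma(V_{(a,A,a)})\subseteq V_{(\emptyset,A,\emptyset)}$; conversely, for $\eta\in V_{(\emptyset,A,\emptyset)}$ with word $\gm$ we have $\eta_0\cap\CI_a\neq\emptyset$, so $\eta\in\mathsf{T}_{(a)\gm}$ and $\zeta:=G_{(a)\gm}(\eta)\in V_{(a,A,a)}$ satisfies $\sigma(\zeta)=\eta$ by Theorem \ref{GH=id}. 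Combining this with the fact that $H_{[a]\gm}$ and $G_{(a)\gm}$ are mutually inverse (Theorem \ref{GH=id}) and that the word $\gm$ of $\sigma(\xi)$ fixes which $G_{(a)\gm}$ to apply shows $\sigma|_{V_{(a,A,a)}}\colon V_{(a,A,a)}\to V_{(\emptyset,A,\emptyset)}$ is a bijection.

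The heart of the argument is continuity and openness, which I would derive from a single translation of basic conditions. For an idempotent $(\dt,D,\dt)$ with $D\in\CI_\dt$ and $\xi\in V_{(a,A,a)}$, I claim $(\dt,D,\dt)\in\sigma(\xi)\iff(a\dt,\theta_\dt(A)\cap D,a\dt)\in\xi$: indeed $\theta_\dt(A)\in\xi_{|\dt|+1}$ since $A\in\xi_1$ and the family is complete, whereupon Lemma \ref{abc} gives $D\in h_{[a]\dt}(\xi_{|\dt|+1})\iff\theta_\dt(A)\cap D\in\xi_{|\dt|+1}$. As $(a\dt,\theta_\dt(A)\cap D,a\dt)\leq(a,A,a)$ by Lemma \ref{lem:order}, this yields $\sigma^{-1}(V_{(\dt,D,\dt)})\cap V_{(a,A,a)}=V_{(a\dt,\theta_\dt(A)\cap D,a\dt)}$ (empty when $\theta_\dt(A)\cap D=\emptyset$), a basic open set; since the sets $V_e$ and their clopen complements generate the topology, $\sigma|_{V_{(a,A,a)}}$ is continuous. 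For openness I would run the translation in reverse: after rewriting each defining idempotent of a basic open subset of $V_{(a,A,a)}$ in the form $(a\mu',F,a\mu')$ (using $(\emptyset,F,\emptyset)\in\xi\iff(a,\theta_a(F),a)\in\xi$), Lemma \ref{for local homeo} gives $(a\mu',F,a\mu')\in\xi\iff(\mu',F,\mu')\in\sigma(\xi)$, so $\sigma$ carries such a set onto a basic open set of $\mathsf{T}$. A continuous open bijection onto the open set $V_{(\emptyset,A,\emptyset)}$ is a homeomorphism, and as these sets cover $\mathsf{T}^{(1)}$, $\sigma$ is a local homeomorphism.

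The main obstacle I anticipate is the bookkeeping around the $0$-th component: unlike the higher components, $\sigma(\xi)_0=\uparrow_{\CB}\xi_1$ need not equal $\xi_0$, and empty-word idempotents $(\emptyset,F,\emptyset)$ must be converted to first-letter idempotents $(a,\theta_a(F),a)$ before the translation applies; one must also confirm that the glued filter $G_{(a)\gm}(\eta)$ has word exactly $a\gm$ (via Proposition \ref{after gluing}) so that $\sigma$ is genuinely inverted by gluing. Everything else parallels the labeled-space case treated in \cite{BCM3}.
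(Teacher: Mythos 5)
Your proof is correct and takes essentially the same approach as the paper: part (i) is identical, and for part (ii) the paper likewise covers $\mathsf{T}^{(1)}$ by the sets $V_{(\af_1,A,\af_1)}$, shows $\sm_{\af_1,A}:=\sigma|_{V_{(\af_1,A,\af_1)}}$ is a bijection onto $V_{(\emptyset,A,\emptyset)}$ via injectivity of $H_{[\af_1]\gm}$ and surjectivity through $G_{(\af_1)\gm}$ and Theorem \ref{GH=id}, and then transfers basic open sets in both directions using Lemma \ref{for local homeo} and Lemma \ref{for local homeo 2}. Your only deviations are cosmetic: you re-derive the content of Lemma \ref{for local homeo 2} directly from Lemma \ref{abc}, and you phrase the conclusion as ``continuous open bijection'' where the paper shows that $\sm_{\af_1,A}$ and $\sm_{\af_1,A}^{-1}$ are both open mappings.
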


\begin{proof}  
(i) If $\xi = \xi^\af$ with $|\af| \geq 1$, then $(\af_1, A, \af_1) \in \xi$ for some $A \in \CI_{\af_1}$. Hence, 
$$\mathsf{T}^{(1)}= \bigcup_{\af \in \CL, A \in \CI_\af} V_{(\af,A, \af)},$$
which is an open set.

(ii) Let $\xi^{\af} \in \mathsf{T}^{( 1)}$ and choose $A \in \CI_{\af_1}$ such that $\xi \in  V_{(\af_1, A, \af_1)}$. We show that $\sm_{\af_1,A}:=\sigma|_{V_{(\af_1,A,\af_1)}}$ is a homeomorphism between $V_{(\af_1,A,\af_1)}$ and $V_{(\emptyset, A, \emptyset)}$. To see tha that $\sm_{\af_1,A}$ is injective, suppose that $\sm_{\af_1,A}(\xi)=\zeta^\gm=\sm_{\af_1,A}(\eta)$.  Then $\xi=\xi^{\af_1\gm}$, $\eta=\eta^{\af_1\gm}$ and $H_{[\af_1]\gm}(\xi^{\af_1\gm})=\sm_{\af_1,A}(\xi)=\sm_{\af_1,A}(\eta)=H_{[\af_1]\gm}(\eta^{\af_1\gm})$. Since $H_{[\af_1]\gm}$ is injective, we have  $\xi=\eta$.

For the surjectivity, let $\zeta=\zeta^\gm \in V_{(\emptyset, A, \emptyset)}$ where $A \in \CI_{\af_1}$. Since $(\emptyset, A, \emptyset)\in  \zeta$, $A \in \zeta_0$, and hence $\zeta_0 \in X_{(\af_1)\emptyset}(=\{\CF \in X_{\emptyset} : \CF \cap \CI_{\af_1} \neq \emptyset\})$.  So, $\zeta \in \mathsf{T}_{(\af_1)\gm}(=\{\eta \in \mathsf{T}_{\gm}: \eta_0 \in X_{(\af_1)\emptyset}\})$. Take $\xi=G_{(\af_1)\gm}(\zeta)$, so that $\xi \in V_{(\af_1,A,\af_1)}$ and $\sigma_{\af_1,A}(\xi)=H_{[\af_1]\gm}(\xi)=\zeta$.

To show that $\sm_{\af_1,A}$ is a homeomorphism, one can use  Lemma \ref{for local homeo} to see that 
\begin{align*}&\sm_{\af_1,A}(V_{(\af_1\gm, B, \af_1\gm):(\af_1\gm\bt_1, B_1, \af_1\gm\bt_1), \cdots, (\af_1\gm\bt_n, B_n, \af_1\gm\bt_n)}) \\
&=\{H_{[\af_1]\gm}(\xi): \xi \in V_{(\af_1\gm, B, \af_1\gm):(\af_1\gm\bt_1, B_1, \af_1\gm\bt_1), \cdots, (\af_1\gm\bt_n, B_n, \af_1\gm\bt_n)} \}\\
&=V_{(\gm,B, \gm):(\gm\bt_1, B_1, \gm\bt_1), \cdots, (\gm\bt_n, B_n, \gm\bt_n)}
\end{align*}
for an arbitrary basic open set $V_{(\af_1\gm, B, \af_1\gm):(\af_1\gm\bt_1, B_1, \af_1\gm\bt_1), \cdots, (\af_1\gm\bt_n, B_n, \af_1\gm\bt_n)}$ of $V_{(\af_1,A,\af_1)}$, where $B \subseteq \theta_\gm(A)$. Also, we can use Lemma \ref{for local homeo 2} to conclude that 
\begin{align*}\sm_{\af_1,A}^{-1}&(V_{(\emptyset, A, \emptyset)} \cap V_{(\gm,B, \gm):(\gm\bt_1, B_1, \gm\bt_1), \cdots, (\gm\bt_n, B_n, \gm\bt_n)}) \\
&=\{G_{(\af_1)\gm}(\xi): \xi \in V_{(\emptyset, A, \emptyset)} \cap V_{(\gm,B, \gm):(\gm\bt_1, B_1, \gm\bt_1), \cdots, (\gm\bt_n, B_n, \gm\bt_n)} \}\\
&= V_{(\af_1\gm,B \cap \theta_{\gm}(A), \af_1\gm):(\af_1\gm\bt_1, B_1 \cap \theta_{\gm\bt_1}(A), \af_1\gm\bt_1), \cdots, (\af_1\gm\bt_n, B_n \cap \theta_{\gm\bt_n}(A), \af_1\gm\bt_n)}
\end{align*}
for an arbitrary basic open set 
$V_{(\emptyset, A, \emptyset)} \cap V_{(\gm,B, \gm):(\gm\bt_1, B_1, \gm\bt_1), \cdots, (\gm\bt_n, B_n, \gm\bt_n)} $ of $V_{(\emptyset, A, \emptyset)}$, where $B \subseteq \theta_\gm(A)$.
Hence,  $\sm_{\af_1,A}$ and $\sm_{\af_1,A}^{-1}$ are open mapping. So,  $\sm_{\af_1,A}$ is a homeomorphism.
\end{proof}

Notice that for $n \in \N$, $\operatorname{dom}(\sigma^n)=\mathsf{T}^{(n)}$. Also, if $\xi=\xi^{\af\bt}$, then $\sigma^{|\af|}(\xi)=H_{[\af]\bt}(\xi)$ by Lemma \ref{G comp G}(iii).  This implies that 
$$\Gamma=\{(\eta, m-n, \xi):  m,n \in \N, \eta \in \operatorname{dom}(\sigma^m), \xi  \in  \operatorname{dom}(\sigma^n) ~\text{and}~ \sigma^m(\eta)=\sigma^n(\xi) \},$$
that is, $\Gamma$ is the Renault-Deaconu groupoid associated with $\sigma$. 

 We now prove that the topology given in Proposition \ref{topology on Gamma} is the same as the one generated by the basic open sets 
 \begin{align}\label{basis by RD}\CV(X,m,n, Y):=\{(\eta,m-n,\xi): (\eta,\xi) \in X \times Y ~\text{and}~\sm^m(\eta)=\sm^n(\xi)\},\end{align}
 where $X$ (resp. Y) is an open subset of $\operatorname{dom}(\sm^m)$ (resp. $\operatorname{dom}(\sm^n)$) for which $\sm^m|_X$ (resp. $\sm^n|_Y$) is injective. 
 
 \begin{lem}\label{RD is finer} If $(\af,A,\bt) \in S$ and $e_1=(\bt\dt_1, B_1, \bt\dt_1), \cdots, e_n=(\bt\dt_n, B_n, \bt\dt_n) \in E$ are such that $B_i \subseteq \theta_{\dt_i}(A)$, then $\sm^{|\bt|}$ is injective restricted to $V_{s^*s:e_1, \cdots, e_n}$,  $\sm^{|\af|}$ is injective restricted to $V_{ss^*:f_1, \cdots, f_n}$ and 
 \begin{equation}\label{Z equals V}Z_{s:e_1, \cdots, e_n}=\CV(V_{ss^*:f_1, \cdots, f_n}, |\af|, |\bt|, V_{s^*s:e_1, \cdots, e_n}),\end{equation}
 where $f_1=(\af\dt_1, B_1, \af\dt_1), \cdots, (\af\dt_n, B_n, \af\dt_n)$.
 \end{lem}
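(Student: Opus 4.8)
The plan is to dispatch the two injectivity assertions first and then prove the set equality \eqref{Z equals V} by a double inclusion whose core is a single filter-level computation.

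For injectivity, I would note that $V_{s^*s:e_1,\ldots,e_n}\subseteq V_{s^*s}=V_{(\bt,A,\bt)}$ and that every $\xi$ in the latter has the form $\xi^{\bt\gm}$ with $A\in\xi_{|\bt|}$, so $\xi\in\mathsf{T}^{(|\bt|)}$ and $\sm^{|\bt|}(\xi)=H_{[\bt]\gm}(\xi)$ by Lemma~\ref{G comp G}(iii). Since $H_{[\bt]\gm}=G_{(\bt)\gm}^{-1}$ by Theorem~\ref{GH=id}, one recovers $\xi=G_{(\bt)\gm}(\sm^{|\bt|}(\xi))$ from its image (the tail word $\gm$ of $\sm^{|\bt|}(\xi)$ dictates which $G$ to apply), whence $\sm^{|\bt|}$ is injective on $V_{(\bt,A,\bt)}$, a fortiori on $V_{s^*s:e_1,\ldots,e_n}$. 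The argument for $\sm^{|\af|}$ on $V_{ss^*:f_1,\ldots,f_n}\subseteq V_{(\af,A,\af)}$ is verbatim the same.

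For the equality, observe that both sides consist of triples $(\eta,|\af|-|\bt|,\xi)$ and both impose $\xi\in V_{s^*s:e_1,\ldots,e_n}$. The key step is the following claim: given a triple in $\Gamma$ with common tail $\gm$ (so $\eta=\eta^{\af\gm}$, $\xi=\xi^{\bt\gm}$ and $\zeta:=H_{[\af]\gm}(\eta)=H_{[\bt]\gm}(\xi)$) together with $A\in\xi_{|\bt|}$, one has $A\in\eta_{|\af|}$ and $e_i\in\xi\iff f_i\in\eta$ for each $i$. To prove it, write $\eta=G_{(\af)\gm}(\zeta)$ and $\xi=G_{(\bt)\gm}(\zeta)$; by the construction of the gluing map preceding Proposition~\ref{after gluing} and the definition $g_{(\cdot)\cdot}(\CF)=\CF\cap\CI_{\cdot}$, for every beginning $\dt$ of $\gm$ with $|\dt|=k$ one gets $\eta_{|\af|+k}=\zeta_k\cap\CI_{\af\dt}$ and $\xi_{|\bt|+k}=\zeta_k\cap\CI_{\bt\dt}$. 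Hence, for any $B\in\CI_{\af\dt}\cap\CI_{\bt\dt}$,
\[
B\in\eta_{|\af|+k}\iff B\in\zeta_k\iff B\in\xi_{|\bt|+k}.
\]
Taking $\dt=\emptyset$, $k=0$, $B=A$ (legitimate since $A\in\CI_\af\cap\CI_\bt$) yields $A\in\eta_{|\af|}$ from $A\in\xi_{|\bt|}$; taking $\dt=\dt_i$, $B=B_i$ (which lies in both $\CI_{\af\dt_i}$ and $\CI_{\bt\dt_i}$, the former because $B_i\subseteq\theta_{\dt_i}(A)\in\CI_{\af\dt_i}$ by Lemma~\ref{properties of I}(iii) and $\CI_{\af\dt_i}$ is an ideal) yields $e_i\in\xi\iff f_i\in\eta$ whenever $\dt_i$ is a beginning of $\gm$, while if $\dt_i$ is not a beginning of $\gm$ both memberships fail.

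With the claim in hand, $Z_{s:e_1,\ldots,e_n}\subseteq\CV(\cdots)$ follows: a triple of $Z_{s:e_1,\ldots,e_n}$ lies in $\Gamma$ with $\xi\in V_{s^*s:e_1,\ldots,e_n}$, so the equivalences give $ss^*\in\eta$ and $f_i\notin\eta$, i.e. $\eta\in V_{ss^*:f_1,\ldots,f_n}$, whereas the shift condition $\sm^{|\af|}(\eta)=\zeta=\sm^{|\bt|}(\xi)$ is automatic from $\Gamma$-membership. The reverse inclusion is direct: $\eta\in V_{ss^*}$ and $\xi\in V_{s^*s}$ force $\eta=\eta^{\af\delta}$, $\xi=\xi^{\bt\gm}$, and $\sm^{|\af|}(\eta)=\sm^{|\bt|}(\xi)$ forces $\delta=\gm$ (equal tail words) together with $H_{[\af]\gm}(\eta)=H_{[\bt]\gm}(\xi)$, which is precisely $(\eta,|\af|-|\bt|,\xi)\in\Gamma$; this combined with $\xi\in V_{s^*s:e_1,\ldots,e_n}$ is exactly the definition of $Z_{s:e_1,\ldots,e_n}$. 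The main obstacle is the bookkeeping that identifies the complete families of $\eta$ and $\xi$ through the common intermediate filter $\zeta$ via the recursive gluing construction, while keeping the $\emptyset$-prefix cases uniform (where $G$ and $H$ degenerate to identities and one must observe that $\zeta_0=\uparrow_\CB\xi_{|\bt|}$ is nonempty because $A\neq\emptyset$ lies in $\xi_{|\bt|}$).
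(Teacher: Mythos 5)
Your proof is correct and takes essentially the same route as the paper: injectivity is reduced to Theorem~\ref{GH=id} exactly as in the paper's proof, and the set equality rests on the same key equivalence, namely that for a triple $(\eta^{\af\gm},|\af|-|\bt|,\xi^{\bt\gm})\in\Gamma$ one has $\eta\in V_{ss^*:f_1,\ldots,f_n}$ if and only if $\xi\in V_{s^*s:e_1,\ldots,e_n}$, established through the cutting/gluing maps. Your level-wise identification $\eta_{|\af|+k}=\zeta_k\cap\CI_{\af\dt}$ and $\xi_{|\bt|+k}=\zeta_k\cap\CI_{\bt\dt}$ through the common filter $\zeta$ just makes explicit what the paper leaves to the reader (``one also can see that $f_1,\ldots,f_n\notin\eta$''), and your use of Lemma~\ref{properties of I}(iii) to get $B_i\in\CI_{\af\dt_i}$ is an equivalent substitute for the paper's choice of $A'$ and $B'$.
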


 \begin{proof} For injectivity, it is sufficient to show that $\sm^{|\bt|}$ is injective in $V_{s^*s}$. This is indeed the case, for $\eta, \xi \in V_{s^*s}$, if $\sm^{|\bt|}(\eta)=\sm^{|\bt|}(\xi)=\zeta^{\gm}$, then $\eta=G_{(\bt)\gm}(\zeta)=\xi$ by Theorem \ref{GH=id}.
 
 By the definition of $S$, $A \in \CI_\af \cap \CI_\bt$. Choose $A' \in \CI_{\af_1}$ and $B' \in \CI_{\bt_1}$ so that $A \subseteq \theta_{\af_{2, |\af|}}(A')\cap \theta_{\bt_{2,|\bt|}}(B')$. Then we see that 
 $$B_i \subseteq \theta_{\dt_i}(A) \subseteq  \theta_{\af_{2, |\af|}\dt_i}(A')\cap \theta_{\bt_{2,|\bt|}\dt_i}(B').$$
 From this, it follows that for $(\eta^{\af\gm}, |\af|-|\bt|, \xi^{\bt\gm}) \in \Gamma$, 
 $$  \eta \in V_{ss^*: f_1, \cdots, f_n}   ~\text{if and only if}~\xi \in V_{s^*s: e_1, \cdots, e_n}.$$
  In fact,  since $\eta^{\af\gm}=G_{(\af)\gm}(H_{[\bt]\gm}(\xi^{\bt\gm}))$, we have $\eta_{|\af|}=g_{(\af)\emptyset}(h_{[\bt]\emptyset}(\xi_{|\bt|}))$. Thus, 
 if $\xi \in V_{s^*s: e_1, \cdots, e_n} $, then $A \in \eta_{|\af|}$, and hence, $ss^*=(\af, A, \af) \in \eta$. One also can see that  $f_1, \cdots, f_n \notin \eta$. 
 Similarly, we have the other implication. The equality of \eqref{Z equals V} then follows from the above equivalence and the corresponding definitions of the sets.
 \end{proof}

 \begin{prop} The topology on $\Gamma$ given by Proposition \ref{topology on Gamma} is the same as the topology generated by the basic open set given by (\ref{basis by RD}).
 \end{prop}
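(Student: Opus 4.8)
The plan is to prove that the two bases generate the same topology by establishing the two inclusions. Write $\tau_{\mathrm{Prop}}$ for the topology of Proposition~\ref{topology on Gamma}, with basis $\{Z_{s,e:e_1,\ldots,e_n}\}$, and $\tau_{\mathrm{RD}}$ for the topology generated by the sets $\CV(X,m,n,Y)$ of \eqref{basis by RD}. The inclusion $\tau_{\mathrm{Prop}}\subseteq\tau_{\mathrm{RD}}$ will follow quickly from Lemma~\ref{RD is finer}; the reverse inclusion is the real work. I will use throughout the elementary observation that, for fixed $m,n$, one has $\CV(X_1,m,n,Y_1)\cap\CV(X_2,m,n,Y_2)=\CV(X_1\cap X_2,m,n,Y_1\cap Y_2)$, since membership on either side is controlled by the single equation $\sm^m(\eta)=\sm^n(\xi)$; in particular an intersection of two such sets is again of this form.

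For $\tau_{\mathrm{Prop}}\subseteq\tau_{\mathrm{RD}}$, I would first reduce a general basic set to the case $e=s^*s$. Under $\Phi$ the set $Z_{s,e:e_1,\ldots,e_n}$ is the image of $\Theta(s,\CU)$ with $\CU=V_{e:e_1,\ldots,e_n}\cap V_{s^*s}$, and since filters in the semilattice $E$ satisfy $V_{e:\ldots}\cap V_{s^*s}=V_{e\cdot s^*s:\ldots}$, we may assume $e\le s^*s$. Writing $e=(\nu\nu'',D',\nu\nu'')$ and $s=(\mu,A,\nu)$ with $D'\subseteq\theta_{\nu''}(A)$ (Lemma~\ref{lem:order}), a direct computation of the product gives $s':=se=(\mu\nu'',D',\nu\nu'')$ with $s'^*s'=e$, so that $Z_{s,e:e_1,\ldots,e_n}=Z_{s':e_1,\ldots,e_n}$. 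Equation~\eqref{Z equals V} of Lemma~\ref{RD is finer} then identifies $Z_{s':e_1,\ldots,e_n}$ with a set $\CV(\,\cdot\,,|\mu\nu''|,|\nu\nu''|,\,\cdot\,)$, which is $\tau_{\mathrm{RD}}$-open. This gives the inclusion.

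For $\tau_{\mathrm{RD}}\subseteq\tau_{\mathrm{Prop}}$, I would fix $\CV(X,m,n,Y)$ and a point $p=(\eta^{\af\gm},m-n,\xi^{\bt\gm})$ in it, where $|\af|=m$, $|\bt|=n$ and $\zeta:=\sm^m(\eta)=\sm^n(\xi)=H_{[\af]\gm}(\eta)=H_{[\bt]\gm}(\xi)$, and look for a $\tau_{\mathrm{Prop}}$-basic set containing $p$ and contained in $\CV(X,m,n,Y)$. Comparing $0$-th filters gives $\uparrow_\CB\eta_m=\uparrow_\CB\xi_n=:\CF$, an ultrafilter of $\CB$ by Proposition~\ref{cut path}; choosing $A_1\in\eta_m$, $A_2\in\xi_n$ and setting $A=A_1\cap A_2$ produces, via Lemma~\ref{h comp f}(iii), a nonempty $A\in\eta_m\cap\xi_n\cap\CI_\af\cap\CI_\bt$, so that $s=(\af,A,\bt)\in S$ with $\eta\in V_{ss^*}$ and $\xi\in V_{s^*s}$. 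By Lemma~\ref{RD is finer}, $Z_s=\CV(V_{ss^*},m,n,V_{s^*s})$ contains $p$, and intersecting with $\CV(X,m,n,Y)$ (using the intersection identity above) lets me replace $X$ and $Y$ by $X\cap V_{ss^*}\subseteq V_{(\af,A,\af)}$ and $Y\cap V_{s^*s}\subseteq V_{(\bt,A,\bt)}$. Since $\sm^m$ and $\sm^n$ restrict (by the proof of Proposition~\ref{local homeo on T}, applied to $H_{[\af]}$ and $H_{[\bt]}$) to homeomorphisms of $V_{(\af,A,\af)}$ and $V_{(\bt,A,\bt)}$ onto open subsets of $V_{(\emptyset,A,\emptyset)}$ that carry basic sets to basic sets, the task reduces to finding a suitable basic neighborhood of $\zeta$ inside the open set $W:=\sm^m(X)\cap\sm^n(Y)\ni\zeta$.

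The hard part will be to arrange this neighborhood in the precise shape demanded by Lemma~\ref{RD is finer}, namely a single positive constraint $(\gm_{1,j},C,\gm_{1,j})$ at one level together with negatives $(\gm_{1,j}\dt_i,B_i,\gm_{1,j}\dt_i)$ satisfying $B_i\subseteq\theta_{\dt_i}(C)$, whereas a generic basic neighborhood carries positive and negative constraints at many levels. I would dispose of this by a normalization: a positive constraint deeper than the chosen top is absorbed by enlarging the prefix and shrinking $C$ (Lemma~\ref{lem:order}); a positive or negative constraint shallower than the top is, inside $V_{(\gm_{1,j},C,\gm_{1,j})}$, equivalent to a top-level constraint, by completeness of the family (Remark~\ref{rmk:complete.family}) together with Lemmas~\ref{for local homeo} and \ref{for local homeo 2}; and each negative may be intersected with the relevant $\theta_{\dt_i}(C)$ without changing the set. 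Pulling the normalized neighborhood back through the two homeomorphisms and choosing a common top set $C'\in\eta_{m+j}\cap\xi_{n+j}$ (built from the common ultrafilter at depth $j$ exactly as $A$ was built above) yields $\tilde s=(\af\gm_{1,j},C',\bt\gm_{1,j})$ and negatives $e_i=(\bt\gm_{1,j}\dt_i,B_i',\bt\gm_{1,j}\dt_i)$ with $p\in Z_{\tilde s:e_1,\ldots,e_k}\subseteq\CV(X,m,n,Y)$ by Lemma~\ref{RD is finer}. The genuine subtlety, and the reason for routing everything through $\zeta$ and $W$, is that the normalization must be performed compatibly on the source and range sides so that the single word $\tilde s$ works for both; passing to the common cut $\zeta$ makes this compatibility automatic.
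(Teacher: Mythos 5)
Your second inclusion ($\tau_{\mathrm{RD}}\subseteq\tau_{\mathrm{Prop}}$) is sound in outline, but the first one has a genuine gap as written. The reduction $Z_{s,e:e_1,\ldots,e_n}=Z_{s':e_1,\ldots,e_n}$ with $s'=se$ is correct, but you then invoke equation \eqref{Z equals V} of Lemma~\ref{RD is finer} for this set, and that lemma carries hypotheses you have not arranged: the negatives must have the form $e_i=(\bt\dt_i,B_i,\bt\dt_i)$, where $\bt$ is the source word of $s'$, \emph{and} must satisfy the domination condition $B_i\subseteq\theta_{\dt_i}(A)$ for the middle set $A$ of $s'$. A generic basic set has negatives sitting at words that are proper prefixes of $\bt$, at words incomparable with $\bt$, or with sets failing domination; in those cases the right-hand side of \eqref{Z equals V} is not even defined (there is no prescription for the $f_i$), and the proof of that lemma breaks down, since it is exactly the domination $B_i\subseteq\theta_{\dt_i}(A)\subseteq\theta_{\af_{2,|\af|}\dt_i}(A')\cap\theta_{\bt_{2,|\bt|}\dt_i}(B')$ that makes the constraint transferable between the $\eta$-side and the $\xi$-side through the common cut. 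The repair is precisely the normalization you describe for the other inclusion (discard negatives at incomparable words, push shallow negatives up to the level of $s'^*s'$ using completeness of the family, and intersect deep negatives with the appropriate $\theta_{\dt_i}(D')$), but it must be carried out \emph{here}; as written, the claim that this direction ``follows quickly from Lemma~\ref{RD is finer}'' fails.

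For comparison, the paper is equally terse on that first direction (it too simply cites Lemma~\ref{RD is finer}), and it spends its effort on the inclusion you treat second, where its argument is much lighter than yours: it shrinks only on the $Y$ side to a basic set $V_{e:e_1,\ldots,e_n}\subseteq Y$ containing $\xi$, observes $e=(\bt\dt,A,\bt\dt)$ with $\dt$ a prefix of $\gm$, transfers only the positive set through the common cut $\zeta$ (from $A\in\uparrow_{\CI_\dt}\xi_{|\bt\dt|}=\uparrow_{\CI_\dt}\eta_{|\af\dt|}$ it extracts $A'\subseteq A$ with $A'\in\eta_{|\af\dt|}\cap\CI_{\bt\dt}$), and takes $s=(\af\dt,A',\bt\dt)$ with the negatives left untouched --- no normalization at all, since $Z_{s:e_1,\ldots,e_n}$ constrains only the $\xi$-coordinate. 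Your heavier route through $W=\sm^m(X)\cap\sm^n(Y)$ costs the normalization machinery but buys something the paper's one-sided argument leaves implicit: it checks that the range components of the resulting $Z$-set land in $X$, not merely that the source components land in $Y$. So your second half is a legitimate, arguably more careful, variant of the paper's argument; the defect to repair is in the first half.
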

 
 \begin{proof} Since we have Lemma \ref{RD is finer}, it is sufficient to show that for each $(\eta, m-n, \xi) \in \CV(X,m,n, Y)$, there exists $s \in S$ and $e_1, \cdots, e_n \in E$ such that $(\eta, m-n, \xi) \in Z_{s:e_1, \cdots, e_n} \subseteq   \CV(X,m,n, Y)$.
 Given such $(\eta, m-n, \xi)$, 
  there exist $\af, \bt, \gm \in \CW^{\leq \infty}$ such that $|\af|=m$, $|\bt|=n$, $\eta=\eta^{\af\gm}$, $\xi=\xi^{\bt\gm}$ and $H_{[\af]\gm}(\eta)=H_{[\bt]\gm}(\xi)$. Since $\xi \in Y$ and $Y$ is open, there exist $e, e_1, \cdots, e_n \in E$ such that $\xi \in V_{e:e_1, \cdots, e_n} \subseteq Y$. This implies that $e$ is of the form $(\bt\dt, A, \bt\dt)$ for some beginning of $\dt$ of $\gm$.
  Since $H_{[\af]\gm}(\eta)=H_{[\bt]\gm}(\xi)=\zeta$, we have $\zeta_{|\dt|}=h_{[\af]\dt}(\eta_{|\af\dt|})=h_{[\bt]\dt}(\xi_{|\bt\dt|})$. Since $A \in \xi_{|\bt\dt|}$, $A \in \uparrow_{\CI_{\dt}} \xi_{|\bt\dt|}= \uparrow_{\CI_{\dt}} \eta_{|\af\dt|}$. So, there exists $A' \in \eta_{|\af\dt|} (\subseteq \CI_{\af\dt})$ such that $A' \subseteq A$. We also note that $A' \in \CI_{\bt\dt}$. Now, take $s=(\af\dt, A',  \bt\dt)$, so that $Z_{s:e_1, \cdots, e_n}$ is the desired set. 
 \end{proof}

\section{The \texorpdfstring{$C^*$}{C*}-algebra of generalized Boolean dynamical system as a groupoid \texorpdfstring{$C^*$}{C*}-algebra}\label{section:C*-isomorphism}

Fix $(\CB,\CL,\theta,\CI_\af)$ a generalized Boolean dynamical system. By the results of the previous sections, we have that $  C^*(\Gamma)    \cong C^*(\CG_{tight}).$ We now prove that each of them is isomorphic to the $C^*$-algebra of generalized Boolean dynamical system by showing that $ C^*(\Gamma)  \cong C^*(\CB,\CL,\theta, \CI_\af)$.
 We first recall that, since $\Gamma$ is an $\acute{e}$tale groupoid, 
 the corresponding Haar system is given by counting measure and, for $f,g \in C_c(\Gamma)$,  
 their product is given by 
 $$(f*g)(\eta,n,\xi)=\sum_{\zeta,m:(\eta,m,\zeta)\in \Gamma}f(\eta, m, \zeta)g(\zeta, n-m, \xi).$$

\begin{prop}\label{representation in C(G)}   Consider the following functions in $C_c(\Gamma)$ viewed as elements of $C^*(\Gamma)$:
\begin{align*} P_A&=\chi_{Z_{(\emptyset, A, \emptyset)}},\\
S_{\af,B}&=\chi_{Z_{(\af,B,\emptyset)}}, 
\end{align*}
where $A \in \CB$, $\af \in \CL$ and $B \in \CI_\af$.
Then  $\{P_A,\ S_{\alpha,B}: A\in\CB,\ \alpha\in\CL ~\text{and}~\ B\in\CI_\alpha\}$ is a $(\CB,\CL,\theta,\CI_\alpha)$-representation.
\end{prop}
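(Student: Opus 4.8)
The plan is to realize all four relations of Definition~\ref{def:representation of RGBDS} through the bisection calculus in the étale groupoid $\Gamma$, transporting the slice structure of the tight groupoid along the isomorphism $\Phi$ of Theorem~\ref{tight iso boundary}.

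First I would record that, for every $s\in S$, the set $Z_s$ is a compact-open bisection of $\Gamma$ and that $\Phi(\Theta(s,D_{s^*s}))=Z_s$. Indeed, $\Theta(s,D_{s^*s})$ is a compact-open bisection of $\CG_{tight}$ by the results recalled after Definition~\ref{def:tight groupoid}, and the identification $\Phi(\Theta(s,D_{s^*s}))=Z_s$ is immediate from the description of the basic sets in Proposition~\ref{topology on Gamma} together with Theorem~\ref{GH=id}, which gives $H_{[\mu]\gm}(\eta)=H_{[\nu]\gm}(\xi)$ exactly when $\eta=(G_{(\mu)\gm}\circ H_{[\nu]\gm})(\xi)$. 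Since $\Phi$ is an isomorphism of topological groupoids, the standard slice identities $\Theta(s)\Theta(t)=\Theta(st)$ and $\Theta(s)^{-1}=\Theta(s^*)$ for the groupoid of germs (see \cite{Ex1}) transport to $Z_sZ_t=Z_{st}$ and $Z_s^{-1}=Z_{s^*}$ in $\Gamma$, with the convention $Z_0=\emptyset$. As $\Gamma$ is Hausdorff and étale, for compact-open bisections $U,V$ one has $\chi_U*\chi_V=\chi_{UV}$ and $\chi_U^*=\chi_{U^{-1}}$; hence
\[
\chi_{Z_s}*\chi_{Z_t}=\chi_{Z_{st}},\qquad \chi_{Z_s}^*=\chi_{Z_{s^*}},
\]
which reduces (i)--(iii) to computing products in $S$ via the multiplication of Section~\ref{An inverse semigroup}.

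Using that multiplication I would then verify (i)--(iii) directly. The identity $P_\emptyset=0$ holds because $Z_{(\emptyset,\emptyset,\emptyset)}=\emptyset$ (no filter contains $\emptyset$), and $P_{A\cap A'}=P_AP_{A'}$ follows from $(\emptyset,A,\emptyset)(\emptyset,A',\emptyset)=(\emptyset,A\cap A',\emptyset)$; the additive relation $P_{A\cup A'}=P_A+P_{A'}-P_{A\cap A'}$ is checked pointwise on the unit space, since each $P_A$ is supported there and, by Proposition~\ref{tight gives ultrafilter}, $\xi_0$ is either empty or a prime filter in $\CB$, so inclusion-exclusion for the indicators $[A\in\xi_0]$ applies. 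For (ii) one computes $(\emptyset,A,\emptyset)(\af,B,\emptyset)=(\af,\theta_\af(A)\cap B,\emptyset)=(\af,B,\emptyset)(\emptyset,\theta_\af(A),\emptyset)$, giving $P_AS_{\af,B}=S_{\af,B}P_{\theta_\af(A)}$. For (iii), $S_{\af,B}^*=\chi_{Z_{(\emptyset,B,\af)}}$, and because $\af,\af'\in\CL$ are single letters, $(\emptyset,B,\af)(\af',B',\emptyset)$ equals $(\emptyset,B\cap B',\emptyset)$ when $\af=\af'$ and is $0$ otherwise; hence $S_{\af,B}^*S_{\af',B'}=\delta_{\af,\af'}P_{B\cap B'}$.

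The main work is relation (iv), and it is the step I expect to be the real obstacle. Here $A\in\CB_{reg}$, so $\Delta_A$ is finite and the sum is well defined. A short bisection computation gives $S_{\bt,\theta_\bt(A)}S_{\bt,\theta_\bt(A)}^*=\chi_{Z_{(\bt,\theta_\bt(A),\bt)}}$, where $(\bt,\theta_\bt(A),\bt)$ is idempotent, so by Theorem~\ref{GH=id} the set $Z_{(\bt,\theta_\bt(A),\bt)}$ sits in the unit space and is identified with $\{\xi\in\mathsf{T}:\bt\text{ begins the word of }\xi\text{ and }\theta_\bt(A)\in\xi_{|\bt|}\}$. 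The crux is then the disjoint-union identity
\[
\{\xi\in\mathsf{T}:A\in\xi_0\}=\bigsqcup_{\bt\in\Delta_A}\{\xi\in\mathsf{T}:(\bt,\theta_\bt(A),\bt)\in\xi\}.
\]
Disjointness is clear, since such a single letter $\bt$ must be the (unique) first letter of the word associated with $\xi$. For $\supseteq$, if $(\bt,\theta_\bt(A),\bt)\in\xi$ then $\theta_\bt(A)\in\xi_1\cap\CR_\bt$, which forces $\xi_0\neq\emptyset$ and, through the completeness relation $\xi_0=\{C:\theta_\bt(C)\in\xi_1\}$, gives $A\in\xi_0$. The delicate inclusion $\subseteq$ is exactly where regularity and the classification of tight filters enter: if $A\in\xi_0$ lay in a length-zero tight filter, then by Theorem~\ref{char:tight}(ii) every element of the ultrafilter $\xi_0$ would be singular, contradicting $A\in\CB_{reg}$; hence the word of $\xi$ has length at least $1$, its first letter $\bt_1$ satisfies $\theta_{\bt_1}(A)\in\xi_1$ so $\bt_1\in\Delta_A$, and $(\bt_1,\theta_{\bt_1}(A),\bt_1)\in\xi$. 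Summing the indicators over this disjoint union then yields $P_A=\sum_{\bt\in\Delta_A}S_{\bt,\theta_\bt(A)}S_{\bt,\theta_\bt(A)}^*$, completing the proof that $\{P_A,S_{\af,B}\}$ is a $(\CB,\CL,\theta,\CI_\af)$-representation.
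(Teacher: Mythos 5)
Your proof is correct, but it takes a genuinely different route from the paper's for the multiplicative relations. The paper verifies (i)--(iv) entirely by hand: it unwinds the convolution formula $(f*g)(\eta,n,\xi)=\sum_{\zeta,m}f(\eta,m,\zeta)g(\zeta,n-m,\xi)$ for each pair of generators, describes the resulting conditions in terms of the sets $Z_s$ and the filter-surgery maps $H_{[\af]\gm}$, $G_{(\af)\gm}$, $h$, $g$, and then compares indicator functions using properties of tight filters (that $\xi_0$ is a filter, that it is prime when nonempty, that $A\in\eta_0$ iff $\theta_\af(A)\in\xi_0$ when $\xi=H_{[\af]\gm}(\eta)$, etc.). You instead transport the slice structure of the groupoid of germs through the isomorphism $\Phi$ of Theorem \ref{tight iso boundary}: the identification $\Phi(\Theta(s,D_{s^*s}))=Z_s$ (which indeed follows from Proposition \ref{topology on Gamma} and Theorem \ref{GH=id}), the germ-groupoid identities $\Theta(s)\Theta(t)=\Theta(st)$ and $\Theta(s)^{-1}=\Theta(s^*)$, and the standard Hausdorff-\'etale facts $\chi_U*\chi_V=\chi_{UV}$, $\chi_U^*=\chi_{U^{-1}}$ reduce (i)--(iii) to multiplication in $S$, which is a cleaner and more conceptual packaging (at the cost of invoking \'etale-groupoid machinery the paper never states explicitly, and of the small notational point that $(\emptyset,\emptyset,\emptyset)\notin S$, so $P_\emptyset=0$ really rests on the convention $Z_0=\emptyset$). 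For the two relations that are not purely multiplicative --- the inclusion-exclusion formula in (i) and the Cuntz--Krieger relation (iv) --- your argument and the paper's coincide in substance: both work pointwise on the unit space, using primeness of the ultrafilter $\xi_0$ for the former and, for the latter, regularity of $A$ together with Theorem \ref{char:tight} to rule out words of length zero. In fact your treatment of (iv), with both inclusions of the disjoint-union identity spelled out (the $\supseteq$ direction via $\xi_1\cap\CR_\bt\neq\emptyset\Rightarrow\xi_0=\{C:\theta_\bt(C)\in\xi_1\}$), is more explicit than the paper's rather terse comparison of supports, so nothing essential is missing.
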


\begin{proof}
Let  $A,A'\in\mathcal{B}$, $\alpha,\alpha'\in\mathcal{L}$, $B\in\mathcal{I}_\alpha$ and $B'\in\mathcal{I}_{\alpha'}$. 
Since $\xi_0$ is a filter in $\CB$, $\emptyset \notin \xi_0$ and hence, $P_\emptyset=0$.
 Fix an arbitrary element $(\eta,n, \xi) \in \Gamma$.  First, observe that
\begin{align*}(P_A * P_{A'})(\eta,n,\xi)&=\sum_{\zeta,m:(\eta,m,\zeta)\in \Gamma} P_A(\eta,m,\zeta)P_{A'}(\zeta,n-m,\xi) \\
&=\left\{
   \begin{array}{ll}
   1   & \hbox{if\ }  ~m=n=0,~ \eta=\zeta=\xi,~ A \in \xi_0 ~\text{and}~ A' \in \xi_0,   \\
    0  & \hbox{if\ } ~\text{otherwise},
   \end{array}
\right.
\end{align*}
and that 
\begin{align*}P_{A \cap A'}(\eta,n,\xi)
&=\left\{
   \begin{array}{ll}
   1   & \hbox{if\ }  ~n=0,~\eta=\xi ~\text{and}~ A\cap A' \in \xi_0,   \\
    0  & \hbox{if\ } ~\text{otherwise}.
   \end{array}
\right.
\end{align*}
Since $\xi_0$ is a filter,  $A \in \xi_0$ and $A' \in \xi_0$ if and only if $A \cap A' \in \xi_0$.
It thus follows that  $P_A * P_B=P_{A\cap B}$.

The equality $P_{A \cup A'}=P_A+P_{A'}-P_{A\cap A'}$ follows from the fact that $\xi_0$ is an ultrafilter and therefore a prime filter, so that $A \cup B \in \xi_0$ if and only if $A \in \xi_0$ or $B \in \xi_0$.

We  see that 
\begin{align*}(&P_A  *S_{\af,B})(\eta, n, \xi)\\
&=\sum_{\zeta,m:(\eta,m,\zeta)\in \Gamma}P_A(\eta,m,\zeta)S_{\af,B}(\zeta,n-m, \xi)\\
&=\left\{
   \begin{array}{ll}
   1   & \hbox{if\ }  ~m=0, n=1, \zeta=\eta=\eta^{\af\gm}, A \in \eta_0, H_{[\af]\gm}(\eta)=\xi ~\text{and}~ B \in \xi_0, \\
    0  & \hbox{if\ } ~\text{otherwise}.
   \end{array}
\right.
\end{align*}
Notice that $\eta_1\subseteq \xi_0$ and $\theta_\af(A)\in\CI_\af$, we have that $A\in\eta_0$ if and only if $\theta_\af(A)\in\xi_0$. Then again, since $\xi_0$ is a filter in $\CB$, $\theta_{\af}(A) \in \xi_0$ and $B \in \xi_0$ if and only if $\theta_{\af}(A) \cap B \in \xi_0$.
Thus, $P_A *S_{\af,B}=S_{\af, \theta_{\af}(A) \cap B}$.

Also, we have 
\begin{align*}(&S_{\af,B}*P_{\theta_\af(A)})(\eta,n, \xi)\\
&=\sum_{\zeta,m:(\eta,m,\zeta)\in \Gamma} S_{\af,B}(\eta,m,\zeta)P_{\theta_\af(A)}(\zeta,n-m, \xi)\\
&=\left\{
   \begin{array}{ll}
   1   & \hbox{if\ }  ~m=n=1, \eta=\eta^{\af\gm}, H_{[\af]\gm}(\eta)=\zeta=\xi ~\text{and}~ B, \theta_\af(A) \in \xi_0, \\
    0  & \hbox{if\ } ~\text{otherwise}.
   \end{array}
\right.
\end{align*}
Then again, $B \in \xi_0$ and $\theta_\af(A) \in \xi_0$ if and only if $B \cap \theta_\af(A) \in \xi_0$. It then easy to see that   $S_{\af,B}*P_{\theta_\af(A)}=S_{\af, \theta_\af(A)\cap B}$. Hence $P_A *S_{\af,B}=S_{\af,B}*P_{\theta_\af(A)}$.

We have that 
\begin{align*}(&S_{\af,B}^* * S_{\af',B'})(\eta,n,\xi)\\
&=\sum_{\zeta,m:(\eta,m,\zeta)\in \Gamma} S_{\af,B}^*(\eta,m,\zeta)S_{\af',B'}(\zeta,n-m,\xi) \\
&=\sum_{\zeta,m:(\eta,m,\zeta)\in \Gamma} S_{\af,B}(\zeta, -m, \eta)S_{\af',B'}(\zeta,n-m,\xi) \\
&= \left\{
   \begin{array}{ll}
   1   & \hbox{if\ }  ~\af=\af',  m=-1, n=0,  \eta=H_{[\af]\gm}(\zeta^{\af\gm})=\xi, ~\text{and}~ B, B' \in \xi_0, \\
    0  & \hbox{if\ } ~\text{otherwise}.
   \end{array}
\right.
\end{align*}
On the other hand,  
\begin{align*}\delta_{\af,\af'}P_{B \cap B'}(\eta,n,\xi)&= \left\{
   \begin{array}{ll}
   1   & \hbox{if\ }  ~\af=\af',  n=0,  \eta=\xi, ~\text{and}~ B \cap B' \in \xi_0, \\
    0  & \hbox{if\ } ~\text{otherwise}.
   \end{array}
\right.
\end{align*}
Since $\xi_0$ is a filter,  $B \in \xi_0$ and $B' \in \xi_0$ if and only if $B \cap B' \in \xi_0$. Thus, $S_{\af,B}^* * S_{\af',B'}=\delta_{\af,\af'}P_{B \cap B'}$.

Finally, for the last relation, let $A \in \CB_{reg}$. 
We first observe that 
\begin{align*} &\sum_{\af \in \Delta_A} S_{\af, \theta_\af(A)}*S_{\af, \theta_\af(A)}^*(\eta, n ,\xi) \\
& =\left\{
   \begin{array}{ll}
   S_{\af, \theta_\af(A)}*S_{\af, \theta_\af(A)}^*(\eta, n ,\xi)   & \hbox{if\ }  \eta=\eta^{\af\gm} ~\text{for some}~ \af \in \Delta_A,\\
    0  & \hbox{if\ } ~\text{otherwise}, \\
   \end{array}
\right. \\
& =\left\{
   \begin{array}{ll}
  1   & \hbox{if\ }  \eta=\eta^{\af\gm}, n=0, \xi=\xi^{\af\gm}, ~\text{and}~ H_{[\af]\gm}(\eta)=H_{[\af]\gm}(\xi),\\
    0  & \hbox{if\ } ~\text{otherwise}. \\
   \end{array}
\right. 
\end{align*} 
 On the other hand,  $P_A(\eta,n,\xi)=1$ if and only if $ n=0, \eta=\xi$, and $ A \in \eta_0$. Here, since $A \in \CB_{reg}$, if $\eta=\eta^\dt$, then $A \in \eta_0$ implies that $\dt\neq \emptyset$ by Theorem \ref{char:tight}, and in this case $\dt=\af\gm$ for some $\af$ such that $\af \in \Delta_A$. 
Thus, for $A \in \CB_{reg}$, we have $P_A=\sum_{\af \in \Delta_A} S_{\af, \theta_\af(A)}*S_{\af, \theta_\af(A)}^* $. 
So, we are done.
\end{proof}

By the universal property of $C^*(\CB,\CL,\theta,\CI_\af)$, there is a $*$-homomorphism
\begin{align*} \pi: C^*(\CB,\CL,\theta,\CI_\af) \to C^*(\Gamma)
\end{align*}
such that $\pi(p_A)=P_A$ and $\pi(s_{\af,B})=S_{\af,B}$ for $A \in \CB$, $\af \in \CL$ and $B \in \CI_\af$.
We shall show that the map $\pi$ is an isomorphism. To do that, we need a  lemma.

\begin{lem} For all $(\af, A, \bt) \in S$, we have $S_{\af,A}*S_{\bt,A}^*=\chi_{Z_{(\af,A,\bt)}}$.
\end{lem}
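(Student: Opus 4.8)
The plan is to compute the convolution $S_{\af,A}*S_{\bt,A}^*$ pointwise on $\Gamma$ and check that it agrees with the characteristic function of $Z_{(\af,A,\bt)}$. As a preliminary I would record that $S_{\af,A}=\chi_{Z_{(\af,A,\emptyset)}}$ holds not only for $\af\in\CL$ but for every word $\af\in\CL^*$: since $s_{\af,A}$ is by definition a product of single-letter generators, $S_{\af,A}$ is the corresponding convolution product of functions $\chi_{Z_{(\af_i,\cdot,\emptyset)}}$, and the identity follows by induction on $|\af|$, the inductive step being a convolution of exactly the type carried out below. Granting this, and writing the involution on $C_c(\Gamma)$ as $f^*(x)=\overline{f(x^{-1})}$ with $(\eta,m,\xi)^{-1}=(\xi,-m,\eta)$, I obtain $S_{\bt,A}^*(\zeta,k,\xi)=\chi_{Z_{(\bt,A,\emptyset)}}(\xi,-k,\zeta)$.

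For the main computation I fix $(\eta,n,\xi)\in\Gamma$ and expand the sum. The summand $S_{\af,A}(\eta,m,\zeta)$ is nonzero only if $m=|\af|$, $\eta=\eta^{\af\gm}$ for some tail $\gm$, $\zeta=H_{[\af]\gm}(\eta)$ and $A\in\zeta_0$; in particular $m$ and $\zeta$ are uniquely determined by $\eta$ whenever $\af$ is a beginning of the word of $\eta$ (and otherwise every summand vanishes), so the sum has at most one nonzero term. For that term, $S_{\bt,A}^*(\zeta,n-m,\xi)=\chi_{Z_{(\bt,A,\emptyset)}}(\xi,m-n,\zeta)$ is nonzero only if $m-n=|\bt|$, $\xi=\xi^{\bt\gm}$ with the same tail $\gm$ (which is the word of $\zeta$), $\zeta=H_{[\bt]\gm}(\xi)$, and again $A\in\zeta_0$. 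Collecting these, the product $(S_{\af,A}*S_{\bt,A}^*)(\eta,n,\xi)$ equals $1$ precisely when $\eta=\eta^{\af\gm}$, $\xi=\xi^{\bt\gm}$, $n=|\af|-|\bt|$, $H_{[\af]\gm}(\eta)=H_{[\bt]\gm}(\xi)=:\zeta$, and $A\in\zeta_0$, and equals $0$ otherwise.

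Finally I compare with $\chi_{Z_{(\af,A,\bt)}}$. Since $Z_{(\af,A,\bt)}=Z_s$ for $s=(\af,A,\bt)$ with $s^*s=(\bt,A,\bt)$, membership $(\eta,n,\xi)\in Z_{(\af,A,\bt)}$ means exactly $\eta=\eta^{\af\gm}$, $\xi=\xi^{\bt\gm}$, $n=|\af|-|\bt|$, $H_{[\af]\gm}(\eta)=H_{[\bt]\gm}(\xi)$, and $\xi\in V_{(\bt,A,\bt)}$, i.e. $A\in\xi_{|\bt|}$. Comparing with the previous paragraph, the only condition to reconcile is $A\in\zeta_0$ versus $A\in\xi_{|\bt|}$. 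Here $\zeta=H_{[\bt]\gm}(\xi)$, so by \eqref{H-map} with $n=0$ one has $\zeta_0=h_{[\bt]\emptyset}(\xi_{|\bt|})=\uparrow_\CB\xi_{|\bt|}$; and since $A\in\CI_\bt$ because $(\af,A,\bt)\in S$, Lemma \ref{for local homeo} applied with the word $\bt$ and empty second word gives $A\in\xi_{|\bt|}\iff A\in\uparrow_\CB\xi_{|\bt|}=\zeta_0$. Thus the two families of conditions coincide and $S_{\af,A}*S_{\bt,A}^*=\chi_{Z_{(\af,A,\bt)}}$.

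The steps needing the most care are the bookkeeping showing that $\eta$, $\xi$ and $\zeta$ share the same tail $\gm$ and that the $\Z$-degrees add up to $|\af|-|\bt|$, and, above all, the passage $A\in\zeta_0\iff A\in\xi_{|\bt|}$, which is exactly where the formula for the $0$-th filter of a cut complete family and Lemma \ref{for local homeo} enter; establishing $S_{\af,A}=\chi_{Z_{(\af,A,\emptyset)}}$ for words by induction is the only other point that requires attention.
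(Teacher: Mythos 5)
Your proof is correct and follows essentially the same route as the paper's own: both compute the convolution pointwise, observe that at most one summand survives and that it forces $n=|\af|-|\bt|$, $\eta=\eta^{\af\gm}$, $\xi=\xi^{\bt\gm}$ and $H_{[\af]\gm}(\eta)=H_{[\bt]\gm}(\xi)=\zeta$, and then reconcile the condition $A\in\zeta_0$ with $(\bt,A,\bt)\in\xi$. The only cosmetic differences are that you pass through $\zeta_0=h_{[\bt]\emptyset}(\xi_{|\bt|})$ and Lemma \ref{for local homeo}, whereas the paper uses the mutually inverse identity $\xi_{|\bt|}=g_{(\bt)\emptyset}(\zeta_0)=\zeta_0\cap\CI_\bt$ from Remark \ref{R1}, and that you make explicit (by induction on $|\af|$) the identity $S_{\af,A}=\chi_{Z_{(\af,A,\emptyset)}}$ for words $\af\in\CL^*$ together with the correct degree bookkeeping $m=|\af|$, $m-n=|\bt|$ — points the paper glosses over (its displayed case even reads $m=1$, $n=0$ as if $\af,\bt$ were single letters).
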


\begin{proof} We see  that 
\begin{align*}(&S_{\af,A}*S_{\bt,A}^*)(\eta,n,\xi)\\
&=\sum_{\zeta,m:(\eta,m,\zeta)\in \Gamma} S_{\af,A}(\eta,m,\zeta)S_{\bt,A}^*(\zeta,n-m,\xi) \\
&=\sum_{\zeta,m:(\eta,m,\zeta)\in \Gamma}S_{\af,A}(\eta,m,\zeta)S_{\bt,A}(\xi,m-n,\zeta) \\
&= \left\{
   \begin{array}{ll}
   1   & \hbox{if\ }  ~m=1, n=0, A\in  \zeta_0,   \eta=\eta^{\af\gm}, \xi=\xi^{\bt\gm},~\text{and}~ H_{[\af]\gm}(\eta)=H_{[\bt]\gm}(\xi)=\zeta,\\
    0  & \hbox{if\ } ~\text{otherwise}
   \end{array}
\right.\\
&=\chi_{Z_{(\af,A,\bt)}}.
\end{align*}
For the last equality, we observe that for $A \in \CI_{\bt}$, 
 $A\in  \zeta_0$ if and only if $(\bt,A,\bt) \in \xi$. 
 In fact, since $H_{[\bt]\gm}(\xi)=\zeta$, we have $G_{(\bt)\gm}(\zeta)=\xi$. Then, we see that  $\xi_{|\bt|}=G_{(\bt)\gm}(\zeta)_{|\bt|}=g_{(\bt)\emptyset}(\zeta_0)=\zeta_0 \cap \CI_\bt$ by Remark \ref{R1}. Thus, it easy to see that  $A\in  \zeta_0$ if and only if $(\bt,A,\bt) \in \xi$. 
\end{proof}
\begin{prop}(\cite[Corollary 5.3 and Proposition 5.7]{BCM3}) The map $\pi$ is an isomorphism.
\end{prop}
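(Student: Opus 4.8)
The plan is to prove surjectivity and injectivity separately, obtaining the latter from the gauge-invariant uniqueness theorem for generalized Boolean dynamical systems \cite{CaK2}. Since $\pi$ is a $*$-homomorphism of $C^*$-algebras its image is a $C^*$-subalgebra of $C^*(\Gamma)$, hence closed, so for surjectivity it suffices to show that the image is dense.

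For surjectivity, recall from (\ref{eq:3}) that $C^*(\CB,\CL,\theta,\CI_\af)=\overline{\operatorname{span}}\{s_{\af,A}s_{\bt,A}^*: \af,\bt\in\CL^*,\ A\in\CI_\af\cap\CI_\bt\}$, and by the preceding lemma $\pi(s_{\af,A}s_{\bt,A}^*)=S_{\af,A}*S_{\bt,A}^*=\chi_{Z_{(\af,A,\bt)}}$. Thus the image of $\pi$ is the closed span of the functions $\chi_{Z_{(\af,A,\bt)}}$. By Proposition \ref{topology on Gamma} the sets $Z_{s,e:e_1,\cdots,e_n}$ form a basis of compact-open bisections for $\Gamma$; inside a fixed bisection each finer basic set corresponds to a set $V_{e:e_1,\cdots,e_n}=V_e\setminus\bigcup_i V_{e\wedge e_i}$ in $\mathsf{T}$, so by inclusion–exclusion each $\chi_{Z_{s,e:e_1,\cdots,e_n}}$ is a finite integer combination of functions $\chi_{Z_{(\mu,C,\nu)}}$. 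Since $\Gamma$ is Hausdorff and \'etale, the characteristic functions of its basic compact-open bisections span a dense subset of $C^*(\Gamma)$, so the image of $\pi$ is dense, and therefore all of $C^*(\Gamma)$.

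For injectivity I would apply the gauge-invariant uniqueness theorem. Define the cocycle $c:\Gamma\to\Z$ by $c(\eta,n,\xi)=n$; it is locally constant because on each basic bisection $Z_{(\af,A,\bt)}$ one has $c\equiv|\af|-|\bt|$. This cocycle induces a $\T$-action $\beta$ on $C^*(\Gamma)$ determined on $C_c(\Gamma)$ by $\beta_z(f)(\eta,n,\xi)=z^n f(\eta,n,\xi)$. Since $P_A=\chi_{Z_{(\emptyset,A,\emptyset)}}$ is supported on $\{n=0\}$ and $S_{\af,B}=\chi_{Z_{(\af,B,\emptyset)}}$ on $\{n=1\}$, we get $\beta_z(P_A)=P_A$ and $\beta_z(S_{\af,B})=zS_{\af,B}$; comparing with the gauge action $\gm$ on $C^*(\CB,\CL,\theta,\CI_\af)$ shows $\pi\circ\gm_z=\beta_z\circ\pi$ on the generators, hence on the whole algebra, so $\pi$ is equivariant.

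It remains to check that $\pi(p_A)=P_A\neq 0$ for every $A\in\CB\setminus\{\emptyset\}$. For such $A$ the idempotent $e=(\emptyset,A,\emptyset)$ is nonzero in $E(S)$, so the principal filter $\uparrow e$ extends, by Zorn's lemma, to an ultrafilter in $E(S)$; every ultrafilter is tight (as recalled in the preliminaries, $\hat E_\infty\subseteq\hat E_{tight}$), giving a tight filter $\xi$ with $(\emptyset,A,\emptyset)\in\xi$, i.e.\ $A\in\xi_0$. Then $(\xi,0,\xi)\in Z_{(\emptyset,A,\emptyset)}$, so $P_A(\xi,0,\xi)=1$ and hence $P_A\neq0$. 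With equivariance and this faithfulness in hand, the gauge-invariant uniqueness theorem \cite{CaK2} yields injectivity of $\pi$, and combined with surjectivity this proves that $\pi$ is an isomorphism. The main obstacle is the injectivity step: it rests on verifying precisely the hypotheses of the gauge-invariant uniqueness theorem — the equivariance under the cocycle-induced gauge action together with the nonvanishing $P_A\neq 0$ — and on invoking that theorem in the generalized Boolean dynamical setting.
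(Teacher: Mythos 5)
Your proposal is correct and follows essentially the same route as the paper: injectivity via the gauge-invariant uniqueness theorem of \cite{CaK2} (you construct the gauge action on $C^*(\Gamma)$ from the canonical cocycle $c(\eta,n,\xi)=n$ and verify $P_A\neq 0$ using an ultrafilter containing $(\emptyset,A,\emptyset)$, which is exactly what the cited theorem requires), and surjectivity by showing the closed image contains the characteristic functions $\chi_{Z_{(\af,A,\bt)}}$, whose span is dense — which is precisely the argument the paper delegates to \cite[Proposition 5.7]{BCM3}. You have simply filled in the details that the paper's proof handles by citation.
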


\begin{proof} Using the gauge invariant uniqueness theorem (\cite[Corollary 6.2]{CaK2}), one can see that $\pi$ is injective.

By replacing $r(-,\af)$ to $\theta_\af(-)$ in the proof of \cite[Proposition 5.7]{BCM3}, one can have that  $\pi$ is surjective. 
\end{proof}

Combining all together, we have the following. It generalizes \cite[Theorem 5.8]{BCM3}.

\begin{thm}\label{C*-isom}Let  $(\CB,\CL,\theta, \CI_\af)$ be a generalized Boolean dynamical system.  Then, we have 
$$C^*(\CB,\CL,\theta, \CI_\af) \cong C^*(\Gamma)   \cong C^*(\CG_{tight}).$$
\end{thm}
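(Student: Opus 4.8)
The plan is to obtain the two isomorphisms independently and then chain them. The second isomorphism $C^*(\Gamma)\cong C^*(\CG_{tight})$ is the easier of the two: Theorem~\ref{tight iso boundary} provides a bijective groupoid homomorphism $\Phi\colon\CG_{tight}\to\Gamma$, and Proposition~\ref{topology on Gamma} shows that the topology placed on $\Gamma$ is precisely the one transported along $\Phi$ from the \'etale topology on $\CG_{tight}$. Hence $\Phi$ is an isomorphism of topological (indeed locally compact, Hausdorff, \'etale) groupoids, and since the groupoid $C^*$-algebra construction is functorial with respect to such isomorphisms, it induces a $*$-isomorphism $C^*(\CG_{tight})\cong C^*(\Gamma)$ carrying each $\chi_{\Theta(s,\CU)}$ to $\chi_{\Phi(\Theta(s,\CU))}$.

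For the first isomorphism $C^*(\CB,\CL,\theta,\CI_\af)\cong C^*(\Gamma)$, I would show that the $*$-homomorphism $\pi$ built from Proposition~\ref{representation in C(G)} through the universal property of $C^*(\CB,\CL,\theta,\CI_\af)$ is bijective, recapitulating the structure of the proposition that immediately precedes the theorem. Injectivity I would obtain from the gauge-invariant uniqueness theorem \cite[Corollary 6.2]{CaK2}: one checks that $\pi$ intertwines the canonical gauge action on $C^*(\CB,\CL,\theta,\CI_\af)$ with the $\T$-action on $C^*(\Gamma)$ arising from the continuous $\Z$-valued cocycle $(\eta,n,\xi)\mapsto n$ on $\Gamma$ (under which $P_A$ is fixed and $S_{\af,B}$ has degree one), and that $\pi(p_A)=P_A=\chi_{Z_{(\emptyset,A,\emptyset)}}\neq 0$ whenever $A\neq\emptyset$, since $Z_{(\emptyset,A,\emptyset)}$ is then a nonempty open set. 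Surjectivity I would deduce from the identity $S_{\af,A}*S_{\bt,A}^*=\chi_{Z_{(\af,A,\bt)}}$ together with the fact that the sets $Z_{(\af,A,\bt)}$ are compact-open bisections forming a basis for the topology of $\Gamma$; their characteristic functions therefore span a dense subalgebra of $C_c(\Gamma)$ in the inductive-limit topology, so $\pi$ has dense, hence full, range, matching the span description in \eqref{eq:3}.

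Chaining the two displays then yields $C^*(\CB,\CL,\theta,\CI_\af)\cong C^*(\Gamma)\cong C^*(\CG_{tight})$, which is the assertion.

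I expect the main obstacle to be surjectivity of $\pi$: establishing that the characteristic functions of the basic bisections $Z_{(\af,A,\bt)}$ genuinely generate all of $C_c(\Gamma)$, and hence a dense subalgebra of $C^*(\Gamma)$, requires the standard but somewhat delicate partition-of-unity argument for ample \'etale groupoids, and one must verify that the products and involutions of the generators $P_A$, $S_{\af,B}$ reproduce every such characteristic function. Setting up the gauge action on $C^*(\Gamma)$ and checking equivariance of $\pi$ is the other point demanding care, although it becomes routine once the cocycle $(\eta,n,\xi)\mapsto n$ is identified and its continuity is confirmed from the description of the topology in Proposition~\ref{topology on Gamma}.
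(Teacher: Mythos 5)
Your proposal is correct and takes essentially the same route as the paper: the second isomorphism comes from the topological-groupoid isomorphism of Theorem \ref{tight iso boundary} together with Proposition \ref{topology on Gamma}, and the first from showing the canonical $\pi$ of Proposition \ref{representation in C(G)} is injective by the gauge-invariant uniqueness theorem and surjective via $S_{\af,A}*S_{\bt,A}^*=\chi_{Z_{(\af,A,\bt)}}$ plus a density argument, which are exactly the steps the paper delegates to \cite[Corollary 6.2]{CaK2} and to \cite[Proposition 5.7]{BCM3}. The only slip is immaterial: the sets $Z_{(\af,A,\bt)}$ are not themselves a basis for the topology on $\Gamma$ (the basis consists of the sets $Z_{s,e:e_1,\dots,e_n}$ of Proposition \ref{topology on Gamma}), but since each $\chi_{Z_{s,e:e_1,\dots,e_n}}$ lies in the span of the functions $\chi_{Z_{s'}}$ by inclusion--exclusion, your surjectivity argument still goes through.
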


\section{Boundary path groupoids from  topological correspondences associated with generalized Boolean dynamical systems}\label{section:topological.correspondence}

In this section, we provide another type of a  boundary path groupoid for a  generalized Boolean dynamical system and examine a relation between it and the groupoid $\Gamma$ defined in  section 5. We shall first construct a topological correspondence $E_{(\CB,\CL,\theta,\CI_\alpha)}$ from   a generalized Boolean dynamical system $(\CB, \CL,\theta, \CI_\af)$.   
We then define a notion of  the boundary path space $\partial E$  of the topological correspondence as \cite[Definition 3.1]{KL2017}. As a result, we have a Renault-Deaconu groupoid $\Gamma(\partial E, \sm_E)$  associated to a  shift map $\sm_E$ on the boundary path space. 
  We  next prove  that the tight spectrum $\mathsf{T} $ is homeomorphic to the boundary path space $\partial E$, and that  $\Gamma(\partial E, \sm)$ is isomorphic to   the groupoid $\Gamma(\CB,\CL, \theta, \CI_\af)$  given in section 5  
 as  topological groupoids.

\subsection{A topological correspondence}

Let $(\CB, \CL,\theta, \CI_\af)$ be a generalized Boolean dynamical system.  Recall, from section \ref{section:tight.filters}, that $X_\emptyset=\WCB$ is equipped with  the topology generated by $\{Z(A): A\in\CB\}$, where 
$Z(A)=\{\xi\in\widehat{\CB}:A\in\xi\}$, and that 
$X_\af=\widehat{\CI_\af}$ is equipped with the topology generated by  $\{Z(\af, A): A\in\CI_\af\}$, where $Z(\af, A):=\{\xi \in \widehat{\CI_{\af}}: A \in \xi\}$.
 
To define a topological correspondence from the generalized Boolean dynamical system $(\CB, \CL,\theta, \CI_\af)$, we let $E^0_{(\CB,\CL,\theta,\CI_\alpha)}=X_\emptyset$ as a topological space, and we let
\[
	F^0_{(\CB,\CL,\theta,\CI_\alpha)}:=X_\emptyset\cup\{\emptyset\}
\]
and equip $F^0_{(\CB,\CL,\theta,\CI_\alpha)}$ with the topology described in Section \ref{section:tight.filters}.

We also let
\begin{equation*}
	E^1_{(\CB,\CL,\theta,\CI_\alpha)}:=\bigl\{e^\alpha_\eta:\alpha\in\CL,\ \eta\in X_\alpha\bigr\}
\end{equation*}
and equip $E^1_{(\CB,\CL,\theta,\CI_\alpha)}$ with the topology generated by 
\begin{equation}\label{basis on E^1}
\mathbb{V}:=\bigcup_{\alpha\in\CL} \{Z^1(\af, B):B\in\CI_\alpha\},
\end{equation}
where $$Z^1(\af, B):=\{e^\af_\eta: \eta \in X_\af  , B \in \eta\}.$$
Notice that $E^1_{(\CB,\CL,\theta,\CI_\alpha)}$ is homeomorphic to the disjoint union $\bigsqcup_{\af\in\CL}X_\af$ (here, we are considering the disjoint union of topological spaces that are not necessarily mutually disjoint).
\begin{prop}\label{def:topological correspondence} Let $(\CB,\CL,\theta, \CI_\af)$ be a generalized Boolean dynamical system and let $E^0:=E^0_{(\CB,\CL,\theta,\CI_\alpha)}$, $F^0:=F^0_{(\CB,\CL,\theta,\CI_\alpha)}$ and $E^1:=E^1_{(\CB,\CL,\theta,\CI_\alpha)}$ be as above. If we define the maps
 $d:E^1\to E^0$ and $r:E^1\to F^0$ by 
\[
	d(e^\alpha_\eta)=h_{[\alpha]\emptyset}(\eta) \text{ and } r(e^\alpha_\eta)=f_{\emptyset[\af]}(\eta)
\]
 then $(E^1, d,r)$ is a topological correspondence from $E^0$ to $F^0$.
\end{prop}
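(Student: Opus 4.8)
The plan is to verify the three defining properties of a topological correspondence: that $E^0$, $F^0$ and $E^1$ are locally compact Hausdorff spaces, that $d$ is a local homeomorphism, and that $r$ is continuous. The organizing observation, already recorded just before the statement, is that $E^1$ is homeomorphic to the disjoint union $\bigsqcup_{\af\in\CL}X_\af$. Concretely, I would first check that each sheet $X_\af$ is a clopen subset of $E^1$: it is open because every $\eta\in X_\af$ is a nonempty filter not containing $\emptyset$, so it contains some $B$ with $e^\af_\eta\in Z^1(\af,B)$, whence $X_\af=\bigcup_{B}Z^1(\af,B)$; and it is closed because its complement is the union of the remaining sheets. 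This reduces each verification to a sheet-by-sheet argument.

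For the topological properties, I would recall from the discussion of Boolean algebras that $\widehat{\CB}=X_\emptyset$ and each $\widehat{\CI_\af}=X_\af$ are totally disconnected locally compact Hausdorff spaces. Hence $E^0=X_\emptyset$ is locally compact Hausdorff, and $E^1=\bigsqcup_\af X_\af$, being a topological disjoint union of such spaces, is again locally compact Hausdorff. For $F^0=X_\emptyset\cup\{\emptyset\}$ I would treat the two cases of Section~\ref{section:tight.filters}: if $\CB$ is unital then $X_\emptyset$ is compact (it equals $Z$ of the unit) and $\{\emptyset\}$ is an isolated point, so $F^0$ is a disjoint union of two compact spaces; if $\CB$ is non-unital then $F^0$ is the one-point compactification of the locally compact Hausdorff space $X_\emptyset$. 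In either case $F^0$ is compact Hausdorff, hence locally compact Hausdorff.

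To see that $d$ is a local homeomorphism, I would argue on each clopen sheet. For $\af\in\CL$ one has $d|_{X_\af}=h_{[\af]\emptyset}\colon X_\af\to X_{(\af)\emptyset}$. By Lemma~\ref{h comp f}(iii) the maps $h_{[\af]\emptyset}$ and $g_{(\af)\emptyset}$ are mutually inverse, and they are continuous by Lemma~\ref{h comp f}(v) and Lemma~\ref{g comp f}(vi), so $d|_{X_\af}$ is a homeomorphism onto $X_{(\af)\emptyset}$; moreover $X_{(\af)\emptyset}$ is open in $X_\emptyset=E^0$ by Lemma~\ref{g comp f}(vii). Thus every point $e^\af_\eta$ of $E^1$ has the open neighborhood $X_\af$ that $d$ maps homeomorphically onto an open subset of $E^0$, which is exactly the statement that $d$ is a local homeomorphism.

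Finally, continuity of $r$ would follow because on each clopen sheet $X_\af$ the restriction $r|_{X_\af}$ is precisely the map $f_{\emptyset[\af]}\colon X_\af\to X_\emptyset\cup\{\emptyset\}=F^0$, which is continuous by Lemma~\ref{lem:f.empty.continuous}; a function whose restriction to each member of a clopen partition is continuous is itself continuous. I expect the only genuinely delicate point to be the behaviour of $r$ at the adjoined point $\emptyset$: this is exactly where the topology on $F^0$ described via Lemma~\ref{lem:conv.X.empty} and the continuity statement of Lemma~\ref{lem:f.empty.continuous} (which fixes the gap noted in \cite{BCM1}) are needed, whereas the remaining verifications are a direct assembly of the cutting and gluing lemmas of Section~\ref{filter surgery}.
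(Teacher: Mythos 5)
Your proposal is correct and takes essentially the same route as the paper's proof: decompose $E^1$ into the sheets $X_\af$, observe that $d$ restricted to each sheet is the homeomorphism $h_{[\af]\emptyset}$ onto the open subset $X_{(\af)\emptyset}$ of $E^0$ (Lemmas \ref{g comp f} and \ref{h comp f}), and deduce continuity of $r$ sheet-by-sheet from Lemma \ref{lem:f.empty.continuous}. The additional details you supply (clopenness of the sheets and the unital/non-unital case analysis for $F^0$) are simply elaborations of what the paper compresses into ``by construction.''
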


\begin{proof}
By construction, we have that $E^0$, $F^0$ and $E^1$ are  totally disconnected, locally compact Hausdorff spaces. To prove that $d$ is a local homeomorphism, we use the above observation that $E_1$ is homeomorphic to disjoint union $\bigsqcup_{\af\in\CL}X_\af$. In each part of this disjoint union, $d$ behaves as the map $h_{[\af]\emptyset}:X_{\af}\to X_{(\af)\emptyset}$, which is a homeomorphism between $X_{\af}$ and open subset of $X_\emptyset$ by Lemmas \ref{g comp f} and \ref{h comp f}. Similarly, we have that $r$ is continuous because in each part of the disjoint it behaves as $f_{\emptyset[\af]}$, which is continuous by Lemma \ref{lem:f.empty.continuous}. Hence, $(E^1, d,r)$ is a topological topological correspondence from $E^0$ to $F^0$.
\end{proof}

The construction of a C*-algebra associated with a topological graph in \cite{Ka2004} uses a C*-correspondence over the algebra $C_0(E^0)$. Motivated by what we will see below, we consider a similar C*-algebra for the above topological correspondence by considering a C*-correspondence over $C_0(E^0)=C_0(X_\emptyset)$. We notice that $E^0=X_\emptyset$ is a open subset of $F^0=X_\emptyset\cup\{\emptyset\}$ that contains $d(E^1)$, so that it is an ideal of $C_0(F^0)=C(X_\emptyset\cup\{\emptyset\})$. We also notice that the construction of the Hilbert module done in \cite{Ka2004} does not use the information on $r$, which is used later to define a left action.

As in \cite{Ka2004}, for each $p\in C(E^1)$, we let $\left<p,p\right>:X_\emptyset\to [0,\infty]$ be the function defined by $\left<p,p\right>(v):=\sum_{e\in d^{-1}(v)}|p(e)|^2$. Then, the set $C_d(E^1):=\{p\in C(E^1):\left<p,p\right>\in C_0(X_\emptyset)\}$ has a structure of Hilbert module given by
\[\left<p,q\right>(v)=\sum_{e\in d^{-1}(v)}\overline{p(e)}q(e),\]
and
\[(pa)(e):=p(e)a(d(e)),\]
where $p,q\in C_d(E^1)$, $a\in C_0(X_\emptyset)$, $v\in E^0$ and $e\in E^1$. Also, since $C_0(X_\emptyset)$ can be seen as an ideal of $C_0(X_\emptyset\cup\{\emptyset\})$, we can define a left action by
\[(ap)(e):=a(r(e))p(e)\]
where $p\in C_d(E^1)$, $a\in C_0(X_\emptyset)$ and $e\in E^1$.

We now recall the construction of a C*-correspondence from a generalized Boolean dynamical system such that its C*-algebra is isomorphic to $C^*(\CB,\CL,\theta,\CI_\af)$ as done in \cite{CaK2}. We change the notation here in order not to confuse with some other notation already established in this paper. For the coefficient algebra, we take $C_0(\WCB)=C_0(X_\emptyset)$ (the subalgebra $\CA(\CB,\CL,\theta)$ defined in \cite{CaK2} actually coincides with $C_0(\WCB)$ by \cite[Proposition~2.14]{COP}). For each $A\in\CB$, we let $\chi_A=1_{Z(A)}$, and for each $\af\in\CL$, define
\[M_{\af}:=\overline{\operatorname{span}}\{\chi_A:A\in\CI_\af\}.\]
We observe that $M_{\af}\cong C_0(X_{\af})$ via a map that sends $m_\af\in M_\af$ to $m_\af\circ h_{[\alpha]\emptyset}$. Since $M_{\af}$ is an ideal of $C_0(X_{\emptyset})$, it has a natural Hilbert module structure. We then let
\begin{align*}
    M(\CB,\CL,\theta,\CI_\af) & :=\bigoplus_{\af\in\CL} M_{\af}\\
    & =\left\{(m_\af)_{\af\in\CL}\in \prod_{\af\in\CL}M_\af:\sum_{\af\in\CL}m_\af^*m_\af\in C_0(X_\emptyset)\right\}
\end{align*}
as a direct sum of Hilbert modules, with inner product given by
\[\left<(m_\af)_{\af\in\CL},(n_\af)_{\af\in\CL}\right>=\sum_{\af\in\CL}m^*_\af n_\af\]
and right action given by
\[(m_\af)_{\af\in\CL}\cdot a=(m_\af a)_{\af\in\CL},\]
for $(m_\af)_{\af\in\CL},(n_\af)_{\af\in\CL}\in M(\CB,\CL,\theta,\CI_\af)$ and $a\in C_0(X_\emptyset)$. The left action is such that for $A\in\CB$ and $(m_\af)_{\af\in\CL}\in M(\CB,\CL,\theta,\CI_\af)$
\[\chi_A\cdot (m_\af)_{\af\in\CL}=(\chi_{\theta_\af(A)}m_\af)_{\af\in\CL}.\]
Observe that $\chi_{\theta_\af(A)}=\chi_A\circ f_{\emptyset[\af]}$ for $A\in\CB$ and $\af\in\CL$. The left action is then given more generally by
\[a\cdot (m_\af)_{\af\in\CL}=((a\circ f_{\emptyset[\af]})m_\af)_{\af\in\CL},\]
where $a\in C_0(X_\emptyset)$ and $(m_\af)_{\af\in\CL}\in M(\CB,\CL,\theta,\CI_\af)$.

\begin{prop}\label{prop:iso.corr}
The C*-correspondences $C_d(E^1)$ and $M(\CB,\CL,\theta,\CI_\af)$ defined above are isomorphic.
\end{prop}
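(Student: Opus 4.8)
The plan is to write down an explicit map $\Psi\colon M(\CB,\CL,\theta,\CI_\af)\to C_d(E^1)$ and check that it is a unitary of $C^*$-correspondences. Using the homeomorphism $E^1\cong\bigsqcup_{\af\in\CL}X_\af$ (so that a function $p\in C(E^1)$ is the same thing as a family $(p_\af)_{\af\in\CL}$ with $p_\af\in C(X_\af)$) together with the isomorphisms $M_\af\cong C_0(X_\af)$, $m_\af\mapsto m_\af\circ h_{[\af]\emptyset}$ recorded above, I would define
\[
\Psi\bigl((m_\af)_{\af\in\CL}\bigr)(e^\af_\eta)=m_\af\bigl(h_{[\af]\emptyset}(\eta)\bigr),\qquad \af\in\CL,\ \eta\in X_\af .
\]
In other words, on the $\af$-summand $\Psi$ is exactly the stated isomorphism $M_\af\to C_0(X_\af)$, so that morally $\Psi$ is the identification $\bigoplus_\af M_\af\cong\bigoplus_\af C_0(X_\af)$.

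The key geometric input, already used in the proof of Proposition~\ref{def:topological correspondence}, is that $d$ restricted to the $\af$-summand is $h_{[\af]\emptyset}$, which by Lemmas~\ref{g comp f} and~\ref{h comp f} is a homeomorphism of $X_\af$ onto the open set $X_{(\af)\emptyset}\subseteq X_\emptyset$ with inverse $g_{(\af)\emptyset}$. Hence for $v\in X_\emptyset$ the fibre $d^{-1}(v)$ consists of exactly one point $e^\af_{g_{(\af)\emptyset}(v)}$ for each $\af$ with $v\in X_{(\af)\emptyset}$, and none of the others contribute since every $m_\af\in M_\af=\overline{\operatorname{span}}\{\chi_A:A\in\CI_\af\}$ vanishes off $X_{(\af)\emptyset}$. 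Using $h_{[\af]\emptyset}\circ g_{(\af)\emptyset}=\mathrm{id}$ I then compute, for $\Psi(m),\Psi(n)$,
\[
\langle \Psi(m),\Psi(n)\rangle(v)=\sum_{\af:\,v\in X_{(\af)\emptyset}}\overline{m_\af(v)}\,n_\af(v)=\sum_{\af\in\CL}\overline{m_\af(v)}\,n_\af(v)=\langle m,n\rangle(v).
\]
This single computation shows at once that $\langle\Psi(m),\Psi(m)\rangle\in C_0(X_\emptyset)$ (so $\Psi$ lands in $C_d(E^1)$, the $C_0$-summability condition matching the one defining $\bigoplus_\af M_\af$), that $\Psi$ preserves the inner product, and hence that $\Psi$ is isometric, in particular injective. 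The right action is immediate: $(\Psi(m)a)(e^\af_\eta)=\Psi(m)(e^\af_\eta)\,a(d(e^\af_\eta))$ is precisely $\Psi((m_\af a)_\af)(e^\af_\eta)$.

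The delicate point, and what I expect to be the main obstacle, is the left action, because on $M(\CB,\CL,\theta,\CI_\af)$ it is defined via $f_{\emptyset[\af]}$ only after the identification $M_\af\cong C_0(X_\af)$, whereas on $C_d(E^1)$ it is defined via $r$. Here I would use $r(e^\af_\eta)=f_{\emptyset[\af]}(\eta)$ to get $(a\cdot\Psi(m))(e^\af_\eta)=a\bigl(f_{\emptyset[\af]}(\eta)\bigr)\,m_\af\bigl(h_{[\af]\emptyset}(\eta)\bigr)$, and compare this with the left action on $M$, which under $M_\af\cong C_0(X_\af)$ reads $\bigl(a\circ f_{\emptyset[\af]}\bigr)\cdot\bigl(m_\af\circ h_{[\af]\emptyset}\bigr)$ pointwise on $X_\af$; the two agree termwise. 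Reconciling the two prescriptions rests on the identity $\chi_{\theta_\af(A)}=\chi_A\circ f_{\emptyset[\af]}$ noted above, whose verification uses Lemma~\ref{for local homeo} (with $\bt=\emptyset$) together with the mutual-inverse relation $g_{(\af)\emptyset}\circ h_{[\af]\emptyset}=\mathrm{id}$ from Lemma~\ref{h comp f}(iii). Finally, surjectivity is routine: given $p\in C_d(E^1)$, restrict to each summand to get $p_\af\in C(X_\af)$ and set $m_\af:=p_\af\circ g_{(\af)\emptyset}$, extended by $0$ off $X_{(\af)\emptyset}$, so that $m_\af\in M_\af$ and $\Psi((m_\af)_\af)=p$, the $C_0$-condition on $p$ guaranteeing $(m_\af)_\af\in\bigoplus_\af M_\af$. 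Thus $\Psi$ is a surjective isometry intertwining both module structures, hence the desired isomorphism of $C^*$-correspondences.
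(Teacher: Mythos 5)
Your proposal is correct and follows essentially the same route as the paper: identify $E^1$ with $\bigsqcup_{\af\in\CL}X_\af$, use the isomorphisms $M_\af\cong C_0(X_\af)$ given by $m_\af\mapsto m_\af\circ h_{[\af]\emptyset}$, and match the inner products and the two module actions summand by summand (the paper phrases the identification in the direction $C_d(E^1)\to\prod_\af M_\af$, you in the reverse direction, which is immaterial). The only difference is that you spell out the fibre structure of $d$ and the left-action comparison that the paper dismisses as "straightforward to check."
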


\begin{proof}
Since $E^1$ can be identified with the disjoint union $\bigsqcup_{\af\in\CL} X_\af$, we can identify a function $p\in C(E^1)$ with the element $(p_\af)\in\prod_{\af\in\CL}M_\af$, where $p_\af$ is such that $p_\af(\xi)=p(e^{\af}_{\xi})$  for $\xi\in X_\af$. It is straightforward to check that $\left<p,p\right>\in C_0(X_\emptyset)$ is equivalent to $\sum_{\af\in\CL}p_\af^*p_\af\in C_0(X_\emptyset)$, and that via this identification the left and right actions are the same.
\end{proof}

\begin{remark}\label{rmk:iso.corr.alg}
If we let $\CO(E)$ be the the C*-algebra associated with the topological correspondence $E=(E^1,d,r)$, that is the Cuntz-Pimsner algebra of the C*-correspondence $C_d(E^1)$ over $C_0(X_\emptyset)$, then we have that $\CO(E)\cong\CO_{M(\CB,\CL,\theta,\CI_\af)}\cong C^*(\CB,\CL,\theta,\CI_\af)$, where the last isomorphism is given by \cite[Corollary 5.6]{CaK2}.
\end{remark}

\subsection{Another boundary path groupoid}

Given a topological correspondence $E=(E^1,d,r)$ from $F^0$ to $E^0$, we define the following subsets of $F^0$ (\cite[Section~1]{Ka2004}):
\begin{align*}F_{sce} & := F^0 \setminus \overline{r(E^1)}, \\
F^0_{fin}&:=\{v \in F^0: \exists V~\text{neighborhood of}~v ~\text{such that}~ r^{-1}(V)~\text{is compact} \},\\
F^0_{rg}&:=F^0_{fin} \setminus \overline{F^0_{sce}},\\
F^0_{sg} & :=F^0 \setminus F^0_{rg}.
\end{align*}

Although stated for topological graphs in \cite{Ka2004}, the following result also holds for topological correspondences.

\begin{prop}\label{prop:reg.vertex}(\cite[Proposition~2.8]{Ka2004})
For each $v\in F^0$, we have that $v\in F_{rg}$ if and only if there exists a compact neighborhood $V$ of $v$ such that $r^{-1}(V)$ is compact and $r(r^{-1}(V))=V$.
\end{prop}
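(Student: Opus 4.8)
The plan is to prove both implications directly from the definitions, after first recording one harmless reformulation. For any $V\subseteq F^0$ one has the tautology $r(r^{-1}(V))=r(E^1)\cap V$, and since $r(r^{-1}(V))\subseteq V$ always holds, the condition $r(r^{-1}(V))=V$ is \emph{equivalent} to $V\subseteq r(E^1)$. This observation is what makes the whole statement transparent: the finiteness condition controls $r^{-1}(V)$, while the surjectivity condition $r(r^{-1}(V))=V$ is just a containment of $V$ in the image of $r$. Throughout I will use that $F^0=X_\emptyset\cup\{\emptyset\}$ is locally compact Hausdorff (indeed totally disconnected, so points have a basis of compact-open neighborhoods) and that $r$ is continuous, both established in Proposition~\ref{def:topological correspondence}.

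The easy direction is ($\Leftarrow$). Given a compact neighborhood $V$ of $v$ with $r^{-1}(V)$ compact and $r(r^{-1}(V))=V$, the compactness of $r^{-1}(V)$ gives $v\in F^0_{fin}$ at once. By the reformulation above, $V\subseteq r(E^1)\subseteq\overline{r(E^1)}$, so the interior of $V$ is an open neighborhood of $v$ disjoint from $F^0_{sce}=F^0\setminus\overline{r(E^1)}$; hence $v\notin\overline{F^0_{sce}}$. Combining these, $v\in F^0_{fin}\setminus\overline{F^0_{sce}}=F^0_{rg}$.

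The substance lies in ($\Rightarrow$), and here is the step I expect to be the main obstacle. Assuming $v\in F^0_{rg}$, I would choose an open neighborhood $U$ of $v$ with $r^{-1}(U)$ compact (from $v\in F^0_{fin}$) and an open neighborhood $W$ of $v$ with $W\subseteq\overline{r(E^1)}$ (from $v\notin\overline{F^0_{sce}}$). The difficulty is to upgrade the \emph{closure} containment $W\subseteq\overline{r(E^1)}$ to a genuine containment in $r(E^1)$ near $v$, and this is exactly where the finiteness of $v$ is used. Set $K:=r(r^{-1}(U))=r(E^1)\cap U$; being the continuous image of the compact set $r^{-1}(U)$, $K$ is compact, hence closed in the Hausdorff space $F^0$. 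A short neighborhood argument then shows $\overline{r(E^1)}\cap U\subseteq K$: if $x\in\overline{r(E^1)}\cap U$, then every open neighborhood of $x$ contained in $U$ meets $r(E^1)$ and therefore meets $r(E^1)\cap U=K$, so $x\in\overline{K}=K$. Consequently $W\cap U\subseteq\overline{r(E^1)}\cap U\subseteq K\subseteq r(E^1)$.

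Finally, using local compactness of $F^0$ I would pick a compact neighborhood $V$ of $v$ with $V\subseteq W\cap U$. Then $V\subseteq U$ makes $r^{-1}(V)$ a closed subset of the compact set $r^{-1}(U)$, hence compact; and $V\subseteq W\cap U\subseteq r(E^1)$ gives $r(r^{-1}(V))=r(E^1)\cap V=V$. This yields the required neighborhood and finishes the proof. I anticipate the only delicate point to be the identification $\overline{r(E^1)}\cap U\subseteq r(E^1)\cap U$ via the closedness of the compact image $K$; the rest is routine manipulation of neighborhoods and preimages.
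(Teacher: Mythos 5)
Your write-up is correct, but note first that the paper does not actually prove this proposition: it cites Katsura's Proposition 2.8 and remarks that the topological-graph proof carries over to topological correspondences. What you have written is therefore the argument the paper omits, and it is in substance Katsura's original one: the tautology $r(r^{-1}(V))=r(E^1)\cap V$, plus the key step that $K=r(r^{-1}(U))$ is compact, hence closed, which upgrades membership in $\overline{r(E^1)}$ to membership in $r(E^1)$ near a point of $F^0_{fin}$. Both directions are structured correctly.

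One step deserves more care. In ($\Rightarrow$) you ``choose an open neighborhood $U$ of $v$ with $r^{-1}(U)$ compact (from $v\in F^0_{fin}$)'', but the definition of $F^0_{fin}$ only provides a neighborhood $N$ of $v$, not necessarily open, with $r^{-1}(N)$ compact, and in a general locally compact Hausdorff space one cannot always pass to an open such neighborhood: if $r$ is the inclusion of $(0,1]$ into $\R$, the point $1$ has the compact neighborhood $[1/2,2]$ with compact preimage $[1/2,1]$, yet no open neighborhood of $1$ has compact preimage. In the present setting the step is legitimate for the reason you mention in your preamble but never actually invoke: $F^0$ has a basis of compact-open sets, so you may pick a compact-open $U$ with $v\in U\subseteq\operatorname{int}(N)$; then $r^{-1}(U)$ is a closed subset of the compact set $r^{-1}(N)$, hence compact, and your argument proceeds verbatim. (Alternatively, and without using total disconnectedness --- which matters if one wants the proposition for arbitrary topological correspondences, as the paper asserts --- keep $U:=\operatorname{int}(N)$ and run your closure argument with $K:=r(r^{-1}(N))=r(E^1)\cap N$, which is still compact; at the end $r^{-1}(V)$ is compact because $V$ is closed and $V\subseteq N$.) With that one adjustment the proof is complete; the ($\Leftarrow$) direction is fine as written.
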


Going back to the topological correspondence of Proposition~\ref{def:topological correspondence}, we recall that $E^0$ is an open subset of $F^0$. We also consider the sets $E^0_{rg}=F^0_{rg}\cap E^0$ and $E^0_{sg}=F^0_{sg}\cap E^0$. Although we do not have a topological graph, the condition that $E^0$ is an open subset of $F^0$ allows us to define a path space similarly to what is done in \cite{KL2017,Yeend2006}.

For $n \geq 2$, we denote by $E^n$ the space of paths of length $n$, that is,
\[E^{n}:=\{(e_1,\ldots,e_n)\in \prod_{i=1}^n E^1:d(e_i)=r(e_{i+1})(1\leq i<n)\}\]
which we regard as a subspace of the product space $\prod_{i=1}^n E^1$. Define the {\it finite path space} $E^*= \sqcup_{n=0}^{\infty} E^n$ with the disjoint union topology. 
Define the {\it infinite path space} as
\[E^{\infty}:=\{(e_i)_{i\in\mathbb{N}}\in \prod_{i=1}^\infty E^1:d(e_i)=r(e_{i+1})(i\in\mathbb{N})\}.\]

For elements $e_1,e_2\in E^1$, they are of the form $e_1=e^{\af_1}_{\eta_1}$ and $e_2=e^{\af_2}_{\eta_2}$ for $\af_1,\af_2\in\CL$, $\eta_1\in X_{\af_1}$ and $\eta_2\in X_{\af_2}$. The condition $d(e_1)=r(e_2)$ is then equivalent to $h_{[\af_1]\emptyset}(\eta_1)=f_{\emptyset[\af_2]}(\eta_2)$. In particular, if $A\in\eta_1$, then $\theta_{\af_2}(A)\in \eta_2$.

For an element, $(e_k)_{k=1}^n\in E^n$, we let $d((e_k)_{k=1}^n)=d(e_n)$ and $r((e_k)_{k=1}^n)=r(e_1)$ if $n\geq 1$. For $v\in E^0$, we let $r(v)=d(v)=v$. For infinite paths, we only define the range, namely, if $(e_k)_{k=1}^\infty\in E^\infty$, we define $r((e_k)_{k=1}^\infty)=r(e_1)$.

\begin{dfn}\label{def:boundary path space}  Let  $(\CB,\CL,\theta, \CI_\af)$ be a generalized Boolean dynamical system and $E=(E^1,d,r)$ be the associated  topological correspondence. The {\it boundary path space} of $E$ is defined by 
$$\partial E :=E^{\infty} \sqcup \{(e_k)_{k=1}^n \in E^* : d((e_k)_{k=1}^n )  \in E^0_{sg}\}.$$
We denote by $\sm_E:\partial E\setminus E^0_{sg}\to \partial E$ the shift map that removes the first edge for paths of length greater of equal to 2. For elements $\mu$ of length 1, $\sm_E(\mu)=d(\mu)$.
For a subset $S \subset E^*$, denote by 
$$\CZ(S)=\{\mu \in \partial E:~\text{either} ~r(\mu) \in S, ~\text{or there exists}~  1 \leq i \leq |\mu| ~\text{such that}~ \mu_1 \cdots \mu_i \in 
S \}.$$
We endow $\partial E$ with the topology generated by the basic open sets $\CZ(U)\cap \CZ(K)^c$, where $U$ is an open set of $E^*$ and $K$ is a compact set of $E^*$.
\end{dfn}

This topology can also be describe by convergence of nets. The following lemma follows from \cite[Theorem 3.10]{Cas2020} and generalizes the description of convergence of sequences in $\partial E$ found in \cite[Lemma 4.8]{KL2017}.

\begin{lem}\label{convergence on partial E}
A net $\{\mu^\lambda\}_{\lambda\in \Lambda}$ converges to $\mu$ in $\partial E$ if and only if
	\begin{enumerate}
		\item $\{r(\mu^\lambda)\}_{\lambda\in\Lambda}$ converges to $r(\mu)$;
		\item\label{item:conv1} for all $1\leq k \leq|\mu|$ with $k\neq\infty$, there exists $\lambda_0\in \Lambda$ such that for all $\lambda\geq \lambda_0$, $|\mu^\lambda|\geq k$ and $(\mu_1^\lambda,\ldots,\mu_k^\lambda)_{\lambda\geq \lambda_0}$ converges to $(\mu_1,\ldots,\mu_{k})$ in $E^k$;
		\item\label{item:conv3} if $|\mu|<\infty$, then for any compact $K\subseteq E^1$, there exists $\lambda_0\in \Lambda$ such that for all $\lambda\geq \lambda_0$, either $|\mu^\lambda|=|\mu|$, or $|\mu^\lambda|>|\mu|$ and $\mu_{|\mu|+1}^\lambda\notin K$.
	\end{enumerate}
\end{lem}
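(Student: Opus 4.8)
The plan is to read off the three conditions directly from the definition of the topology on $\partial E$, whose subbasic open sets are $\CZ(U)$ with $U\subseteq E^*$ open and $\CZ(K)^c$ with $K\subseteq E^*$ compact. Two structural observations about $E^*=\bigsqcup_{n\geq 0}E^n$ drive everything. First, since this carries the disjoint union topology, each $E^n$ is clopen; hence for $U\subseteq E^k$ open with $k\geq 1$ one has, directly from the definition of $\CZ$, that $\nu\in\CZ(U)$ if and only if $|\nu|\geq k$ and $(\nu_1,\dots,\nu_k)\in U$ (the range of $\nu$ lies in $E^0$ and every proper initial segment lies in a different $E^i$). Second, any compact $K$ meets only finitely many of the clopen pieces, so $K=\bigsqcup_{n=0}^{N}K_n$ with each $K_n:=K\cap E^n$ compact.

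For necessity, suppose $\mu^\lambda\to\mu$. Condition (1) follows by feeding an open neighbourhood $W$ of $r(\mu)$ in $F^0$ back into $\partial E$: if $r(\mu)\in X_\emptyset$ take $U=W\cap X_\emptyset\subseteq E^0$, for which $\CZ(U)=\{\nu:r(\nu)\in U\}$ is an open neighbourhood of $\mu$, while if $r(\mu)=\emptyset$ use $\CZ(K_0)^c$ for a compact $K_0\subseteq X_\emptyset$ with $W=F^0\setminus K_0$; in both cases eventual membership translates into $r(\mu^\lambda)\in W$. Condition (2) is immediate from the first structural observation: for $k\leq|\mu|$ and any open $U\subseteq E^k$ containing $(\mu_1,\dots,\mu_k)$, the set $\CZ(U)$ is an open neighbourhood of $\mu$, so eventually $|\mu^\lambda|\geq k$ and $(\mu_1^\lambda,\dots,\mu_k^\lambda)\in U$, and such $U$ are cofinal among neighbourhoods in $E^k$. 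For condition (3) with $|\mu|=n<\infty$ and $K\subseteq E^1$ compact, I would pick a compact neighbourhood $V$ of $(\mu_1,\dots,\mu_n)$ in $E^n$ and set $\widehat K=\{(e_1,\dots,e_n,e_{n+1})\in E^{n+1}:(e_1,\dots,e_n)\in V,\ e_{n+1}\in K\}$, which is compact; since $|\mu|=n$ gives $\mu\notin\CZ(\widehat K)$, the open set $\CZ(\widehat K)^c$ is a neighbourhood of $\mu$, and combining eventual membership with condition (2) forces $\mu_{n+1}^\lambda\notin K$ whenever $|\mu^\lambda|>n$.

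For sufficiency, assume (1)--(3) and fix a basic neighbourhood $\CZ(U)\cap\CZ(K)^c$ of $\mu$. To get $\mu^\lambda\in\CZ(U)$ eventually, note $\mu\in\CZ(U)$ means either $r(\mu)\in U\cap E^0$, handled by (1) and openness of $E^0$ in $F^0$, or some initial segment $(\mu_1,\dots,\mu_i)\in U\cap E^i$ with $i\leq|\mu|$, handled by (2). To get $\mu^\lambda\notin\CZ(K)$ eventually, I use the decomposition $K=\bigsqcup_{n=0}^N K_n$: the range component $K_0$ is excluded by (1), since $r(\mu)\notin K_0$ and $F^0\setminus K_0$ is open; for $1\leq i\leq\min(N,|\mu|)$ the segment $(\mu_1,\dots,\mu_i)\notin K_i$, so (2) together with closedness of $K_i$ gives eventual avoidance; and for $i>|\mu|$ I project $K_i$ onto its $(|\mu|+1)$-st coordinate, obtaining a compact $\widehat K_i\subseteq E^1$, and apply (3): eventually either $|\mu^\lambda|=|\mu|<i$ (so there is no length-$i$ segment) or $\mu^\lambda_{|\mu|+1}\notin\widehat K_i$ (so $(\mu^\lambda_1,\dots,\mu^\lambda_i)\notin K_i$). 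As all of these are finitely many eventual conditions in a directed set, a common tail index exists, completing the argument. Alternatively, the statement is a direct instance of the general convergence criterion in \cite[Theorem~3.10]{Cas2020}, once one records that $E^0$ is open in $F^0$.

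The step I expect to be the genuine obstacle is the sufficiency analysis of $\CZ(K)^c$ for components $K_i$ with $i>|\mu|+1$: condition (3) literally constrains only the single edge $\mu^\lambda_{|\mu|+1}$, so one must recognize that controlling this one coordinate — via the projection of the finitely many compact pieces $K_i$ onto it — already blocks membership in every higher $K_i$. The attendant care is the bookkeeping of the point $\emptyset$ in $F^0$, where the one-point-compactification description from Lemma~\ref{lem:conv.X.empty} is what makes the range argument uniform across the unital and non-unital cases.
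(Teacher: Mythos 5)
Your proof is correct, but it takes a genuinely different route from the paper: the paper gives no direct argument at all --- its ``proof'' of Lemma \ref{convergence on partial E} is the one-line observation that the statement follows from \cite[Theorem 3.10]{Cas2020} (the general convergence criterion for boundary path spaces, which your closing sentence mentions only as an alternative). What you do instead is unpack the basic open sets $\CZ(U)\cap\CZ(K)^c$ of Definition \ref{def:boundary path space} by hand, driven by two structural facts: each $E^n$ is clopen in $E^*$, so for $S\subseteq E^k$ with $k\geq 1$ one has $\CZ(S)=\{\nu:|\nu|\geq k,\ (\nu_1,\dots,\nu_k)\in S\}$; and any compact $K\subseteq E^*$ splits as a finite disjoint union of compacts $K_n\subseteq E^n$. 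The two nontrivial ingredients you supply --- the compact set $\widehat K\subseteq E^{|\mu|+1}$ built from a compact neighbourhood $V$ of $(\mu_1,\dots,\mu_{|\mu|})$ (this uses local compactness of $E^n$, which holds because $E^n$ is closed in $\prod_{i=1}^n E^1$, the maps $d,r$ being continuous into the Hausdorff space $F^0$), and the projection of the pieces $K_i$ with $i>|\mu|$ onto the $(|\mu|+1)$-st coordinate --- are exactly what make condition (3), which constrains only a single edge, strong enough to rule out membership in every longer compact piece; this is the step a naive attempt would miss, and you identify it correctly. Your route buys a self-contained proof that exposes where local compactness and the topology at the point $\emptyset$ (Lemma \ref{lem:conv.X.empty}) enter; the paper's route buys brevity by inheriting the result from the general framework of \cite{Cas2020}. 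Two cosmetic caveats: when $|\mu|=0$ your formula for $\widehat K$ should be read as $K\cap r^{-1}(V)$, or simply $K$ itself, since a length-one path has no positive-length proper initial segment other than itself; and you apply the cylinder description of $\CZ(S)$ to compact as well as open $S$, which is harmless because that computation never uses openness.
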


\begin{remark}
We could have defined a topological graph by putting $E^0=F^0$. There are two reasons why we use a topological correspondence instead of a topological graph. The first is the C*-correspondence found in the previous section, where we use $C_0(X_\emptyset)$ as the coefficient algebra instead of $C_0(X_\emptyset\cup\{\emptyset\})$. The second reason is that we want the boundary path space of the topological correspondence to be homeomorphic to the tight spectrum of the inverse semigroup associated with a generalized Boolean dynamical system. If we were to allow $\emptyset$ to be an element of $E^0$, for it not to belong to the boundary space, we would have to prove that $\emptyset\in E^0_{rg}$, however, this is not always the case, for instance if there is an infinite amount of elements $\af\in\CL$ such that $\emptyset$ belongs to the image of $f_{\emptyset[\af]}$.
\end{remark}

\begin{lem}\label{lem:word from path}
Let $(e^{\af_k}_{\eta_k})_{k=1}^n\in E^*$, where $1\leq n$. Then $\af_1\cdots\af_n\in\CW^*$. Moreover for all $1\leq m\leq n$, we have that $\eta_m\cap \CI_{\af_{1,m}}$ is an ultrafilter in $\CI_{\af_{1,m}}$.
\end{lem}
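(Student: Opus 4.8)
The plan is to prove by induction on $m$, for $1\leq m\leq n$, the combined statement that $\af_{1,m}=\af_1\cdots\af_m\in\CW^*$ and that $\eta_m\cap\CI_{\af_{1,m}}$ is an ultrafilter in $\CI_{\af_{1,m}}$; the lemma then follows by taking $m=n$. For the base case $m=1$, I note that $\eta_1\in X_{\af_1}=\widehat{\CI_{\af_1}}$ is by definition an ultrafilter in $\CI_{\af_1}$, so $\eta_1\subseteq\CI_{\af_1}$ gives $\eta_1\cap\CI_{\af_1}=\eta_1$; moreover a filter is nonempty and excludes $\emptyset$, so $\eta_1$ contains a nonempty set, whence $\CI_{\af_1}\neq\{\emptyset\}$ and $\af_1\in\CW^*$.

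For the inductive step, I assume the statement for some $1\leq m<n$ and write $\CF_m:=\eta_m\cap\CI_{\af_{1,m}}$, which is a nonempty ultrafilter by hypothesis. The key is to translate the path-compatibility condition $d(e^{\af_m}_{\eta_m})=r(e^{\af_{m+1}}_{\eta_{m+1}})$, which by the definitions of $d$ and $r$ reads $h_{[\af_m]\emptyset}(\eta_m)=f_{\emptyset[\af_{m+1}]}(\eta_{m+1})$, into Boolean-algebraic information. Choosing any $A\in\CF_m$, I have $A\in\eta_m\subseteq\uparrow_{\CB}\eta_m=h_{[\af_m]\emptyset}(\eta_m)=f_{\emptyset[\af_{m+1}]}(\eta_{m+1})$, and the definition of $f_{\emptyset[\af_{m+1}]}$ then yields $\theta_{\af_{m+1}}(A)\in\eta_{m+1}$. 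Since $A\in\CI_{\af_{1,m}}$, Lemma~\ref{properties of I}(iii) gives $\theta_{\af_{m+1}}(A)\in\CI_{\af_{1,m}\af_{m+1}}=\CI_{\af_{1,m+1}}$, so $\theta_{\af_{m+1}}(A)\in\eta_{m+1}\cap\CI_{\af_{1,m+1}}$; as $\emptyset\notin\eta_{m+1}$, this set is nonempty and, in particular, $\CI_{\af_{1,m+1}}\neq\{\emptyset\}$, establishing $\af_{1,m+1}\in\CW^*$.

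It remains to upgrade nonemptiness to the ultrafilter property. Having shown $\af_{1,m+1}=\af_{1,m}\af_{m+1}\in\CW^*$, I would apply Proposition~\ref{A} with $\CF=\eta_{m+1}\in X_{\af_{m+1}}$, $\bt=\af_{m+1}$ and $\af=\af_{1,m}\in\CL^{\geq 1}$: since $\eta_{m+1}\cap\CI_{\af_{1,m+1}}\neq\emptyset$, that proposition gives directly that $\eta_{m+1}\cap\CI_{\af_{1,m+1}}$ is an ultrafilter in $\CI_{\af_{1,m+1}}$, completing the induction.

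I expect the only genuinely delicate point to be the faithful reading of the path condition: one must unwind $h_{[\af_m]\emptyset}=\uparrow_{\CB}(-)$ together with the definition of $f_{\emptyset[\af_{m+1}]}$ to extract the implication $A\in\eta_m\Rightarrow\theta_{\af_{m+1}}(A)\in\eta_{m+1}$, and one must verify the hypothesis $\af_{1,m+1}\in\CW^*$ of Proposition~\ref{A} \emph{before} invoking it. This dependency is precisely why the two assertions of the lemma are proved simultaneously rather than separately.
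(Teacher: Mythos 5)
Your proof is correct and follows essentially the same route as the paper's: induction on the length of the path, translating the compatibility condition $d(e^{\af_m}_{\eta_m})=r(e^{\af_{m+1}}_{\eta_{m+1}})$ into the implication $A\in\eta_m\Rightarrow\theta_{\af_{m+1}}(A)\in\eta_{m+1}$, invoking Lemma~\ref{properties of I}(iii) to place $\theta_{\af_{m+1}}(A)$ in $\CI_{\af_{1,m+1}}$, and then applying Proposition~\ref{A} once $\af_{1,m+1}\in\CW^*$ is secured. If anything, your treatment of the base case (deducing $\CI_{\af_1}\neq\{\emptyset\}$ from the existence of the ultrafilter $\eta_1$, rather than asserting $\CL\subseteq\CW^*$ as the paper does) is slightly more careful than the original.
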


\begin{proof}
We prove by induction on $n$. If $n=1$, then $\af_1\in\CL\subseteq\CW^*$ and $\eta_1\cap\CI_{\af_1}=\eta_1$ is an ultrafilter in $\CI_{\af_1}$ by definition.

Suppose the result true for a fixed $n$, and let $(e^{\af_k}_{\eta_k})_{k=1}^{n+1}\in E^*$. By the induction hypothesis there exists $A\in\eta_n\cap \CI_{\af_{1,n}}$. As observed above, since $d(e^{\af_n}_{\eta_n})=r(e^{\af_{n+1}}_{\eta_{n+1}})$, we have that $\theta_{\af_{n+1}}(A)\in\eta_{n+1}$. By Lemma \ref{properties of I}, $\theta_{\af_{n+1}}(A)\in\CI_{\af_{1,n+1}}$ so that $\CI_{\af_{1,n+1}}\neq\{\emptyset\}$ and $\af_{1,n+1}\in\CW^*$. The fact that $\eta_{n+1}\cap \CI_{\af_{1,n+1}}$ is an ultrafilter follows from Proposition \ref{A}.
\end{proof}

\begin{lem}\label{lem:sing.vertex}
Let $\xi\in X_\emptyset$. Then $\xi\in E^0_{sg}$ if and only if $A\notin\CB_{reg}$ for all $A\in\xi$.
\end{lem}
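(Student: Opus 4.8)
The plan is to work with the characterization of regular vertices provided by Proposition~\ref{prop:reg.vertex} rather than unwinding the four sets $F_{sce}, F^0_{fin}, F^0_{rg}, F^0_{sg}$ directly. Since $\xi\in E^0$ and $E^0_{sg}=E^0\setminus F^0_{rg}$, it suffices to show that $\xi\in F^0_{rg}$ if and only if some $A\in\xi$ lies in $\CB_{reg}$. Two preliminary computations will drive everything. First, using $r(e^\af_\eta)=f_{\emptyset[\af]}(\eta)=\{C\in\CB:\theta_\af(C)\in\eta\}$ and the identification $E^1\cong\bigsqcup_{\af\in\CL}X_\af$, I would record that for $A\in\CB$,
\[r^{-1}(Z(A))=\bigsqcup_{\af\in\Delta_A}Z^1(\af,\theta_\af(A)),\]
since $\theta_\af(A)\in\eta$ forces $\theta_\af(A)\neq\emptyset$, i.e.\ $\af\in\Delta_A$, and $\theta_\af(A)\in\CI_\af$ by Lemma~\ref{properties of I}(i). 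Second, I would establish a surjectivity lemma: for $\zeta\in X_\emptyset$, one has $\zeta=r(e^\af_\eta)$ for some $\eta\in X_\af$ exactly when $\theta_\af(C)\neq\emptyset$ for every $C\in\zeta$; the nontrivial implication is built by noting that $\{\theta_\af(C):C\in\zeta\}$ is a downward-directed family of nonempty elements of $\CI_\af$ (as $\theta_\af(C)\cap\theta_\af(C')=\theta_\af(C\cap C')$), extending the filter it generates to an ultrafilter $\eta\in X_\af$, and using maximality of $\zeta$ to conclude $\zeta=f_{\emptyset[\af]}(\eta)$.

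For the implication that a regular $A\in\xi$ makes $\xi$ regular, I would take the compact open neighborhood $V=Z(A)$ of $\xi$. Regularity of $A$ (applied with $B=A$) gives $0<\ld_A<\infty$, so $\Delta_A$ is finite and $r^{-1}(V)=\bigsqcup_{\af\in\Delta_A}Z^1(\af,\theta_\af(A))$ is a finite union of compact sets, hence compact. The equality $r(r^{-1}(V))=V$ is the heart of the matter: $\subseteq$ is immediate, while for $\supseteq$ I would take $\zeta\in Z(A)$ and show some color works, arguing by contradiction. If for every $\af$ there were $C_\af\in\zeta$ with $\theta_\af(C_\af)=\emptyset$, then, using that $\theta_\af(A)=\emptyset$ for $\af\notin\Delta_A$ and that $\Delta_A$ is finite, the element $D:=A\cap\bigcap_{\af\in\Delta_A}C_\af$ would lie in $\zeta$, be a nonempty subset of $A$, and satisfy $\theta_\af(D)=\emptyset$ for all $\af\in\CL$, i.e.\ $\ld_D=0$, contradicting regularity of $A$. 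Hence some $\af$ (necessarily in $\Delta_A$, by testing $C=A$) has $\theta_\af(C)\neq\emptyset$ for all $C\in\zeta$, and the surjectivity lemma produces $\eta\in Z^1(\af,\theta_\af(A))$ with $r(e^\af_\eta)=\zeta$. Proposition~\ref{prop:reg.vertex} then yields $\xi\in F^0_{rg}$.

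For the converse I would argue contrapositively: assuming $\xi\in F^0_{rg}$, pick a compact neighborhood $V$ with $r^{-1}(V)$ compact and $r(r^{-1}(V))=V$, and choose $A\in\xi$ with $\xi\in Z(A)\subseteq V$. I claim $A\in\CB_{reg}$. If not, there is $\emptyset\neq B\subseteq A$ with $\ld_B\in\{0,\infty\}$. When $\ld_B=0$, any ultrafilter $\zeta\ni B$ satisfies $A\in\zeta$, hence $\zeta\in V=r(r^{-1}(V))\subseteq r(E^1)$, but no color can receive $\zeta$ because $\theta_\af(B)=\emptyset$ for all $\af$, contradicting the surjectivity lemma. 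When $\ld_B=\infty$, the set $r^{-1}(Z(B))=\bigsqcup_{\af\in\Delta_B}Z^1(\af,\theta_\af(B))$ is an infinite disjoint union of nonempty open sets contained in the compact $r^{-1}(V)$; together with the open set $r^{-1}(V)\setminus r^{-1}(Z(B))$ (open because $Z(B)$ is closed in $F^0$) these form an open cover of $r^{-1}(V)$ admitting no finite subcover, contradicting compactness. Thus $A$ is regular, completing the proof.

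I expect the main obstacle to be the equality $r(r^{-1}(V))=V$ in the first implication, and more precisely the surjectivity lemma together with the finite-intersection argument that forces a uniform color $\af$ receiving a given $\zeta\in Z(A)$; this is exactly where the regularity hypothesis (and the finiteness of $\Delta_A$) is used in an essential way. The remaining steps—identifying $r^{-1}(Z(A))$, checking compactness, and the two non-compactness and non-surjectivity contradictions—should be routine once the topology on $E^1\cong\bigsqcup_\af X_\af$ and the compactness of the basic sets $Z(\af,B)$ are in hand.
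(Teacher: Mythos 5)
Your proposal is correct, and it follows the same overall strategy as the paper's proof: reduce everything to Proposition \ref{prop:reg.vertex}, observe that $r^{-1}(Z(A))=\bigcup_{\af\in\Delta_A}Z^1(\af,\theta_\af(A))$, obtain compactness of this preimage from $\ld_A<\infty$, prove $r(r^{-1}(Z(A)))=Z(A)$ by extending a pushed-forward filter to an ultrafilter in $X_\af$, and, for the converse, squeeze $0<\ld_B<\infty$ for every $\emptyset\neq B\subseteq A$ out of compactness of $r^{-1}(Z(B))$ together with surjectivity of $r$ onto $Z(B)$.

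There is one point where your argument is genuinely sharper than the paper's, and it is worth flagging: the inclusion $Z(A)\subseteq r(r^{-1}(Z(A)))$. The paper takes $\eta\in Z(A)$ and an \emph{arbitrary} $\af\in\Delta_A$ and asserts that $\{B\in\CI_\af:\theta_\af(C)\subseteq B\text{ for some }C\in\eta\}$ is a filter in $\CI_\af$. As stated this can fail: if $\theta_\af(C)=\emptyset$ for some $C\in\eta$, that set contains $\emptyset$, and regularity of $A$ does not preclude this for a \emph{fixed} $\af\in\Delta_A$ --- it only guarantees that each nonempty $C\cap A$ is moved by \emph{some} letter (e.g.\ two letters acting on the two atoms of the power set of a two-point set already gives a counterexample to the assertion for the ``wrong'' letter). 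Your finite-intersection argument --- if every $\af\in\Delta_A$ annihilated some $C_\af\in\zeta$, then $D=A\cap\bigcap_{\af\in\Delta_A}C_\af$ would be a nonempty element of $\zeta$ contained in $A$ with $\ld_D=0$, contradicting $A\in\CB_{reg}$ --- is exactly what is needed to first select a letter $\af$ with $\theta_\af(C)\neq\emptyset$ for all $C\in\zeta$; only then does the ultrafilter-extension step (your surjectivity lemma, which is the same construction the paper uses) go through. So your write-up not only proves the lemma but quietly repairs a small gap in the paper's own proof. The remaining pieces --- the $\ld_B=0$ and $\ld_B=\infty$ contradictions in the converse --- are organized slightly differently from the paper's direct derivation but are mathematically identical.
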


\begin{proof}
Notice that for $A\in\CB$, we have that $r^{-1}(Z(A))=\bigcup_{\af\in\Delta_A} Z^1(\af,\theta_\af(A))$.

Suppose first that there exists $A\in\xi\cap\CB_{reg}$, then $Z(A)$ is a compact neighborhood of $\xi$ such that $r^{-1}(Z(A))$ is compact because it is a finite union of compact sets.

We claim that $r(r^{-1}(Z(A))=Z(A)$. Let $\eta\in Z(A)$ and $\af\in\Delta_A$. The set $\{B\in\CI_\af:\theta_\af(C)\subseteq B\text{ for some }C\in\eta\}$ is a filter in $\CI_\af$ and therefore it is contained in an ultrafilter $\zeta\in X_{\af}$. We have that $r(e^{\af}_{\zeta})=f_{\emptyset[\af]}(\zeta)=\{D\in\CB:\theta_\af(D)\in\zeta\}\supseteq\eta$. Since $\eta$ is an ultrafilter, we get $r(e^{\af}_{\zeta})=\eta$.
By Proposition~\ref{prop:reg.vertex}, we have that $\xi\in E^0_{rg}$.

Now suppose that $\xi\in E^0_{rg}$. By Proposition~\ref{prop:reg.vertex}, there exists a compact neighborhood $V$ of $\xi$ such that $r^{-1}(V)$ is compact and $r(r^{-1}(V))=V$. Then, there exists $A\in\xi$ such that $\xi\in Z(A)\subseteq V$. By the conditions on $V$, we also have that $r^{-1}(Z(B))$ is compact and $r(r^{-1}(Z(B))=Z(B)$ for every $\emptyset\neq B\subseteq A$. The first part implies that $\lambda_B<\infty$, while the second part implies that $\lambda_B>0$. It follows that $A\in\CB_{reg}$.
\end{proof}

\begin{thm}\label{thm:iso.tight.boundary}
Let $(\CB,\CL,\theta,\CI_\af)$ a generalized Boolean dynamical system, $\mathsf{T}$ the tight spectrum of its inverse semigroup and $\partial E$ the boundary path space of its topological correspondence. Then, there exists a homeomorphism $\phi:\mathsf{T}\to \partial E$ such that $\phi\circ\sigma=\sm_E\circ\phi|_{\mathsf{T}^{(1)}}$.
\end{thm}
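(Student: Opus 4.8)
The plan is to define $\phi$ at the level of complete families of ultrafilters and then read off a path. Given a tight filter $\xi=\xi^\af\in\mathsf{T}$ with associated word $\af\in\CW^{\leq\infty}$ and complete family $\{\xi_n\}_{0\leq n\leq|\af|}$ of ultrafilters (Proposition \ref{tight gives ultrafilter}), I set, for $1\leq k\leq|\af|$,
\[
\eta_k:=h_{[\af_{1,k-1}]\af_k}(\xi_k)\in X_{(\af_{1,k-1})\af_k}\subseteq X_{\af_k}
\]
(with the convention $\eta_1=\xi_1$), and declare $\phi(\xi^\af):=(e^{\af_k}_{\eta_k})_{1\leq k\leq|\af|}$; when $|\af|=0$ I put $\phi(\xi^\emptyset):=\xi_0\in E^0$. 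First I would check that this is a legitimate element of $\partial E$. That consecutive edges compose, i.e.\ $d(e^{\af_k}_{\eta_k})=r(e^{\af_{k+1}}_{\eta_{k+1}})$, reduces via Lemma \ref{h comp f}(i) and the diagram of Lemma \ref{h comp f}(ii) to the identity $d(e^{\af_k}_{\eta_k})=h_{[\af_{1,k}]\emptyset}(\xi_k)$ together with the completeness relation $\xi_k=f_{\af_{1,k}[\af_{k+1}]}(\xi_{k+1})$ of Remark \ref{rmk:complete family using f}. For the type: if $\af$ is infinite the path is infinite, landing in $E^\infty$; if $|\af|=n<\infty$, then $d(e^{\af_n}_{\eta_n})=h_{[\af_{1,n}]\emptyset}(\xi_n)=\uparrow_{\CB}\xi_n$, and since every element of $\xi_n$ is singular (Theorem \ref{char:tight}(ii)) and singular sets are closed under taking supersets, every element of $\uparrow_{\CB}\xi_n$ is singular, whence $d(e^{\af_n}_{\eta_n})\in E^0_{sg}$ by Lemma \ref{lem:sing.vertex}. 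So the finite path lies in $\partial E$.

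For the inverse, given a path with edges $e^{\af_k}_{\eta_k}$ I would recover the word $\af$ (Lemma \ref{lem:word from path} guarantees $\af_{1,n}\in\CW^*$) and set $\xi_k:=g_{(\af_{1,k-1})\af_k}(\eta_k)=\eta_k\cap\CI_{\af_{1,k}}$ for $k\geq1$ and $\xi_0:=f_{\emptyset[\af_1]}(\eta_1)$. Using the diagram of Lemma \ref{g comp f}(iv) together with the composition relation $d(e^{\af_k}_{\eta_k})=r(e^{\af_{k+1}}_{\eta_{k+1}})$, I would verify that $\{\xi_n\}$ is a complete family, and then Theorem \ref{char:tight} (for infinite paths through the ultrafilter criterion of Theorem \ref{char:ultrafilters}, for finite paths through Lemma \ref{lem:sing.vertex}) shows the associated filter, produced by Theorem \ref{filter-bijective-pair}, is tight. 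Because $g_{(\af)\bt}$ and $h_{[\af]\bt}$ are mutually inverse (Lemma \ref{h comp f}(iii)), these two assignments are mutually inverse, so $\phi$ is a bijection.

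The intertwining $\phi\circ\sigma=\sm_E\circ\phi|_{\mathsf{T}^{(1)}}$ I would check by unwinding both sides. For $\xi=\xi^\af$ with $|\af|\geq1$ and $\af=\af_1\gm$, the shifted filter $\sigma(\xi)=H_{[\af_1]\gm}(\xi)$ has $k$-th member $h_{[\af_1]\af_{2,k+1}}(\xi_{k+1})$ by \eqref{H-map}; feeding this into the definition of $\phi$ and collapsing the composition with Lemma \ref{h comp f}(i) produces exactly the edge $e^{\af_{k+1}}_{\eta_{k+1}}$, that is, the $k$-th edge of $\phi(\xi)$ after deletion of its first edge, which is $\sm_E(\phi(\xi))$. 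The ranges agree by the composition relation already verified.

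The main obstacle is the continuity of $\phi$ and $\phi^{-1}$, which I would handle with the net descriptions available: pointwise convergence of characters (equivalently, the basic sets $V_{e:e_1,\ldots,e_m}$) on $\mathsf{T}$, and Lemma \ref{convergence on partial E} on $\partial E$. Given $\xi^\lambda\to\xi$ in $\mathsf{T}$, the stabilization of finite prefixes (condition (2) of Lemma \ref{convergence on partial E}) follows by testing against the idempotents $(\af_{1,k},A,\af_{1,k})$ and invoking continuity of the cut maps $h_{[\af_{1,k-1}]\af_k}$ (Lemma \ref{h comp f}(v)); convergence of ranges (condition (1)) follows by testing against $(\emptyset,A,\emptyset)$, using continuity of $f_{\emptyset[\af_1]}$ (Lemma \ref{lem:f.empty.continuous}) together with Lemma \ref{lem:conv.X.empty} in the case $\xi_0=\emptyset$. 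The genuinely delicate point is condition (3) for a finite-type limit $\xi^\af$ ($|\af|=n$): I must show that for every compact $K\subseteq E^1$ the approximating paths eventually either stop at length $n$ or have their $(n+1)$-st edge outside $K$. Here I expect to use that a compact subset of $E^1\cong\bigsqcup_{\af}X_\af$ meets only finitely many letters with compact trace in each part, combined with the tightness characterization of $\xi^\af$ (Proposition \ref{char 1:tight filter of finite type}, reflecting that every $A\in\xi_n$ is singular) to force the extra edges to escape every compact set; the reverse continuity of $\phi^{-1}$ is then proved symmetrically from the same convergence lemmas. This compact-escape step, reconciling the shift topology on $\partial E$ with the tight topology on $\mathsf{T}$, is where the real work lies.
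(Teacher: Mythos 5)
Your construction of $\phi$, its inverse, and the intertwining computation coincide with the paper's proof, and your well-definedness argument (singularity is inherited by supersets, so $d(e^{\af_n}_{\eta_n})=\uparrow_{\CB}\xi_n\in E^0_{sg}$) is a valid shortcut past the paper's appeal to Theorem \ref{cutting gives tight}. The genuine gap is in the step you yourself flag as the crux: condition (3) of Lemma \ref{convergence on partial E} for a finite-type limit $\xi^\af$, $|\af|=n$. The mechanism you propose there --- the tightness/singularity of $\xi_n$ via Proposition \ref{char 1:tight filter of finite type} --- is not what forces the extra edges of the approximating paths out of a compact set $K$; singularity enters only in showing that $\phi$ lands in $\partial E$ and that $\phi^{-1}$ lands in $\mathsf{T}$. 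What actually does the work is pointwise convergence of characters tested against finitely many idempotents whose words properly extend $\af$: cover $K$ by basic compact-open sets $Z^1(\beta_1,B_1),\ldots,Z^1(\beta_m,B_m)$, fix $A\in\xi_n$, and observe that $(\af\beta_i,\theta_{\beta_i}(A)\cap B_i,\af\beta_i)\notin\xi^\af$ simply because $\af\beta_i$ is not an initial segment of $\af$ (finite type), while $(\af,A,\af)\in\xi^\af$. Convergence in $\mathsf{T}$ then yields $\lambda_0$ such that for $\lambda\geq\lambda_0$ one has $(\af,A,\af)\in\xi^{(\lambda)}$ (so the $\lambda$-paths have length at least $n$) and $(\af\beta_i,\theta_{\beta_i}(A)\cap B_i,\af\beta_i)\notin\xi^{(\lambda)}$ for every $i$; if the $\lambda$-path is strictly longer than $n$, this forces $B_i\notin\eta^{(\lambda)}_{n+1}$ whenever its $(n{+}1)$-st letter equals $\beta_i$, i.e.\ the $(n{+}1)$-st edge avoids $K$. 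Without this idea the step does not close: compactness of $K$ by itself says nothing about the approximating filters, and singularity of the elements of $\xi_n$ is irrelevant to it.

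A second, smaller gap: continuity of $\phi^{-1}$ is not ``symmetric'' in a routine sense. In the paper it is a case analysis over test idempotents $(\bt,B,\bt)$, and the delicate case --- $(\bt,B,\bt)\notin\xi$ with $|\bt|>|\af|$ and $\af$ a beginning of $\bt$ --- is handled by running the compact-escape argument in reverse: one chooses $C\in\CB$ with $B\subseteq\theta_{\bt_{2,|\bt|}}(C)$, sets $B'=\theta_{\bt_{2,|\af|+1}}(C)\in\CI_{\bt_{|\af|+1}}$, and feeds the compact set $K=Z^1(\bt_{|\af|+1},B')$ into condition (3) of Lemma \ref{convergence on partial E} to conclude that $(\bt,B,\bt)$ is eventually absent from the approximating filters. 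This construction of a suitable compact set out of the datum $(\bt,B,\bt)$ is an idea your sketch does not supply, and it is the exact dual of the missing mechanism above.
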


\begin{proof}
Given a tight filter $\xi^\af$ in $E(S)$ with $|\af|\geq 1$, we put $\eta_1=\xi_1$ and $\eta_n=h_{[\af_{1,n-1}]\af_n}(\xi_n)$ for $2\leq n\leq|\af|$. Notice that for each $n$, we have that $\eta_n\in X_{\af_n}$ and $d(e^{\af_n}_{\eta_n})=h_{[\af_{1,n}]\emptyset}(\xi_n)=H_{[\af_{1,n}]\af_{n+1,|\af|}}(\xi^\af)_0$.

Define $\phi:\mathsf{T}\to\partial E$ by
\[\phi(\xi^\af)=\begin{cases}
\xi_0 & \text{if } \af=\emptyset, \\
(e^{\af_n}_{\eta_n})_{n=1}^{|\af|} &\text{if }|\af|\geq 1.
\end{cases}\]

\begin{itemize}
    \item The map $\phi$ is well defined.
\end{itemize}

If $\af=\emptyset$, we have that $\xi_0\in E^0_{sg}$ by Theorem \ref{char:tight} and Lemma \ref{lem:sing.vertex}, and hence $\phi(\xi^\af)\in\partial E$. If $1\leq|\af|<\infty$, then $d(e_{\eta_n}^{\af_n})=H_{[\af_{1,n}]\af_{n+1,|\af|}}(\xi^\af)_0\in E^0_{sg}$ by Theorems \ref{char:tight} and \ref{cutting gives tight} as well as Lemma \ref{lem:sing.vertex}. Moreover, given $1\leq i<|\af|$, using Lemma \ref{h comp f}, we have that
\begin{align*}
    d(e^{\af_n}_{\eta_n}) &=h_{[\af_{1,n}]\emptyset}(\xi_n)\\
    &=h_{[\af_{1,n}]\emptyset}(f_{\af_{1,n}[\af_{n+1}]}(\xi_{n+1}))\\
    &=f_{\emptyset[\af_{n+1}]}(h_{[\af_{1,n}]\af_{n+1}}(\xi_{n+1}))\\
    &=f_{\emptyset[\af_{n+1}]}(\eta_{n+1}) \\
    &=r(e^{\af_{n+1}}_{\eta_{n+1}}).
\end{align*}
Hence $\phi(\xi^\af)\in\partial E$. The above computation also show that $\phi(\xi^\af)\in\partial E$ if $|\af|=\infty$.

\begin{itemize}
    \item The map $\phi$ is injective.
\end{itemize}
By Theorem \ref{filter-bijective-pair}, a filter $\xi\in\mathsf{T}$ is completely determined by the pair $(\af,\{\xi_n\}_{n=0}^{|\af|})$. Since the maps $h$ are injective by Lemma \ref{h comp f}, we get that $\phi$ is injective.

\begin{itemize}
    \item The map $\phi$ is surjective.
\end{itemize}

Let $\mu\in\partial E$. If $|\mu|=0$, then $\mu\in X_{\emptyset}$ and we can let $\xi$ be the filter in $E(S)$ associated with the pair $(\emptyset,\{\mu\})$. Then $\xi\in\mathsf{T}$ by Lemma \ref{lem:sing.vertex} and Theorem \ref{char:tight}.

Suppose now that $1\leq|\mu|<\infty$, so that $\mu=(e^{\af_k}_{\eta_k})_{k=1}^{|\mu|}$. By Lemma \ref{lem:word from path}, we have that $\af_1\ldots\af_{|\mu|}\in\CW^*$ and $\eta_k\cap\CI_{\af_{1,k}}$ is an ultrafilter in $\CI_{\af_{1,k}}$ for all $1\leq k\leq |\mu|$. Define a family of filters $\{\CF_k\}_{k=0}^{|\mu|}$ by
\[\CF_k=\begin{cases}
f_{\emptyset[\af_1]}(\eta_1) & \text{if }k=0,\\
\eta_k\cap\CI_{\af_{1,k}} & \text{if }k>0.
\end{cases}\]
Observe that for $k>1$, we have that $\CF_k=g_{(\af_1\ldots\af_{k-1})\af_k}(\eta_k)$, and for $k=1$, we have that $\CF_1=\eta_1$. We show that $\{\CF_k\}_{k=0}^{|\mu|}$ is complete for $\af_1\ldots\af_{|\mu|}$. For that, we use Remark \ref{rmk:complete family using f}. By definition, we get $\CF_0=f_{\emptyset[\af_1]}(\eta_1)=f_{\emptyset[\af_1]}(\CF_1)$. Now, fix $1\leq k<|\mu|$, then by Lemmas \ref{g comp f} and \ref{h comp f} and the fact that $\mu$ is a path on the topological correspondence, we get
\begin{align*}
    f_{\af_1\ldots\af_k[\af_{k+1}]}(\CF_{k+1}) & = f_{\af_1\ldots\af_k[\af_{k+1}]}(g_{(\af_1\ldots\af_{k})\af_{k+1}}(\eta_{k+1}))\\
    &=g_{(\af_1\ldots\af_{k})\emptyset}(f_{\emptyset[\af_{k+1}]}(\eta_{k+1})) \\
    &=g_{(\af_1\ldots\af_{k})\emptyset}(h_{[\af_k]\emptyset}(\eta_k)) \\
    &=g_{(\af_1\ldots\af_{k-1})\af_k}(g_{(\af_{k})\emptyset}(h_{[\af_k]\emptyset}(\eta_k))) \\
    &=g_{(\af_1\ldots\af_{k-1})\af_k}(\eta_k)\\
    &=\CF_k.
\end{align*}
By Theorem \ref{filter-bijective-pair}, the pair $(\af_1\ldots\af_{|\mu|},\{\CF_k\}_{k=0}^{|\mu|})$ is associated with a filter $\xi$ in $E(S)$, which is tight by Theorem \ref{char:tight} and Lemma \ref{lem:sing.vertex}.

Finally, suppose that $|\mu|<\infty$, so that $\mu=(e^{\af_k}_{\eta_k})_{k=1}^{\infty}$. Notice that, for each $n\in\mathbb{N}$, we have that $\mu=(e^{\af_k}_{\eta_k})_{k=1}^{n}\in E^n$. By Lemma \ref{lem:word from path}, we get $\af_1\ldots\af_n\in\CW^*$ and hence $\af_1\af_2\ldots\CW^{\infty}$. Analogously to the finite length case, we can build a complete family family for $\af_1\af_2\ldots$ and a filter associated with the corresponding pair which is tight by Theorem \ref{char:tight}.

\begin{itemize}
    \item The map $\phi$ is continuous.
\end{itemize}

Let $\{\xi^{(\lambda)}\}_{\lambda\in\Lambda}$ be a net converging to $\xi$ in $\mathsf{T}$. We show that $\{\mu^\lambda\}_{\lambda\in\Lambda}:=\{\phi(\xi^{(\lambda)})\}_{\lambda\in\Lambda}$ converges to $\mu:=\phi(\xi)$ using the characterization of convergence on $\partial E$ given by Lemma \ref{convergence on partial E}. We let $\af$ be the word associated with $\xi$ and, for each $\lambda\in\Lambda$, we let $\af^{(\lambda)}$ be the word associated with $\xi^{(\lambda)}$.

(1) If $r(\mu)=\xi_0\neq\emptyset$, then there exists $\lambda_0\in\Lambda$ such that $r(\mu^\lambda)=\xi_0^{(\lambda)}\neq\emptyset$ for all $\lambda\geq\lambda_0$. It is straightforward to check that $(\xi_0^{(\lambda)})_{\lambda\geq\lambda_0}$ converges to $\xi_0$ and hence $\{r(\mu^\lambda)\}_{\lambda\in\Lambda}$ converges to $r(\mu)$. \ If $r(\mu)=\xi_0=\emptyset$, then $|\af|\geq 1$ and using any $A\in\xi_1$ we see that there exists $\lambda_0$ such that $\af^{(\lambda)}_1=\af_1$ for all $\lambda\geq\lambda_0$. That $\{r(\mu^\lambda)\}_{\lambda\in\Lambda}$ converges to $r(\mu)$ then follows from the continuity of $f_{\emptyset[\af_1]}$.

(2) Suppose that $1\leq k\leq|\mu|=|\af|$ and $k\neq\infty$. Taking any $A\in\xi_k$, we see that there exists $\lambda_0\in\Lambda$ such that $\af_{1,k}$ is a beginning of $\af^{(\lambda)}$ for all $\lambda\geq\lambda_0$, so that $|\mu^\ld| \geq k$ for all $\ld\geq\ld_0$. Also we see that $(\xi_i^{(\lambda)})_{\lambda\geq\lambda_0}$ converges to $\xi_i$ in $\CI_{\af_{1,i}}$ for all $1\leq i\leq k$. By the continuity of the $h$ maps (see Lemma \ref{h comp f}), we conclude that $(\mu_1^\lambda,\ldots,\mu_k^\lambda)_{\lambda\geq \lambda_0}$ converges to $(\mu_1,\ldots,\mu_{k})$ in $E^k$. 

(3) Suppose that $|\af|=|\mu|<\infty$ and let $K\subseteq E^1$ be compact. By the definition of the topology on $E^1$ and compactness of $K$, there exists $Z^1(\beta_1,B_1),\ldots,Z^1(\beta_n,B_n)$ basic compact-open subsets of $E^1$ such that $K\subseteq Z^1(\beta_1,B_1)\cup\cdots\cup Z^1(\beta_n,B_n)$. Fix $A\in\xi_{|\af|}$. Using the convergence of $\{\xi^{(\lambda)}\}_{\lambda\in\Lambda}$ to $\xi$, we can find $\lambda_0$ such that $(\af,A,\af)\in\xi^{(\lambda)}$ and $(\af\beta_i,\theta_{\beta_i}(A)\cap B_i,\af\beta_i)\notin\xi^{(\lambda)}$ for all $\lambda\geq\lambda_0$. Hence, if $\lambda\geq\lambda_0$, we have that $|\mu^{\lambda}|=|\af^{(\lambda)}|\geq |\af|=|\mu|$. And if $|\mu^\lambda|>|\mu|$, then $\mu^{\lambda}_{|\mu|+1}=e^{\af^{(\lambda)}_{|\mu|+1}}_{\eta^{(\lambda)}_{|\mu|+1}}\notin K$ because either $\af^{(\lambda)}_{|\mu|+1}\neq \beta_i$ for all $1\leq i\leq n$, or if $\af^{(\lambda)}_{|\mu|+1}= \beta_i$ for some $1\leq i\leq n$, then $B_i\notin\xi_{|\mu|+1}^{(\lambda)}\subseteq \eta_{|\mu|+1}^{(\lambda)}$.

\begin{itemize}
    \item The map $\phi^{-1}$ is continuous.
\end{itemize}

Let $\{\mu^\ld\}_{\ld\in\Ld}$ be a net converging to $\mu$ in $\partial E$. We show that the net $\{\xi^\lambda\}_{\lambda\in\Lambda}:=\{\phi^{-1}(\mu^{(\lambda)})\}_{\lambda\in\Lambda}$ converges to $\xi:=\phi^{-1}(\mu)$ in $\mathsf{T}$. We write $\mu=(e^{\af_k}_{\eta_k})_{k=1}^{|\mu|}$ and $\mu^\ld=(e^{\af^{(\ld)}_k}_{\eta^{(\ld)}_k})_{k=1}^{|\mu^\ld|}$ whenever they have positive length and we write $\mu=\eta_0$, $\mu^\ld=\eta_0^{(\ld)}$, $\af=\emptyset$ and $\af^{(\lambda)}=\emptyset$ whenever they are paths of length zero. Then $\af$ is the word associated with $\xi$ and, for each $\lambda\in\Lambda$, $\af^{(\lambda)}$ is the word associated with $\xi^{(\lambda)}$.

Let $(\bt,B,\bt)\in E(S)$ and suppose first that $(\bt,B,\bt)\in\xi$. We have to find $\ld_0\in\Ld$ such that for all $\ld\geq\ld_0$, we have that $(\bt,B,\bt)\in\xi^{(\lambda)}$.

If $\beta=\emptyset$, then $B\in \xi_0=r(\mu)$. Because $\{r(\mu^\ld)\}_{\ld\in\Ld}$ converges to $r(\mu)$, there exists $\ld_0$ such that $B\in r(\mu^\ld)=\xi_0^{(\ld)}$ for all $\ld\geq\ld_0$. Hence $(\beta,B,\beta)\in\xi^{(\lambda)}$ for all $\ld\geq\ld_0$.

If $\beta\neq\emptyset$, then $\beta$ is a beginning of $\af$, so that $1\leq|\bt|\leq|\af|$. For each $1\leq i\leq|\bt|$, let $A_i\in\eta_i$. We may assume that $A_{|\bt|}=B$. Consider the open subset $U:=\CZ((Z^1(\af_1,A_1)\times\cdots\times Z^1(\af_{|\bt|},A_{|\bt|}))\cap E^{|\bt|})$ of $\partial E$. Then, there exists $\ld_0\in\Ld$  such that $\mu^\ld\in U$ for all $\ld\geq\ld_0$. This implies that for all $\ld\geq\ld_0$, we have that $\bt$ is the beginning of $\af^{(\ld)}$ and $B\in\eta_{|\bt|}^{(\ld)}$. Also, since $B\in\CI_{\bt}$, we see that $B\in\xi_{|\bt|}^{(\af)}=\eta_{|\bt|}^{(\af)}\cap\CI_{\bt}$ for all $\ld\geq\ld_0$. Hence $(\beta,B,\beta)\in\xi^{(\lambda)}$ for all $\ld\geq\ld_0$.

Suppose second that $(\bt,B,\bt)\notin\xi$. Now, we have to find $\ld_0\in\Ld$ such that for all $\ld\geq\ld_0$, we have that $(\bt,B,\bt)\notin\xi^{(\lambda)}$. We consider a few cases.

If $\bt=\emptyset$, then $r(\mu)=\xi_0$ belongs to the open subset $F^0\setminus Z(B)$ of $F^0$. Because $\{r(\mu^\ld)\}_{\ld\in\Ld}$ converges to $r(\mu)$, there exists $\ld_0$ such that $\xi_0^{(\ld)}=r(\mu^\ld)\in F^0\setminus Z(B)$ for all $\ld\geq\ld_0$. Hence $(\bt,B,\bt)\notin\xi^{(\lambda)}$ for all $\lambda\geq\lambda_0$.

If $1\leq|\bt|\leq|\af|$ and $\bt$ is not a beginning of $\af$, then we can find $\ld_0\in\Ld$ such that $\af^{(\ld)}_{1,|\bt|}=\af_{1,|\bt|}\neq\bt$ for all $\ld\geq\ld_0$. Hence $(\bt,B,\bt)\notin\xi^{(\lambda)}$ for all $\lambda\geq\lambda_0$.

If $1\leq|\bt|\leq|\af|$ and $\bt$ is a beginning of $\af$, then we can find $\ld_1\in\Ld$ such that $\af^{(\ld)}_{1,|\bt|}=\af_{1,|\bt|}=\bt$ for all $\ld\geq\ld_1$. Moreover $\{\mu^{\ld}_{1,|\bt|}\}_{\ld\geq\ld_1}$ converges to $\mu_{1,|\bt|}$. In particular $\{\eta^{(\ld)}_{|\bt|}\}_{\ld\geq\ld_1}$ converges to $\eta_{|\bt|}$. By Lemma \ref{g comp f}, the maps $g$ are continuous so that $\{\xi^{(\ld)}_{|\bt|}\}_{\ld\geq\ld_1}$ converges to $\xi_{|\bt|}$. Notice that $B\notin \xi_{|\bt|}$ so that there exists $\ld_0\geq\ld_1$ such that $B\notin \xi^{(\ld)}_{|\bt|}$ for all $\ld\geq\ld_0$. Hence $(\bt,B,\bt)\notin\xi^{(\lambda)}$ for all $\lambda\geq\lambda_0$.

If $|\bt|>|\af|$ and $\af$ is not a beginning of $\bt$, then we can find $\ld_0\in\Ld$ such that $\af^{(\ld)}_{1,|\af|}=\af\neq\bt_{1,|\af|}$ for all $\ld\geq\ld_0$. Hence $(\bt,B,\bt)\notin\xi^{(\lambda)}$ for all $\lambda\geq\lambda_0$.

Finally, we suppose that $|\bt|>|\af|$ and $\af$ is a beginning of $\bt$. Since $B\in\CI_{\bt}$, there exists $C\in\CB$ such that $B\subseteq \theta_{\beta_{2,|\beta|}}(C)$. Define $B':=\theta_{\beta_{2,|\af|+1}}(C)\in\CI_{\beta_{1,|\af|+1}}\subseteq \CI_{\beta_{|\af|+1}}$ and observe that $K:=Z^1(\bt_{|\af|+1},B')$ is a compact subset of $E^1$. By (3) of Lemma \ref{convergence on partial E}, there exists $\ld_0\in\Ld$ such for all $\ld\geq\ld_0$, we have that $|\mu^\ld|=|\mu|$, or $|\mu^\ld|>|\mu|$ and $\mu_{|\mu|+1}\notin K$. In the first case $(\beta,B,\beta)\notin\xi^{(\ld)}$ because $|\af^{(\ld)}|=|\mu^{\ld}|=|\mu|=|\af|<|\bt|$. In the second case, if $\bt$ is not beginning of $\af^{(\ld)}$, then $(\beta,B,\beta)\notin\xi^{(\ld)}$ by comparing the words. If $\bt$ is a beginning of $\af^{(\ld)}$, then $B'\notin\eta^\ld_{|\af|+1}$ since $\mu^{\ld}_{|\af|+1}\notin K$. This implies that $B'\notin\xi^{(\ld)}_{|\af|+1}$ so that $\theta_{|\af|+2,|\bt|}(B')=\theta_{\beta_{2,|\beta|}}(C)\notin\xi^{(\ld)}_{|\bt|}$. Because $B\subseteq \theta_{\beta_{2,|\beta|}}(C)$, we have that $(\beta,B,\beta)\notin \xi^{(\ld)}$. Hence $(\bt,B,\bt)\notin\xi^{(\lambda)}$ for all $\lambda\geq\lambda_0$.

\begin{itemize}
    \item $\phi\circ\sigma=\sm_E\circ\phi|_{\mathsf{T}^{(1)}}$.
\end{itemize}

Let $\xi^\af\in \mathsf{T}^{(1)}$. Suppose first that $|\af|=1$. In this case, $\sigma(\xi)=H_{[\af]\emptyset}(\xi)$ and $\phi(\sigma(\xi))=H_{[\af]\emptyset}(\xi)_0=h_{[\af]\emptyset}(\xi_1)$. On the other hand $\phi(\xi)=e^\af_{\xi_1}$ and $\sm_E(\phi(\xi))=d(e^\af_{\xi_1})=h_{[\af]\emptyset}(\xi_1)$. Hence $\phi(\sigma(\xi))=\sm_E(\phi(\xi))$.

Suppose now that $|\af|>1$ and write $\af=\af_1\bt$ where $\af_1\in\CL$ and $\bt\in\CW^{\leq\infty}$. In this case, $\sigma(\xi)=H_{[\af_1]\beta}(\xi)$ and $\phi(\sigma(\xi))=(e^{\beta_n}_{\zeta_n})_{n=1}^{|\bt|}$, where $\zeta_1=H_{[\af_1]\beta}(\xi)_1=h_{[\af_1]\beta_1}(\xi_2)$ and \begin{align*}
    \zeta_n &=h_{[\beta_1,n-1]\beta_n}(H_{[\af_1]\beta}(\xi)_n) \\
    &=h_{[\beta_1,n-1]\beta_n}(h_{[\af_1]\beta_{1,n}}(\xi_{n+1}) \\
    &=h_{[\af_1\beta_1,n-1]\beta_n}(\xi_{n+1})
\end{align*} for $2\leq n\leq|\bt|$. On the other hand $\phi(\xi)=(e^{\gamma_n}_{\eta_n})_{n=1}^{|\beta|+1}$, where $\gamma_1=\af_1$ and $\eta_1=\xi_1$, and for $2\leq n\leq|\beta|+1$, $\gamma_n=\bt_{n-1}$ and $\eta_n=h_{[\af_1\bt_{1,n-2}]\bt_{n-1}}(\xi_n)$. Applying $\sm_E$ to $\phi(\xi)$ and comparing with the above expression, we see that $\phi(\sigma(\xi))=\sm_E(\phi(\xi))$. 

\end{proof}

\begin{cor}
Let $(\CB,\CL,\theta,\CI_\af)$ a generalized Boolean dynamical system and $E=(E^1,d,r)$ its topological correspondence. The shift map $\sm_E:\partial E\setminus E^0_{sg}\to\partial E$ is a local homeomorphism. Moreover, if $\Gamma(\partial E,\sm_E)$ is the Renault-Deaconu groupoid associated with $\sm_E$, then $\Gamma(\partial E,\sm_E)\cong\Gamma(\CB,\CL,\theta,\CI_\af)$ and $C^*(\Gamma(\partial E,\sm_E))\cong C^*(\CB,\CL,\theta,\CI_\af)$.
\end{cor}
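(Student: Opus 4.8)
The plan is to transport everything through the homeomorphism $\phi:\mathsf{T}\to\partial E$ of Theorem~\ref{thm:iso.tight.boundary}, which intertwines the shift maps via $\phi\circ\sigma=\sm_E\circ\phi|_{\mathsf{T}^{(1)}}$. First I would identify the domains: under $\phi$, a tight filter $\xi^\af$ is sent to a path of length $|\af|$, so $\phi$ restricts to a bijection $\mathsf{T}^{(n)}=\{\xi^\af\in\mathsf{T}:|\af|\geq n\}\to\{\mu\in\partial E:|\mu|\geq n\}=\operatorname{dom}(\sm_E^n)$ for each $n$. In particular $\phi(\mathsf{T}^{(1)})=\partial E\setminus E^0_{sg}=\operatorname{dom}(\sm_E)$, where the equality $\phi(\mathsf{T}\setminus\mathsf{T}^{(1)})=E^0_{sg}$ uses Lemma~\ref{lem:sing.vertex} together with the description of finite-type tight filters in Theorem~\ref{char:tight}. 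Since $\mathsf{T}^{(1)}$ is open (Proposition~\ref{local homeo on T}(i)) and $\phi$ is a homeomorphism, $\operatorname{dom}(\sm_E)$ is open and $\sm_E=\phi\circ\sigma\circ\phi^{-1}$ on it; as $\sigma$ is a local homeomorphism (Proposition~\ref{local homeo on T}(ii)), it follows that $\sm_E$ is a local homeomorphism.

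Next, by induction on $n$ from the base relation I would establish $\phi\circ\sigma^n=\sm_E^n\circ\phi$ on $\mathsf{T}^{(n)}$ for all $n\geq 0$. Recall from the discussion following Proposition~\ref{local homeo on T} that $\Gamma=\Gamma(\CB,\CL,\theta,\CI_\af)$ is exactly the Renault--Deaconu groupoid of $\sigma$, namely $\{(\eta,m-n,\xi):\eta\in\operatorname{dom}(\sigma^m),\ \xi\in\operatorname{dom}(\sigma^n),\ \sigma^m(\eta)=\sigma^n(\xi)\}$. I would then define $\Psi:\Gamma\to\Gamma(\partial E,\sm_E)$ by $\Psi(\eta,k,\xi)=(\phi(\eta),k,\phi(\xi))$. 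Well-definedness follows from the intertwining relation: if $\sigma^m(\eta)=\sigma^n(\xi)$, then $\sm_E^m(\phi(\eta))=\phi(\sigma^m(\eta))=\phi(\sigma^n(\xi))=\sm_E^n(\phi(\xi))$, and $\phi(\eta)\in\operatorname{dom}(\sm_E^m)$, $\phi(\xi)\in\operatorname{dom}(\sm_E^n)$ by the domain identification. The same computation with $\phi^{-1}$ (which also intertwines the shifts) provides a two-sided inverse, so $\Psi$ is a bijection, and respecting the groupoid operations is immediate from the formula for $\Psi$.

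It then remains to check that $\Psi$ is a homeomorphism. Both groupoids carry the Renault--Deaconu topology generated by the basic bisections $\CV(X,m,n,Y)$ of~\eqref{basis by RD}, where $X$ and $Y$ are open sets on which the relevant powers of the shift are injective. Because $\phi$ conjugates $\sigma$ to $\sm_E$, it carries such sets to sets of the same kind and hence maps the generating bisections of $\Gamma$ bijectively onto those of $\Gamma(\partial E,\sm_E)$, so $\Psi$ is both open and continuous. This gives $\Gamma(\partial E,\sm_E)\cong\Gamma(\CB,\CL,\theta,\CI_\af)$ as topological groupoids. Finally, both are locally compact Hausdorff \'etale groupoids, so the isomorphism induces an isomorphism of their $C^*$-algebras; combining this with Theorem~\ref{C*-isom} yields $C^*(\Gamma(\partial E,\sm_E))\cong C^*(\Gamma)\cong C^*(\CB,\CL,\theta,\CI_\af)$. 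The main obstacle is the bookkeeping in the domain identification $\phi(\mathsf{T}^{(n)})=\operatorname{dom}(\sm_E^n)$ and the verification that $\phi$ preserves the injectivity-of-shift-power condition defining the basic bisections; everything else is a formal transfer along the homeomorphism $\phi$.
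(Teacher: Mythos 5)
Your proposal is correct and takes essentially the same route as the paper: transport the shift $\sigma$ and the Renault--Deaconu description of $\Gamma(\CB,\CL,\theta,\CI_\af)$ through the homeomorphism $\phi$ of Theorem~\ref{thm:iso.tight.boundary}, define $\Psi$ by applying $\phi$ in both coordinates, and invoke Theorem~\ref{C*-isom} for the $C^*$-algebra isomorphism. The paper's proof simply declares this transfer ``straightforward,'' and your write-up supplies precisely the details (the identification $\phi(\mathsf{T}^{(n)})=\operatorname{dom}(\sm_E^n)$, the intertwining of powers, and the preservation of the basic bisections) that the paper leaves implicit.
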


\begin{proof}
By the results of Section \ref{section:Renault-Deaconu}, $\Gamma(\CB,\CL,\theta,\CI_\af)$ is the Renault-Deaconu of the map $\sigma$. It is then straightforward to prove that $\Psi: \Gamma(\CB, \CL, \theta, \CI_\af) \to  \Gamma(\partial E, \sigma)$ defined by 
$$\Psi((\xi^{\af\gm}, |\af|-|\bt|, \eta^{\bt\gm}))=(\phi(\xi^{\af\gm}),|\af|-|\bt|, \phi(\eta^{\bt\gm} )), $$ where $\phi: \mathsf{T} \to \partial E $  the homeomorphism given in Proposition \ref{thm:iso.tight.boundary}, is a well defined isomorphism of topological groupoids. The isomorphism $C^*(\Gamma(\partial E,\sm_E))\cong C^*(\CB,\CL,\theta,\CI_\af)$ follows from Theorem \ref{C*-isom}. 
\end{proof}

\begin{remark}
We believe that the results of \cite{KL2017,Yeend2006} can be adapted to prove that $C^*(\Gamma(\partial E,\sm_E))\cong\CO(E)$ directly. Using the isomorphism of Remark \ref{rmk:iso.corr.alg}, we could then get a groupoid model for $C^*(\CB,\CL,\theta,\CI_\af)$ in a different way.
\end{remark}

\end{document}